\title[Compactifications with colliding markings]{On compactifications of $\moduli_{g,n}$ with colliding markings}
\author{Vance Blankers and Sebastian Bozlee}
\date{\today}
\begin{document}
\allowdisplaybreaks 

\maketitle

\begin{abstract}
  In this paper, we study all ways of constructing modular compactifications of the moduli space $\mathcal{M}_{g,n}$ of $n$-pointed smooth algebraic curves of genus $g$ by allowing markings to collide.
  We find that for any such compactification, collisions of markings are controlled by a simplicial complex which we call the collision complex. Conversely, we identify modular compactifications of $\mathcal{M}_{g,n}$ with essentially arbitrary collision complexes, including complexes not associated to any space of weighted pointed stable curves. These moduli spaces classify the modular compactifications of $\mathcal{M}_{g,n}$ by nodal curves with smooth markings as well as the modular compactifications of $\mathcal{M}_{1,n}$ with Gorenstein curves and smooth markings. These compactifications generalize previous constructions given by Hassett, Smyth, and Bozlee--Kuo--Neff.
\end{abstract}


\section{Introduction}\label{sec:intro}

A key insight of \cite{hassett_weighted_curves} is that alternatives to the Deligne-Mumford-Knudsen compactification of $\moduli_{g,n}$ can be constructed by allowing marked points to collide under prescribed conditions. In \cite{hassett_weighted_curves} weights $a_1,\ldots,a_n \in \Q$ are assigned to the markings $p_1, \ldots, p_n$. A subset $\{ p_i \}_{i \in I}$ of the markings is then allowed to coincide only if $\sum_{i \in I} a_i \leq 1$.

This paper is motivated by the observation that the weights are irrelevant to the definition of the moduli problem: all one really needs to decide are which subsets of markings are permitted to collide. These subsets form a simplicial complex. Our results make the case for the following thesis.

\bigskip

\noindent\textit{The ``correct" way to parametrize which smooth markings collide in any modular compactification of $\moduli_{g,n}$ is with a simplicial complex on $[n] = \{1, \ldots, n\}$.}

\bigskip

In particular, we show that for all modular compactifications $\moduli$ of $\mathcal{M}_{g,n}$, there is a simplicial complex $\calK(\moduli)$ on vertices $\{1, \ldots, n\}$ that describes when smooth markings may collide. Conversely, we prove that for all simplicial complexes $\calK$ on vertices $\{1,\ldots, n\}$ (barring some exceptional cases in genus 0), there is a modular compactification $\Moduli_{g,\calK}$ of $\moduli_{g,n}$ so that $\calK(\Moduli_{g,\calK}) = \calK$. These spaces generalize Hassett's spaces of weighted stable pointed curves and many are not associated to any choice of weights.
  
Combining these ideas with the recently constructed $Q$-stable spaces, we construct moduli spaces $\Moduli_{1,\calK}(Q)$ where a simplicial complex $\calK$ determines which markings may collide and a set of partitions $Q$ determines when elliptic $m$-fold singularities may appear. We prove the spaces $\Moduli_{1,\calK}(Q)$ classify the modular compactifications of $\moduli_{g,n}$ with Gorenstein singularities and smooth, possibly colliding markings in genus one.

\subsection{Relation to other work}

Our results most fundamentally deal with alternative compactifications of the moduli space of curves---alternative relative to the classical Deligne-Mumford-Knudsen compactification \cite{deligne_mumford, knudsen_mumford_projectivity_I, knudsen_projectivity_II, knudsen_projectivity_III}. For a review and discussion of many of the most important compactifications in the literature, the survey article \cite{fedorchuk_smyth} is an excellent and thorough resource.

The central touchstones underlying this work are \cite{hassett_weighted_curves}, \cite{smyth_zstable}, and \cite{bkn_qstable}. In the first, Hassett constructed a new collection of modular compactifications of $\moduli_{g,n}$ by allowing marks to collide as a step in the log minimal model program for $\moduli_{g,n}$. Along with illuminating some of the birational geometry of the moduli space of curves, his notion of weighted stable curves recovered constructions of Kapranov \cite{kap1,kap2}, Keel \cite{keel}, and Losev and Manin \cite{losev_manin} as special cases, shedding new light on existing theorems. The first half of this paper takes this idea of colliding points compactifications to its natural generality.

In \cite{smyth_zstable}, Smyth began the process of classifying all modular compactifications of $\moduli_{g,n}$, starting with ``stable" compactifications. These encompass the spaces developed by Hassett, as well as other ``popular" compactifications in the literature at the time, including compactification by pseudostable curves, first described in \cite{schubert_pseudostable}. One of Smyth's key tools is a type of combinatorial structure on the set of all stable graphs: extremal assignments. These structures---defined by a handful of simple properties---turn out to control all stable modular compactifications of $\moduli_{g,n}$. When the variation in the moduli problem consists solely of deciding whether or not a given collection of marked points are allowed to collide, extremal assignments become even more tractable: they are described entirely by simplicial complexes.

However, not all modular compactifications are stable. In \cite{smyth_mstable}, Smyth builds a sequence of ``semistable" modular compactifications in genus $1$ indexed by an integer $m$, where an $m$-stable curve is allowed to have elliptic Gorenstein singularities with $m$ or fewer branches. He first constructs compactifications by $m$-stable curves before showing that $m$-stability may be combined with Hassett stability in a compatible way. The second half of this paper generalizes this construction, combining the generalization of $m$-stability given in \cite{bkn_qstable} (called $Q$-stability) with a generalization of weighted stability given below (called simplicial stability).

Moving to the semistable setting, the authors of \cite{bkn_qstable} dig deeper into the classification of modular compactifications in genus $1$, themselves building on the work of \cite{smyth_mstable}. They find another combinatorial framework for classifying all modular compactifications of $\moduli_{1,n}$ by Gorenstein curves with distinct, smooth markings. Their techniques use contractions of universal curves from radially aligned curves, following the ideas of \cite{rsw} and \cite{keli_thesis}. A key technical component is a contraction construction compatible with base change, first worked out in \cite{bozlee_thesis} and \cite{bozlee_contractions}.

We are not the first to observe that simplicial complexes are a natural language in which to talk about colliding markings. To facilitate the discussion in \cite{alexeevguy}, Hassett stability is reframed in terms of a simplicial complex on the set of marked points. The authors note \cite[Remark 2.9]{alexeevguy} that asking for a complex $\calK$ to correspond to some Hassett stability condition is non-trivial and restrict themselves to this situation. The benefit of doing so is that they are able to lift some of the results from \cite{hassett_weighted_curves} more-or-less directly. In \cite{moon}, smooth compactifications in genus 0 are considered using an analysis of the structure of extremal assignments; these compactifications agree with our (genus 0) nodal compactifications. More recently, \cite{fry_thesis} uses a combinatorially dual notion to the simplicial complex framing in \cite{alexeevguy} to explore additional compactifications of $\moduli_{0,n}$ admitting collisions and their tropicalizations.

\subsection{Future directions}

In a subsequent paper we will compute the intersection theory of $\psi$-classes for the compactifications that appear in this paper as a natural extension of \cite{alexeevguy}, \cite{smyth_psi}, and \cite{bc_hassett}.

The classification results of this paper assume that the marked points are smooth. The situation becomes much more complicated even if one only allows markings to collide with nodes. Nevertheless this is a salient case: for example, the universal curves of Hassett's compactifications can be identified with moduli spaces of curves in which one point is allowed to collide with nodes (such marks are considered ``weight $0$" in \cite{hassett_weighted_curves}). A natural direction would therefore be to find a satisfying combinatorial description of the nodal modular compactifications of $\moduli_{g,n}$ where markings may collide both with nodes and each other. This would likely involve a careful study of the combinatorics of extremal assignments.

Our results on $(Q,\calK)$-stable spaces suggest the question of whether \emph{all} modular compactifications of $\moduli_{g,n}$ with distinct smooth markings admit simplicially stable variants. Battistella's spaces of Gorenstein modular compactifications in genus 2 \cite{battistella_mstable} are a reasonable next candidate to test this hypothesis. An important first step is to build a system for universal contractions like those in Section \ref{ssec:genus_one_univ_contractions} below, likely entailing a simultaneous generalization of the contraction techniques of \cite{battistella_carocci_stable_maps} and \cite{bozlee_contractions}.

Finally, it would be natural to consider simplicially stable variants of stable maps. The idea to use a simplicial complex to encode the collisions of marked points first arose in \cite{alexeevguy} in the context of constructing a variant of stable maps with weighted stable curves as sources, and a number of their results immediately generalize to the use of simplicially stable curves as sources. Extending even farther to $(Q,\calK)$-stable curves is significantly more involved.

\subsection*{Acknowledgments}
The authors would like to thank Luca Battistella, Francesca Carocci, Renzo Cavalieri, Andy Fry, Leo Herr, Navid Nabijou, Dhruv Ranganathan, and Jonathan Wise for helpful and interesting conversations, and David Smyth for suggesting a key step in Theorem \ref{thm:to_collide_or_not_to_collide}.

\section{Main definitions and theorem statements}

\begin{definition}
Let $n$ be a positive integer. A \emphbf{family of $n$-pointed curves} over a scheme $S$ consists of:
\begin{enumerate}
  \item A flat, proper, and finitely presented morphism $\pi : C \to S$ of schemes with connected, reduced, 1-dimensional geometric fibers;
  \item $n$ sections $p_1, \ldots, p_n : S \to C$ of $\pi$.
\end{enumerate}

An \emphbf{$n$-pointed curve} $(C; p_1,\ldots, p_n)$ is a family of $n$-pointed curves over the spectrum of an algebraically closed field. In this case, we identify the sections $p_1,\ldots, p_n$ with the respective closed points $p_1(S), \ldots, p_n(S)$.
\end{definition}

\begin{definition}[{{\cite[Definition 1.1]{smyth_zstable}}}]
  Let $\mathcal{U}_{g,n}$ be the algebraic stack over schemes whose $T$-points parametrize the families of $n$-pointed curves over $T$ of arithmetic genus $g$. Let $\mathcal{V}_{g,n}$ be the irreducible component of $\mathcal{U}_{g,n}$ containing $\moduli_{g,n}$.

  If $U$ is an open subscheme of $\Spec \Z$, a \emphbf{modular compactification over $U$} of $\mathcal{M}_{g,n}$ is an open substack $\mathcal{X} \subseteq \mathcal{V}_{g,n} \times_{\Spec \Z} U$ so that $\mathcal{X}$ is proper over $U$.
\end{definition}

\begin{definition}\label{def:modular_compactifications}
  A modular compactification $\moduli$ of $\moduli_{g,n}$ is said to
  \begin{enumerate}
    \item be \emphbf{nodal} if each geometric point of $\moduli$ is a pointed curve with at worst nodal singularities;
    \item be \emphbf{Gorenstein} if each geometric point of $\moduli$ is a pointed Gorenstein curve;
    \item have \emphbf{no collisions} if each geometric point of $\moduli$ is a curve with distinct, smooth marked points;
    \item admit \emphbf{collisions} if each geometric point of $\moduli$ is a curve with smooth, but not-necessarily distinct marked points;
    \item admit \emphbf{general collisions} if no hypotheses are imposed on markings.
  \end{enumerate}
\end{definition}

Unless qualified otherwise, nodal compactifications are taken to be modular compactifications over $\Spec \Z$, and not-necessarily nodal compactifications are taken to live over $\Spec \Z[1/6]$, in order to avoid problems with infinitesimal automorphisms of elliptic singularities in characteristic 2 and 3 (see \cite[\S 2.1]{smyth_mstable} for some analysis of this phenomenon).

\subsection{Simplicial complexes from moduli spaces}

We use the term ``simplicial complex" for what is sometimes called an ``abstract simplicial complex." That is:

\begin{definition}
  A \emphbf{simplicial complex on a set $S$} is a subset $\mathcal{K}$ of the powerset $2^S$ such that
  \begin{enumerate}
      \item $\calK$ is downward closed, i.e., if $I \subseteq J \subseteq S$ and $J \in \calK$, then $I \in \calK$; and
      \item for each $s \in S$, $\calK$ contains $\{ s \}$.
  \end{enumerate} 
\end{definition}

To obtain a simplicial complex from a modular compactification $\moduli$, we record the subsets of markings that collide somewhere in $\moduli$.

\begin{definition}
A \emphbf{subcurve} of a proper algebraic curve $C$ over an algebraically closed field is a reduced closed subscheme of $C$.

Let $(C; p_1,\ldots,p_n)$ be an $n$-pointed curve and let $x$ be a closed point of $C$. Write
\[
  \Marks(x) \coloneqq \{ i \in [n] \stc p_i = x \}
\]
for the set of marks at $x$. Similarly, if $Z$ is a subcurve of $C$, write
\[
  \Marks(Z) \coloneqq \{ i \in [n] \stc p_i \in Z \} = \bigcup _{x\in Z} \Marks(x)
\]
for the set of marks on $Z$.
\end{definition}

\begin{definition}
Let $U$ be an open subscheme of $\Spec \Z$ and $\moduli$ be a modular compactification of $\mathcal{M}_{g,n}$ over $U$. Say that a subset $I \subseteq [n]$ \emphbf{collides in $\moduli$} if there is a geometric point $(C; p_1,\ldots, p_n)$ of $\moduli$ and a smooth point $x \in C$ so that $I = \Marks(x)$. The \emphbf{collision complex} $\calK(\moduli)$ of $\moduli$ is the set of all subsets $I$ of $[n]$ so that $I$ collides in $\moduli$.
\end{definition}

One must check that $\calK(\moduli)$ is a simplicial complex, that is, that if $I \subseteq J \subseteq [n]$ and there exists some curve parametrized by a geometric point of $\moduli$ with a closed point marked by $J$, then there exists another curve and a point with set of marks indexed by $I$. This essentially follows from the deformation openness of $\moduli$: simply move away the points indexed by $J - I$ from the collision.

\begin{theorem} \label{thm:collision_complex_is_simplicial}
Let $U$ be an open subscheme of $\Spec \Z$ and $\moduli$ a modular compactification of $\moduli_{g,n}$ over $U$.
Then $\calK(\moduli)$ is a simplicial complex.
\end{theorem}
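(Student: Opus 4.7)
The plan is to verify the two defining properties of a simplicial complex for $\calK(\moduli)$. The easy half is that every singleton $\{s\}$ lies in $\calK(\moduli)$: since $\moduli$ is a modular compactification of $\moduli_{g,n}$, it contains $\moduli_{g,n}$ as an open substack, and any smooth $n$-pointed curve in $\moduli_{g,n}$ witnesses $\{s\}$ as $\Marks(p_s)$.

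For downward closure, suppose $I \subseteq J$ with $J \in \calK(\moduli)$, witnessed by a geometric point $(C; p_1, \ldots, p_n)$ of $\moduli$ and a smooth point $x \in C$ with $\Marks(x) = J$. Following the hint given after the theorem statement, I would construct a one-parameter deformation whose special fiber is $(C; p_\bullet)$ and whose generic fiber has the sections indexed by $J - I$ slid off $x$ to distinct nearby smooth points, while all other sections are held fixed. Concretely, take $R = k[[t]]$ (or the local ring of a smooth curve at a $k$-rational point) and set $\calC = C \times_k \Spec R$. Because $x$ is smooth, choose an étale neighborhood $V \to C$ of $x$ together with an étale map $V \to \mathbb{A}^1_k$; using this étale coordinate, deform each section indexed by $j \in J - I$ to a section of $\calC \to \Spec R$ whose value at the generic point of $\Spec R$ is a point of $V$ distinct from $x$ and from the other deformed sections. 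Since the underlying curve $C$ never changes, this family lies in $\mathcal{V}_{g,n}$; its special fiber is in $\moduli$, so by openness of $\moduli$ in $\mathcal{V}_{g,n} \times_{\Spec \Z} U$ its generic fiber is in $\moduli$ as well. In that generic fiber, the marks passing through $x$ are exactly those in $I$, so $I \in \calK(\moduli)$.

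The main obstacle will be carrying out the deformation of sections rigorously, since a section is an honest morphism $\Spec R \to \calC$ rather than a formal choice of point. Smoothness of $x$ is what supplies the needed flexibility: the étale-local $\mathbb{A}^1$-model around $x$ lets one pull back distinct nonzero elements of $\mathfrak{m}_R$ to produce the desired deformed sections. Once the family is in hand, the remainder of the argument is formal, relying only on openness of $\moduli$, which is built into the definition of a modular compactification.
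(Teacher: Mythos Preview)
Your proposal is correct and follows essentially the same approach as the paper: deform the markings indexed by $J - I$ away from $x$ in a family whose special fiber is the given curve, then use openness of $\moduli$ in $\mathcal{V}_{g,n}$ to conclude that the generic fiber lies in $\moduli$. The only difference is cosmetic: rather than a one-parameter base $\Spec k[[t]]$ with \'etale-local coordinates, the paper takes the base to be $C^{J-I}$ itself, with the section $\sigma_j$ for $j \in J - I$ given by the $j$th projection, which sidesteps the need to write down explicit deformations of sections.
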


\begin{proof}
Suppose that $I \subsetneq J \subseteq [n]$ and $J \in \calK(\moduli)$.
Choose any pointed curve $(C; p_1, \ldots, p_n)$ parametrized by a geometric point in $\moduli$ so that $C$ has a smooth point $x$ with $J = \Marks(x)$. Let $C^{J-I} = \underbrace{C \times \cdots \times C}_{|J-I|\text{ copies}}$ and construct the constant family of curves
\begin{align*}
  \pi : C \times C^{J - I} &\to C^{J - I} \\
       (z, (y_j)_{j \in J - I}) &\mapsto (y_j)_{j \in J - I}.
\end{align*}
Construct $n$ sections $\sigma_1, \ldots, \sigma_n$ of $\pi$ by the rule
\[
  \sigma_i((y_j)_{j \in J - I}) = \begin{cases}
    (y_i, (y_j)_{j \in J - I}) \, \text{ if } i \in J - I \\
    (p_i, (y_j)_{j \in J - I}) \, \text{ if } i \not\in J - I.
  \end{cases}
\]
That is, the markings indexed by $J - I$ vary independently in the family, and all others remain constant.
Our original pointed curve $(C; p_1, \ldots, p_n)$ belongs to the family $\pi$. Since $\moduli$ is an open substack of $\mathcal{V}_{g,n}$, it follows $\moduli$ contains the generic member of $\pi$. In the generic member of $\pi$, $\Marks(x) = I$. Therefore $I \in \calK(\moduli)$, as desired.
\end{proof}

Before we narrow our focus to modular compactifications concerning any of the terms in Definition \ref{def:modular_compactifications}, we first establish that there is no finer invariant than collision complexes to describe when smooth marked points may collide in any modular compactification. In particular, the moduli spaces constructed below fully explore the possible ways of allowing smooth marks to collide. The data of a collision complex is still relevant for compactifications in which markings may collide at singularities---in this case, the complex merely describes the behavior of markings away from singularities. 

Semistable reduction implies that an alternative to permitting markings to collide at a smooth point is to sprout off a rational curve with the same markings.

\begin{definition}
If $(C; p_1, \ldots, p_n)$ is a pointed curve and $Z$ is a connected subcurve of $C$, we say \emphbf{$Z$ is a rational tail of $C$} if
\begin{enumerate}
  \item $Z$ has arithmetic genus zero;
  \item $Z \cap \ol{C - Z}$ is a single node of $C$.
\end{enumerate}

Let $U$ be an open subscheme of $\Spec \Z$ and $\moduli$ be a modular compactification of $\mathcal{M}_{g,n}$ over $U$. Say that a subset $I$ of $[n]$ \emphbf{does not collide in $\mathcal{M}_{g,n}$} if there is a pointed curve $(C; p_1,\ldots, p_n)$ of $\moduli$ and a rational tail $Z$ of $C$ so that $\Marks(Z) = I$.
Call such a subcurve $Z$ an \emphbf{$I$-marked rational tail of $C$}.
\end{definition}

\begin{theorem} \label{thm:to_collide_or_not_to_collide}
Let $\moduli$ be a modular compactification of $\moduli_{g,n}$ over an open subset $U$ of $\Spec \Z$. Let $I \subseteq [n]$ be a non-empty subset. Then either $I$ collides in $\moduli$ or $I$ does not collide in $\moduli$, but not both.
\end{theorem}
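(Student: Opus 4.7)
The plan is to use the properness of $\moduli$ in both directions: the valuative criterion of existence forces at least one of the two alternatives to hold, while separatedness forbids both.

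\emph{At least one.} Start from a generic smooth curve $C \in \moduli_{g,n} \subseteq \moduli$ with distinct markings, and construct a one-parameter family $\pi \colon \mathcal{C} \to \Spec R$ over a DVR $R$ whose generic fiber is $C$ (markings distinct) and whose special fiber is $C$ with the $I$-markings collided at a chosen smooth point $x$. Concretely, in local coordinates at $x$, take sections $\sigma_i$ for $i \in I$ of the form $z = a_i t$ with distinct nonzero $a_i$. This family lies in $\mathcal{V}_{g,n}$ and its generic fiber lies in $\moduli$ by openness. By properness, after a finite base change, there is a modification of $\pi$ lying entirely in $\moduli$ with the same generic fiber. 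By semistable reduction the minimal such modification is either $\pi$ itself (the special fiber has smooth $I$-collision, so $I$ collides in $\moduli$) or the blow-up of $\pi$ at $(x, 0)$ (the special fiber acquires a rational $\mathbb{P}^1$-tail at $x$ carrying the $I$-markings at the distinct points $[a_i : 1]$, so $I$ does not collide in $\moduli$). Further iterations produce chains or trees of rational components, which can in turn be contracted within $\moduli$ to reach one of these two minimal forms.

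\emph{Not both.} Suppose for contradiction that $I$ collides via a curve $C_1$ (smooth $I$-collision at $x_1$) and that $I$ does not collide via a curve $C_2$ (with $I$-rational tail). Choose $C_1$ generic in its stratum. Let $\mathcal{F}_1$ be the family over $\Spec R$ that deforms $C_1$ by separating the $I$-markings along distinct coordinate directions at $x_1$, and let $\mathcal{F}_2$ be the blow-up of $\mathcal{F}_1$ at $(x_1, 0)$. These two families share a common generic fiber (in $\moduli$ by openness), but their special fibers are $C_1$ and $C_1 \cup_{x_1} \mathbb{P}^1$ respectively. The first lies in $\moduli$ by hypothesis. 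For the second, note that the locus $X \subseteq \mathcal{V}_{g,n}$ of pointed curves carrying an $I$-rational tail is irreducible---parametrized by the main component (a genus-$g$ curve with marks $[n] - I$ plus an attaching point) together with the marked rational tail---and contains $C_2 \in \moduli$. By openness $\moduli \cap X$ is a dense open of $X$, and for generic $C_1$ the curve $C_1 \cup_{x_1} \mathbb{P}^1$ lies in this dense open. Thus both special fibers are in $\moduli$, so $\mathcal{F}_1$ and $\mathcal{F}_2$ are two families in $\moduli$ sharing a generic fiber with non-isomorphic special fibers, contradicting separatedness.

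The main obstacle is the last step: passing from the existence of some $C_2$ to the conclusion that the particular curve $C_1 \cup_{x_1} \mathbb{P}^1$ lies in $\moduli$. This requires careful use of the irreducibility of the ``$I$-rational tail'' locus $X$ and openness of $\moduli$ in $\mathcal{V}_{g,n}$, together with the freedom to choose $C_1$ generic, and is likely the place where the key step suggested by D. Smyth enters.
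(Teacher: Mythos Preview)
Your ``not both'' argument is essentially the paper's approach: both build two one-parameter families with a common generic fiber whose special fibers are, respectively, an $I$-collision and an $I$-sprouted curve, then invoke separatedness. The only difference is in how the irreducibility step is packaged. The paper observes that $(C_1;(p_j)_{j\notin I},x_1)$ and the main component $W$ of any $I$-sprouted curve are both smooth $(n-|I|+1)$-pointed genus-$g$ curves, hence lie in the irreducible space $\moduli_{g,n-|I|+1}$, and uses openness of $\moduli$ to move them until they coincide; you instead move within the irreducible locus $X$ of $I$-sprouted curves until you hit the particular sprout $C_1\cup_{x_1}\mathbb{P}^1$. These are equivalent maneuvers, and the step you correctly flag as the ``Smyth step'' is precisely this irreducibility-plus-openness move.

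Your ``at least one'' argument, however, has a genuine gap, and in fact the claim is false in general. You assert that the $\moduli$-limit of your colliding family must, after contracting some rational chains, be either the smooth $I$-collision or the single sprout. But $\moduli$ is an \emph{arbitrary} modular compactification, and its limits may involve non-nodal singularities that block both alternatives. Concretely, take $\moduli=\Moduli_{1,2}(1)$ (Smyth's $1$-stable space) and $I=\{1,2\}$: markings are required to be distinct, so $I$ never collides at a smooth point; yet no curve carries a $\{1,2\}$-marked rational tail either, since the attached elliptic core $E$ would then satisfy $|E\cap\overline{C-E}|+|\{i:p_i\in E\}|=1\not>1$. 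In your family the $\moduli$-limit is instead a cuspidal rational curve with $p_1\neq p_2$, which exhibits neither an $I$-collision nor an $I$-tail. The paper's proof in fact addresses only the ``not both'' direction; despite the either/or wording of the statement, that is the only content used elsewhere, and you should read the theorem accordingly.
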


\begin{proof}
Suppose there exists an $I$ so that both $I$ collides in $\moduli$ and $I$ does not collide in $\moduli$. Then we can find a pointed curve $(C; p_1, \ldots, p_n)$
in $\moduli$ with a smooth point $x$ marked by $I$ and a pointed curve $(C'; p_1', \ldots, p_n')$ with an $I$-marked rational tail $T$. By definition of modular compactification, both curves are smoothable within $\moduli$. Replacing $C$ by a partial smoothing if necessary, we may assume that $C$ is smooth and the markings other than those indexed by $I$ are distinct. Write $q$ for the point in $C$ such that $\Marks(q) = I$. Similarly, smoothing $T \subset C'$ and $W := \ol{C' - T}$, we may assume that $C'$ consists of two smooth irreducible components $T$ and $W$ meeting at a node $q'$. Moreover, since $\moduli$ is open in $\mathcal{V}_{g,n}$ we may assume that the markings on $C'$ are smooth and distinct (see Figure \ref{fig:to_collide_or_not}).

Now both $(C; (p_j)_{j \in [n] - I}, q)$ and $(W; (p_j)_{j \in [n] - I}, q')$ are smooth curves with $n - |I| + 1$ distinct markings. Such curves are parametrized by $\moduli_{g,n - |I| + 1}$, which is irreducible. In particular, any open neighborhood of $C$ must intersect any open neighborhood of $W$ non-trivially. Then using openness of $\moduli$ in $\mathcal{V}_{g,n}$ to replace $W$ and $C$ if necessary, we may assume that $(C; (p_j)_{j \in [n] - I}, q)$ and $(W; (p_j)_{j \in [n] - I}, q')$ are isomorphic as $n - |I| + 1$ pointed curves.

Now choose a one parameter smoothing family $\pi' : \mathcal{C}' \to S$ over the spectrum of a discrete valuation ring with special fiber $(C'; p_1',\ldots, p_n')$. By \cite[Proposition 2.6]{smyth_zstable}, we may form a second flat family $\pi : \mathcal{C} \to S$ by contracting $T$ in the special fiber. Its limit is $(C; p_1,\ldots, p_n)$. Then both $(C; p_1,\ldots, p_n)$ and $(C'; p_1', \ldots, p_n')$ are limits in $\moduli$ of the same 1-parameter family of curves in $\moduli_{g,n}$. 
This contradicts the separatedness of $\moduli$, and the result follows.
\end{proof}

\begin{figure}
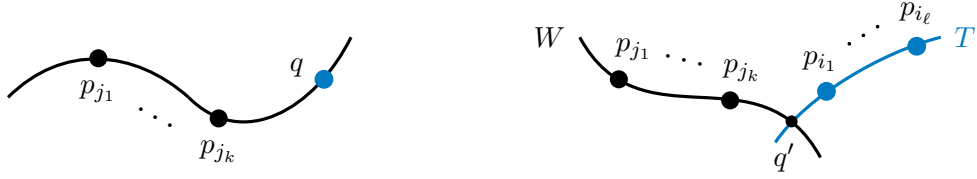

    \ToCollideOrNot
    \caption{The curves $C$ (left) and $C'$ (right).}
    \label{fig:to_collide_or_not}
\end{figure}

\begin{remark}
Let $\moduli, \moduli'$ be modular compactifications with the same collision complex. Suppose $C_{\moduli}$ and $C_{\moduli'}$ are the respective limits in $\moduli$ and $\moduli'$ of a family of curves in $\moduli_{1,n}$, and the points indexed by $I \subseteq [n]$ collide in $C_{\moduli}$. Theorem \ref{thm:to_collide_or_not_to_collide} implies that it is not possible for a rational tail of $C_{\moduli'}$ to contain only the marks indexed by $I$. However, it \emph{is} possible that the marks do not coincide in $C_{\moduli'}$: they may belong to a genus zero subcurve connected to the rest of $C_{\moduli'}$ by a singularity other than a node. See Example \ref{ex:different_collision_limits}.
\end{remark}

\subsection{Moduli spaces from simplicial complexes} \label{ssec:def_simplicial_spaces}

Does every simplicial complex $\calK$ on $[n]$ arise as $\calK(\moduli)$ for some modular compactification $\moduli_{g,n}$? Except in genus zero, the answer is yes.

\begin{definition}
Given a simplicial complex $\calK$ on $[n]$, say that a partition $\mathcal{P}$ of $[n]$ is a \emphbf{$\calK$-partition} if each part of $\mathcal{P}$ belongs to $\calK$.
A simplicial complex $\calK$ on $[n]$ is \emphbf{at least triparted} if each $\calK$-partition has at least three parts.

\end{definition}

\begin{definition}\label{def:simpstab2}
  Let $g \geq 0$ and $n \geq 1$ be integers, and let $\mathcal{K}$ be a simplicial complex on $[n]$.
  If $g = 0$ assume in addition that $\calK$ is at least triparted. We say an $n$-pointed curve $(C; p_1, \dots,p_n)$ is \emphbf{$\mathcal{K}$-stable} if
  \begin{enumerate}[label=\text{(K\arabic*)}]
    \item $C$ has at worst nodal singularities;
    \item $p_i \in C$ is smooth for every $i\in [n]$;
    \item for each smooth closed point $x \in C$, we have $\Marks(x) \in \mathcal{K}$;
    \item for each rational tail $Z$ of $C$, we have $\Marks(Z) \not\in \mathcal{K}$;
    \item (no infinitesimal automorphisms) the normalization of each rational irreducible component $Z$ of $C$ has at least three distinct special points.
  \end{enumerate}
  The substack of $\mathcal{V}_{g,n}$ parametrizing flat families of $\mathcal{K}$-stable $n$-pointed curves of genus $g$ is denoted $\Moduli_{g,\mathcal{K}}$.
\end{definition}

We remark that it is sufficient to check condition (K4) on the irreducible rational tails. We call the stacks $\Moduli_{g,\mathcal{K}}$ collectively \emphbf{moduli spaces of simplicially stable curves}, and we refer to them collectively as \emphbf{simplicially stable spaces}.

To explain the triparted hypothesis in genus zero, we give an alternative formulation of condition (K3) as follows.  Given a pointed curve $(C; p_1,\ldots, p_n)$, there is an associated partition of $[n]$ by the set of subsets $\Marks(x)$ as $x$ varies over the marked points of $C$. Then $(C; p_1,\ldots, p_n)$ satisfies (K3) if and only if the induced partition is a $\calK$-partition.

In light of this, a simplicial complex is at least triparted if and only if each $\calK$-stable curve has at least three \emph{distinct} marked points.  Intuitively, if $\mathcal{K}$ is not at least triparted, then $\Moduli_{0,\mathcal{K}}$ would ``want" to include a $\PP^1$ with only two distinct marked points. However, this curve has infinite automorphism group. Therefore it is necessary that $\calK$ is at least triparted in genus zero.

Our first main result is that the moduli stacks $\Moduli_{g,\calK}$ are in fact modular compactifications.

\begin{theorem_simplicial_spaces_are_modular_compactifications_intro}
  Let $g \geq 0, n \geq 1$ be integers and $\calK$ a simplicial complex on $[n]$. If $g = 0$ then assume in addition that $\calK$ is at least triparted. Then $\Moduli_{g,\calK}$ is a smooth Deligne-Mumford nodal modular compactification of $\moduli_{g,n}$ over $\Z$ admitting collisions such that $\calK(\Moduli_{g,\calK}) = \calK$.
\end{theorem_simplicial_spaces_are_modular_compactifications_intro}

Our next result is that simplicial spaces classify nodal compactifications with colliding points. They are therefore the fullest generalization of Hassett spaces from the perspective of altering compactifications of $\moduli_{g,n}$ by allowing marked points to selectively collide.

\begin{theorem_simplicial_spaces_classification_intro}
  If $\moduli$ is a nodal compactification of $\moduli_{g,n}$ over $\Z$ admitting collisions, then
  \[
    \moduli = \Moduli_{g,\calK(\moduli)}
  \]
  as open substacks of $\mathcal{V}_{g,n}$.
\end{theorem_simplicial_spaces_classification_intro}

Our approach to Theorem \ref{thm:simplicial_spaces_are_modular_compactifications_nonintro} and Theorem \ref{thm:simplicial_spaces_classification_nonintro} is to recognize that nodal compactifications with colliding points are \emphbf{stable} modular compactifications in the sense of \cite{smyth_zstable}. We can then use the combinatorics of \cite{smyth_zstable}'s extremal assignments. 

The local structure of the simplicially stable moduli spaces is not new: each is a regluing of Hassett spaces in the following sense.

\begin{theorem_simplicial_frankenstein_intro}
Let $g,n, \calK$ be as in Theorem \ref{thm:simplicial_spaces_are_modular_compactifications_nonintro}. Then there is a finite open cover $\{ U_i \}_{i \in I}$ of $\Moduli_{g,\calK}$ and weights $\{ \mathcal{A}_i \}_{i \in I}$ so that for each $i \in I$, $U_i$ is an open substack of the space of weighted stable curves $\Moduli_{g,\mathcal{A}_i}$.
\end{theorem_simplicial_frankenstein_intro}

It is from this fact and the known smoothness of weighted pointed stable spaces that we deduce the smoothness asserted in Theorem \ref{thm:simplicial_spaces_are_modular_compactifications_nonintro}.

It is often true in practice that when working with weighted pointed stable spaces, one first writes down the collisions one desires, then works backwards to find weights. If all one wants is a moduli space with the given collisions, then our work permits the latter step to be skipped. Moreover, it is not always possible to find the desired weights: not all simplicial complexes arise from a choice of weights \cite[Remark 2.9]{alexeevguy}. Table \ref{fig:simp_vs_hassett} illustrates the discrepancy by giving the number of distinct fine moduli spaces for low $n$ and $g \geq 1$.

\bigskip

\begin{table}[h] 
\begin{center}
\begin{tabular}{c|c|c}
     $n$ & \# of simplicially stable spaces & \# of weighted stable spaces \\ \hline
      2 & 2 & 2 \\
      3 & 9 & 9 \\
      4 & 114 & 96\\
      5 & 6,894 & 2,690 \\
      6 & 7,785,062 & 226,360\\
      7 & 2,414,627,396,434 & 64,646,855\\
      8 & 56,130,437,209,370,320,359,966 & 68,339,572,672 
\end{tabular}
\end{center}

\caption{The left column is Sequence A307249 of the OEIS \cite{oeis_simplicial_complexes}. The right column comes from \cite[Figure 1]{adgh-hassett}.}\label{fig:simp_vs_hassett}

\end{table}

We also construct reduction morphisms associated to each collision complex $\calK$, relating the Deligne-Mumford compactification $\Moduli_{g,n}$ to the simplicially stable compactifications.

\begin{theorem_simplicial_reduction_map_intro}
Let $g \geq 0, n \geq 1$ be integers and $\calK$ a simplicial complex on $[n]$. If $g = 0$, assume also that $\calK$ is at least triparted. Then there is a morphism of algebraic stacks
\[
  \rho_{\calK}: \Moduli_{g,n} \to \Moduli_{g,\mathcal{K}}.
\]
restricting to an isomorphism on the dense open substack $\moduli_{g,n}$.
\end{theorem_simplicial_reduction_map_intro}

\subsection{The genus one case}

The article \cite{bkn_qstable} classifies the Gorenstein compactifications of $\moduli_{1,n}$ with distinct markings. It is natural to ask if it is possible to extend this to a classification of Gorenstein compactifications in which markings may collide. On the other hand, it is also natural to wonder which collision complexes may be imposed on moduli spaces of curves admitting more exotic singularities than nodes.

Applying the philosophy of this paper to the definitions of \cite{bkn_qstable}, we are able to identify new moduli spaces $\Moduli_{1,\mathcal{K}}(Q)$ which exhaust the Gorenstein compactifications admitting collisions in genus one, and we extend the results of \cite{bkn_qstable} to these new spaces. Doing so takes up the second half of this paper.

\begin{definition}
Given a positive integer $n$, let $\PPart(n)$ be the set of partitions of $[n]$. Partially order $\PPart(n)$ by the rule that $\calP_1 \preceq \calP_2$ if the partition $\calP_2$ refines $\calP_1$.

Denote by $\mathfrak{Q}_n$ the collection of subsets $Q \subseteq \PPart(n)$ such that
\begin{enumerate}
    \item $Q$ is downward closed;
    \item $Q$ does not contain the discrete partition $\{ \{1\}, \ldots, \{n\} \}$. 
\end{enumerate}
\end{definition}

\begin{definition}
A closed point $p$ of an algebraic curve $C$ over an algebraically closed field is an \emphbf{elliptic Gorenstein singularity} if $\mathscr{O}_{C,p}$ is Gorenstein and $g(p) = 1.$
\end{definition}

We recall from \cite{smyth_mstable} that the elliptic Gorenstein singularities are classified by their number of branches $m$. If
$m = 1$, $p$ is a cusp; for $m = 2$, $p$ is a tacnode; for $m \geq 3$,
$p$ is the union of the coordinate axes of $\mathbb{A}^{m-1}$ and a line transverse to each of the coordinate hyperplanes of $\mathbb{A}^{m-1}$.

\begin{definition}[{{\cite[Definition 1.4]{bkn_qstable}}}]
Let $(C; p_1, \ldots, p_n)$ be an $n$-pointed curve of arithmetic genus one with smooth markings. Let $Z$ be a connected subcurve of $C$ of genus one and let $\Sigma$ be the divisor of markings. We define the \emphbf{level of $Z$}, $\lev(Z)$, to be the partition of $[n]$ induced by the connected components of $(C - Z) \cup \Sigma$.

If $q \in C$ is an elliptic Gorenstein singularity, we say the \emphbf{level of $q$}, $\lev(q)$ is the partition of $[n]$ induced by the connected components of the normalization of $C$ at $q$. (i.e. $i, j \in [n]$ belong to the same part of $\lev(q)$ if and only if the rational tails
containing $p_i, p_j$ are connected to the same branch of the singularity).
\end{definition}

\begin{definition} \label{def:do_not_overlap}
Given $Q \in \mathfrak{Q}_n$ and a simplicial complex $\calK$ on $[n]$, we say that $Q$ and $\calK$ \emphbf{do not overlap} if for each subset $I \in \calK$, $Q$ does not contain the partition in which $I$ is one part and the elements of $[n] - I$ belong to 1-element parts.
\end{definition}

\begin{definition} \label{def:qk_stable}
Let $Q \in \mathfrak{Q}_n$ and $\calK$ a simplicial complex on $[n]$ so that $Q$ and $\calK$ do not overlap.

An $n$-pointed Gorenstein curve $(C; p_1, \ldots, p_n)$ of genus one is \emphbf{$(Q, \calK)$-stable} if

\begin{enumerate}[label=\text{(Q\arabic*)}]
  \item $p_i \in C$ is smooth for every $i \in [n]$;
  \item for each elliptic Gorenstein singularity $p \in C$, we have $\lev(p) \in Q$;
  \item for each connected subcurve $E \subseteq C$ of genus one, $\lev(E) \not\in Q$;
  \item for each smooth point $x \in C$, we have $\Marks(x) \in \calK$;
  \item for each rational tail $Z$ of $C$, we have $\Marks(Z) \not\in \calK$;
  \item (no infinitesimal automorphisms) $H^0(C, \Omega_C^\vee(-\Sigma)) = 0$.
\end{enumerate}

A family of $n$-pointed curves of genus one is $(Q, \calK)$-stable if its geometric fibers are $(Q,\calK)$-stable. The \emphbf{moduli space of $(Q,\calK)$-stable curves} is the stack $\Moduli_{1,\calK}(Q)$ over $\Z[1/6]$ that assigns to a scheme $S$ the groupoid of $(Q,\calK)$-stable curves over $S$.
\end{definition}

The $(Q,\calK)$-stable spaces enjoy many of the same properties of the simplicially stable spaces, and classify compactifications of $\moduli_{1,n}$ in a parallel way.

\begin{theorem_qk_is_modular_compactification_intro}
Let $Q \in \mathfrak{Q}_n$ and let $\calK$ be a simplicial complex on $[n]$ so that $Q$ and $\calK$ do not overlap. Then $\Moduli_{1,\calK}(Q)$ is a Deligne-Mumford Gorenstein modular compactification of $\moduli_{1,n}$ over $\Z[1/6]$ admitting collisions so that $\calK(\Moduli_{1,\calK}(Q)) = \calK$.
\end{theorem_qk_is_modular_compactification_intro}

\begin{theorem_qk_classification_intro}
Let $\moduli$ be a Gorenstein modular compactification of $\moduli_{1,n}$ over $\Z[1/6]$ admitting collisions. Then there exists $Q \in \mathfrak{Q}_n$ and $\calK$ a simplicial complex on $[n]$ so that $Q$ and $\calK$ do not overlap and $\moduli = \Moduli_{1,\calK}(Q)$ as open substacks of $\mathcal{V}_{1,n} \times_{\Spec \Z} \Spec \Z[1/6]$.
\end{theorem_qk_classification_intro}

Finally, in Section \ref{ssec:genus_one_univ_contractions}, we find resolutions of the rational map between $\Moduli_{1,n}$ and $\Moduli_{1,\calK}(Q)$ for each $Q$ and $K$ by way of the moduli space of radially aligned curves $\Moduli_{1,n}^{rad}$ \cite[Definition 3.3.3]{rsw}, whose definition we recall in Section \ref{sec:tropical_log_background}.

\begin{theorem_contraction_to_qk_intro}
For each $Q \in \mathfrak{Q}_n$ and $\calK$ a simplicial complex on $[n]$ so that $Q$ and $\calK$ do not overlap, there is a diagram of birational morphisms of algebraic stacks
\[
  \begin{tikzcd}
   & \Moduli_{1,n}^{rad} \ar[dl] \ar[dr] & \\
   \Moduli_{1,n} & & \Moduli_{1,\mathcal{K}}(Q)
  \end{tikzcd}
\]
restricting to the identity on $\moduli_{1,n}$.
\end{theorem_contraction_to_qk_intro}

\section{Background on tropical and logarithmic curves} \label{sec:tropical_log_background}

We use the formalism of \cite{ccuw} for tropical curves and tropicalizations of log curves. The word ``log" here refers to the log structures of Kato and Illusie \cite{kato_log_structures}. The notion of a log curve is due to F. Kato in \cite{fkato_deformations}. We sketch a few key definitions and refer the reader to \cite{ccuw} and \cite{bkn_qstable} for further details. As is usual in logarithmic geometry, ``monoid" is taken to mean ``commutative monoid."

\begin{definition}
An ($n$-marked) \emphbf{tropical curve} $\Gamma$ of genus $g$ with edge lengths in a sharp fs monoid $P$ consists of the data of:
\begin{enumerate}
    \item a weighted ($n$-)marked graph $\ul{\Gamma}$ of genus $g$ with vertices $V(\Gamma)$, edges $E(\Gamma)$, legs $L(\Gamma)$, and a marking function $\{ 1, \ldots, n \} \overset{\sim}{\to} L(\Gamma)$;
    \item a length function $\delta : E(\Gamma) \to P$ so that $\delta(e) \neq 0$ for all $e \in E(G)$.
\end{enumerate}
\end{definition}

Morphisms of weighted $n$-marked graphs are taken to be compositions of weighted contractions of a sequence of edges and isomorphisms of weighted $n$-marked graphs.

\begin{definition}
If $\Gamma$ and $\Gamma'$ are tropical curves with edge lengths in $P$ and $P'$, respectively, then a \emphbf{morphism} of tropical curves $\pi : \Gamma' \to \Gamma$ consists of
\begin{enumerate}
  \item an ``underlying" morphism of weighted $n$-marked graphs $\ul{\pi}: \ul{\Gamma} \to \ul{\Gamma'}$ (note the reversal of arrow direction!) and
  \item a morphism of monoids $\pi^\sharp : P \to P'$
\end{enumerate}
such that for all $e \in E(\Gamma)$
\begin{enumerate}
  \item $\pi$ contracts $e$ if and only if $\pi^\sharp(e) = 0$ and
  \item if $\pi$ does not contract $e$, then $\pi^\sharp(\delta_{\Gamma}(e)) = \delta_{\Gamma'}(\pi(e))$.
\end{enumerate}
\end{definition}

We call morphisms of tropical curves and of their underlying graphs \emphbf{weighted edge contractions}. As in the definition, we use an underline to indicate a morphism of underlying graphs and the sharp symbol for the associated function on edge lengths.

\begin{definition}
A \emphbf{piecewise linear function} $f$ on a tropical curve $\Gamma$ with edge lengths in $P$ consists of
\begin{enumerate}
  \item a value $f(v) \in P$ for each vertex $v \in V(\Gamma)$;
  \item a slope $m(\ell) \in \mathbb{N}$ for each leg $\ell \in L(\Gamma)$
\end{enumerate}
such that if $e$ is an edge of $\Gamma$ with ends $v, w$, then $f(v) - f(w) \in P^{gp}$ is an integer multiple of the edge length $\delta(e)$.
We write $\PL(\Gamma)$ for the set of piecewise linear functions on $\Gamma$.
\end{definition}

A \emphbf{log curve} is a morphism $\pi : C \to S$ in the category of fs log schemes admitting the following \'etale local characterization, due originally to F. Kato but rephrased as in \cite[Theorem 2.3.1]{rsw}:

\begin{theorem} [{{\cite[Theorem 2.3.1]{rsw}}}]  \label{thm:log_curve_local_structure}
Let $\pi : C \to S$ be a family of log curves. If $x \in C$ is a geometric point with image $s \in S$, then there are \'etale neighborhoods $V$ of $x$ and $U$ of $s$ so that $V \to U$ has a strict morphism to an \'etale-local model
$V' \to U'$ where $V' \to U'$ is one of the following:
\begin{enumerate}
  \item (the smooth germ) $V' = \mathbb{A}^1_{U'} \to U'$ and the log structure on $V'$ is pulled back from the base;
  \item (the germ of a marked point) $V' = \mathbb{A}^1_{U'} \to U'$ with the log structure pulled back from the toric log structure on $\mathbb{A}^1$;
  \item (the node) $V' = \underline{\Spec} \mathscr{O}_{U'}[a,b] / (ab - t)$ for $t \in \mathscr{O}_{U'}$. The log structure on $V'$ is pulled back from the multiplication map $\mathbb{A}^2 \to \mathbb{A}^1$ of toric varieties along the morphism
  $U' \to \mathbb{A}^1$ of logarithmic schemes induced by $t$.
\end{enumerate}
\end{theorem}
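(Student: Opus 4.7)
The plan is to deduce this local structure theorem from the general chart criterion for log smooth morphisms (F. Kato's original argument), applied to the definition of a log curve as a proper, integral, vertical, log smooth morphism of fs log schemes of relative dimension one whose geometric fibers are reduced connected curves with at worst nodal singularities.

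First, I would localize the problem at a single geometric point $x \in C$ lying over $s \in S$, and reduce to computing the relative characteristic monoid $\overline{M}_{C,x} / \pi^{-1}\overline{M}_{S,s}$. By log smoothness this quotient is a sharp fs monoid, and its rank is constrained by the fact that $\pi$ has relative dimension one. A case analysis on the underlying algebro-geometric behavior of $x$ in the fiber $C_s$ — smooth unmarked, smooth marked, or nodal — should force the relative characteristic to be, respectively, the trivial monoid, $\mathbb{N}$, or the monoid $\langle \alpha, \beta \rangle$ with the single relation that $\alpha + \beta$ equals a prescribed element $\delta_e \in \overline{M}_{S,s}$ (the smoothing parameter of the node).

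Second, once the relative characteristic is identified in each case, Kato's chart criterion for log smoothness produces an étale-local strict morphism to the corresponding toric model. In the trivial-monoid case this yields $\mathbb{A}^1_{U'} \to U'$ with log structure pulled back from the base; in the $\mathbb{N}$ case this yields $\mathbb{A}^1_{U'} \to U'$ equipped with the toric log structure of $\mathbb{A}^1$ along the marked section; and in the nodal case the chart realises $V'$ as the fibre product of $U' \to \mathbb{A}^1$ with the multiplication map $\mathbb{A}^2 \to \mathbb{A}^1$, producing the equation $ab = t$ together with the pulled-back toric log structure. Strictness of the resulting map of log schemes is automatic from how the charts were extracted.

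The main obstacle is showing that no other relative characteristic monoids occur — i.e., ruling out more complicated singularities or higher-rank monoid contributions. This is the substantive content of Kato's theorem: one argues that verticality together with relative dimension one forbids free generators beyond what is needed to encode the node, while log smoothness forces the underlying singularity to be a node whenever the relative characteristic has rank one and the "balanced" relation $\alpha+\beta = \delta_e$ holds. The deformation-theoretic step identifying this monoid data with the actual scheme-theoretic equation $ab - t$ (rather than some other deformation of a node) is the delicate part; I would handle it by comparing the universal deformation of a log node as computed from Kato's theory with the versal deformation of the underlying nodal singularity, concluding that the two agree étale-locally.
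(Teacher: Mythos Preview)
The paper does not prove this theorem. It is stated as a quotation of \cite[Theorem 2.3.1]{rsw} (which in turn rephrases F.~Kato's original result from \cite{fkato_deformations}), and no proof or proof sketch appears in the paper; the statement is used as a black box to set up the tropical formalism in Section~\ref{sec:tropical_log_background}. So there is no ``paper's own proof'' against which to compare your proposal.

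That said, your outline is a faithful description of how the result is actually established in F.~Kato's work: identify the relative characteristic monoid at a geometric point, use the relative-dimension-one and verticality hypotheses to constrain it to one of three cases, and then invoke the chart criterion for log smoothness to produce the strict \'etale-local model. The point you flag as delicate---matching the monoid-theoretic node with the versal deformation $ab=t$---is indeed where the work lies, and your plan to compare the log deformation theory with the classical versal deformation of a node is the standard route. Nothing in your sketch is wrong; it simply goes beyond what the present paper attempts.
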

If $x$ in $C|_s$ is a node, $x$ has a neighborhood of type (iii), and we say that the image of $t$ in $\ol{M}_{S,s}$ is the \emphbf{smoothing parameter} of $x$. We denote the smoothing parameter of $x$ by $\delta_x$.

The underlying morphism of schemes $\pi : C \to S$ is therefore a family of nodal curves. An \emphbf{$n$-marked log curve} is a proper log curve $\pi : C \to S$ together with $n$ disjoint sections $p_1,\ldots, p_n$ of $\pi$ through the ``marked" points of $C$, i.e., the points with neighborhood the germ of a marked point.

For each geometric point $s$ of $S$, we set the \emphbf{tropicalization} of $C|_s \to s$ to be the tropical curve whose underlying graph has
\begin{enumerate}
    \item a vertex of genus $g$ for each irreducible component of $C|_s$ with normalization of genus $g$;
    \item an edge of length $\delta_e$ for each node $e$ of $C|_s$;
    \item a leg for each marking of $C|_s$.
\end{enumerate}

Sections of the characteristic sheaf of $C$ may be interpreted as piecewise linear functions.

\begin{theorem} [{{\cite[Remark 7.3]{ccuw}}}] 
Let $\pi : C \to S$ be a log curve over the spectrum of an algebraically closed field. Then there is a bijection
\begin{align*}
  \PL : \Gamma(C, \ol{M}_C) &\overset{\sim}{\longrightarrow} \PL(\trop(C)) \\
     \sigma &\mapsto \PL(\sigma)
\end{align*}
where
\begin{enumerate}
  \item the value of $\PL(\sigma)$ at a vertex $v$ of $\Gamma(C)$ is the stalk of $\sigma$ at the generic point of the corresponding component of $C$;
  \item the slope of $\PL(\sigma)$ at a leg $l$ of $\Gamma(C)$ is the image of $\sigma$ in $(\ol{M}_C/\pi^{-1}\ol{M}_S)_{p} \cong \mathbb{N}$ where $p$ is the marked point corresponding to $l$.
\end{enumerate}
\end{theorem}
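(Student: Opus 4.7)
The plan is to use the local structure Theorem \ref{thm:log_curve_local_structure} to compute the stalks of $\ol{M}_C$ at each geometric point, define $\PL$ by reading these stalks off at generic points and marked points, verify the result lies in $\PL(\trop(C))$, and check bijectivity stalk by stalk. Write $P = \ol{M}_{S,s}$. From the three local models I obtain: at a smooth unmarked point, $\ol{M}_{C,x} = P$; at a marked point, $\ol{M}_{C,x} = P \oplus \mathbb{N}$ with the $\mathbb{N}$-factor generated by the toric marked-point coordinate; and at a node $x$ with smoothing parameter $\delta_x$, $\ol{M}_{C,x}$ is the pushout $P \oplus_{\mathbb{N}} \mathbb{N}^2$ where $\mathbb{N} \to \mathbb{N}^2$ is the diagonal and $\mathbb{N} \to P$ sends $1$ to $\delta_x$. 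The generization map from a marked point to its component's generic point is the projection $P \oplus \mathbb{N} \to P$, and the two generization maps at a node $x$ to the generic points of its branches are obtained by respectively inverting one of the two toric generators $\alpha, \beta \in \mathbb{N}^2$ and using the pushout relation $\alpha + \beta = \delta_x$ to rewrite the other.

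Given $\sigma \in \Gamma(C, \ol{M}_C)$, I define $\PL(\sigma)(v)$ to be $\sigma$'s stalk at the generic point of the component indexed by $v$, and $m(\ell)$ to be the image of $\sigma$ in the $\mathbb{N}$-summand at the marked point indexed by $\ell$. To verify the piecewise linearity condition at an edge $e$ with ends $v, w$ and corresponding node $x$, represent $\sigma_x$ as the class of $(a, (m, n))$ in $P \oplus_{\mathbb{N}} \mathbb{N}^2$; the two generizations yield $\PL(\sigma)(v) = a + n\delta_e$ and $\PL(\sigma)(w) = a + m\delta_e$, so their difference $(n-m)\delta_e \in P^{gp}$ is the required integer multiple of $\delta(e)$. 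Injectivity follows immediately from the explicit stalks: if $\PL(\sigma) = 0$ then $\sigma$ vanishes at every generic point and every marked point, and sharpness of $P$ combined with the pushout description forces $\sigma_x = 0$ at every node. For surjectivity, given $f \in \PL(\trop(C))$ I construct $\sigma$ stalkwise: the constant section $f(v) \in P$ on the smooth locus of the component at $v$; the element $(f(v), m(\ell)) \in P \oplus \mathbb{N}$ at a marked point with leg $\ell$ on component $v$; and at a node $e$ between $v$ and $w$ with $f(v) - f(w) = k\delta_e$, the class of $(f(w), (0, k))$ when $k \geq 0$ or $(f(v), (-k, 0))$ when $k < 0$, whose generizations recover $f(v)$ and $f(w)$ by direct computation. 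The pushout relation ensures these local choices are compatible with the generization maps and therefore glue to a global section.

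The main obstacle will be securing the pushout description of the characteristic stalk at a node. This requires unpacking the pullback of the toric log structure on $\mathbb{A}^2$ along the multiplication map $\mathbb{A}^2 \to \mathbb{A}^1$ and identifying the characteristic sheaf of $V' = \underline{\Spec}\, \mathscr{O}_{U'}[a,b]/(ab - t)$, tracking how the toric generators $\alpha, \beta$ and the smoothing parameter $\delta_x$ interact after the strict base change. Once this pushout description is established, the well-definedness of the PL condition and the compatibility of the surjectivity construction with restriction are formal consequences of the universal property of the pushout, and the bijection follows.
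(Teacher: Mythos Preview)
The paper does not give its own proof of this theorem; it is quoted as background from \cite[Remark 7.3]{ccuw} and used as a black box. There is therefore no argument in the paper to compare against, and your proposal is in fact the standard way one establishes this bijection from the local structure theorem.

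Your outline is essentially correct, but one point deserves care. In the injectivity step you only argue that $\PL(\sigma)=0$ forces $\sigma=0$. Since $\Gamma(C,\ol{M}_C)$ and $\PL(\trop(C))$ are monoids rather than groups, triviality of the kernel does not by itself give injectivity. The fix is immediate from your own stalk description: if $\sigma_x=[(a,m,n)]$ and $\tau_x=[(a',m',n')]$ at a node and their generizations agree, then $a-a'=(n'-n)\delta_e=(m'-m)\delta_e$ in $P^{gp}$, so torsion-freeness of $P^{gp}$ gives $n'-n=m'-m=:k$ and hence $(a,m,n)$ and $(a',m',n')$ differ by $k\cdot(\delta_e,-1,-1)$, i.e.\ represent the same class in the pushout. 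With this adjustment your argument is complete.
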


For a general log curve, this interpretation extends nicely over an \'etale neighborhood of each point. In particular, piecewise linear functions over the entire family turn out to be related by weighted edge contractions that we call face contractions.

\begin{definition}[{{\cite[Definition 3.5]{bkn_qstable}}}]
A weighted edge contraction $\pi : \Gamma' \to \Gamma$ where $\Gamma$ has edge lengths in $P$ is a \emph{face contraction} if there is a subset $S \subseteq P$ so that the map $\pi^\sharp$ is of the form
\[
  P \longrightarrow S^{-1}P \longrightarrow S^{-1}P / (S^{-1}P)^* \overset{\sim}{\longrightarrow} P'
\]
where the first arrow is localization, the second is the quotient by the submonoid of invertible elements, and the third is an isomorphism.
\end{definition}

In the case that $P$ is a finite free monoid $\N^r$, the face contractions are induced by the projections of $\N^r$ onto subsets of its coordinates.

\begin{theorem} [{{\cite[Theorem 3.10]{bkn_qstable}}}] \label{thm:uniform_charts}
Let $\pi : C \to S$ be an $n$-marked log curve and let $s$ be a geometric point of $S$. Then there is an \'etale neighborhood $U$ of $s$ in $S$ so that
\begin{enumerate}
  \item $\Gamma(U, \ol{M}_S) \to \ol{M}_{S,s}$ and $\Gamma(C|_U, \ol{M}_{C}) \to \Gamma(C|_s, \ol{M}_{C|_s})$  are isomorphisms;
  \item for each geometric point $t$ of $U$, there is a canonical face contraction
  \[
    \trop(C_t) \to \trop(C_s)
  \]
  induced by
  \[
    \ol{M}_{S,s} \overset{\sim}{\longleftarrow} \Gamma(U, \ol{M}_S) \longrightarrow \ol{M}_{S,t}.
  \]
  Moreover, this face contraction respects associated piecewise linear functions in the sense that
  \[
    \xymatrix{
      \Gamma(C|_s, \ol{M}_{C|_s}) \ar[d]_{\PL} & \Gamma(C|_U, \ol{M}_C) \ar[l]_{\sim} \ar[r] & \Gamma(C|_t, \ol{M}_{C|_t}) \ar[d]^{\PL} \\
      \PL(\trop(C|_s)) \ar[rr] & & \PL(\trop(C|_t))
    }
  \]
  commutes. 
\end{enumerate}
\end{theorem}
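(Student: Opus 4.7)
The plan is to obtain $U$ by working étale-locally around each special point of the fiber $C|_s$ and exploiting the finiteness of those points together with properness of $\pi$. Since $C|_s$ has only finitely many nodes and marked points, Theorem \ref{thm:log_curve_local_structure} provides a finite collection of étale charts for $C$ near these points, and at every other point of $C|_s$ the characteristic $\ol{M}_C$ is simply pulled back from $\ol{M}_S$. After shrinking $S$ étale-locally at $s$ to a neighborhood $U$, we may arrange that the restriction map $\Gamma(U, \ol{M}_S) \to \ol{M}_{S,s}$ is an isomorphism: $\ol{M}_{S,s}$ is a finitely generated sharp fs monoid, so its finitely many generators lift to global sections over a small enough neighborhood (surjectivity), while injectivity is automatic from the sheaf condition. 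The analogue for $\Gamma(C|_U, \ol{M}_C) \to \Gamma(C|_s, \ol{M}_{C|_s})$ then follows because the explicit node and marked-point charts present $\ol{M}_C$ as a pushout of $\pi^{-1}\ol{M}_S$ with $\mathbb{N}^2$ or $\mathbb{N}$, so once the base is trivialized in the above sense, so is $\ol{M}_C$ along the finitely many charts, and the generic pullback description handles the rest. Properness of $\pi$ ensures we need only finitely many such charts, so the common refinement $U$ exists.

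For part (ii), given any geometric point $t$ of $U$, the isomorphism from (i) yields a specialization homomorphism
\[
  \ol{M}_{S,s} \overset{\sim}{\longleftarrow} \Gamma(U, \ol{M}_S) \longrightarrow \ol{M}_{S,t}
\]
of sharp fs monoids. A standard fact about sharp fs cospecialization is that any such map factors as localization at the face $F \subseteq \ol{M}_{S,s}$ consisting of elements that become invertible in $\ol{M}_{S,t}$, followed by passage to the sharp quotient of $F^{-1}\ol{M}_{S,s}$; this exhibits the map as a face contraction in the sense of Definition 3.5. The induced morphism of tropical curves $\trop(C|_t) \to \trop(C|_s)$ is then forced: an edge $e$ of $\trop(C|_s)$ corresponding to a node with smoothing parameter $\delta_e \in \ol{M}_{S,s}$ is contracted exactly when $\delta_e \in F$, which by the explicit node chart (iii) of Theorem \ref{thm:log_curve_local_structure} happens if and only if that node has been smoothed in $C|_t$. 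Non-contracted edges have their lengths transported strictly by $\pi^\sharp$, and the marking-to-leg correspondence is preserved because the marked-point charts are pulled back from the toric log structure on $\mathbb{A}^1$ and thus unchanged by base change.

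The compatibility of $\PL$ with this face contraction is then a direct diagram chase on stalks. Given $\sigma \in \Gamma(C|_U, \ol{M}_C)$, the value of $\PL(\sigma|_{C|_s})$ at a vertex is, by construction, the stalk of $\sigma$ at the corresponding generic point; restricting $\sigma$ to $C|_t$ and applying the $\ol{M}_S$-specialization map on stalks produces precisely the value of $\PL(\sigma|_{C|_t})$ at the image vertex of $\trop(C|_t) \to \trop(C|_s)$. Slopes at legs match because the marked-point charts are constant across the family. This yields commutativity of the displayed square.

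The main obstacle is the fs-monoid input used above: that specialization maps between stalks of a sharp fs characteristic sheaf are face contractions, with a well-defined and finitely generated face $F$. Once this finiteness and localization structure is in hand, the rest of the argument is a bookkeeping exercise carried out chart-by-chart using Theorem \ref{thm:log_curve_local_structure}, with properness of $\pi$ ensuring that finitely many charts suffice.
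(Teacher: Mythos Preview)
The paper does not prove this theorem: it is quoted verbatim from \cite[Theorem 3.10]{bkn_qstable} and used as black-box input, so there is no ``paper's own proof'' to compare against. Your sketch is a reasonable outline of how such a result is established in the literature (finiteness of special points, local charts from Theorem~\ref{thm:log_curve_local_structure}, constructibility of $\ol{M}_S$ to trivialize sections over a small enough $U$, and the fs-monoid fact that cospecialization maps are face contractions), and it would be appropriate as a proof sketch in a setting where the result is being re-derived. If anything, the step ``injectivity is automatic from the sheaf condition'' for $\Gamma(U,\ol{M}_S)\to\ol{M}_{S,s}$ deserves more care: one typically uses that $\ol{M}_S$ is an \'etale sheaf of sharp fs monoids with constructible stalks, so after shrinking $U$ the restriction is an isomorphism; the sheaf axiom alone does not give injectivity of global-sections-to-stalk without some connectedness or constancy input. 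But since the present paper simply cites the result, no further argument is needed here.
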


\begin{definition}
Say that $S$ is an \emphbf{atomic neighborhood} around a geometric point $s$ of $S$ for $\pi : C \to S$ if the conclusions of Theorem \ref{thm:uniform_charts} hold for $U = S$.
\end{definition}

A section of the characteristic sheaf of $C$ is therefore equivalent to a piecewise linear function on each geometric fiber of $C$ so that the resulting piecewise linear functions are compatible with generalization.

\smallskip

Sections of the characteristic sheaf of an fs log scheme induce line bundles with sections \cite{borne_vistoli}. In the case of curves we have the following characterization of these associated line bundles.

\begin{proposition} [{{\cite[Proposition 2.4.1]{rsw}}}] \label{prop:log_curve_line_bundles}
Let $\pi : C \to S$ be a log curve over $S$, where the underlying scheme of $S$ is the spectrum of an algebraically closed field. Let $\sigma$ be a global section of $\ol{M}_C$ with corresponding piecewise linear function $f$.
Let $v$ be a component of $C$. Then
\[
  \mathscr{O}_C(\sigma)|_{v} = \mathscr{O}_{v}\left( \sum_p \mu_p p \right) \otimes \pi^*\mathscr{O}_S(f(v))
\]
where the sum is over the edges and half-edges $p$ incident to $v$, and $\mu_p$ is the ``outgoing slope" of the piecewise linear function at the point $p$: when $p$ is a marked point, this is the integer $n_p$; when $p$ is a node joining $v$ to $w$,
this is $(f_w - f_v)/\delta_p$.
\end{proposition}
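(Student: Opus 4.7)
Plan: The strategy is to verify the formula étale-locally on $C$ using the three-case structure of Theorem \ref{thm:log_curve_local_structure} and then glue. Recall that the Borne--Vistoli construction assigns to a section $\sigma \in \Gamma(C,\ol M_C)$ a line bundle $\mathscr O_C(\sigma)$ with canonical section, that this assignment is monoidal in $\sigma$, and that any local lift $\tilde \sigma \in M_C$ of $\sigma$ locally trivializes $\mathscr O_C(\sigma)$ in such a way that the canonical section is identified with $\alpha(\tilde\sigma) \in \mathscr O_C$. Monoidality will reduce each computation to the contribution of a single generator of $\ol M_C$.

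At a smooth non-marked point of $v$, the log structure is pulled back from $S$ and $\sigma$ equals the pullback of its stalk $f(v) \in P$, so $\mathscr O_C(\sigma)$ is locally $\pi^*\mathscr O_S(f(v))$ and contributes no divisor on $v$. Near a marking $p_i$ on $v$, the chart gives $\ol M_{C,p_i} = P \oplus \N\cdot e_{p_i}$ with $\alpha(e_{p_i})$ a uniformizer at $p_i$; writing $\sigma = \pi^\sharp f(v) + n_{p_i} e_{p_i}$ and applying monoidality yields $\mathscr O_C(\sigma) \cong \pi^*\mathscr O_S(f(v)) \otimes \mathscr O_C(n_{p_i} p_i)$ locally, contributing $n_{p_i} p_i$ to the divisor on $v$.

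The main technical step is the node. At a node $p$ joining $v$ and $w$ with local model $\Spec \mathscr O_S[a,b]/(ab-t)$ specializing to $\Spec k[a,b]/(ab)$, the characteristic stalk is $\ol M_{C,p} = (P \oplus \N\cdot e_a \oplus \N\cdot e_b)/\langle e_a + e_b = \delta_p\rangle$ with $\alpha(e_a) = a$, $\alpha(e_b) = b$; the $v$-branch is cut out by $\{b=0\}$ and the $w$-branch by $\{a=0\}$. Write $\sigma|_p = \pi^\sharp q + m_a e_a + m_b e_b$. Specializing to the generic point of $v$, where $a$ becomes a unit and hence $e_a = 0$ while $e_b = \delta_p$ in $\ol M$, gives $f(v) = q + m_b \delta_p$; analogously $f(w) = q + m_a \delta_p$. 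Hence $(f_w - f_v)/\delta_p = m_a - m_b$, matching $\mu_p$. To compute $\mathscr O_C(e_a)|_v$, observe that $a$ lifts $e_a$ and restricts on $v$ to a local coordinate vanishing simply at $p$, so $\mathscr O_C(e_a)|_v \cong \mathscr O_v(p)$; applying $(-)|_v$ to the identity $\mathscr O_C(e_a)\otimes\mathscr O_C(e_b) = \pi^*\mathscr O_S(\delta_p)$ and using that $\pi^*\mathscr O_S(\delta_p)|_v$ is trivial (pullback from a point) then forces $\mathscr O_C(e_b)|_v \cong \mathscr O_v(-p)$. Combining by monoidality yields $\mathscr O_C(\sigma)|_v = \pi^*\mathscr O_S(q) \otimes \mathscr O_v((m_a - m_b) p) = \pi^*\mathscr O_S(f(v)) \otimes \mathscr O_v(\mu_p p)$, where the last equality again uses the triviality of $\pi^*\mathscr O_S(\delta_p)|_v$ to absorb the difference between $q$ and $f(v) = q + m_b \delta_p$.

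The main obstacle is the book-keeping at the node: since $a$ and $b$ are both zero-divisors in $\mathscr O_{C,p}$, a naïve choice of lift can produce a canonical section that vanishes along an entire branch of $C$, obstructing a direct divisor-theoretic reading. The monoidality of $\mathscr O_C(\cdot)$, together with the fundamental identity $\mathscr O_C(e_a)\otimes\mathscr O_C(e_b) = \pi^*\mathscr O_S(\delta_p)$ coming from the relation $e_a+e_b=\delta_p$, circumvents this issue and pins down $\mathscr O_C(e_b)|_v$ from $\mathscr O_C(e_a)|_v$. The independent local contributions at the finitely many special points of $v$ then assemble into the asserted global formula.
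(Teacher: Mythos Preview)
The paper does not give its own proof of this proposition; it simply cites \cite[Proposition 2.4.1]{rsw}. So there is no in-paper argument to compare against.

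Your argument is correct and is essentially the standard one. The local analysis in each of the three charts of Theorem~\ref{thm:log_curve_local_structure} is accurate, and the nodal bookkeeping---writing $\sigma = \pi^\sharp q + m_a e_a + m_b e_b$, reading off $f(v)$ and $f(w)$ by specialization to the two branches, and using the relation $e_a + e_b = \delta_p$ together with monoidality to pin down $\mathscr O_C(e_b)|_v$ from $\mathscr O_C(e_a)|_v$---is exactly how this is done in the literature. One small remark: since $S$ is the spectrum of a field, \emph{every} bundle of the form $\pi^*\mathscr O_S(\cdot)$ is trivial as a line bundle on $v$, not just $\pi^*\mathscr O_S(\delta_p)$; the factor $\pi^*\mathscr O_S(f(v))$ in the stated formula is there to keep track of the canonical section (equivalently, to make the formula compatible with the relative case), and your absorption step at the end is fine once this is understood. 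You might also want to say a word about why the local isomorphisms glue---this is automatic from the functoriality of the Borne--Vistoli construction, but it is worth one sentence.
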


Given a tropical curve $\Gamma$, the sub-tropical curves of interest are given by vertex-induced subgraphs: graphs with edges and legs determined solely by a subset $V' \subseteq V(\Gamma)$. More precisely, in the notation of \cite[Section 3.1]{ccuw}:

\begin{definition}
Let $\Gamma$ be a tropical curve with edge lengths in $P$ and let $V' \subseteq V(\Gamma)$. Write $G$ for the underlying weighted marked graph of $\Gamma$. The \emphbf{tropical subcurve $\Gamma'$ determined by $V'$} is the weighted (but not marked) graph with
\begin{enumerate}
  \item vertices $V(\Gamma') = V'$;
  \item flags
  \[
    F(\Gamma') = \{ f \in F(G) \mid r_G(f) \in V' \text{ and } r_G(\iota_G(f)) \in V' \},
  \]
  i.e. $\Gamma'$ has edges the edges of $G$ both of whose ends lie in $V'$ and legs the legs of $G$ rooted in $V'$;
  \item $\iota_{\Gamma'} = \iota_{G}|_{F(G')}$;
  \item genus function $g_{\Gamma'} = g|_{V'}$.
\end{enumerate}
\end{definition}

\begin{definition}
Let $\Gamma$ be a tropical curve with edge lengths in $P$ and let $\sigma \in \PL(\Gamma)$. The \emphbf{support}
$|\sigma|$ of $\sigma$ is the tropical subcurve of $\Gamma$ whose vertices $v$ are those with $\sigma(v) > 0$.
\end{definition}

\begin{remark}
If $\sigma(v) > 0$, $\sigma(w) = 0$, and $e$ is an edge with ends in $v$ and $w$, our definition does not include $e$ in the support of $\sigma$. While this is slightly unnatural from the point of view of tropical curves with real edge lengths, our definition allows us to avoid introducing language for legs of finite length and is sufficient for our purposes.
\end{remark}


\begin{definition}
%
The \emphbf{core} of $C$ is the minimal connected subcurve of $C$ with the same arithmetic genus as $C$. Analogously the core of a tropical curve $\Gamma$ is the minimal connected vertex-induced subgraph of the same genus as $\Gamma$.
\end{definition}

\begin{definition}[{{\cite[Section 3.3]{rsw}}}]
We order the elements of a sharp fs monoid $P$ by the rule $p \leq q$ if and only if there is an element $p' \in P$ so that $p + p' = q$.

Given a tropical curve $\Gamma$ of genus one, we define a piecewise linear function $\lambda$ on $\Gamma$ measuring ``distance from the core'' as follows. If $v$ is a vertex in the core of $\Gamma$, we set
\[
  \lambda(v) = 0.
\]
If $v$ is a vertex outside of the core of $\Gamma$, we let $W = v_0e_1v_1e_2 \cdots e_kv_k$ be the unique path from the core of $\Gamma$ to $v$ and set
\[
  \lambda(v) = \sum_{i = 1}^k \delta(e_i).
\]
Finally, we set the slope of $\lambda$ to be 1 at all marked points.

As this is compatible with generalization, for any stable log curve $(\pi : C \to S; \sigma_1, \ldots, \sigma_n)$ of genus one, we let $\lambda \in \Gamma(S, \ol{M}_S)$ be the unique section of the characteristic bundle whose
restriction to geometric fibers has corresponding piecewise linear function as in the previous paragraph.
\end{definition}


\begin{definition}[{{\cite[Definition 3.3.3]{rsw}}}]\label{def:basic_radially_aligned}
A stable $n$-marked tropical curve of genus one with edge lengths in $P$ is \emphbf{radially aligned} if, for each pair of vertices $v, w$ of $\Gamma$, $\lambda(v)$ is comparable to $\lambda(w)$ in $P$.

Given such a radially aligned curve, let
\[
  0 < \rho_1 < \cdots < \rho_k
\]
be the distinct values of $\lambda(v)$ as $v$ varies over the components of $C$, and let $\delta_1, \ldots, \delta_l$ be the lengths of the edges of $\trop(C)$ internal to the core of $\Gamma$. Let $e_1 = \rho_1, e_2 = \rho_2 - \rho_1, \ldots, e_k = \rho_k - \rho_{k - 1}$. If $P$ is freely generated by
\[
  \{ e_1, \ldots, e_k \} \cup \{ \delta_1, \ldots, \delta_l \},
\]
then we say that $\Gamma$ is a \emphbf{basic radially aligned tropical curve}. An element of $P$ is said to \emphbf{have no contribution from the core} if it lies in the submonoid generated by $e_1, \ldots, e_k$.

A stable log curve $(\pi : C \to S; p_1, \ldots, p_n)$ of genus one with $n$ markings is \emphbf{radially aligned} or \emphbf{has a basic radially aligned log structure} if the tropicalizations of its geometric fibers
with their pulled back log structure are respectively radially aligned or basic radially aligned. An element $\rho \in \Gamma(S, \ol{M}_{S})$ \emphbf{has no contribution from the core} if the same holds of its
stalks at the geometric points of $S$.
\end{definition}

There is a moduli stack with log structure parametrizing radially aligned log curves.

\begin{theorem} [{{\cite[Proposition 3.3.4]{rsw}}}] \label{thm:M_1n_rad_exists} \hfill
\begin{enumerate}
\item There is a Deligne-Mumford stack with locally free log structure $\ol{\mathcal{M}}_{1,n}^{rad}$ whose $S$-points for $S$ an fs log scheme are the $n$-marked radially aligned curves $\pi : C \to S$ over $S$.  We say its log structure is \emph{the} basic radially aligned log structure.

\item There is a natural map $\ol{\mathcal{M}}_{1,n}^{rad} \to \ol{\mathcal{M}}_{1,n}$ induced by a logarithmic blowup and it restricts to an isomorphism on $\mathcal{M}_{1,n}$.
\end{enumerate}
\end{theorem}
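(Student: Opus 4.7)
The plan is to construct $\ol{\mathcal{M}}_{1,n}^{rad}$ as the logarithmic modification of $\ol{\mathcal{M}}_{1,n}$ corresponding to a tropical subdivision that enforces radial alignment. Working atomic-locally around a geometric point $s$ with dual graph $\Gamma$, Theorem \ref{thm:uniform_charts} identifies the characteristic monoid $\ol{M}_{\ol{\mathcal{M}}_{1,n},s}$ with $\mathbb{N}^{E(\Gamma)}$, freely generated by the smoothing parameters $\delta_e$. The function $\lambda$ assigns to each vertex $v$ the element $\lambda(v) = \sum_{e \in \mathrm{path}(v_0, v)} \delta_e$, where the path runs from a core vertex $v_0$ to $v$.

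In the dual cone $\sigma_\Gamma = \mathbb{R}_{\geq 0}^{E(\Gamma)}$, the radial alignment condition is that for every pair of vertices $v, w$ one of $\lambda(v) - \lambda(w), \lambda(w) - \lambda(v)$ lies in $\sigma_\Gamma$. The hyperplanes $\{ \lambda(v) = \lambda(w) \}$ therefore cut $\sigma_\Gamma$ into a cone subdivision whose maximal chambers are indexed by total preorders on $V(\Gamma)$ refining the partial order by $\lambda$. On a chamber indexed by a chain of vertex levels $0 = \rho_0 < \rho_1 < \cdots < \rho_k$, the gaps $e_i = \rho_i - \rho_{i-1}$ together with the internal core edge lengths $\delta_1, \ldots, \delta_l$ form a new generating set, and a short change-of-basis computation shows they freely generate the monoid of the chamber, so each maximal chamber is smooth.

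Applying the standard correspondence between tropical cone subdivisions and logarithmic blowups produces a stack $\ol{\mathcal{M}}_{1,n}^{rad}$ equipped with a birational, logarithmically \'etale morphism to $\ol{\mathcal{M}}_{1,n}$. Deligne--Mumford-ness is inherited from $\ol{\mathcal{M}}_{1,n}$, and smoothness of the chambers yields local freeness of the log structure. The universal property follows by construction: a morphism $T \to \ol{\mathcal{M}}_{1,n}$ from an fs log scheme lifts to $\ol{\mathcal{M}}_{1,n}^{rad}$ precisely when its pulled-back characteristic sheaf lands in the cones of the subdivision, equivalently when each geometric fiber of the pulled-back log curve is radially aligned. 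On a chamber, the induced minimal log structure is freely generated by the gaps and internal core edge lengths, matching Definition \ref{def:basic_radially_aligned}. Finally, since a smooth curve has dual graph with a single vertex and no edges, the subdivision is vacuous there and the map restricts to an isomorphism on $\mathcal{M}_{1,n}$.

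The main obstacle is verifying that the locally defined subdivisions glue across $\ol{\mathcal{M}}_{1,n}$, equivalently that they are compatible with the face contractions $\ol{M}_{\ol{\mathcal{M}}_{1,n}, s'} \to \ol{M}_{\ol{\mathcal{M}}_{1,n}, s}$ of Theorem \ref{thm:uniform_charts}. This reduces to the observation that $\lambda$ itself is compatible with generalization: when an edge is contracted its endpoints are identified and the sums defining $\lambda$ restrict correctly along the face contraction, so the hyperplane arrangements pull back correctly and the local subdivisions assemble into the desired global tropical modification.
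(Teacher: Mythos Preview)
The paper does not prove this theorem; it is stated with a citation to \cite[Proposition 3.3.4]{rsw} and no proof is given in the present paper. There is therefore nothing to compare your proposal against here.

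That said, your sketch is the expected one and matches the construction in \cite{rsw}: one subdivides each cone $\sigma_\Gamma$ of the tropical moduli space along the hyperplanes $\lambda(v)=\lambda(w)$, checks that the resulting chambers are smooth cones freely generated by the successive gaps $e_i=\rho_i-\rho_{i-1}$ together with the core edge lengths, verifies compatibility with face contractions so that the local subdivisions glue, and then invokes the equivalence between coherent tropical subdivisions and logarithmic modifications to obtain the Deligne--Mumford stack $\ol{\mathcal{M}}_{1,n}^{rad}$ with locally free log structure. Your identification of the moduli description of $S$-points and the isomorphism over $\mathcal{M}_{1,n}$ is also correct. The only point I would flag is that your statement that the characteristic monoid at $s$ is $\mathbb{N}^{E(\Gamma)}$ presumes the basic log structure on $\ol{\mathcal{M}}_{1,n}$; this is fine, but worth making explicit since the argument hinges on it.
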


A stable log curve $(\pi : C \to S; \sigma_1, \ldots, \sigma_n)$ has a basic radially aligned
log structure precisely when the log structure on $\pi : C \to S$ is that pulled back from the
the universal curve $\mathcal{C}_{1,n}^{rad} \to \ol{\mathcal{M}}_{1,n}^{rad}$ along the map
$S \to \ol{\mathcal{M}}_{1,n}^{rad}$.

\section{Simplicially stable spaces}\label{sec:collide}

The main goals of this section are to prove that simplicially stable spaces are nodal modular compactifications with collisions and that they are the \emph{only} nodal modular compactifications with collisions. We begin with a review of Smyth's notion of $\calZ$-stability.

\subsection{$\calZ$-stability}

We show that nodal compactifications admitting collisions are stable compactifications in the sense of \cite{smyth_zstable}, using the results therein as a key tool. We begin by recalling the pertinent definitions and results.

\begin{definition}
A pointed curve $(C; p_1,\ldots, p_n)$ is \emphbf{prestable} if each rational component of its normalization has at least three distinguished points and each elliptic component of its normalization has at least one distinguished point.

A modular compactification $\moduli$ of $\moduli_{g,n}$ over $\Z$ is \emphbf{stable} if each geometric point $(C; p_1,\ldots, p_n)$ of $\moduli$ is prestable.
\end{definition}

The main result of \cite{smyth_zstable} (stated in full as Theorem \ref{thm:smythclassification} below) says that stable modular compactifications are uniquely determined by a collection of combinatorial data called an \emphbf{extremal assignment}. We restate the definition of extremal assignment here in the language of edge contractions (the ``morphisms of graphs" in \cite[Section 3.1]{ccuw}).

\begin{definition}[{\cite[Definition 1.5]{smyth_zstable}}]\label{def:extremalassignment}
Let $\mathbb{G}_{g,n}$ be the set of isomorphism classes of the dual graphs of $n$-pointed stable curves of genus $g$ up to isomorphism.
For each $G \in \mathbb{G}_{g,n}$ assign a vertex-induced subgraph $\calZ(G)$ of $G$. We say that $\calZ$ is an \emphbf{extremal assignment (on $\mathbb{G}_{g,n}$)} if it satisfies the following two axioms:
\begin{enumerate}[label=\text{(Z\arabic*)}]
    \item for each $G \in \mathbb{G}_{g,n}$, $\calZ(G) \neq G$;
    \item for each weighted edge contraction $\pi: G' \to G$ between graphs in $\mathbb{G}_{g,n}$ and each vertex $v \in V(G)$, we have
    \[
        v \subseteq \calZ(G) \iff \pi^{-1}(v) \subseteq \calZ(G').
    \]
\end{enumerate}
\end{definition}

Note that if axiom (Z2) holds, it suffices to check axiom (Z1) for $G$ equal to the graph with a single genus $g$ vertex.

\smallskip

An extremal assignment $\calZ$ determines a subcurve $\calZ(C) \subset C$ for each stable curve $(C;p_1,\dots,p_n)$ by taking $\calZ(C)$ to be the union of all irreducible components of $C$ whose corresponding vertices in the dual graph $G_C$ of $C$ belong to $\calZ(G_C)$ \cite[Remark 1.6]{smyth_zstable}. In the following definition, if $C$ is some curve, $g_a(Z)$ denotes the arithmetic genus of a connected component $Z\subseteq C$, $g_s(p)$ denotes the genus of a point $p\in C$, and $m(p)$ denotes the number of branches in the normalization of $\tilde{C} \to C$ above $p$ (see \cite[Definition 1.7]{smyth_zstable}).

\begin{definition}
Given a vertex-induced subgraph of $Z$ of a weighted $n$-marked graph $G$, the \emph{edge valence} of $Z$ is the number of edges connecting $Z$ to the complement of $Z$ in $G$.
\end{definition}

\begin{definition}[{\cite[Definition 1.8]{smyth_zstable}}]\label{def:zstability}
Given an extremal assignment $\calZ$ on $\mathbb{G}_{g,n}$, a smoothable $n$-pointed curve $(C;p_1,\dots,p_n)$ is \emphbf{$\calZ$-stable} if there exists a stable curve $(C^s,p_1^s,\dots,p_n^s)$ and a morphism $\phi:(C^s,p_1^s,\dots,p_n^s) \to (C;p_1,\dots,p_n)$ satisfying the following axioms:
\begin{enumerate}
\item $\phi$ is surjective with connected fibers;
\item $\phi$ maps $C^s - \calZ(C^s)$ isomorphically onto its image;
\item if $Z_1,\dots,Z_k$ are the connected components of $\calZ(C^s)$, then $p_i := \phi(Z_i)\in C$ satisfies $g_s(p_i) = g_a(Z_i)$ and $m(p_i)$ is the edge valence of $Z_i$.
\end{enumerate}
For any extremal assignment $\calZ$, define $\Moduli_{g,n}(\calZ)$ to be the substack of $\mathcal{V}_{g,n}$ parametrizing flat families of $\calZ$-stable curves.
\end{definition}

The power of these definitions comes from the following result.

\begin{theorem}[{\cite[Theorem 1.9]{smyth_zstable}}]\label{thm:smythclassification} {}\hfill
\begin{enumerate}
  \item If $\calZ$ is an extremal assignment, then $\Moduli_{g,n}(\calZ)$ is a stable modular compactification of $\moduli_{g,n}$.\
    In particular, it is an irreducible stack, proper over $\Spec \mathbb{Z}$, and has quasi-finite diagonal.
  \item If $\moduli$ is a stable modular compactification, then $\moduli = \Moduli_{g,n}(\calZ)$ for some extremal assignment $\calZ$.
\end{enumerate}
\end{theorem}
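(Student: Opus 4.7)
The plan is to prove the two parts separately, with the contraction construction of \cite[Proposition 2.6]{smyth_zstable}---already invoked in the proof of Theorem \ref{thm:to_collide_or_not_to_collide}---serving as the main geometric bridge between an extremal assignment and the corresponding modular compactification.

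For part (1), I would first verify that $\Moduli_{g,n}(\calZ)$ is open in $\mathcal{V}_{g,n}$ by proving deformation openness of $\calZ$-stability. Given a family of $n$-pointed curves $\pi: \mathcal{C} \to S$ with $\calZ$-stable geometric fiber $(C; p_1, \ldots, p_n)$ over $s \in S$, the deformation theory of nodal curves lets one produce a simultaneous stable reduction family, and axiom (Z2) ensures that the extremal subcurve of a nearby stable model specializes compatibly under the edge contraction dictated by the dual graph. Properness is then checked via the valuative criterion: stable reduction of a smooth $n$-pointed family over a DVR's generic point produces a stable limit $(C^s; p^s)$, and contracting $\calZ(C^s)$ via \cite[Proposition 2.6]{smyth_zstable} yields a $\calZ$-stable limit. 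Separatedness follows because any two such limits must share the same stable model, and contracting $\calZ$ of the stable model is a canonical operation, so the limits agree. Irreducibility and density of $\moduli_{g,n}$ are automatic from axiom (Z1) (smooth curves are their own $\calZ$-stabilizations). Quasi-finiteness of the diagonal would reduce to the corresponding fact for $\ol{\moduli}_{g,n}$ via the surjection from the stable model.

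For part (2), given a stable modular compactification $\moduli$, I would define $\calZ$ by reading off the geometry: for each stable graph $G \in \mathbb{G}_{g,n}$, pick a representative stable curve $(C^s; p^s)$, smooth its generic one-parameter family in the deepest stratum of $\ol{\moduli}_{g,n}$, and take the $\moduli$-limit $(C; p)$ together with the canonical surjection $\phi: C^s \to C$ arising from comparison of limits of the same generic family. Let $\calZ(G)$ be the vertex-induced subgraph consisting of components of $C^s$ contracted by $\phi$. Axiom (Z1) follows from stability of $\moduli$ (total contraction would produce a curve lacking the required marked points or genus), and axiom (Z2) from the functoriality of $\moduli$-limits under further degeneration, applied to a two-parameter smoothing. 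The equality $\moduli = \Moduli_{g,n}(\calZ)$ then reduces to showing that the two open substacks of $\mathcal{V}_{g,n}$ share the same geometric points: given any $\calZ$-stable curve $(C; p)$, its associated stable model $C^s$ satisfies $\phi(\calZ(C^s)) = $ the singular locus prescribed by Definition \ref{def:zstability}, so $(C; p)$ is the canonical $\moduli$-contraction of $(C^s; p^s)$, and conversely every $\moduli$-point arises this way by construction of $\calZ$.

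The main obstacle I anticipate is the deformation openness in part (1), coupled with the flatness of the contracted family. The stable model of a family is not merely pulled back or pushed forward, so one must argue that its formation commutes with base change to a sufficient extent that the combinatorial data of $\calZ$ applied fiberwise produces a coherent global contraction. The technical heart is \cite[Proposition 2.6]{smyth_zstable}, which verifies that contracting the extremal subcurve yields a flat family whose fibers have precisely the genus and branch-number invariants demanded by Definition \ref{def:zstability}. The proof there uses careful analysis of the dualizing sheaf along the contracted locus to control the singularity type of the image, and any attempt to reprove the classification would need to reconstruct this delicate infinitesimal calculation.
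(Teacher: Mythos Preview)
The paper does not prove this theorem; it is quoted verbatim as \cite[Theorem 1.9]{smyth_zstable} and used as a black box. There is therefore no proof in the paper to compare your proposal against.

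Your sketch is a reasonable high-level outline of the strategy Smyth actually employs in \cite{smyth_zstable}, and the identification of \cite[Proposition 2.6]{smyth_zstable} as the key geometric input is correct. That said, a few of your steps understate the difficulty. Deformation openness is not simply a consequence of axiom (Z2) together with ``the deformation theory of nodal curves''; Smyth needs a careful analysis of how the singularity type of the contracted image varies in families (his \S2), and in particular the contraction must be shown to commute with base change so that the $\calZ$-stable condition is genuinely open rather than merely constructible. For separatedness, your claim that ``two such limits must share the same stable model'' is the crux and is not automatic: one must show that any $\calZ$-stable limit can be dominated by a semistable model, compare exceptional loci, and use axiom (Z2) to conclude the contracted subcurves coincide. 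For part (2), the well-definedness of $\calZ(G)$ independent of the chosen representative curve and smoothing is a genuine issue that requires the irreducibility of boundary strata of $\ol{\moduli}_{g,n}$; you should flag this rather than assert it. None of these are fatal to your outline, but a full proof is substantially longer than the sketch suggests, which is presumably why the present paper simply cites the result.
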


\subsection{Construction and classification of simplicial stable spaces} \label{sec:simplicial_spaces}
Given a graph $G\in\mathbb{G}_{g,n}$, a connected subgraph $T \subset G$ is a \emphbf{tail} if $T$ has edge-valence $1$ and a \emphbf{rational tail} if in addition its genus is $0$. The unique edge $e_T$ connecting $T$ to the rest of $G$ is called the \emphbf{leading edge of $T$}.

Extremal assignments behave well with respect to tails in general.

\begin{lemma}\label{lem:tailspersist1}
  Let $G \in \mathbb{G}_{g,n}$, and let $T$ be a tail of $G$. Suppose $\calZ$ is an extremal assignment, with $T$ a subgraph of $\calZ(G)$. Suppose $G' \in \mathbb{G}_{g,n}$ has a tail $T'$ with $\Marks(T) = \Marks(T')$. Then $T'$ is a subgraph of $\calZ(G')$.
\end{lemma}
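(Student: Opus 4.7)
The plan is to use axiom (Z2) to reduce $G$ and $G'$ to standard two-vertex graphs, then to relate their extremal data via a common blowup. Contracting all internal edges of $T$ yields a morphism $\pi : G \to \widetilde{G}$ collapsing $T$ to a single vertex $t$ of genus $g_a(T)$ carrying markings $I := \Marks(T)$; by (Z2) applied to the vertex $t$, the condition $T \subseteq \calZ(G)$ is equivalent to $t \in \calZ(\widetilde{G})$. A further application of (Z2), this time contracting $\widetilde{G} - \{t\}$ to a single vertex $u$ of markings $[n] - I$, produces a two-vertex graph $H$ in which $t$ is a tail joined by an edge to $u$, satisfying $t \in \calZ(H)$. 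The analogous reduction applied to $(G', T')$ produces $H'$ with marked-$I$ tail vertex $t'$. It therefore suffices to show $t' \in \calZ(H')$.

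If $g_a(T) = g_a(T')$, then $H$ and $H'$ are isomorphic as $n$-marked stable graphs, so $\calZ(H) = \calZ(H')$ and the conclusion is immediate. This case suffices for the principal applications of the lemma: in the nodal setting of primary interest, an extremal tail must be rational (its contraction would otherwise introduce a non-nodal singularity), and hence has genus zero. For the remaining case $g_a(T) < g_a(T')$ (the reverse is symmetric), I would construct a three-vertex chain $K = w_1 - w_2 - w_3$ in $\mathbb{G}_{g,n}$ with $w_1$ carrying markings $I$ and genus $g_a(T)$, an unmarked middle vertex $w_2$ of genus $g_a(T') - g_a(T) \geq 1$, and $w_3$ carrying markings $[n] - I$ and genus $g - g_a(T')$; stability of $K$ follows from stability of $H$ and $H'$. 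The graph $K$ admits two edge contractions in $\mathbb{G}_{g,n}$: contracting $w_2 - w_3$ yields $H$, and contracting $w_1 - w_2$ yields $H'$. Applying (Z2) to each translates the problem into
\[
  t \in \calZ(H) \iff w_1 \in \calZ(K), \qquad t' \in \calZ(H') \iff \{w_1, w_2\} \subseteq \calZ(K),
\]
so the lemma reduces to deducing $w_2 \in \calZ(K)$ from $w_1 \in \calZ(K)$.

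This final deduction is the main obstacle: axiom (Z1) alone only prevents $V(K) \subseteq \calZ(K)$, which is too weak to force $w_2 \in \calZ(K)$. I expect this can be closed by introducing further blowups of $K$---for example, a four-vertex chain inserting an additional unmarked genus-$1$ vertex---and chaining together the (Z2) relations on their contractions onto the various intermediate two-vertex graphs interpolating between $H$ and $H'$, so as to collectively exclude the configuration $\calZ(K) = \{w_1, w_3\}$ and force $w_2$ into the extremal subgraph.
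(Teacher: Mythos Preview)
Your reduction to the two-vertex graphs $H$ and $H'$ is exactly the paper's argument: the paper contracts all edges of $G$ except the leading edge $e_T$ to obtain a two-vertex graph $\hat{G}$, does the same for $G'$, and then asserts that the two resulting graphs are isomorphic. That assertion is precisely your observation that $H \cong H'$ when $g_a(T) = g_a(T')$; the paper's proof silently assumes equal genera and does not treat the unequal-genus case at all. So in the case that actually matters, your argument and the paper's coincide.

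Your instinct that something further is required when $g_a(T) \neq g_a(T')$ is correct, but the chain construction you sketch cannot close the gap, because the lemma is \emph{false} in that generality. Take $g = 2$, $n = 3$, and let $\calZ = \calZ_{\calK}$ for the simplicial complex $\calK = \{\varnothing,\{1\},\{2\},\{3\},\{1,2\}\}$; this is an extremal assignment by the results of Section~\ref{sec:simplicial_spaces}. Let $G$ be the two-vertex graph with a genus-$2$ vertex marked by $\{3\}$ joined to a genus-$0$ vertex $T$ marked by $\{1,2\}$; then $T \subseteq \calZ(G)$ since $\{1,2\} \in \calK$. Let $G'$ be the two-vertex graph with a genus-$1$ vertex marked by $\{3\}$ joined to a genus-$1$ vertex $T'$ marked by $\{1,2\}$. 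Then $\Marks(T) = \Marks(T')$, but $G'$ has no rational tails, so $\calZ(G') = \varnothing$ and $T' \not\subseteq \calZ(G')$. In your three-vertex chain $K$ one genuinely has $w_1 \in \calZ(K)$ and $w_2 \notin \calZ(K)$, so the ``final deduction'' you flag as the obstacle is impossible, not merely incomplete.

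You were right to note that the equal-genus case suffices for the applications: the lemma is only invoked in the paper for rational tails, where both $T$ and $T'$ have genus zero and the contracted two-vertex graphs are automatically isomorphic. In that regime your proof and the paper's are complete and essentially identical.
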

\begin{proof}
  Let $\hat{G}$ be the stable graph obtained by contracting all edges of $G$ except $e_T$. Note that this is isomorphic to the graph obtained by contracting all edges of $G'$ except $e_{T'}$. Write $\pi: G \to \hat{G}$ and $\pi' : G' \to \hat{G}$ for the associated edge contractions. By axiom (Z2) applied to $\pi$, $\pi(T) \subseteq \calZ(\hat{G})$. Then, since $\pi'(T') = \pi(T) \subseteq \calZ(\hat{G})$, axiom (Z2) implies $T' \subseteq (\pi')^{-1}(\pi(T')) \subseteq \calZ(\hat{G})$.
\end{proof}

\begin{remark}
This is essentially a special case of Theorem \ref{thm:to_collide_or_not_to_collide} for stable compactifications.
\end{remark}

\begin{definition}
An extremal assignment $\calZ$ is \emphbf{supported on rational tails} if for each $G \in \mathbb{G}_{g,n}$, $\calZ(G)$ is a disjoint union of rational tails.
\end{definition}

\begin{lemma} \label{lem:nodal_implies_stable}
Let $g, n$ be integers with $g \geq 0$, $n \geq 1$ and $n \geq 3$ if $g = 0$.
If $\moduli$ is a nodal modular compactification of $\moduli_{g,n}$ admitting collisions over $\Spec \Z$, then $\moduli \cong \Moduli_{g,n}(\calZ)$ for some extremal assignment $\calZ$ supported on rational tails.
\end{lemma}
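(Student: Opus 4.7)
The plan is three-step: first establish prestability, then invoke Smyth's classification to extract an extremal assignment $\calZ$, and finally show $\calZ$ is supported on rational tails via a short combinatorial argument.

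For prestability, let $(C; p_1, \ldots, p_n)$ be a geometric point of $\moduli$. Suppose a rational tail $T$ of $C$ had its markings $\Marks(T)$ concentrated at a single smooth point $x \in T$. Then $x$ would witness that $\Marks(T)$ collides in $\moduli$, while $T$ itself witnesses that $\Marks(T)$ does not collide in $\moduli$, violating Theorem \ref{thm:to_collide_or_not_to_collide}. So the markings on $T$ must occupy at least two distinct points, and together with the node connecting $T$ to $C$ this gives $\geq 3$ distinguished points on the normalization of $T$. Other potentially non-prestable configurations---most notably an unmarked rational tail---give rise to positive-dimensional stabilizers of $(C; p_1, \ldots, p_n)$, which would force the diagonal of $\moduli$ to be non-proper, contradicting properness.

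With prestability established, Theorem \ref{thm:smythclassification}(2) supplies a unique extremal assignment $\calZ$ with $\moduli \cong \Moduli_{g,n}(\calZ)$. To show $\calZ$ is supported on rational tails, I would fix $G \in \mathbb{G}_{g,n}$, a connected component $Z$ of $\calZ(G)$, a stable curve $(C^s; p_1, \ldots, p_n)$ with dual graph $G$, and let $\phi : C^s \to C$ be the $\calZ$-stable contraction, setting $p = \phi(Z)$. By Definition \ref{def:zstability}(3), $g_s(p) = g_a(Z)$ and $m(p)$ is the edge valence of $Z$. Nodality of $\moduli$ forces $p$ to be either smooth or a node, so $g_a(Z) = 0$ and the edge valence of $Z$ is $1$ or $2$. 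If the valence is $1$, then $Z$ is a rational tail. If the valence is $2$, then $p$ is a node of $C$, so admitting collisions forces $\Marks(Z) = \emptyset$. Since $Z$ is a connected genus-$0$ subgraph of $G$, it is a tree with $k \geq 1$ vertices and $k - 1$ internal edges, together with $2$ edges exiting to the complement of $Z$. Summing valence-in-$G$ over vertices of $Z$ yields $2(k-1) + 2 = 2k$. Each vertex of $Z$ has genus $0$ and carries no markings, so stability of $G$ forces its valence-in-$G$ to be $\geq 3$, giving the same sum $\geq 3k$. This forces $2k \geq 3k$, contradicting $k \geq 1$, so the valence-$2$ case cannot occur.

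I anticipate the main obstacle to be Step 1: the paper's definition of modular compactification requires only properness, so excluding infinitesimal automorphisms at unmarked rational tails---while intuitively clear---requires care. Smyth's contraction construction (\cite[Proposition 2.6]{smyth_zstable}) by itself only rules out two non-isomorphic limits of the same smoothing family, and does not directly exclude unmarked tails; one must combine it with an argument about stabilizer size. By contrast, the Smyth citation in Step 2 is immediate, and Step 3's combinatorial count goes through easily once the geometric constraints on $Z$ have been extracted.
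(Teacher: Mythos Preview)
Your three-step outline matches the paper's proof exactly: prestability, then Smyth's classification, then ruling out non-tail components of $\calZ(G)$.

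For Step 1 the paper simply cites \cite[Remark~1.3]{smyth_zstable} for the blanket fact that a proper open substack of $\mathcal{V}_{g,n}$ cannot contain a curve with a rational component carrying fewer than three special points---this is precisely your stabilizer/separatedness argument, so your concern is well-placed but already addressed in the literature. Your invocation of Theorem~\ref{thm:to_collide_or_not_to_collide} is correct but redundant, since a rational tail with all markings at one point already has a $\mathbb{G}_m$ in its automorphism group. One small gap: the elliptic-component clause of prestability is not covered by your stabilizer argument (an unmarked genus-one curve has \emph{proper} automorphism group, so separatedness is not violated). The paper handles this by observing that connectedness together with $n\geq 1$ makes that case vacuous.

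For Step 3, the paper argues the contrapositive of your count: stability of $G$ forces a genus-zero valence-two component of $\calZ(G)$ to carry at least one marking, which would then land on a node in the contracted curve, contradicting smoothness of markings. Your direct half-edge tally $2k\geq 3k$ is the same observation unpacked.
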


\begin{proof}
Let $(C; p_1, \ldots, p_n)$ be a geometric point of $\moduli$, so that $C$ is at-worst nodal with smooth markings. As in \cite[Remark 1.3]{smyth_zstable}, since $\moduli$ is proper, $C$ cannot contain any rational curve with fewer than three special points. As $C$ is connected and $n \geq 1$, $C$ cannot be an unmarked curve of genus one. Therefore $C$ is prestable, and Theorem \ref{thm:smythclassification} implies there is an extremal assignment $\mathcal{Z}$ so that $\moduli = \Moduli_{g,n}(\mathcal{Z})$.

It remains to show that for all graphs $G \in \mathbb{G}_{g,n}$, $\mathcal{Z}(G)$ is a disjoint union of rational tails. If for some $G$, $\mathcal{Z}(G)$ has a connected component of positive genus, then $\Moduli_{g,n}(\calZ)$ contains curves with singularities of positive genus. But this does not happen in the nodal compactification $\moduli = \Moduli_{g,n}(\calZ)$, so connected components of $\mathcal{Z}(G)$ must have genus zero. Similarly, if for some $G$, $\calZ(G)$ has a connected component with edge valence greater than two, then $\Moduli_{g,n}(\mathcal{Z})$ contains a curve with a singularity with more than two branches. Again, no such curves appear in $\moduli$, so $\calZ$ must have connected components with edge valence no larger than two. If for some $G$, $\calZ(G)$ possesses a connected component $Z$ of genus zero with edge valence exactly two, then stability requires that $Z$ possesses at least one marking. This implies that $\Moduli_{g,n}(\calZ)$ contains a curve with a non-smooth marking, a contradiction. We conclude $\calZ$ is supported on rational tails. 
\end{proof}

Note that there are no graphs $G \in \mathbb{G}_{0,n}$ containing a tail $T$ with $|\Marks(T)| \in \{n - 1, n\}$. With this and Lemma \ref{lem:tailspersist1} in mind, an extremal assignment supported on rational tails must take the following form.

\begin{definition}\label{def:simpstab}
Suppose $\calK$ is a subset of the powerset $2^{[n]}$ containing $\varnothing,\{1\},\dots,\{n\}$. If $g = 0$, assume in addition that $\calK$ does not contain any subsets of size $n - 1$ or $n$. Denote by $\calZ_\calK$ the assignment that sends $G \in \mathbb{G}_{g,n}$ to the smallest vertex-induced subgraph of $G$ containing all rational tails $T$ with $\Marks(T) \in \calK$.

Conversely, if $\calZ$ an extremal assignment on $\mathbb{G}_{g,n}$ supported on rational tails, denote by $\calK_\calZ$ the subset of $2^{[n]}$ that contains $\varnothing,\{1\},\dots,\{n\}$ and all $I \subseteq [n]$ such that there exists a $G \in \mathbb{G}_{g,n}$ containing an $I$-marked rational tail.
\end{definition}

\begin{lemma}\label{lem:extremal_K_is_simplicial}
With notation as above, if $\calZ_\calK$ is an extremal assignment, then $\calK$ is a simplicial complex.
\end{lemma}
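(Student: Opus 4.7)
The only property of a simplicial complex that requires proof is downward closure: I need to show that if $J \in \calK$ and $I \subseteq J$ is nonempty, then $I \in \calK$. Since $\calK$ already contains $\varnothing$ and all singletons by hypothesis, I may assume $|I| \geq 2$ and $I \subsetneq J$.

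The strategy is to exploit axiom (Z2) by constructing a single graph $H \in \mathbb{G}_{g,n}$ admitting two natural edge contractions, one exposing a $J$-marked rational tail and the other exposing an $I$-marked rational tail. Concretely, let $H$ have three vertices --- a genus-$g$ vertex $u$ carrying the marks $[n] - J$, a genus-$0$ vertex $v_1$ carrying the marks $J - I$, and a genus-$0$ vertex $v_2$ carrying the marks $I$ --- connected by an edge from $u$ to $v_1$ and an edge from $v_1$ to $v_2$. Let $\pi_1 \colon H \to H_1$ contract the edge joining $v_1$ and $v_2$, producing a graph $H_1$ in which the merged vertex $v$ is a $J$-marked rational tail. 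Let $\pi_2 \colon H \to H_2$ contract the edge joining $u$ and $v_1$, producing a graph $H_2$ in which $w := v_2$ is an $I$-marked rational tail.

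The deduction is then short. Because $J \in \calK$, the vertex $v$ lies in $\calZ_\calK(H_1)$. Axiom (Z2) applied to $\pi_1$ yields $\{v_1, v_2\} \subseteq \calZ_\calK(H)$, and in particular $v_2 \in \calZ_\calK(H)$. Applying axiom (Z2) to $\pi_2$, whose fibre over $w$ is precisely $\{v_2\}$, forces $w \in \calZ_\calK(H_2)$. The only rational tail of $H_2$ containing $w$ is $\{w\}$ itself, whose marking is $I$; by the definition of $\calZ_\calK$ this forces $I \in \calK$.

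The main obstacle is simply to check that $H$ (and $H_1$, $H_2$) actually lie in $\mathbb{G}_{g,n}$, i.e., are stable. For $g \geq 1$, the only stability conditions to verify are on the rational vertices: $v_1$ has $|J - I| + 2 \geq 3$ special points (using $I \subsetneq J$) and $v_2$ has $|I| + 1 \geq 3$ special points (using $|I| \geq 2$). For $g = 0$ we additionally need $u$ to carry at least three special points, i.e., $|J| \leq n - 2$; but this is automatic from the standing assumption that $\calK$ contains no subsets of size $n - 1$ or $n$ in genus zero. So the construction is valid in every case covered by the hypothesis, and the argument goes through.
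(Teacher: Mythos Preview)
Your proof is correct and is essentially identical to the paper's: you build the same three-vertex graph $H=G_{I\subseteq J}$, and your contractions $\pi_1,\pi_2$ are exactly the paper's $\pi_J,\pi_I$. The only differences are cosmetic --- you spell out the stability verification (including the genus-zero case) and the final step more carefully than the paper does.
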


\begin{proof}
Suppose $I \subseteq J \subseteq [n]$, $J \in \calK$, and $|I| \geq 2$. Let $G_I$ be the graph with a genus $g$ vertex marked by $[n] - I$ attached by a single edge to a genus 0 vertex marked by $I$.
Similarly, let $G_{J}$ be the graph with a genus $g$ vertex marked by $[n] - J$ attached by a single edge to a genus 0 vertex marked by $J$. Finally, let $G_{I \subseteq J}$ be the graph with a genus $g$ vertex marked by $[n] - J$ attached to a genus 0 vertex marked by $J - I$ attached in turn to a genus 0 vertex marked by $I$. Let $\pi_I : G_{I \subseteq J} \to G_I$ and $\pi_J : G_{I \subseteq J} \to G_J$ be the obvious weighted contractions (see Figure \ref{fig:ExtremalIsSimplicial}).
\begin{figure}[h]
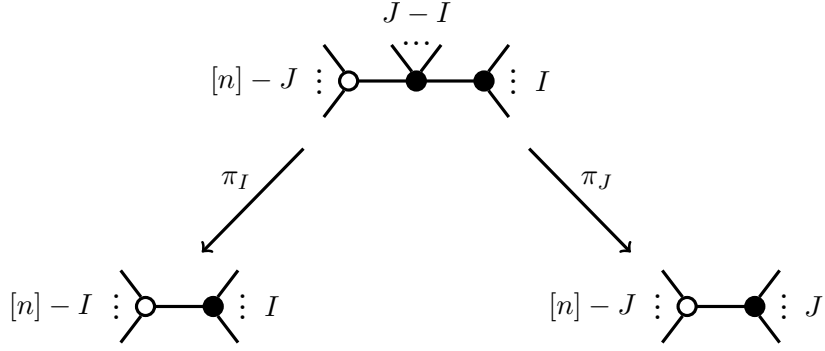
 
    \ExtremalIsSimplicial
    \caption{The graph $G_{I \subseteq J}$ (top) with weighted contractions $\pi_I$ to $G_I$ (bottom left) and $\pi_J$ to $G_J$ (bottom right). White vertices have genus $g$, and black vertices have genus $0$.}\label{fig:ExtremalIsSimplicial}
\end{figure}

Then axiom (Z2) applied to $\pi_J$ implies that $\calZ(G_{I \subseteq J})$ consists of both genus 0 vertices. By axiom (Z2) again applied to $\pi_I$, $\calZ(G_I)$ consists of the vertex marked by $I$. Then $I$ must belong to $\calK$.
\end{proof}

\begin{remark} \label{rem:extremal_K_is_simplicial}
This is essentially Theorem \ref{thm:collision_complex_is_simplicial} applied to stable compactifications after translation into the language of extremal assignments.
\end{remark}

\begin{lemma}\label{lem:intersecting_tails}
    Let $G \in \mathbb{G}_{g,n}$, and let $T_1,T_2$ be tails of $G$. If $T_1 \cap T_2 \neq \varnothing$, then either one of the tails contains the other tail, or $T_1 \cup T_2 = G$.
\end{lemma}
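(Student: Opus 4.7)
The plan is to argue by contradiction, exploiting the edge-valence-$1$ condition on tails. Assume $T_1 \cap T_2 \neq \varnothing$ and that neither tail contains the other, and partition $V(G)$ into the four sets $X_1 = V(T_1) \cap V(T_2)$, $X_2 = V(T_1) \setminus V(T_2)$, $X_3 = V(T_2) \setminus V(T_1)$, and $X_4 = V(G) \setminus (V(T_1) \cup V(T_2))$. The hypotheses give $X_1, X_2, X_3 \neq \varnothing$, and my aim is to deduce $X_4 = \varnothing$, which is equivalent to $T_1 \cup T_2 = G$.

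The central observation is that any edge of $G$ with one endpoint in $V(T_i)$ and one endpoint outside $V(T_i)$ must equal the leading edge $e_{T_i}$, by uniqueness. I would first apply this inside $T_1$: since $T_1$ is connected and meets both $X_1$ and $X_2$, it contains some edge bridging $X_1$ and $X_2$, and that edge crosses between $V(T_2)$ and its complement, so it must equal $e_{T_2}$. In particular, both endpoints of $e_{T_2}$ lie in $V(T_1)$.

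Next I would note that the leading edge $e_{T_1}$ is a bridge of $G$, so $G - e_{T_1}$ has exactly two connected components: the subgraph $T_1$ itself and the vertex-induced subgraph on $V(G) \setminus V(T_1) = X_3 \cup X_4$. In particular this complement is connected. If $X_4$ were nonempty, that connected complement would have to contain some edge bridging $X_3$ and $X_4$, and the same reasoning forces that edge to equal $e_{T_2}$ as well. But that places both endpoints of $e_{T_2}$ in $V(G) \setminus V(T_1)$, contradicting the previous paragraph. Hence $X_4 = \varnothing$, and since $T_1, T_2$ are vertex-induced subgraphs, this gives $T_1 \cup T_2 = G$.

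The main obstacle is one of framing rather than technical difficulty. A direct attack in which one picks a hypothetical vertex $v \in X_4$ and traces a simple path to a point of $X_1$, tracking how membership in each $T_i$ flips at every leading-edge crossing, also closes the argument but forces a case split on the order in which $e_{T_1}$ and $e_{T_2}$ are traversed. The partition formulation above sidesteps this by reducing everything to a single question: where can the two endpoints of $e_{T_2}$ lie? The only subtlety worth flagging is the connectedness of the vertex-induced subgraph on $V(G) \setminus V(T_1)$, which is automatic from the fact that a leading edge is a bridge of $G$.
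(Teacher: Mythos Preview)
Your proof is correct and follows essentially the same approach as the paper's: both pin down the leading edges via the edge-valence-$1$ condition, then use that $e_{T_1}$ is a bridge to show the complement of $T_1$ is contained in $T_2$. The only nit is your last sentence --- ``vertex-induced'' alone does not yield $T_1 \cup T_2 = G$ from $X_4 = \varnothing$, since one must also rule out edges between $X_2$ and $X_3$; but any such edge would again have to be $e_{T_2}$, which you already placed inside $V(T_1)$, so the conclusion stands.
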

\begin{proof}
    Suppose neither tail contains the other. Let $e_i$ be the leading edge of $T_i$. Since $T_2 \not\subseteq T_1$ and $T_2$ is connected, there exists some $e \in E(T_2)$ one of whose ends lies in $T_1$ and whose other end lies in $T_2$ but not $T_1$. As $T_1$ has edge valence 1, this edge $e$ must be $e_1$. By symmetry, $e_1, e_2 \in T_1 \cup T_2$.
    
    Let $G_i,G_i' \subset G$ be the subgraphs induced by deleting $e_i$, with the convention that $T_i \subseteq G_i$. Note that $V(G_i) \cup V(G_i') = V(G)$. Since $T_i$ has valence $1$ and $G_i$ is connected, $T_i = G_i$. On the other hand, $T_j$ intersects $G_i'$ non-trivially, $e_j$ belongs to $G_i$, and $T_j$ has valence 1, we must have $G_i' \subseteq T_j$. Therefore $V(G) = V(G_i) \cup V(G_i') = V(T_1) \cup V(T_2)$, and as $e_1,e_2 \in T_1\cup T_2$, we have $T_1 \cup T_2 = G$.
    %
\end{proof}

\begin{lemma} \label{lem:subset_of_tails}
Let $G \in \mathbb{G}_{g,n}$. Let $T_1, \ldots, T_k$ be arbitrary rational tails of $G$. If $H$ is a connected vertex-induced subgraph of $G$ so that $V(H) \subseteq \bigcup_{i = 1}^k V(T_i)$, then either $H \subseteq T_i$ for some $i$ or $V(G) = V(T_\ell) \cup V(T_r)$ for some $\ell, r$.
\end{lemma}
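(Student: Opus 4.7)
The plan is to reduce first to the case of pairwise vertex-disjoint tails using Lemma \ref{lem:intersecting_tails}, and then exploit the fact that each tail has edge valence $1$ to analyze how a connected subgraph can span multiple disjoint tails.

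\textbf{Reduction step.} Suppose two of the tails $T_i, T_j$ share a vertex. By Lemma \ref{lem:intersecting_tails}, either one contains the other (in which case the smaller one can be discarded from the collection without changing $\bigcup_i V(T_i)$) or $T_i \cup T_j = G$, which immediately yields the second conclusion. Iterating, I may assume the $T_1, \ldots, T_k$ are pairwise vertex-disjoint.

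\textbf{Key observation.} In this disjoint setting, any edge $e \in E(G)$ with one end in $V(T_\ell)$ and the other in $V(T_r)$ for distinct $\ell, r$ must be the leading edge of both tails: indeed, $e$ witnesses that $T_\ell$ has an edge to a vertex outside $V(T_\ell)$, and since $T_\ell$ has edge valence $1$, this edge is $e_{T_\ell}$; by symmetry $e = e_{T_r}$. Removing $e$ from $G$ disconnects it into a component equal to $T_\ell$ on one side (by the defining property of the leading edge) and a component containing $v_r \in V(T_r)$ on the other; the same argument from $T_r$'s side forces the latter component to be $T_r$. Hence $V(G) = V(T_\ell) \cup V(T_r)$.

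\textbf{Conclusion.} Now either $V(H) \subseteq V(T_i)$ for some single $i$, in which case $H \subseteq T_i$ follows from both subgraphs being vertex-induced; or $V(H)$ meets at least two distinct tails $T_\ell, T_r$. In the latter case, since $H$ is connected, there is a path in $H$ joining a vertex of $V(T_\ell) \cap V(H)$ to a vertex of $V(T_r) \cap V(H)$. Each vertex of this path lies in $V(H) \subseteq \bigcup_i V(T_i)$, so as the path transitions between tails it must contain an edge $e$ whose two endpoints lie in different $T_i$'s, and the key observation above yields $V(G) = V(T_\ell) \cup V(T_r)$ for the relevant pair.

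The main obstacle is really just setting up the reduction cleanly and noticing that edge valence $1$ forces any inter-tail edge among disjoint tails to be a shared leading edge, which is the engine that produces the second alternative. After that the connectivity argument is routine.
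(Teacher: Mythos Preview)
Your proof is correct, and the organization differs from the paper's. The paper argues directly by contradiction: assuming $H \not\subseteq T_i$ for all $i$, it splits into two cases according to whether $E(H) \subseteq \bigcup_i E(T_i)$ or not. In the first case it finds an edge lying in two incomparable tails and invokes Lemma~\ref{lem:intersecting_tails}; in the second it finds an edge of $H$ outside every $T_i$ and argues that it must be the leading edge of two tails simultaneously. You instead front-load Lemma~\ref{lem:intersecting_tails} to reduce to pairwise vertex-disjoint tails, after which a single connectivity/path argument locates an inter-tail edge and the leading-edge observation finishes. Your route avoids the case split on $E(H)$ and is arguably cleaner; the paper's route avoids the preprocessing step. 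One small notational slip: in your final sentence the pair of tails produced by the key observation is the pair flanking the edge $e$, which need not be the $\ell, r$ you started the path with---but the conclusion of the lemma is existential in $\ell, r$, so this is harmless.
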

\begin{proof}
Suppose $V(H) \subseteq \bigcup_{i=1}^k V(T_i)$ but $H \not\subseteq T_i$ for any $i$. 

If $E(H) \subseteq \bigcup_{i=1}^k E(T_i)$, then since $H \not\subset T_i$ for any $i$, there exists an edge $e \in E(T_i) \cap E(T_j)$ for some $T_i$ and $T_j$ neither of which contains the other. So $T_i \cap T_j \neq \emptyset$, and by Lemma \ref{lem:intersecting_tails}, $T_i \cup T_j = G$.

On the other hand, if $E(H) \not\subseteq \bigcup_{i=1}^k E(T_i)$, then there exists some $e \in E(H)$ such that $e \not\in \bigcup_{i=1}^k E(T_i)$. Without loss of generality, $H$ intersects both $T_1$ and $T_2$ non-trivially, and neither tail contains the other. Deleting $e$ from $G$ defines two subgraphs $G_1 \supseteq T_1$ and $G_2 \supseteq T_2$ such that $V(G_1) \cup V(G_2) = V(G)$. But $T_1 = G_1$ and $T_2 = G_2$, since $T_1$ and $T_2$ are both $1$-valent (given by $e$). So $V(T_1) \cup V(T_2) = G$, as desired.
\end{proof}

\begin{lemma} \label{lem:extremal_assignments_biject_simplicial_complexes}
If $g \geq 1$, $n \geq 1$, there is a bijection between the extremal assignments supported on rational tails and  simplicial complexes via $\calK \mapsto \calZ_\calK$ (Definition \ref{def:simpstab}).

If $g = 0$, $n \geq 3$, there is a bijection between the extremal assignments supported on rational tails and at least triparted simplicial complexes via $\calK \mapsto \calZ_\calK$.
\end{lemma}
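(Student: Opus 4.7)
The plan is to verify that the maps $\calK \mapsto \calZ_\calK$ and $\calZ \mapsto \calK_\calZ$ produce objects of the claimed type, then show they are mutually inverse.

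First, I will check that $\calZ_\calK$ is an extremal assignment supported on rational tails. Axiom (Z1) follows from Lemma \ref{lem:subset_of_tails}: if $\calZ_\calK(G) = G$, then either $G$ is itself a single rational tail (impossible for a stable $n$-marked graph with $n \geq 1$) or $V(G) = V(T_\ell) \cup V(T_r)$ for two rational tails with marks in $\calK$, which contradicts either $g \geq 1$ (such a $G$ has genus zero) or, in genus zero, the triparted hypothesis (the marks of $T_\ell, T_r$ form a two-part $\calK$-partition). Axiom (Z2) reduces to the case of a single-edge contraction $\pi : G' \to G$ collapsing an edge $e$ with endpoints $u_1, u_2 \mapsto \bar{v}$. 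The forward direction is immediate because, given a rational tail $T \subseteq G$ with marks in $\calK$ through the image of $v$, $\pi^{-1}(T)$ is a rational tail of $G'$ with the same marks, edge-valence, and genus.

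The backward direction of (Z2) is the main technical obstacle. Supposing $\pi^{-1}(v) \subseteq \calZ_\calK(G')$, Lemma \ref{lem:subset_of_tails} applied to $H = \pi^{-1}(v)$ (with the two-tails-cover case ruled out by (Z1)) places $\pi^{-1}(v)$ inside a single rational tail $T' \subseteq G'$ with $\Marks(T') \in \calK$. If both endpoints of $e$ lie in $T'$ or neither does, then $\pi(T')$ is a rational tail of $G$ with marks in $\calK$ containing $v$. The subtle case is when $e$ meets $V(T')$ in exactly one endpoint $u_1$, so $e$ is the leading edge of $T'$: here I pass to the connected component $T_v'$ of $T' \setminus \{u_1\}$ containing $\pi^{-1}(v)$. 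Since $T'$ is a tree, $T_v'$ is a rational tail of $G'$, and downward closure of $\calK$ keeps its marks in $\calK$; its leading edge lies in $E(T')$ and hence is not contracted, so $\pi(T_v')$ is the desired rational tail of $G$.

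Second, for an extremal assignment $\calZ$ supported on rational tails, I will show $\calK_\calZ$ is a simplicial complex (triparted when $g = 0$). Downward closure reuses the three-vertex graph construction $G_{I \subseteq J}$ from Lemma \ref{lem:extremal_K_is_simplicial}: Lemma \ref{lem:tailspersist1} places the $J$-marked tail of $G_J$ inside $\calZ(G_J)$, and axiom (Z2) applied first to $\pi_J$ and then to $\pi_I$ transports this to put the $I$-tail of $G_I$ into $\calZ(G_I)$. For the triparted condition in genus zero, I will first observe that any $J \in \calK_\calZ$ satisfies $|J| \leq n - 2$, since the complement of a $J$-tail in a stable genus-zero graph must itself be stable and therefore carry at least two marks; this rules out one-part $\calK_\calZ$-partitions and also two-part partitions $\{\{j\}, [n]-\{j\}\}$. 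The remaining case is a two-part partition $\{I, [n] - I\}$ with $|I|, |[n] - I| \geq 2$: the stable two-vertex graph $G''$ with these two sets as marks would have both vertices rational tails lying in $\calZ(G'')$ by Lemma \ref{lem:tailspersist1}, forcing $\calZ(G'') = G''$ and contradicting (Z1).

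Finally, I will show the maps are mutually inverse. The identity $\calZ_{\calK_\calZ} = \calZ$ is direct: the rational tails of $G$ lying in $\calZ(G)$ are, by definition and Lemma \ref{lem:tailspersist1}, exactly the rational tails of $G$ with marks in $\calK_\calZ$. For $\calK_{\calZ_\calK} = \calK$, the inclusion $\subseteq$ follows from Lemma \ref{lem:subset_of_tails} together with downward closure of $\calK$, and $\supseteq$ requires, for each $I \in \calK$ with $|I| \geq 2$, the construction of a stable graph containing an $I$-marked rational tail with marks in $\calK$; in genus zero this is built by attaching the $I$-tail together with singleton tails for each element of $[n] - I$ to a central unmarked genus-zero vertex, which is stable precisely because the triparted hypothesis forces $|I| \leq n - 2$ and hence at least two singleton tails are available.
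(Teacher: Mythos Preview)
Your proof is correct and follows essentially the same strategy as the paper's: verify that $\calZ_\calK$ is extremal via Lemma~\ref{lem:subset_of_tails}, verify that $\calK_\calZ$ is a (triparted) simplicial complex via the two-vertex test graph and axiom~(Z1), and check that the two constructions invert each other. Your reduction to single-edge contractions and explicit handling of the case where the leading edge of $T'$ is contracted is more careful than the paper's one-line assertion that $\pi(T')$ is an $I$-marked rational tail; and where the paper proves injectivity of $\calK \mapsto \calZ_\calK$ directly and then checks only $\calZ_{\calK_\calZ} = \calZ$, you instead verify both composites are the identity.
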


\begin{proof}
First, we show that $\calK \to \calZ_\calK$ is well-defined, i.e., that $\calZ_\calK$ is an extremal assignment. Let $\calK$ be a simplicial complex on $[n]$, and assume $\calK$ is at least triparted if $g = 0$. We begin by checking Axiom (Z2).

Let $\pi : G' \to G$ be an edge contraction. Suppose $v \in V(G)$. If $v \in V(\calZ_\calK(G))$ then $v$ belongs to a rational tail $T$ of $G$ with $I = \Marks(T) \in \calK$. The preimage $\pi^{-1}(T)$ is also an $I$-marked rational tail, so $\pi^{-1}(v) \subseteq \calZ_\calK(G')$.

Conversely, if $v \in V(G)$ is such that $\pi^{-1}(v) \subseteq \calZ_\calK(G')$, then the vertices of $\pi^{-1}(v)$ all belong to rational tails with marks in $\calK$. By Lemma \ref{lem:subset_of_tails}, either $\pi^{-1}(v)$ factors through a tail $T'$ of $G'$ so that $I = \Marks(T') \in \calK$ or every vertex of $G'$ belongs to a rational tail with marks in $\calK$. The latter is impossible if $g > 0$ or $\calK$ is at least triparted. Then $\pi(T')$ is an $I$-marked rational tail of $G$ so $v \in \calZ_\calK(G)$, as required. We conclude that Axiom (Z2) holds.

In either case it is clear that $\calZ_\calK(G)$ is empty for the one-vertex no-edge graph. Axiom (Z1) follows, and we conclude that $\calZ_\calK$ is an extremal assignment.

Next, we claim that the map $\calK \mapsto \calZ_\calK$ is injective on the relevant complexes $\calK$. If $g > 0$, and $\calK$ and $\calK'$ are distinct simplicial complexes, then without loss of generality there is a subset $S \subseteq [n]$ with $2 \leq S \leq n$ so that $S \in \calK$ and $S \not\in \calK'$. Since $g > 0$ there is a graph $G_S \in \mathbb{G}_{g,n}$ with an $S$-marked rational tail. Then $\calZ_{\calK}$ and $\calZ_{\calK'}$ differ on $G_S$, so we have injectivity in the $g > 0$ case.

Let $g=0$ and note that if $\calK$ is an at least triparted simplicial complex, then $\calK$ does not contain any subset $S \subseteq [n]$ with $|S| = n - 1$ or $|S| = n$. (Otherwise, either $S \cup ([n] - S)$ or $S$ would be a $\calK$-partition of $[n]$.) Now if $\calK$ and $\calK'$ are distinct at least triparted simplicial complexes, then without loss of generality, there is a subset $S \subseteq [n]$ with $2 \leq |S| \leq n - 2$ so that $S \in \calK$ and $S \not\in \calK'$. Since $2 \leq |S| \leq n - 2$, there is a graph $G_S \in \mathbb{G}_{0,n}$ with an $S$-marked rational tail, and we conclude that we have injectivity as in the previous paragraph.

It is straightforward to check that $\calZ_{\calK_\calZ} = \calZ$ as extremal assignments. It remains only to show that $\calK_\calZ$ is at least triparted in genus zero. Let $\calZ$ be an extremal assignment on $\mathbb{G}_{0,n}$ supported on rational tails, and suppose that $\calK_\calZ$ is not at least triparted. Then there exists some partition $\calP$ of $[n]$ such that $\calP = \{P_1\}$ (Case I) or $\calP = \{P_1,P_2\}$ (Case II), and $P_i \in \calK_\calZ$. In Case I, this implies that $\calZ(G) = G$ for $G$ the single-vertex graph in $\mathbb{G}_{0,n}$, since the simplices in $\calK_\calZ$ correspond to rational tails determined by $\calZ$.

In Case II, consider a graph $G_1 \in \mathbb{G}_{0,n}$ so that a connected component of $\calZ(G_1)$ is a $P_1$-marked rational tail, and define $G_2 \in \mathbb{G}_{0,n}$ analogously. Let $G' \in \mathbb{G}_{0,n}$ be the graph defined by two vertices $v_1$ and $v_2$ sharing an edge, with $v_i$ marked by $P_i$. Since $\calZ$ is an extremal assignment, the edge contractions $G_1 \to G'$ and $G_2 \to G'$ imply that $\calZ(G') = G'$, contradicting the fact that $\calZ$ is an extremal assignment. We conclude that $\calK_\calZ$ must be at least triparted.
\end{proof}

We have now described all the possible extremal assignments of nodal compactifications admitting collisions. Our next step is to identify the moduli problem handed to us by $\calZ_\calK$-stability with the more typical moduli problem we described in Section \ref{ssec:def_simplicial_spaces}.

\begin{proposition} \label{prop:simplicial_stable_is_ZK_stable}
Let $g, n$ be integers with $g \geq 0$ and $n \geq 1$ ($n \geq 3$ if $g = 0$). For each simplicial complex $\calK$ on $[n]$ (at least triparted if $g = 0$)
we have
\[
  \Moduli_{g,n}(\calZ_\calK) = \Moduli_{g,\calK}
\]
as open substacks of $\mathcal{V}_{g,n}$. In particular, $\Moduli_{g, \calK}$ is a modular compactification of $\moduli_{g,n}$ over $\Spec \Z$.
\end{proposition}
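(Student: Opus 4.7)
The plan is to show that a pointed curve $(C; p_1, \ldots, p_n)$ is $\calZ_\calK$-stable in the sense of Definition \ref{def:zstability} if and only if it is $\calK$-stable in the sense of Definition \ref{def:simpstab2}. Once this equivalence is established, the final assertion---that $\Moduli_{g,\calK}$ is a modular compactification of $\moduli_{g,n}$ over $\Spec \Z$---follows immediately from Smyth's Theorem \ref{thm:smythclassification}, applied to the extremal assignment $\calZ_\calK$ produced in Lemma \ref{lem:extremal_assignments_biject_simplicial_complexes}.

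A useful preliminary step is to observe that each connected component $Z_i$ of $\calZ_\calK(C^s)$ is itself a single rational tail of $C^s$ with $\Marks(Z_i) \in \calK$. Indeed, $Z_i$ is a connected subgraph contained in the union of all rational tails of $C^s$ with marks in $\calK$, so Lemma \ref{lem:subset_of_tails} forces either $Z_i \subseteq T$ for such a tail $T$ (whence $Z_i = T$ by maximality) or else $V(C^s)$ is covered by two such tails, making $\calZ_\calK(C^s) = C^s$ and contradicting axiom (Z1). Under the contraction $\phi : C^s \to C$ from Definition \ref{def:zstability}, each $Z_i$ therefore collapses to a single smooth point of $C$ carrying exactly the marks of $Z_i$.

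For the forward direction ($\calZ_\calK$-stable $\Rightarrow$ $\calK$-stable), axioms (K1), (K2), and (K3) follow at once from the preliminary observation. For (K4), if some rational tail $Z \subseteq C$ had $\Marks(Z) \in \calK$, then $\phi^{-1}(Z)$---the union of the lift of $Z$ with the contracted tails at the collision points of $Z$---is itself a rational tail of $C^s$ with the same marks, so by definition lies inside $\calZ_\calK(C^s)$ and is contracted entirely, contradicting that $Z = \phi(\phi^{-1}(Z))$ has positive dimension. For (K5), any rational component $Z$ of $C$ lifts isomorphically to a component $Z' \subseteq C^s$, and the distinct special points of $\tilde Z$ and $\tilde{Z'}$ are in natural bijection (collision marks of $Z$ correspond to tail-attachment nodes of $Z'$), so the Deligne--Mumford stability of $C^s$ transfers to give at least three distinct special points on $\tilde Z$.

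For the reverse direction, given a $\calK$-stable $C$, I would construct $C^s$ by attaching a rational bridge $E_x \cong \PP^1$ at each collision point $x$ of $C$, with the marks at $x$ redistributed to distinct points of $E_x$ and $x$ itself becoming a node. Each $E_x$ carries $|\Marks(x)| + 1 \geq 3$ special points, and the number of distinct special points on each rational component of $C$ is preserved, so $C^s$ is Deligne--Mumford stable. The essential verification is that $\calZ_\calK(C^s) = \bigsqcup_x E_x$. The inclusion ``$\supseteq$'' is immediate from (K3). For ``$\subseteq$'', suppose $T' \subseteq C^s$ is a rational tail with $\Marks(T') \in \calK$ not contained in any single $E_x$; then $T'$ contains an original component of $C$, and either the leading edge of $T'$ exits via an original component of $C$---so $\phi(T')$ is a proper rational tail of $C$ contradicting (K4)---or it enters some $E_z \notin T'$, forcing $T' = C^s \setminus E_z$ of arithmetic genus $g$, hence $g = 0$ and then $\{\Marks(z),\, [n] - \Marks(z)\}$ is a two-part $\calK$-partition of $[n]$, contradicting the at-least-triparted hypothesis. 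I expect this last subcase to be the main obstacle, and the triparted condition in genus zero is exactly what defeats it.
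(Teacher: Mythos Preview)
Your proposal is correct and follows essentially the same approach as the paper: both directions proceed by constructing or analyzing the contraction $\phi: C^s \to C$ and verifying the axioms, with $C^s$ built in the reverse direction by sprouting a $\PP^1$ at each collision point. Your treatment is in fact more careful than the paper's in one place: where the paper simply asserts ``chasing definitions, these are precisely the tails of $C^s$ with markings in $\calK$,'' you explicitly rule out the possibility that some larger rational tail $T'$ of $C^s$ has $\Marks(T') \in \calK$, handling the genus-zero edge case via the at-least-triparted hypothesis. The paper's terser phrasing is justified by appealing to axiom (Z1) for the already-established extremal assignment $\calZ_\calK$, but your direct verification is equally valid and arguably clearer.
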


\begin{proof}
We need to show that a curve is $\calK$-stable if and only if it is $\calZ_\calK$-stable.

Suppose that $(C; p_1,\ldots, p_n)$ is a $\calK$-stable curve. Let $x_1, \ldots, x_\ell$ be the closed points of $C$ marked by at least two markings. For each $i = 1,\ldots, \ell$, let $Z_i$ be a smooth rational curve with $|\Marks(x_i)|$ distinct markings indexed by $\Marks(x_i)$ and an additional disjoint marking $q_i$. Let $C^s$ be the $n$-pointed nodal curve obtained by gluing the point $x_i$ of $C$ to the point $q_i$ of $Z_i$ for each $i$. Now, $C^s$ is nodal with distinct markings by construction. Moreover, each rational component of $C^s$ coming from $C$ has at least 3 special points by definition of $\calK$-stable, and the remaining components $Z_1, \ldots, Z_\ell$ have at least 3 special points by construction, so $C^s$ is stable. There is a map $\phi : C^s \to C$ given by the identity on $C$ and by $Z_i \mapsto q_i$ on the remaining components. The exceptional locus of this map is $Z_1 \cup \cdots \cup Z_\ell$. Chasing definitions, these are precisely the tails of $C^s$ with markings in $\calZ_\calK$. Clearly, $\phi$ is surjective with connected fibers, an isomorphism on the complement of $Z_1, \ldots, Z_\ell$, and the image of each $Z_i$ is a point with $g_s(x_i) = 0$, so $(C; p_1,\ldots, p_n)$ is $\calZ_\calK$-stable.

Conversely, suppose that $(C; p_1, \ldots, p_n)$ is a $\calZ_\calK$-stable curve. Then there is a contraction $\phi : C^s \to C$ as in Definition \ref{def:zstability} where $C^s$ is a stable curve with exceptional locus $\calZ_\calK(C^s)$. By definition of $\calZ_\calK$, this exceptional locus consists of the tails with markings in $\calK$. Since these are rational tails, their images are smooth points marked by collections of points indexed by subsets belonging to $\calK$. It follows that $C$ is a nodal curve with smooth markings, so axioms (K1) and (K2) are satisfied. The markings not in $\calZ_\calK(C^s)$ remain distinct, so axiom (K3) is satisfied. Now, if $Z$ is a rational tail of $C$, its pre-image $\phi^{-1}(Z)$ is also a rational tail, and since $\phi^{-1}(Z)$ was not entirely contracted by $\phi$, $\phi^{-1}(Z) \not\subseteq \calZ_\calK(C^s)$. It follows $\Marks(Z) = \Marks(\phi^{-1}(Z)) \not\in \calK$, so we satisfy axiom (K4). Finally, if $K$ is a rational irreducible component of $C$, then there is a rational irreducible component $K^s$ of $C^s$ mapping birationally onto $K$. Since $C^s$ is stable, the normalization of $K^s$ has at least three special points. Following the contraction, the normalization of $K$ also has at least three special points. Then $(C; p_1,\ldots,p_n)$ satisfies the final axiom (K5). We conclude that $(C; p_1,\ldots, p_n)$ is $\calK$-stable, as desired.
\end{proof}

Now that we have identified the moduli spaces $\Moduli_{g,\calK}$ with the moduli spaces $\Moduli_{g,n}(\calZ_\calK)$, we learn that $\Moduli_{g,\calK}$ has all of the nice properties of stable modular compactifications. In particular, the spaces $\Moduli_{g,\calK}$ are open inside of $\mathcal{V}_{g,n}$. A consequence of this is is that each $\Moduli_{g,\calK}$ is a regluing of spaces of spaces of weighted pointed stable curves. We use this to conclude that the moduli spaces $\Moduli_{g,\calK}$ are smooth in Theorem \ref{thm:simplicial_spaces_are_modular_compactifications_nonintro}.

\begin{theorem} [{{Frankenstein property}}] \label{thm:simplicial_frankenstein}
Let $g,n, \calK$ be as in Theorem \ref{thm:simplicial_spaces_are_modular_compactifications_nonintro}. Then there is a finite open cover $\{ U_i \}_{i \in I}$ of $\Moduli_{g,\calK}$ and weights $\{ \mathcal{A}_i \}_{i \in I}$ so that for each $i \in I$, $U_i$ is an open substack of the space of weighted stable curves $\Moduli_{g,\mathcal{A}_i}$.
\end{theorem}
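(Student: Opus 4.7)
The plan is to produce, for each disjoint family $\mathbf{I} = \{I_1, \ldots, I_k\}$ of non-singleton members of $\calK$, explicit Hassett weights $\mathcal{A}(\mathbf{I}) = (a_1,\ldots,a_n)$ designed so that any $\calK$-stable curve whose non-singleton collisions are precisely $\mathbf{I}$ is automatically also $\mathcal{A}(\mathbf{I})$-stable. The open substack
\[
  U_{\mathbf{I}} := \Moduli_{g,\mathcal{A}(\mathbf{I})} \cap \Moduli_{g,\calK}
\]
of $\mathcal{V}_{g,n}$ will then serve as the $\mathbf{I}$-indexed member of the cover. Because $\calK$ is a finite simplicial complex, the set of candidate families $\mathbf{I}$ is finite, so the resulting cover is finite.

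The weights I will take are $a_i = 1/|I_j|$ if $i \in I_j$ for some $j$, and $a_i = 1$ otherwise. These obviously lie in $\Q \cap (0,1]$. For $g \geq 1$ the Hassett positivity condition $2g-2 + \sum_i a_i > 0$ is automatic; for $g = 0$ the key observation is that $\sum_i a_i$ equals the number of parts in the $\calK$-partition $\{I_1,\ldots,I_k\} \cup \{\{i\} : i \notin \bigcup_j I_j\}$, so the at-least-triparted hypothesis on $\calK$ yields $\sum_i a_i \geq 3 > 2$, as Hassett requires.

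The core of the argument is a single computation: for any subcurve $Z$ of a $\calK$-stable $C$, each distinct marked point on $Z$ contributes exactly $1$ to $\sum_{i \in \Marks(Z)} a_i$, since a collision point $p$ with $\Marks(p) = I_j \subseteq \Marks(Z)$ contributes $|I_j| \cdot (1/|I_j|) = 1$, while a singleton marked point on $Z$ contributes $1$ directly. Applied at each collision of $C$, this shows the fiberwise weight sum equals $1$; applied to any rational tail $Z$ of $C$, and combined with axiom (K5)---which forces $Z$ to carry at least two distinct marked points---it gives $\sum_{i \in \Marks(Z)} a_i \geq 2 > 1$, so no tail is contracted under $\mathcal{A}(\mathbf{I})$-stabilization. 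Therefore every $\calK$-stable $C$ with $\mathbf{I}(C) = \mathbf{I}$ is also $\mathcal{A}(\mathbf{I})$-stable and lies in $U_{\mathbf{I}}$, which establishes that the $U_{\mathbf{I}}$ cover $\Moduli_{g,\calK}$.

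The point most in need of care---not really an obstacle, but the place one must be explicit---is that the Hassett complex $\calK_{\mathcal{A}(\mathbf{I})} = \{ J \subseteq [n] : \sum_{i \in J} a_i \leq 1 \}$ is typically strictly larger than $\calK$; for example, with $I_1 = \{1,2\}$ and $I_2 = \{3,4\}$ the set $\{1,3\}$ has weight sum $1$ but need not lie in $\calK$. Consequently $\Moduli_{g,\mathcal{A}(\mathbf{I})}$ is not in general contained in $\Moduli_{g,\calK}$, and the intersection in the definition of $U_{\mathbf{I}}$ is genuinely needed. However, the condition ``all collisions lie in $\calK$'' is open, so $U_{\mathbf{I}}$ really is open in $\Moduli_{g,\mathcal{A}(\mathbf{I})}$ as the statement demands, and letting $\mathbf{I}$ range over the finite collection of disjoint tuples of non-singleton faces of $\calK$ (with the empty tuple contributing the ordinary Deligne--Mumford locus) yields the claimed finite cover.
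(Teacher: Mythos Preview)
Your proof is correct and follows essentially the same approach as the paper: both define the weight of $p_i$ to be the reciprocal of the size of its collision (your $a_i = 1/|I_j|$ is exactly the paper's $a_i = 1/N_i$), take $U = \Moduli_{g,\calA} \cap \Moduli_{g,\calK}$ as an open substack of $\mathcal{V}_{g,n}$, and observe that only finitely many distinct weight vectors arise. You supply more detail than the paper does---notably the Hassett positivity check in genus~$0$ via the at-least-triparted hypothesis and the explicit weight computation showing each distinct marked point contributes~$1$---whereas the paper simply asserts $\calA$-stability is ``easily verified''; one small addendum is that the ampleness check should also cover rational \emph{bridge} components (two nodes), but (K5) handles these identically, forcing at least one distinct mark and hence positive weight.
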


\begin{proof}
Proposition \ref{prop:simplicial_stable_is_ZK_stable} implies that $\Moduli_{g,\calK}$ is an open substack of $\mathcal{V}_{g,n}$. Similarly, the stack $\Moduli_{g,\calA}$ for each weight vector $\mathcal{A}$ is an open substack of $\mathcal{V}_{g,n}$.
For each collection of weights $\mathcal{A}$, let $U_\mathcal{A}$ be the open substack $\Moduli_{g,\mathcal{A}} \cap \Moduli_{g,\calK}$ of $\Moduli_{g,\calK}$. To see that we have a cover, we show that every geometric point of $\Moduli_{g,\calK}$ belongs to some $U_\mathcal{A}$.

Suppose $(C;p_1,\dots,p_n)$ is a $\calK$-stable curve. For each $i \in [n]$, define $a_i := \frac{1}{N_i}$, where $N_i = |\{p_j \stc p_j = p_i, j\in [n]\}|$, and let $\calA := (a_1,\dots,a_n)$. It is easily verified that $(C;p_1,\dots,p_n)$ is stable with respect to the weights $\calA$.

Finally, since there are finitely many distinct collision complexes on $[n]$, there are only finitely many distinct modular compactifications $\Moduli_{g,\calA}$, so we may pass to a finite cover.
\end{proof}

\begin{theorem} \label{thm:simplicial_spaces_are_modular_compactifications_nonintro}
  Let $g \geq 0, n \geq 1$ be integers and $\calK$ a simplicial complex on $[n]$. If $g = 0$ then assume in addition that $\calK$ is at least triparted. Then $\Moduli_{g,\calK}$ is a smooth Deligne-Mumford nodal modular compactification of $\moduli_{g,n}$ over $\Z$ admitting collisions such that $\calK(\Moduli_{g,\calK}) = \calK$.
\end{theorem}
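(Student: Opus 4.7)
The plan is to harvest most of the assertions from earlier in the paper and then add smoothness via the Frankenstein property.

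First, Proposition \ref{prop:simplicial_stable_is_ZK_stable} already identifies $\Moduli_{g,\calK}$ with the $\calZ_\calK$-stable space $\Moduli_{g,n}(\calZ_\calK)$. Combined with Theorem \ref{thm:smythclassification}, this tells us that $\Moduli_{g,\calK}$ is an open substack of $\mathcal{V}_{g,n}$, proper over $\Spec \Z$, with quasi-finite diagonal; in other words, it is a modular compactification of $\moduli_{g,n}$ over $\Spec \Z$. Axioms (K1) and (K2) of Definition \ref{def:simpstab2} immediately say that this modular compactification is nodal and admits collisions.

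Next I would verify the equality $\calK(\Moduli_{g,\calK}) = \calK$. The containment $\calK(\Moduli_{g,\calK}) \subseteq \calK$ is immediate from axiom (K3). For the reverse containment, given a non-empty $I \in \calK$, I will exhibit a $\calK$-stable curve with a smooth point whose mark-set is exactly $I$. Take any smooth curve $C_0$ of genus $g$, place the markings indexed by $[n] - I$ at distinct general points, and place all markings indexed by $I$ at one additional smooth point. Axioms (K1)--(K5) are easily checked: the only subtle point is that $C_0 \cong \PP^1$ in the $g = 0$ case must carry at least three distinct special points. The at least triparted hypothesis forces $|[n]-I| \geq 2$ (otherwise $\{I, \{j\}_{j \notin I}\}$ would be a $\calK$-partition of fewer than three parts), so this curve has $|[n]-I| + 1 \geq 3$ distinct special points and is a valid member of $\Moduli_{0, \calK}$. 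Thus $I \in \calK(\Moduli_{g,\calK})$.

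Finally, for smoothness and Deligne-Mumford-ness, I would invoke the Frankenstein property (Theorem \ref{thm:simplicial_frankenstein}): the stack $\Moduli_{g,\calK}$ admits a finite open cover by substacks $U_{\calA_i}$ of Hassett's weighted pointed stable spaces $\Moduli_{g, \calA_i}$. Since each $\Moduli_{g,\calA_i}$ is a smooth Deligne-Mumford stack by \cite{hassett_weighted_curves}, and since both smoothness and the Deligne-Mumford property are \'etale-local on the stack, these properties pass to each open $U_{\calA_i}$ and hence to their union $\Moduli_{g,\calK}$.

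I do not anticipate a serious obstacle here: the theorem is essentially a packaging result that combines Proposition \ref{prop:simplicial_stable_is_ZK_stable}, Theorem \ref{thm:smythclassification}, and the Frankenstein property Theorem \ref{thm:simplicial_frankenstein}, together with the short constructive argument for $\calK(\Moduli_{g,\calK}) \supseteq \calK$. The mildest subtlety is the $g = 0$ boundary case discussed above, which the at least triparted hypothesis is designed to handle.
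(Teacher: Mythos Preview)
Your proposal is correct and follows essentially the same route as the paper: invoke Proposition \ref{prop:simplicial_stable_is_ZK_stable} (together with Smyth's classification) for the modular compactification claim, read off ``nodal'' and ``admitting collisions'' from the axioms, and use the Frankenstein property (Theorem \ref{thm:simplicial_frankenstein}) plus Hassett's results for smoothness and Deligne--Mumford. The only difference is that the paper dismisses the equality $\calK(\Moduli_{g,\calK}) = \calK$ as ``clear,'' whereas you spell out the constructive witness for the inclusion $\calK \subseteq \calK(\Moduli_{g,\calK})$ and check the genus-zero edge case via the triparted hypothesis; this extra detail is fine and does not change the structure of the argument.
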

\begin{proof}
We know $\Moduli_{g,\calK}$ is a modular compactification of $\moduli_{g,n}$ over $\Z$ by Proposition \ref{prop:simplicial_stable_is_ZK_stable}. It is nodal and has only smooth markings by definition. It is also clear that $\calK(\Moduli_{g,\calK}) = \calK$. Finally, $\Moduli_{g,\calK}$ is smooth and Deligne-Mumford since, by Theorem \ref{thm:simplicial_frankenstein}, it admits an open cover by smooth Deligne-Mumford stacks.
\end{proof}

\begin{theorem} \label{thm:simplicial_spaces_classification_nonintro}
  If $\moduli$ is a nodal compactification of $\moduli_{g,n}$ over $\Z$ admitting collisions, then, as open substacks of $\mathcal{V}_{G,n}$,
  \[
    \moduli = \Moduli_{g,\calK(\moduli)}.
  \]
\end{theorem}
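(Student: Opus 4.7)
The plan is to combine three facts already established: that any nodal compactification with collisions comes from an extremal assignment supported on rational tails (Lemma \ref{lem:nodal_implies_stable}); that such extremal assignments are in bijection with simplicial complexes via $\calK \mapsto \calZ_\calK$ (Lemma \ref{lem:extremal_assignments_biject_simplicial_complexes}); and that $\Moduli_{g,n}(\calZ_\calK)$ coincides with $\Moduli_{g,\calK}$ (Proposition \ref{prop:simplicial_stable_is_ZK_stable}). Theorem \ref{thm:simplicial_spaces_are_modular_compactifications_nonintro} supplies the final identification of collision complexes.

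First I would invoke Lemma \ref{lem:nodal_implies_stable} to produce an extremal assignment $\calZ$ supported on rational tails with $\moduli = \Moduli_{g,n}(\calZ)$ as open substacks of $\mathcal{V}_{g,n}$. Next I would pass through the bijection of Lemma \ref{lem:extremal_assignments_biject_simplicial_complexes}: the assignment $\calZ$ equals $\calZ_{\calK_\calZ}$ for the simplicial complex $\calK_\calZ$ of Definition \ref{def:simpstab}, and in genus zero $\calK_\calZ$ is automatically at least triparted. Proposition \ref{prop:simplicial_stable_is_ZK_stable} then gives the chain of equalities
\[
  \moduli = \Moduli_{g,n}(\calZ) = \Moduli_{g,n}(\calZ_{\calK_\calZ}) = \Moduli_{g,\calK_\calZ}
\]
as open substacks of $\mathcal{V}_{g,n}$.

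It remains to identify $\calK_\calZ$ with $\calK(\moduli)$. Since Theorem \ref{thm:simplicial_spaces_are_modular_compactifications_nonintro} asserts $\calK(\Moduli_{g,\calK_\calZ}) = \calK_\calZ$, and we have just shown $\moduli = \Moduli_{g,\calK_\calZ}$, we conclude $\calK(\moduli) = \calK_\calZ$. Substituting back yields $\moduli = \Moduli_{g,\calK(\moduli)}$, which is the claim.

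The whole argument is essentially a bookkeeping exercise chaining the previously established correspondences, so I do not anticipate a genuine obstacle; the only point requiring mild care is confirming that the triparted hypothesis in genus zero is not an extra assumption but a consequence of $\moduli$ being a modular (in particular separated, Deligne-Mumford) compactification, which was already absorbed into Lemma \ref{lem:extremal_assignments_biject_simplicial_complexes} when producing $\calK_\calZ$ from $\calZ$.
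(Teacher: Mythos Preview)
Your proposal is correct and follows essentially the same route as the paper's proof: invoke Lemma \ref{lem:nodal_implies_stable} and Lemma \ref{lem:extremal_assignments_biject_simplicial_complexes} to obtain a simplicial complex $\calK$ with $\moduli = \Moduli_{g,n}(\calZ_\calK)$, apply Proposition \ref{prop:simplicial_stable_is_ZK_stable} to identify this with $\Moduli_{g,\calK}$, and then use Theorem \ref{thm:simplicial_spaces_are_modular_compactifications_nonintro} to conclude $\calK = \calK(\moduli)$. Your write-up is slightly more explicit about naming the intermediate complex $\calK_\calZ$, but the logic is identical.
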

\begin{proof}
By Lemma \ref{lem:nodal_implies_stable} and Lemma \ref{lem:extremal_assignments_biject_simplicial_complexes}, there is a simplicial complex $\calK$ so that $\moduli = \Moduli_{g,n}(\calZ_\calK)$. By Proposition \ref{prop:simplicial_stable_is_ZK_stable}, $\Moduli_{g,n}(\calZ_\calK) = \Moduli_{g,\calK}$. Finally, by Theorem \ref{thm:simplicial_spaces_are_modular_compactifications_nonintro}, $\calK(\moduli) = \calK(\Moduli_{g,\calK}) = \calK$.
\end{proof}

\subsection{Connections to Hassett compactifications}

As suggested by the Frankenstein property (Theorem \ref{thm:simplicial_frankenstein}), the notion of simplicial stability recovers that of Hassett stability but is strictly more general. One easily verifies, for example, that there does not exist weight data $\calA = (a_1,\dots,a_5)$ so that $\Moduli_{g,\calA} = \Moduli_{g,\calK}$ for $\calK$ the union of two disjoint one-simplices and an isolated vertex.

\begin{proposition}[cf. {\cite[Definition 2.8]{alexeevguy}}]\label{prop:hassettsupsimp}
Let $\mathcal{A} = (a_1,\dots,a_n)$ be weight data. Then there exists a simplicial complex $\mathcal{K}_{\mathcal{A}}$ such that $\Moduli_{g,\mathcal{A}} \cong \Moduli_{g,\mathcal{K}_{\mathcal{A}}}$.
\end{proposition}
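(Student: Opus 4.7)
The natural candidate is
\[
  \mathcal{K}_{\mathcal{A}} := \bigl\{ I \subseteq [n] \stc I = \emptyset,\ I = \{j\} \text{ for some } j,\ \text{or } \sum_{i \in I} a_i \leq 1 \bigr\}.
\]
The plan is to show this is a valid simplicial complex (at least triparted when $g = 0$), and then to check that $\mathcal{A}$-stability and $\mathcal{K}_{\mathcal{A}}$-stability define the same substack of $\mathcal{V}_{g,n}$. First I would verify $\mathcal{K}_{\mathcal{A}}$ is a simplicial complex: downward closedness follows because Hassett weights are positive, and all singletons are included by construction since Hassett requires $a_j \in \Q \cap (0,1]$. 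For the $g = 0$ case, I would use that Hassett's genus zero spaces require $\sum_i a_i > 2$; then for any $\mathcal{K}_{\mathcal{A}}$-partition $\{P_1,\ldots,P_k\}$ one has $\sum_i a_i = \sum_{j=1}^k \sum_{i \in P_j} a_i \leq k$, forcing $k \geq 3$.

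Next, since by Hassett's theorem and Proposition \ref{prop:simplicial_stable_is_ZK_stable} both $\Moduli_{g,\mathcal{A}}$ and $\Moduli_{g,\mathcal{K}_\mathcal{A}}$ are open substacks of $\mathcal{V}_{g,n}$, the isomorphism reduces to checking that their geometric points coincide. I would match the conditions pointwise. Both require nodal curves with smooth markings, so (K1) and (K2) are automatic. Hassett's collision condition $\sum_{i \in \Marks(x)} a_i \leq 1$ at each marked point $x$ is exactly the condition $\Marks(x) \in \mathcal{K}_{\mathcal{A}}$, which is (K3). What remains is to match the ampleness of $\omega_C(\sum a_i p_i)$ with conditions (K4) and (K5); by the standard degree computation, ampleness reduces to the inequality
\[
  k_E + \sum_{p_i \in E} a_i > 2
\]
for each irreducible rational component $E$, where $k_E$ is the number of nodes of $C$ lying on $E$.

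I would then split this into two cases. When $E$ is an irreducible rational tail ($k_E = 1$), the inequality becomes $\sum_{p_i \in E} a_i > 1$, which is exactly the statement $\Marks(E) \notin \mathcal{K}_\mathcal{A}$, i.e.\ (K4); per the remark after Definition \ref{def:simpstab2}, checking (K4) on irreducible rational tails suffices. When $k_E \geq 2$, positivity of the weights makes the inequality equivalent to requiring $k_E + m_E \geq 3$, where $m_E$ is the number of distinct marked points on $E$, which is precisely (K5). The main obstacle is the bookkeeping in this last case---keeping straight the distinction between the number of marks counted with multiplicity (as in the degree computation) and the number of distinct marked points (as in the automorphism condition)---but the positivity of Hassett weights makes the translation clean. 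Once pointwise equivalence is established, the two moduli stacks coincide as open substacks of $\mathcal{V}_{g,n}$, giving the desired isomorphism.
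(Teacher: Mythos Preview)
Your argument is correct, but it takes a different route from the paper. The paper's proof is a one-liner: it observes that $\Moduli_{g,\mathcal{A}}$ is a nodal modular compactification admitting collisions and invokes the classification Theorem~\ref{thm:simplicial_spaces_classification_nonintro}, which says any such compactification equals $\Moduli_{g,\calK(\moduli)}$; computing the collision complex of $\Moduli_{g,\mathcal{A}}$ gives exactly your $\mathcal{K}_\mathcal{A}$. Your approach instead bypasses the classification theorem entirely and checks directly, condition by condition, that $\mathcal{A}$-stability and $\mathcal{K}_\mathcal{A}$-stability coincide on geometric points. This is more elementary and self-contained---it would work even before Theorem~\ref{thm:simplicial_spaces_classification_nonintro} is available---at the cost of a longer case analysis on the ampleness condition.

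One bookkeeping point to tighten: in the forward direction you match ampleness on components with $k_E = 1$ to (K4) and on components with $k_E \geq 2$ to (K5), but (K5) must also be verified on rational tails. This is easy: if $E$ is a rational tail with $\sum_{p_i \in E} a_i > 1$ and all marks on $E$ coincided at a single point $x$, then $\sum_{i \in \Marks(x)} a_i > 1$, contradicting the Hassett collision condition (equivalently (K3)); hence $m_E \geq 2$ and (K5) holds. With that remark your argument is complete.
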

\begin{proof}
One computes the collision complex $\mathcal{K}_{\mathcal{A}}$ of $\Moduli_{g,\calA}$ as follows: let the 0-skeleton be given by $\set{\set{1},\dots,\set{n}}$, and add a $d$-simplex on the vertices $i_1,\dots,i_{d+1}$ whenever $a_{i_1} + \cdots + a_{i_{d+1}} \leq 1$. Then the desired equality follows from Theorem \ref{thm:simplicial_spaces_classification_nonintro}.
\end{proof}

Under this identification, the weight data $(1,1,\dots,1)$ corresponds to the trivial simplicial complex, which gives the following immediate corollary.

\begin{corollary}
If $\mathcal{T}$ is the trivial simplicial complex on $[n]$, then $\Moduli_{g,\mathcal{T}} \cong \Moduli_{g,n}$. \qed 
\end{corollary}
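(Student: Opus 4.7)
The plan is to specialize the classification result, Theorem \ref{thm:simplicial_spaces_classification_nonintro}, to the case $\moduli = \Moduli_{g,n}$ itself. First I would verify the hypotheses: $\Moduli_{g,n}$ is a nodal modular compactification of $\moduli_{g,n}$ over $\Z$ by the classical Deligne--Mumford--Knudsen construction, and since every marking on a Deligne--Mumford stable curve is smooth, $\Moduli_{g,n}$ trivially admits collisions in the sense of Definition \ref{def:modular_compactifications}.

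Next I would compute the collision complex $\calK(\Moduli_{g,n})$. For any stable pointed curve $(C; p_1, \ldots, p_n)$ and any smooth point $x \in C$, stability forces the markings to be distinct, so $|\Marks(x)| \leq 1$. Hence the only subsets of $[n]$ that arise as $\Marks(x)$ for some smooth $x$ on a stable curve are singletons, giving $\calK(\Moduli_{g,n}) = \{\varnothing, \{1\}, \ldots, \{n\}\} = \mathcal{T}$. Theorem \ref{thm:simplicial_spaces_classification_nonintro} then yields the equality $\Moduli_{g,n} = \Moduli_{g, \mathcal{T}}$ as open substacks of $\mathcal{V}_{g,n}$.

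An alternative, equally short route proceeds through Proposition \ref{prop:hassettsupsimp}: specialize to the weight vector $\mathcal{A} = (1, 1, \ldots, 1)$, for which the corresponding simplicial complex $\calK_{\mathcal{A}}$ contains only those subsets $I$ with $\sum_{i \in I} 1 \leq 1$, namely $\varnothing$ and the singletons; thus $\calK_{\mathcal{A}} = \mathcal{T}$, and $\Moduli_{g, (1,\ldots,1)}$ is by definition the classical Deligne--Mumford--Knudsen space. In the genus zero case, both $\Moduli_{0,n}$ being defined and $\mathcal{T}$ being at least triparted require $n \geq 3$, so the hypotheses of Theorem \ref{thm:simplicial_spaces_classification_nonintro} align automatically. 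There is no substantial obstacle here: once the two preceding theorems (classification and the Frankenstein/Hassett identification) are in hand, the corollary is essentially bookkeeping.
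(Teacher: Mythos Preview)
Your proposal is correct. The paper's own argument is exactly your second route: it observes that under Proposition \ref{prop:hassettsupsimp} the weight data $(1,1,\ldots,1)$ yields the trivial complex $\mathcal{T}$, whence $\Moduli_{g,\mathcal{T}} \cong \Moduli_{g,(1,\ldots,1)} = \Moduli_{g,n}$, and marks the corollary as immediate with a \qed. Your first route via the classification Theorem \ref{thm:simplicial_spaces_classification_nonintro} is equally valid and essentially equivalent in effort.
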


The description given in Proposition \ref{prop:hassettsupsimp} characterizes Hassett spaces within the larger notion of simplicially stable spaces in the following way. Recall that a \emphbf{threshold complex} is a complex $\mathcal{K}$ such that there exists $M\in\mathbb{Q}$ and a weight function $w:K_0 \to \mathbb{Q}_{\geq 0}$ so that for any subset $V \subseteq K_0$ we have $V \in \mathcal{K}$ if and only if $\sum_{v\in V}w(v) \leq M$. Given a threshold complex, we may scale $M$ and $w$ by $M$ in order to recover valid weight data for a Hassett space. In this way we arrive at Corollary \ref{cor:hassettthreshold}.

\begin{corollary}\label{cor:hassettthreshold}
A simplicial complex $\mathcal{K}$ is a threshold complex if and only if there exists weight data $\mathcal{A}$ such that $\Moduli_{g,\mathcal{K}} \cong \Moduli_{g,\mathcal{A}}$. \qed
\end{corollary}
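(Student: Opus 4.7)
The plan is to handle the two implications separately, each reducing to the preceding propositions without new technical input.

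For the forward direction, assume $\mathcal{K}$ is a threshold complex with threshold $M \in \mathbb{Q}$ and weight function $w : K_0 \to \mathbb{Q}_{\geq 0}$. I would first dispose of the degenerate case $M = 0$ (only singletons in $\mathcal{K}$): here any weight vector $\mathcal{A} = (a_1,\ldots,a_n)$ with each $a_i \in (1/2,1]$ produces the same collision complex. Otherwise $M > 0$, and I would set $a_i := w(i)/M$ to obtain weight data $\mathcal{A} = (a_1,\ldots,a_n)$. By construction a subset $V \subseteq [n]$ satisfies $\sum_{i \in V} a_i \leq 1$ if and only if $\sum_{i \in V} w(i) \leq M$, which is exactly the defining condition for $V \in \mathcal{K}$. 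Reading off the recipe for $\mathcal{K}_{\mathcal{A}}$ in Proposition \ref{prop:hassettsupsimp} then gives $\mathcal{K}_{\mathcal{A}} = \mathcal{K}$, so $\Moduli_{g,\mathcal{A}} \cong \Moduli_{g,\mathcal{K}_{\mathcal{A}}} = \Moduli_{g,\mathcal{K}}$.

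For the reverse direction, suppose weight data $\mathcal{A} = (a_1,\ldots,a_n)$ is given with $\Moduli_{g,\mathcal{K}} \cong \Moduli_{g,\mathcal{A}}$. By Theorem \ref{thm:simplicial_spaces_classification_nonintro} the collision complex determines the compactification, so $\mathcal{K} = \mathcal{K}(\Moduli_{g,\mathcal{K}}) = \mathcal{K}(\Moduli_{g,\mathcal{A}}) = \mathcal{K}_{\mathcal{A}}$, where the last equality uses Proposition \ref{prop:hassettsupsimp}. Now take $M := 1$ and $w(i) := a_i$; the characterization of $\mathcal{K}_{\mathcal{A}}$ from Proposition \ref{prop:hassettsupsimp} says exactly that $V \in \mathcal{K}$ if and only if $\sum_{i \in V} w(i) \leq M$, exhibiting $\mathcal{K}$ as a threshold complex.

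Since both directions are immediate translations between the inequality $\sum a_i \leq 1$ and the abstract threshold condition, there is no substantive obstacle; the whole content sits in Proposition \ref{prop:hassettsupsimp} and Theorem \ref{thm:simplicial_spaces_classification_nonintro}. The only mild care needed is the normalization step (scaling by $M$) and the trivial boundary case $M = 0$, which is why the authors mark the corollary with \qed.
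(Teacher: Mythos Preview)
Your approach is the same as the paper's: the corollary carries only a \qed, with the entire argument being the scaling remark in the preceding paragraph together with Proposition~\ref{prop:hassettsupsimp}, and your reverse direction via the classification theorem is the natural way to make the implicit converse explicit.

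One small slip: your treatment of the degenerate case $M=0$ is backwards. If $M=0$ then the requirement $\{i\}\in\mathcal{K}$ forces $w(i)=0$ for every $i$, and then \emph{every} subset $V$ satisfies $\sum_{v\in V}w(v)=0\le M$, so $\mathcal{K}$ is the full simplex on $[n]$, not the discrete one. This is still easily handled (e.g.\ $a_i=1/n$ for $g\ge 1$), so the approach survives. A related nicety you and the paper both skate past: Hassett weight data requires $a_i>0$, so if some $w(i)=0$ in the general case one must replace it by a sufficiently small positive rational before scaling; this does not change the threshold complex.
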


This formulation transforms the question concerning the number of isomorphism classes of $\Moduli_{g,\mathcal{A}}$ in \cite{hassett_weighted_curves} into a purely combinatorial problem.

\begin{corollary}[cf. {\cite[Corollary 3.25]{adgh-hassett}}]\label{cor:chambercount}
For fixed $g\geq1$ and $n$, the number of Hassett spaces (as open substacks of $\mathcal{V}_{g,n}$) is the same as the number of threshold complexes on $[n]$ up to isomorphism (of complexes). \qed
\end{corollary}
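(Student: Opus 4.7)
The plan is to derive this counting statement as essentially immediate corollary of Corollary~\ref{cor:hassettthreshold} combined with the classification in Theorem~\ref{thm:simplicial_spaces_classification_nonintro}, so the bulk of the work is unpacking definitions and checking that the obvious bijection respects the relevant equivalence relation on each side.

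First I would invoke Theorem~\ref{thm:simplicial_spaces_classification_nonintro} to get that the assignment $\moduli \mapsto \calK(\moduli)$ gives a bijection between nodal modular compactifications of $\moduli_{g,n}$ admitting collisions (viewed as open substacks of $\mathcal{V}_{g,n}$) and simplicial complexes on $[n]$. In particular, two such compactifications coincide as open substacks of $\mathcal{V}_{g,n}$ if and only if their collision complexes coincide as subsets of $2^{[n]}$.

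Second, I would apply Corollary~\ref{cor:hassettthreshold}: the Hassett spaces $\Moduli_{g,\calA}$ are exactly those simplicially stable spaces $\Moduli_{g,\calK}$ for which $\calK$ is a threshold complex. Restricting the bijection above to the image of the Hassett subfamily gives a one-to-one correspondence between Hassett spaces (as open substacks of $\mathcal{V}_{g,n}$) and threshold complexes on $[n]$. The hypothesis $g \geq 1$ is used here to avoid the genus-zero triparted restriction in Theorem~\ref{thm:simplicial_spaces_classification_nonintro}.

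Third, to match the ``up to isomorphism (of complexes)'' clause, I would note that the symmetric group $S_n$ acts on both sides in a compatible way: on Hassett open substacks by relabeling the sections $p_1,\ldots,p_n$ (equivalently, by permuting the weight vector $\calA$), and on simplicial complexes by relabeling vertices. Since both the formation of the collision complex from $\moduli$ and the passage from weight data $\calA$ to the threshold complex $\calK_\calA$ (as in Proposition~\ref{prop:hassettsupsimp}) are manifestly $S_n$-equivariant, the bijection descends to a bijection between orbits, which is the stated count. No step should present a serious obstacle; the only thing to be careful about is stating precisely which equivalence relation is in play on each side and verifying the equivariance, both of which follow directly from the definitions.
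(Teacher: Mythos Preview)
Your first two steps match the paper's approach exactly: the corollary is marked with a bare \qed, treating it as an immediate restatement of Corollary~\ref{cor:hassettthreshold}.

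Your third step, however, over-reads the phrase ``up to isomorphism (of complexes).'' The left-hand side of the corollary counts Hassett spaces \emph{as open substacks} of $\mathcal{V}_{g,n}$: two weight vectors $\calA,\calA'$ contribute the same space precisely when $\Moduli_{g,\calA} = \Moduli_{g,\calA'}$ as substacks, not merely after relabeling markings. Corollary~\ref{cor:hassettthreshold} puts these in bijection with threshold complexes on $[n]$ as \emph{labeled} complexes (subsets of $2^{[n]}$). The parenthetical ``(of complexes)'' is best read as distinguishing the underlying complex from the witnessing weight data $(M,w)$, not as passing to $S_n$-orbits. This is confirmed by Table~\ref{fig:simp_vs_hassett}: for $n=3$ it lists $9$ Hassett spaces, matching the $9$ labeled threshold complexes on $[3]$, whereas there are only $5$ isomorphism classes of such complexes. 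Your step 3 yields a correct but different statement (counting $S_n$-orbits on both sides); for the corollary as the paper intends it, you should stop after step 2.
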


\section{Contractions via tropical geometry}

In this section, we develop a system for contracting genus 0 tails of families of log curves then show how to combine this with the formalism of \cite{rsw} and \cite{bozlee_thesis} for contracting elliptic subcurves to Gorenstein singularities. We start by defining ``tail functions," which encode which genus 0 tails to contract.

\subsection{Contractions of tails}

We begin by translating the notion of rational tail to the tropical setting. We only consider contracting stable rational tails.

\begin{definition} \label{def:tail}
Let $\Gamma$ be a tropical curve with edge lengths in a sharp fs monoid $P$.
Say that a sub-tropical curve $T$ of $\Gamma$ is a \emphbf{rational tail} if its underlying graph is a rational tail.
We say that $T$ is a \emphbf{stable rational tail} if in addition we have that the valence in $\Gamma$ of each vertex of $T$ is at least 3.

Observe that since $g(T)$ is zero, each edge of $T$ admits a well-defined orientation away from its leading edge $e_T$. Note also that $e_T$ does not belong to $T$, since $T$ is a vertex-induced subgraph of $\Gamma$.
\end{definition}

In \cite{hassett_weighted_curves}, contractions of rational tails were performed using a sequence of appropriate twists of the relative log dualizing sheaf by the markings. We also ultimately use a twist of the log dualizing sheaf. However, our twists are given not by a single linear combination of the markings, but instead by a piecewise linear function. Non-tropically, this translates to a twist by a vertical divisor rather than by sections. The kind of piecewise linear function we need is what we call a tail function below.

\begin{definition} \label{def:tail_function}
Suppose given a semistable tropical curve $\Gamma$, an integer $k \geq 0$, and $k$ disjoint stable rational tails $T_1, \ldots, T_k$ such that $V(T_1) \cup \cdots \cup V(T_k) \neq V(\Gamma)$.

If $e$ is an edge of $\Gamma$ incident to $T_i$ for some $i\in \{1,\dots,k\}$, let $T_e$ be the unique stable rational tail of $\Gamma$ contained in $T_i$ with leading edge $e$, and write $n_e$ be the number of markings incident to $T_e$. Now, let $\mu = \mu_{T_1,\ldots,T_k}$ be the unique piecewise linear function satisfying:
\begin{enumerate}
  \item $\mu$ has slope 0 on legs
  \item $\mu(v) = 0$ for each vertex $v \in V(\Gamma) - (V(T_1) \cup \cdots \cup V(T_k))$
  \item for each edge $e$ with at least one endpoint in one of the tails $T_1,\ldots,T_k$, the slope of $\mu$ on $e$ oriented in the direction of $T_e$ is $n_e - 1$.
\end{enumerate}
If $k = 0$, observe that $\mu$ is identically 0.

We say that a piecewise linear function $\mu$ on $\Gamma$ is a \emphbf{tail function} if there are stable rational tails $T_1,\ldots, T_k$ so that $\mu = \mu_{T_1, \ldots, T_k}$.

If $\pi : C \to S$ is a family of log curves, we say that $\mu \in \Gamma(C, \ol{M}_C)$ is a \emphbf{tail function} if for each geometric point $s \to S$, $\PL(\mu|_{C_s})$ is a tail function of $\trop(C_s)$.
\end{definition}

\begin{lemma} \label{lem:tail_function_determined_by_support}
If $\mu = \mu_{T_1,\ldots,T_k}$, then $|\mu| = T_1 \cup \cdots \cup T_k$. 
\end{lemma}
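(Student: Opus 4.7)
Since $|\mu|$ and $T_1 \cup \cdots \cup T_k$ are both vertex-induced subgraphs of $\Gamma$, it suffices to show their vertex sets agree, that is, $\mu(v) > 0$ if and only if $v$ lies in some $T_i$. The inclusion $V(|\mu|) \subseteq V(T_1 \cup \cdots \cup T_k)$ is immediate from condition (ii) of Definition \ref{def:tail_function}, which forces $\mu(v) = 0$ for every vertex $v$ outside the given tails.

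For the reverse inclusion, the plan is to fix $v \in V(T_i)$ for some $i$, let $w'$ be the endpoint of the leading edge $e_{T_i}$ lying outside $T_i$ (so $\mu(w') = 0$), and integrate. Since $T_i$ is a tree, there is a unique path in $\Gamma$ from $w'$ to $v$ consisting of $e_{T_i}$ followed by edges internal to $T_i$. Each edge $e$ on this path is traversed in the direction pointing toward $T_e$, so condition (iii) gives slope $n_e - 1$ for $\mu$ along $e$ in that direction. Summing the increments along the path yields
\[
\mu(v) \;=\; \sum_{e\ \text{on path}} (n_e - 1)\,\delta(e)
\]
in $P$, and it remains to show this element is nonzero.

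The key estimate is that $n_e \geq 2$ for every edge $e$ on the path. Since $T_e$ is a stable rational tail of edge-valence one in $\Gamma$, each leaf vertex of its underlying rooted tree is incident to exactly one edge of $\Gamma$, namely its upward edge (or the leading edge $e$ itself when $T_e$ is a singleton). The stability condition (valence at least three in $\Gamma$) then forces such a leaf to carry at least two marked legs, so $T_e$ carries at least two markings and $n_e \geq 2$. Consequently each summand $(n_e - 1)\,\delta(e)$ is nonzero in the sharp fs monoid $P$, and sharpness guarantees that a finite sum of nonzero elements of $P$ is itself nonzero. Hence $\mu(v) \neq 0$, so $v \in V(|\mu|)$. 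The only step beyond unpacking definitions is the stability-based lower bound $n_e \geq 2$, which I expect to be the main (and only) technical point.
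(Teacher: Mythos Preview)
Your proof is correct and follows essentially the same approach as the paper's own argument: both establish $\mu(v)=0$ outside the tails from condition~(ii), then for $v\in T_i$ integrate $\mu$ along the unique path from the leading edge to $v$, using stability to get $n_e\geq 2$ on every edge of the path and hence a strictly positive value. The only cosmetic differences are that the paper anchors the path at the vertex of $T_i$ adjacent to $e_{T_i}$ rather than at the vertex outside, and it asserts $n_e\geq 2$ without your more explicit leaf-vertex justification.
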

\begin{proof}
By definition, if $v$ is a vertex of $V(\Gamma) - (V(T_1) \cup \cdots \cup V(T_k))$, then $\mu(v) = 0$, so $v \not\in |\mu|$.

Conversely, let $v$ be a vertex of $T_i$ for some $i$. Let $w$ be the vertex of $T_i$ adjacent to its leading edge $e_{T_i}$. As $T_i$ is a tree, there is a unique (possibly trivial) path $e_1,\dots,e_\ell$ from $v$ to $w$. Since $T_{e_j}$ is itself a stable rational tail for each $j\in\{1,\dots,\ell\}$, we have $n_{e_j} \geq 2$ (similarly,  $n_e \geq 2$). Since $(n_e-1)\delta(e) > 0$, altogether we have
\[
  \mu(v) = (n_e - 1)\delta(e) + \sum_{j=1}^\ell(n_{e_j} - 1)\delta(e_j) > 0,
\]
as desired.
\end{proof}

\begin{remark}
The conclusion of the preceding lemma fails if the tails $T_i$ are not assumed to be stable.
\end{remark}

\begin{figure}[h]
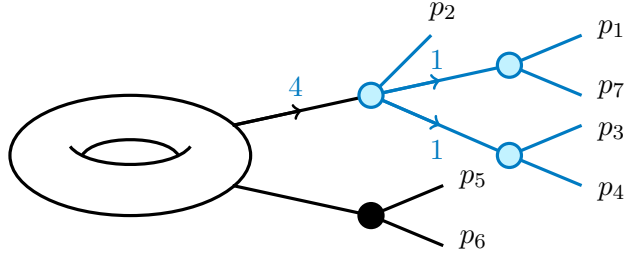

    \TailFunction
    \caption{A tail function on a tropical curve $\Gamma$. The associated tail and non-zero slopes are colored {\color{bluestroke}blue}.} \label{fig:tailfunction}
\end{figure}

In order to construct tail functions in families, one needs to assign tail functions in a manner compatible with generalization, or tropically, compatibly with weighted edge contractions. The following lemma explains how tail functions vary along these weighted edge contractions.

\begin{lemma} \label{lem:tail_function_weighted_edge_contraction}
Let $\pi : \Gamma \to \Gamma'$ be a weighted edge contraction and let $\mu_{\Gamma'}$ be a tail function on $\Gamma'$. Then $\mu_\Gamma = \pi^*(\mu_{\Gamma'})$ is the tail function on $\Gamma$ supported on the vertices $v \in V(\Gamma)$ such that $\ul{\pi}^{-1}(v) \subseteq |\mu_{\Gamma'}|$.
\end{lemma}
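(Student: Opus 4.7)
The plan is to reduce to the case of contracting a single edge by induction on the number of contracted edges, and then case-split on the role of that edge relative to the supporting tails $T_1', \ldots, T_k'$ of $\mu_{\Gamma'}$. Since every weighted edge contraction factors as a composition of single-edge contractions and isomorphisms (and isomorphisms pose no difficulty), and the single-edge case will show that the pullback of a tail function is again a tail function, the induction goes through cleanly.

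In the single-edge case with contracted edge $e \in E(\Gamma')$, there are three subcases. (1) If $e$ is internal to some $T_i'$, then $\ul{\pi}(T_i')$ remains a stable rational tail of $\Gamma$: the leading edge is untouched, rationality is preserved, and the two merged endpoints (each of valence $\geq 3$ in $\Gamma'$) produce a vertex of valence $\geq 4$. (2) If $e = e_{T_i'}$ is the leading edge of some $T_i'$, let $v_0'$ be its tail-side endpoint. Each subtree of $T_i' - v_0'$ rooted at a child of $v_0'$ becomes a new stable rational tail of $\Gamma$, with leading edge the former internal edge from $v_0'$; stability follows because the valences of these vertices are inherited unchanged from $\Gamma'$. (3) If $e$ is neither internal to nor a leading edge of any tail, the tails pass unchanged to $\Gamma$.

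Let $\widetilde{\mu}_\Gamma$ denote the tail function corresponding to the collection of tails just produced. I will show $\widetilde{\mu}_\Gamma = \pi^*\mu_{\Gamma'}$ by vertex-by-vertex comparison. In case (1), applying $\pi^\sharp$ to $\mu_{\Gamma'}(v')$ kills the $\delta(e)$ summand and preserves the others, matching the tail function formula for $\ul{\pi}(T_i')$. In case (2), $\pi^\sharp$ again kills the $\delta(e)$ summand; for each vertex $v$ of a subtree, the resulting sum is precisely the defining sum of $\widetilde{\mu}_\Gamma(v)$ along the path from the new leading edge. Case (3) is immediate, and slopes at legs are preserved trivially since $\pi$ is an isomorphism on legs. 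For the support, if $v' \in \ul{\pi}^{-1}(v)$ does not lie in $|\mu_{\Gamma'}|$ then $\pi^*\mu_{\Gamma'}(v) = \pi^\sharp(\mu_{\Gamma'}(v')) = 0$; the reverse direction follows from the explicit construction, where each vertex in the new support has preimage contained in $|\mu_{\Gamma'}|$. The non-degeneracy condition $V(T_1') \cup \cdots \cup V(T_k') \ne V(\Gamma')$ rules out the alternative conclusion of Lemma \ref{lem:subset_of_tails}, and Lemma \ref{lem:tail_function_determined_by_support} then concludes the identification.

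The main obstacle is case (2), where a single tail splits into several tails after contraction of its leading edge and the markings at $v_0'$ are effectively absorbed into the complement. One must carefully verify that each resulting subtree satisfies the axioms of a stable rational tail in $\Gamma$ and that the pullback formula matches correctly under this reindexing.
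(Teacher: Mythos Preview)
Your single-edge case analysis is correct, and in spirit you have all the content needed for the lemma. However, the reduction step has a real gap: weighted edge contractions in the sense of this paper do \emph{not} in general factor as compositions of single-edge contractions. A morphism of tropical curves carries not only an underlying graph contraction $\ul{\pi}$ but also a monoid map $\pi^\sharp : P' \to P$, and the two are coupled by the rule that an edge $e$ is contracted if and only if $\pi^\sharp(\delta(e)) = 0$. If two distinct contracted edges of $\Gamma'$ happen to have the same length in $P'$ (or more generally lie in the same face), there is no intermediate monoid map killing one edge length and not the other, so no intermediate tropical curve exists. Your inductive step ``the pullback of a tail function is again a tail function, so the induction goes through'' presumes such intermediates, which need not exist.

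The paper's proof avoids this by working directly. It defines the candidate tails $T_1,\ldots,T_k$ in $\Gamma$ all at once as the maximal connected subgraphs whose preimage under $\ul{\pi}$ lies in $|\mu_{\Gamma'}|$, observes via the edge-by-edge correspondence that each $T_i$ is a stable rational tail (because its preimage is one and the relevant marking counts $n_e = n_{e'}$ agree), and then checks the defining slope condition on every edge of $\Gamma$ simultaneously. This is really the same computation you perform in your three cases, just organized without an induction: applying $\pi^\sharp$ to the path-sum formula for $\mu_{\Gamma'}(v')$ kills exactly the contracted-edge summands and leaves precisely the path-sum formula for $\mu_{T_1,\ldots,T_k}(v)$ in $\Gamma$. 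Your argument is easily repaired by dropping the induction and making this direct comparison, but as written the factorization claim does not hold.
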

\begin{proof}
Let $e\in \Gamma$ be an edge and $e' = \ul{\pi}^{-1}(e) \in \Gamma'$ its unique pre-image. Observe that $T$ is an $I$-marked rational tail of $\Gamma$ with leading edge $e$ if and only if $T' = \ul{\pi}^{-1}(T)$ is an $I$-marked rational tail of $\Gamma'$ with leading edge $e'$. Moreover, if $T'$ is contained in a \emph{stable} rational tail of $\Gamma'$, then $T$ is also stable.

Let $T_1, \ldots, T_k$ be the maximal connected subgraphs of $\Gamma$ such that $\ul{\pi}^{-1}(T_i) \subseteq |\mu_{\Gamma'}|$. Note that a vertex $v \in V(\Gamma)$ belongs to one of the $T_i$ if and only if $\ul{\pi}^{-1}(v) \subseteq |\mu_{\Gamma'}|$. By the preceding paragraph, taking into account that the support of $\mu_{\Gamma'}$ is a union of disjoint stable rational tails, $T_1, \ldots, T_k$ is also a family of disjoint stable rational tails. To complete the proof we now check that $\mu_{\Gamma} = \mu_{T_1,\ldots,T_k}$.

Notice that $e$ is incident to one of the tails $T_1, \ldots, T_k$ if and only if $e'$ is incident to the support of $\mu_{\Gamma'}$. If $e'$ is not adjacent to a tail of $\Gamma'$, then $\mu_\Gamma(v) = \mu_\Gamma(w) = 0$ for $v,w$ adjacent to $e$. This implies that $\mu(v) = 0$ for all $v \not\in V(T_1) \cup \cdots \cup V(T_k)$.
On the other hand, if $e'$ is adjacent to a tail of $\Gamma'$, then, since number of markings (and genus) is preserved under $\pi$, we have $n_{e} = n_{e'}$. Therefore, if $e$ is an edge incident to one of $T_1, \ldots, T_k$, $\mu_\Gamma$ has the desired slope on $e$.
\end{proof}

The following lemma describes the twisted line bundle used to perform the contraction in Proposition \ref{prop:tail_contraction}.

\begin{lemma} \label{lem:tail_contraction_sheaf}
Let $\pi : C \to S$ be a family of semistable log curves of positive genus over a geometric point and let
$\mu \in \Gamma(C, \ol{M}_C)$ be the tail function associated to the disjoint rational tails
$T_1, \ldots, T_k$. Let $e_1, \ldots, e_k$ be their leading edges. Let $T = T_1 \cup \cdots \cup T_k$ be the support of $\mu$, and let $R = \overline{C - T}$. Let $\mathcal{L} = \omega_{C/S}(\Sigma) \otimes \mathscr{O}_C(\mu)$. Then
\begin{enumerate}
  \item \label{lem:tail_contraction_sheaf_1} $\mathcal{L} \otimes \mathscr{O}_T \cong \mathscr{O}_T$
  \item \label{lem:tail_contraction_sheaf_2} $\mathcal{L} \otimes \mathscr{O}_R = \omega_{C/S}(\Sigma)|_R \otimes \mathscr{O}_R(\sum_{i = 1}^k (n_{e_i} - 1) e_i).$
  \item \label{lem:tail_contraction_sheaf_3} $H^1(C,\mathscr{L}) = 0$
  \item \label{lem:tail_contraction_sheaf_4} if $C$ is stable, then $\mathscr{L}$ is ample on $R$
\end{enumerate}
\end{lemma}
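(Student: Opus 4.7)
The strategy is to reduce each claim to an explicit degree computation via Proposition \ref{prop:log_curve_line_bundles}, followed by standard cohomological arguments. To prove (\ref{lem:tail_contraction_sheaf_1}) and (\ref{lem:tail_contraction_sheaf_2}), I would compute the divisor class of $\mathscr{O}_C(\mu)|_v$ from the outgoing slopes of $\mu$ at the special points of each component $v$. On a component $v$ of $R$, the only nodes where $\mu$ has nonzero outgoing slope are those over leading edges $e_i$ incident to $v$, where the slope is $n_{e_i} - 1$ (since $\mu$ increases going into the tail); this yields $\mathscr{O}_C(\mu)|_R = \mathscr{O}_R\bigl(\sum_{i=1}^{k}(n_{e_i}-1)e_i\bigr)$, giving (\ref{lem:tail_contraction_sheaf_2}). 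On a component $v$ of a tail $T_i$ with $m_v$ markings, $\ell$ downward edges, and one upward edge, using the telescoping identity $n_{e_{\text{up}}} = m_v + \sum_j n_{e_j}$, the outgoing slopes of $\mu$ at the nodes of $v$ sum to $1 - m_v - \ell$. Combined with $\deg \omega_{C/S}(\Sigma)|_v = -2 + (\ell+1) + m_v = \ell - 1 + m_v$, this shows $\mathscr{L}|_v$ has degree zero on every component of $T$. Since each $T_i$ is a tree of rational curves, line bundles of multidegree zero on $T = \bigsqcup T_i$ are trivial, and (\ref{lem:tail_contraction_sheaf_1}) follows.

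For (\ref{lem:tail_contraction_sheaf_4}), I combine (\ref{lem:tail_contraction_sheaf_2}) with the standard identity $\omega_{C/S}|_R = \omega_R(\sum_i e_i)$ to obtain, for each component $v \subseteq R$,
\[
  \deg \mathscr{L}|_v = 2g(v) - 2 + d_R(v) + m_v + \sum_{i \,:\, e_i \ni v} n_{e_i},
\]
where $d_R(v)$ counts the nodes of $v$ internal to $R$ and $k_v$ is the number of leading edges at $v$. Stability of $C$ gives $2g(v) - 2 + d_R(v) + k_v + m_v > 0$, and the stable rational tail condition forces $n_{e_i} \geq 2$ (any leaf vertex of a stable tail has valence at least three, hence at least two markings). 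Thus $\sum_{i \,:\, e_i \ni v} n_{e_i} \geq 2 k_v \geq k_v$, so the degree above is strictly positive on every component of $R$, giving ampleness.

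For (\ref{lem:tail_contraction_sheaf_3}), I would apply the Mayer--Vietoris sequence
\[
  0 \to \mathscr{L} \to \mathscr{L}|_T \oplus \mathscr{L}|_R \to \mathscr{L}|_{T \cap R} \to 0
\]
and take cohomology. By (\ref{lem:tail_contraction_sheaf_1}), $H^1(\mathscr{L}|_T) = H^1(\mathscr{O}_T) = 0$ because $T$ is a disjoint union of trees. Serre duality on the Cohen--Macaulay curve $R$ (which is connected, since each tail attaches to the rest of $C$ at a single node) gives $H^1(R, \mathscr{L}|_R)^\vee \cong H^0\bigl(R, \mathscr{O}_R(-\sum n_{e_i} e_i - \Sigma|_R)\bigr)$, and since the divisor being subtracted is effective and nonzero, this $H^0$ vanishes. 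Finally, the restriction map $H^0(\mathscr{L}|_T) \to H^0(\mathscr{L}|_{T \cap R})$ sends the constant section on each $T_i$ to its value at the unique node $e_i$, hence is an isomorphism of $k$-dimensional vector spaces; this makes the connecting map in the long exact sequence vanish, so $H^1(C, \mathscr{L}) = 0$.

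The main obstacle I anticipate is the bookkeeping in (\ref{lem:tail_contraction_sheaf_1}): keeping signs straight in the outgoing slope formula and verifying that the telescoping identity for the $n_e$'s matches precisely the count of special points on each tail component so that the local degrees cancel. The non-trivial gluing step (multidegree zero implies trivial) is clean because each $T_i$ is a tree; and once (\ref{lem:tail_contraction_sheaf_1}) is in hand, (\ref{lem:tail_contraction_sheaf_2}), (\ref{lem:tail_contraction_sheaf_3}), and (\ref{lem:tail_contraction_sheaf_4}) follow from routine applications of Serre duality, Mayer--Vietoris, and the stability hypothesis.
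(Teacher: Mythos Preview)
Your proposal is correct and follows essentially the same strategy as the paper: compute $\mathscr{O}_C(\mu)$ componentwise via Proposition~\ref{prop:log_curve_line_bundles}, reduce (\ref{lem:tail_contraction_sheaf_1}) to a multidegree-zero check on a tree, and deduce (\ref{lem:tail_contraction_sheaf_3}) from the normalization/Mayer--Vietoris sequence for $T \sqcup R \to C$. The only substantive difference is in how you show $H^1(R,\mathscr{L}|_R)=0$: the paper observes that $\mathscr{L}|_R$ dominates $\omega_{C/S}(\Sigma)|_R$ (which already has vanishing $H^1$ by semistability), whereas you rewrite $\mathscr{L}|_R$ as $\omega_R(\sum n_{e_i}e_i + \Sigma|_R)$ and apply Serre duality directly; your version is slightly more explicit and also makes the surjectivity of $H^0(\mathscr{L}|_T)\to H^0(\mathscr{L}|_{T\cap R})$ visible, which the paper leaves to the reader.
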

\begin{proof} \hfill
\begin{enumerate}
  \item Since $T_1, \ldots, T_k$ have arithmetic genus 0, it suffices to show that the isomorphism holds on each irreducible component $Z$ of $T$. Let $q$ be the node of $Z$ heading towards the core and $q_1, \ldots, q_\ell$ be the remaining nodes contained in $Z$. Identifying nodes with the corresponding edges of $\trop(C)$, observe that
  $n_q = \sum_{i = 1}^\ell n_{q_i} + \deg(\Sigma|_Z)$.
  
  Then
  \begin{align*}
    \omega_{C/S}(\Sigma) \otimes \mathscr{O}_Z &\cong \mathscr{O}_{Z}(-2 + ( \ell + 1) + \Sigma|_Z) \\
      &\cong \mathscr{O}_Z(\Sigma|_Z + \ell - 1)
  \end{align*}
  
  On the other hand, by Proposition \ref{prop:log_curve_line_bundles},
  \begin{align*}
    \mathscr{O}_C(\mu) \otimes \mathscr{O}_Z &\cong \mathscr{O}_Z\left( -(n_q - 1) e + \sum_{i = 1}^\ell (n_{q_i} - 1) q_i \right) \\
      &\cong \mathscr{O}_Z\left( \left(\sum_{i = 1}^\ell n_{q_i} - n_q\right)  - (\ell - 1) \right) \\
      &\cong \mathscr{O}_Z\left( -\Sigma|_Z - \ell + 1 \right).
  \end{align*}
  Tensoring, $\mathcal{L}|_Z \cong \mathscr{O}_Z$ as desired.

\item This follows directly from Proposition \ref{prop:log_curve_line_bundles} and the definition of tail function.

\item 
By (semi-)stability, $n_{e_i} \geq 2$ ($\geq 1)$ for all $i$. Then by (ii), $\mathcal{L} \otimes \mathscr{O}_R$ is at least as positive as $\omega_{C/S}(\Sigma) \otimes \mathscr{O}_R$, so $H^1(R, \mathcal{L} \otimes \mathscr{O}_R) = 0$ too. On the other hand, $H^1(T, \mathcal{L} \otimes \mathscr{O}_T) = H^1(T, \mathscr{O}_T) = 0$ since $T_1, \ldots, T_k$ are genus 0. Let $\nu : R \sqcup T \to C$ be the natural map. Then the result follows by analyzing the long exact sequence associated to $0 \to \mathcal{L} \to \nu_*\nu^*\mathcal{L} \to Q \to 0$.
\item This follows immediately from \eqref{lem:tail_contraction_sheaf_2} 
\end{enumerate}
\end{proof}

Everything is now in place to construct contractions.

\begin{proposition} \label{prop:tail_contraction}
Let $\pi : C \to S$ be a family of stable log curves of genus $g > 0$ and let $\mu \in \Gamma(C, \ol{M}_C)$ be a tail function. Let $\mathcal{L} = \omega_{C/S}(\Sigma) \otimes \mathscr{O}_C(\mu)$. Then there is a contraction
\[
  \begin{tikzcd}
    C \ar[rr, "\tau"] \ar[rd, "\pi"] & & \ol{C} = \PProj \bigoplus_{i = 0}^\infty \pi_*\mathcal{L}^{\otimes i} \ar[ld, "\ol{\pi}"] \\
     & S
  \end{tikzcd}
\]
so that
\begin{enumerate}
  \item $\ol{\pi}$ is a flat family of pointed nodal curves;
  \item the contraction commutes with base change in the sense that $\ol{C} \times_S T \cong \ol{C \times_S T}$;
  \item The pull back of $\tau$ to each geometric point $s \to S$
  \begin{enumerate}
    \item has connected fibers;
    \item contracts each connected component of the support of $\mu|_s$ to a smooth point; and
    \item restricts to an isomorphism on the complement of the support of $\mu$.
  \end{enumerate}
\end{enumerate}
\end{proposition}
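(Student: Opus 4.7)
The plan is to realize $\tau$ as the classical morphism associated to a semi-ample line bundle, using Lemma \ref{lem:tail_contraction_sheaf} as essentially all the fiberwise input and lifting to families via cohomology-and-base-change. The fiberwise picture I expect is: on each geometric fiber $C_s$, $\mathcal{L}|_{C_s}$ is trivial on the support $T$ of $\mu$ and ample on the complement $R$, so some large power defines a morphism contracting each connected component of $T$ to a point and restricting to a closed immersion on $R$.

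First I would extend the vanishing $H^1(C_s, \mathcal{L}|_{C_s}) = 0$ from Lemma \ref{lem:tail_contraction_sheaf}\eqref{lem:tail_contraction_sheaf_3} to every positive tensor power. Since $\mathcal{L}^{\otimes n}|_T \cong \mathscr{O}_T$ and $\mathcal{L}^{\otimes n}|_R$ is at least as positive as $\omega_{C/S}(\Sigma)|_R$, the same normalization short exact sequence $0 \to \mathcal{L}^{\otimes n} \to \nu_*\nu^*\mathcal{L}^{\otimes n} \to Q \to 0$ used there shows $H^1(C_s, \mathcal{L}^{\otimes n}|_{C_s}) = 0$ for all $n \geq 1$ and all geometric fibers. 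Grauert's theorem then gives that each $\pi_*\mathcal{L}^{\otimes n}$ is locally free, and its formation (hence also the formation of the graded algebra $\bigoplus_n \pi_*\mathcal{L}^{\otimes n}$ and of its relative $\PProj$) commutes with arbitrary base change $T \to S$. This immediately yields claim (ii) as well as the flatness and properness of $\bar{\pi}$.

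Next I would analyze $\tau_s$ on each geometric fiber. For $N$ sufficiently large, $\mathcal{L}^{\otimes N}|_{C_s}$ is globally generated: triviality on $T$ gives a section nonvanishing on $T$, while ampleness on $R$ eventually supplies sections separating points and tangent vectors on $R$. The induced morphism factors through $\bar{C}_s$ by construction. Triviality of $\mathcal{L}$ on each $T_i$ forces $T_i$ to be contracted to a single point $\bar{q}_i$; ampleness on $R$ implies $\tau_s|_R$ is a closed immersion onto its image. The $\bar{q}_i$ for distinct $i$ are distinct because each $T_i$ meets $R$ only at its leading node $q_i$, and $\bar{q}_i = \tau_s(q_i)$ lies in the image of $R$. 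Together with the vanishing computation, which rules out extra fiber components, this establishes (iii)(a), (iii)(c), and the first half of (iii)(b).

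The final subtle point — and in my view the main obstacle — is showing that $\bar{C}_s$ is \emph{smooth} at each $\bar{q}_i$, so that $\bar{C} \to S$ is genuinely a family of nodal curves with no worse singularities than $R$. For this I work locally: since $T_i$ is a stable rational tail, the leading node $q_i$ has one branch in $T_i$ (which is contracted) and one branch in $R$ (which maps isomorphically). Locally near $\bar{q}_i$, the curve $\bar{C}_s$ therefore looks like the $R$-branch of $C_s$ at $q_i$, which is smooth. It follows that $\bar{C}_s$ has exactly the nodes of $R$ and no other singularities, and that the sections $\bar{p}_i := \tau \circ p_i$ all land in the smooth locus (either away from contracted tails or at the smooth points $\bar{q}_j$), delivering claim (i) and completing claim (iii)(b).
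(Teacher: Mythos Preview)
Your approach is essentially the same as the paper's: use the $H^1$ vanishing from Lemma~\ref{lem:tail_contraction_sheaf} on all powers of $\mathcal{L}$ to obtain base-change compatibility and flatness via Grauert's theorem, and deduce the fiberwise description of $\tau$ from parts \eqref{lem:tail_contraction_sheaf_1} and \eqref{lem:tail_contraction_sheaf_4} of that lemma. The paper's proof is considerably more terse on the fiberwise analysis---it simply invokes the lemma for (iii) and does not spell out the smoothness at the contracted points---so your additional detail there is welcome rather than a divergence.
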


\begin{proof}
Note that $\pi_*$ commutes with base change since $\mathcal{L}^{\otimes i}$ has no cohomology on geometric fibers (Lemma \ref{lem:tail_contraction_sheaf}\eqref{lem:tail_contraction_sheaf_3}). This implies (ii).

Then (iii) can be checked on geometric fibers, where it holds by Lemma \ref{lem:tail_contraction_sheaf} part \eqref{lem:tail_contraction_sheaf_1} and \eqref{lem:tail_contraction_sheaf_4}.

In order to show that $\ol{\pi}$ is flat, it is sufficient to show that $\pi_*\mathcal{L}^{\otimes i}$ is locally free for $i \geq 1$. We note that the degree of $\mathcal{L}^{\otimes i}$ is constant on fibers and $\mathcal{L}^{\otimes i}$ has no cohomology on fibers. It follows by Riemann-Roch that the rank of $\pi_*\mathcal{L}^{\otimes i}$ is constant over $S$. Grauert's Theorem then implies that $\pi_*\mathcal{L}^{\otimes i}$ is locally free.
\end{proof}

\subsection{Contractions of tails and elliptic subcurves}

Our goal now is to construct contractions of both elliptic subcurves and tails of radially aligned log (resp. tropical) curves. To do so, we package the necessary data into a \emphbf{contraction datum}.

\begin{definition}
If $\Gamma$ is a radially aligned tropical curve of genus one with edge lengths in a sharp fs monoid $P_\Gamma$, an element $\rho \in P_\Gamma$ is said to be a \emphbf{radius} if $\rho = \lambda(v)$ for some vertex $v$ of $\Gamma$.

Given a radially aligned tropical curve $\Gamma$ and radius $\rho$, write $\tilde{\Gamma}$ for the subdivision of $\Gamma$ where $\lambda = \rho$. Write $\Delta$ for the tropical subcurve of $\tilde{\Gamma}$ where $\lambda < \rho$ and $\ol{\Delta}$ for the tropical subcurve where $\lambda \leq \rho$.

A \emphbf{contraction datum} $D = (\rho, \mu)$ on $\Gamma$ consists of
\begin{enumerate}
  \item a radius $\rho \in P_\Gamma$;
  \item a tail function $\mu \in \PL(\tilde{\Gamma})$ with support $T$
\end{enumerate}
such that $\ol{\Delta} \cap |\mu| = \emptyset$.
\end{definition}

\begin{definition}
If $\pi : C \to S$ is a family of radially aligned log curves, we say that a section $\rho \in \Gamma(S, \ol{M}_S)$ is a \emphbf{radius} if the restriction of $\rho$ at each geometric point $s \to S$ is a radius of $\trop(C \times_S s)$.

Given a radially aligned log curve $C$ and radius $\rho$, write $\tilde{C}$ for the subdivision of $C$ where $\lambda = \rho$. Write $E$ for the subcurve of $\tilde{C}$ where $\lambda < \rho$.

A \emphbf{contraction datum} $D = (\rho, \mu)$ on a family of radially aligned log curves $\pi : C \to S$ of genus one consists of
\begin{enumerate}
  \item a radius $\rho \in \Gamma(S, \ol{M}_S)$;
  \item a tail function $\mu \in \Gamma(\tilde{C}, \ol{M}_{\tilde{C}})$ with support $T$
\end{enumerate}
such that $E \cap T = \emptyset$.
\end{definition}

Ultimately the connected components of $E$ and $T$ are contracted, with $E$ collapsing to an elliptic singularity, and the connected components of $T$ collapsing to smooth points where markings collide. The condition $E \cap T = \emptyset$ implies that $E \cup T$ does not contain the components of $\tilde{C}$ where $\lambda = \rho$. This condition (and its tropical analogue) are imposed to avoid markings on the elliptic singularity, or worse, contracting the entire curve to a point.

\begin{proposition} \label{prop:contract_everything}
Let $(\rho, \mu)$ be a contraction datum on an $n$-pointed stable radially aligned curve $(\pi : C \to S; p_1, \ldots, p_n)$ of genus one, with $T$ the support of $\mu$ and $E$ the radially aligned log subcurve of $C$ where $\lambda < \rho$. Then there is a diagram
\[
  \begin{tikzcd}
   & \tilde{C} \ar[dl, "\phi"'] \ar[dr, "\tau"] & \\
    C \ar[dr, "\pi"'] & & \ol{C} \ar[ld, "\ol{\pi}"] \\
     & S & 
  \end{tikzcd}
\]
so that
\begin{enumerate}
  \item $\phi$ is a log blowup inducing the subdivision at the locus where $\lambda = \rho$ on tropicalizations;
  \item $\ol{\pi}$ is a flat family of Gorenstein curves;
  \item the diagram commutes with base change in $S$;
  \item $\tau$ is a contraction with exceptional locus $E \cup T$
\end{enumerate}
and for each geometric point $s$ of $S$
\begin{enumerate}
  \item[(a)] When $E|_s$ is nonempty, the image of $E|_s$ under $\tau$ is an elliptic Gorenstein singularity $x$ with $\lev(x) = \lev(E|_s)$;
  \item[(b)] $\tau(\tilde{p}_i(s)) = \tau(\tilde{p}_j(s))$, where $\tilde{p}_i$ denotes the proper transform of $p_i$ in $\tilde{C}$, precisely when the images of $\tilde{p}_i(s)$ and $\tilde{p}_j(s)$ belong to a common connected component of $T|_s$,
\end{enumerate}

\end{proposition}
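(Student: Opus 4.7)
The plan is to combine the tail contraction of Proposition \ref{prop:tail_contraction} with the elliptic Gorenstein contraction of \cite{rsw, bozlee_contractions, bkn_qstable} into a single $\PProj$ construction on $\tilde C$. First, form the log blowup $\phi : \tilde C \to C$ subdividing along $\lambda = \rho$; this is standard (Theorem \ref{thm:M_1n_rad_exists} and \cite{rsw}) and produces a family of semistable log curves with $\phi$ an isomorphism away from the subdivision locus, so (i) is immediate. The contraction datum lives naturally on $\tilde C$ and the hypothesis $E \cap T = \emptyset$ separating the elliptic locus from the tail locus is exactly what lets the two contractions be carried out simultaneously from one line bundle.

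Next, I would construct a line bundle of the form
\[
  \mathcal{L} = \omega_{\tilde C/S}(\Sigma) \otimes \mathscr O_{\tilde C}(\mu + \nu),
\]
where $\nu \in \Gamma(\tilde C, \ol M_{\tilde C})$ is (the log realization of) the piecewise linear function supported on $E$ that implements the elliptic Gorenstein contraction at radius $\rho$, as in \cite{bkn_qstable}. Since $E \cap T = \emptyset$, the supports of $\mu$ and $\nu$ are disjoint, so the fibrewise analysis of $\mathcal L$ splits cleanly into three pieces: on $E$, where $\mu \equiv 0$, $\mathcal L|_E$ reduces to the sheaf used for elliptic contraction and is trivial by the standard computation from the cited works; on $T$, where $\nu \equiv 0$, $\mathcal L|_T \cong \mathscr O_T$ by Lemma \ref{lem:tail_contraction_sheaf}(i); and on the closure $R$ of $\tilde C - (E \cup T)$, $\mathcal L$ is a strictly positive twist of $\omega_{\tilde C/S}(\Sigma)|_R$, hence ample. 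Vanishing of $H^1(\tilde C_s, \mathcal L^{\otimes i})$ for $i \geq 1$ then follows by the normalization-sequence argument used in the proof of Lemma \ref{lem:tail_contraction_sheaf}(iii), applied to the partial normalization along $\partial E \cup \partial T$.

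With $\mathcal L$ in hand, set
\[
  \ol C = \PProj \bigoplus_{i \geq 0} \tilde \pi_* \mathcal L^{\otimes i},
\]
where $\tilde \pi : \tilde C \to S$ is the structure map. Cohomology vanishing on fibers makes this $\PProj$ compatible with arbitrary base change $T \to S$, giving (iii); constancy of the rank of $\tilde \pi_* \mathcal L^{\otimes i}$ via Riemann-Roch together with Grauert's theorem gives flatness of $\ol \pi$, yielding (ii); and the fibrewise statement (iv) together with the assertions about $\tau$ reduces to combining the fibrewise analyses of the tail and elliptic contractions that already appear in the literature. In particular, part (b) is built into the tail-contraction factor of the construction by Proposition \ref{prop:tail_contraction}, while part (a) follows from the classification of elliptic Gorenstein singularities recalled immediately before Definition \ref{def:qk_stable}: contracting $E|_s$ via $\mathcal L$ produces a Gorenstein singularity whose number of branches equals the edge-valence of $E|_s$ in $\trop(\tilde C|_s)$, and the induced partition of $[n]$ on these branches is exactly $\lev(E|_s)$.

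The main obstacle will be checking that $\mathcal L|_E \cong \mathscr O_E$ and that the image of $E|_s$ is Gorenstein of the predicted combinatorial type; this is essentially a bookkeeping exercise with the local computations of \cite{bkn_qstable, bozlee_contractions}, made tractable here by the disjointness $E \cap T = \emptyset$, which ensures that $\mu$ contributes nothing in a neighborhood of $E$ (and symmetrically that $\nu$ contributes nothing near $T$), so the two pieces of the construction do not interact and can be analyzed by quoting existing results verbatim on each side.
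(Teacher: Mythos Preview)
Your single-step $\PProj$ plan differs from the paper's two-step argument, and there is a genuine obstruction to making it work as written. The paper first applies \cite[Theorem~4.1]{bkn_qstable} as a black box to contract $E$ and obtain a Gorenstein family $C' \to S$, then builds a line bundle $\mathscr{L} = \omega_{C'/S}(\Sigma') \otimes \mathscr{J}$ on $C'$ (where $\mathscr{J}$ is constructed by gluing, since $C'$ is no longer a log curve) and takes $\PProj$ to contract the tails. The point of this ordering is that on $C'$ the genus-one contribution is concentrated at the elliptic singularity, and $\omega_{C'/S}(\Sigma')$ is already ample there, so $H^1$ of $\mathscr{L}$ vanishes on fibers and cohomology-and-base-change goes through.

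Your approach collapses these two steps into a single $\PProj$ on $\tilde C$, but this breaks at the $H^1$ vanishing step. If, as you assert, $\mathcal{L}|_E \cong \mathscr{O}_E$, then since $E$ has arithmetic genus one, $H^1(E, \mathcal{L}|_E) \cong k \neq 0$. The partial-normalization long exact sequence then forces $H^1(\tilde C_s, \mathcal{L}) \to H^1(E,\mathscr{O}_E)$ to be surjective, so $H^1(\tilde C_s, \mathcal{L}) \neq 0$; the argument of Lemma~\ref{lem:tail_contraction_sheaf}(iii) does not save you here precisely because the subcurve being contracted is no longer of genus zero. Without fiberwise $H^1$ vanishing you lose both commutation with base change and flatness of $\ol\pi$. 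More fundamentally, the elliptic contraction of \cite{bozlee_contractions, bkn_qstable} is \emph{not} realized as the $\PProj$ of a line bundle of the shape $\omega_{\tilde C/S}(\Sigma)\otimes\mathscr{O}_{\tilde C}(\nu)$ that is trivial on $E$; the Gorenstein property of the resulting singularity comes from a more delicate construction, so there is no off-the-shelf $\nu$ to plug in. The disjointness $E \cap T = \emptyset$ is necessary but not sufficient: it lets you localize the analysis, but it does not let you replace the elliptic contraction machinery by a naive $\PProj$.
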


\begin{proof}
We first contract the elliptic subcurve by applying \cite[Theorem 4.1]{bkn_qstable} to form a diagram
\begin{equation} \label{eq:diagram_contracting_E}
 \begin{tikzcd}
   & \tilde{C} \ar[dl, "\phi"'] \ar[dr, "\tau'"] & \\
    C \ar[dr, "\pi"'] & & C' \ar[ld, "\pi'"] \\
     & S & 
  \end{tikzcd}
\end{equation}
where
\begin{enumerate}
  \item $\phi$ is a log blowup inducing the subdivision at the locus where $\lambda = \rho$ on tropicalizations;
  \item $\pi'$ is a flat and proper family of Gorenstein curves of genus one;
  \item $\tau'$ is a surjective map whose restriction to fibers contracts $E|_s$ to a singularity of level $\lev(E|_{s})$;
  \item $\tau'$ restricts to an isomorphism $\tilde{C} - E \to C' - \tau'(E)$.
\end{enumerate}
Moreover, this construction commutes with base change. Let $\Sigma'$ be the divisor of markings on $C'$ induced by $\tau' \circ \phi^{-1}$.

Our next step is to contract the rational tails. We begin by constructing a sheaf $\mathscr{J}$ on $C'$ so that $(\tau')^*\mathscr{J} = \mathscr{O}_{\tilde{C}}(\mu)$.

Let $T' = \tau'(T)$ and let
\begin{align*}
    \tilde{U} = \tilde{C} - T \quad &\text{ and } \quad U = C' - T'; \\
    \tilde{V} = \tilde{C} - E \quad &\text{ and } \quad V = C' - \tau'(E).
\end{align*}
Note that $U \cup V$ is an open cover of $C'$ and that $\tau'(\tilde{U}) = U$, $\tau'(\tilde{V}) = V$, $\tilde{U} = (\tau')^{-1}(U)$, and $\tilde{V} = (\tau')^{-1}(V)$. By (iv) above, $\tau'|_{\tilde{V}} : \tilde{V} \to V$ is an isomorphism. Let $\mathscr{J}$ be the invertible sheaf on $C'$ formed by gluing $\mathscr{O}_U$ on $U$ with $(\tau')_*(\mathscr{O}_{\tilde{V}}(\mu|_{\tilde{V}}))$ on $V$ via the map
\[
(\tau')_*\big(\,\mathscr{O}_{\tilde{U}}|_{\tilde{U} \cap \tilde{V}} \overset{\mu}{\to} \mathscr{O}_{\tilde{U} \cap \tilde{V}}(\mu)\,\big)
\]
given by the log structure. This is an isomorphism since $\mu$ restricts to 0 on $\tilde{U} \cap \tilde{V}$. Observe that $(\tau')^*\mathscr{J} = \mathscr{O}_{\tilde{C}}(\mu)$, as desired.

Now let $\mathscr{L} = \omega_{C'/S}(\Sigma') \otimes \mathscr{J}$. Since $C'$ is Gorenstein, $\mathscr{L}$ is an invertible sheaf.

We next claim that $R^i(\pi')_{*}(\mathscr{L}) = 0$ for all $i > 0$. If $S$ has finite dimension $n$, then $C'$ has dimension $n + 1$, so $R^{i}(\pi')_{*}(\mathscr{L}) = 0$ for $i > n + 1$. By standard arguments $\pi : C \to S$, its contraction datum, and hence all of diagram \eqref{eq:diagram_contracting_E}, are pulled back from a finite dimensional base. Then we may use cohomology and base change to conclude that $R^i(\pi')_*(\mathscr{L}) = 0$ for $i \gg 0.$ Then since cohomology vanishes on the fibers of $\pi'$ for $i > 1$, we use descending induction to conclude $R^i(\pi')_*(\mathscr{L}) = 0$ for $i > 1$.

To conclude that $R^1(\pi')_{*}(\mathscr{L}) = 0$, it suffices to show that $H^1(C'|_s, \mathscr{L}|_s) = 0$ for geometric points $s$ of $S$. 
Let $Z = \ol{C'|_s - T'|_s}$ and let $\nu : Z \sqcup T'|_{s} \to C'|_{s}$ be the natural map. Just as in Lemma \ref{lem:tail_contraction_sheaf} part \eqref{lem:tail_contraction_sheaf_2}, we may observe that $\mathscr{L} \otimes \mathscr{O}_Z \cong \omega_{C'/S}(\Sigma')|_Z \otimes \mathscr{O}_Z(D)$ where $D$ is a strictly effective divisor supported on $Z \cap T'|_s$. Since $T'$ is contained in $V$, we may use Lemma \ref{lem:tail_contraction_sheaf} part \eqref{lem:tail_contraction_sheaf_1} to see that $\mathscr{L} \otimes \mathscr{O}_{T'|_{s}} \cong \mathscr{O}_{T'|_{s}}$. 
Then we conclude $H^1(C'|_s, \mathscr{L}|_s) = 0$ as in Lemma \ref{lem:tail_contraction_sheaf} part \eqref{lem:tail_contraction_sheaf_3}.

Then by cohomology and base change, we have commutativity of $\pi_*'$ and base change for $\mathscr{L}$. Now, fiber by fiber, $\mathscr{L}|_{Z}$ is ample, since $\omega_{C'/S}(\Sigma')|_Z$ is already ample.

We now set $\ol{C} = \PProj_{S} \bigoplus_{i = 0}^\infty (\pi')_*(\mathscr{L}^{i})$. The rest follows.

In order to show that $\ol{\pi}$ is flat, it is sufficient to show that $\pi_*\mathcal{L}^{\otimes i}$ is locally free for $i \geq 1$. We note that the degree of $\mathcal{L}^{\otimes i}$ is constant on fibers and $\mathcal{L}^{\otimes i}$ has no cohomology on fibers. It follows by Riemann-Roch that the rank of $\pi_*\mathcal{L}^{\otimes i}$ is constant over $S$. Grauert's Theorem then implies that $\pi_*\mathcal{L}^{\otimes i}$ is locally free.
\end{proof}

\section{Maps induced by universal contraction data}

\subsection{Simplicially stable spaces}

We now use the technology of the previous section to construct reduction maps
\begin{align*}
  \rho_{\calK}: \Moduli_{g,n} \to \Moduli_{g,\mathcal{K}}
\end{align*}
analogous to the reduction maps of \cite[Theorem 4.1]{hassett_weighted_curves}.

Our strategy is to find all tail functions on the universal curve of $\Moduli_{g,n}$. Each of these induces a contraction by Proposition \ref{prop:tail_contraction}, and the resulting family of curves induces a map to $\Moduli_{g,\mathcal{K}}$ for some $\calK$. It is interesting to note that such universal tail functions are in bijection with collision complexes $\calK$ and with the extremal assignments $\calZ$ supported on rational tails: in principle one could discover the moduli spaces $\Moduli_{g,\calK}$ by starting with contractions of the universal curve.

\begin{lemma} \label{lem:univ_tail_function_equals_extremal_assignment}
Let $g, n$ be integers so that $g \geq 0$ and $n \geq 1$, and $n \geq 3$ if $g = 0$.
Let $\pi_{g,n} : C_{g,n} \to \Moduli_{g,n}$ be the universal curve with the basic log structure. Then tail functions on $\pi_{g,n}$ are in bijection with extremal assignments on $\Moduli_{g,n}$ supported on rational tails.
\end{lemma}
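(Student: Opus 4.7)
The plan is to describe the bijection explicitly in both directions and verify that each assignment respects the combinatorial compatibility required of the other.

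\textbf{Forward direction.} Given a tail function $\mu \in \Gamma(C_{g,n}, \ol{M}_{C_{g,n}})$, I would define the associated extremal assignment $\calZ_\mu$ as follows. For each $G \in \mathbb{G}_{g,n}$, choose a geometric point $s \to \Moduli_{g,n}$ whose stable dual graph is $G$. Then $\PL(\mu|_{C_s})$ is a tail function on $\trop(C_s)$ with underlying graph $G$, and by Lemma \ref{lem:tail_function_determined_by_support} its support is a union of disjoint stable rational tails. Define $\calZ_\mu(G)$ to be this vertex-induced subgraph. The first thing I would check is that $\calZ_\mu(G)$ does not depend on the choice of $s$: using the atomic neighborhood structure of Theorem \ref{thm:uniform_charts} together with the irreducibility of the boundary strata of $\Moduli_{g,n}$, any two such geometric points can be connected through fibers whose tropicalizations differ by face contractions that act as isomorphisms on the combinatorial data, and Lemma \ref{lem:tail_function_weighted_edge_contraction} then shows the support is constant on the stratum.

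\textbf{Verification of (Z1) and (Z2).} Axiom (Z1) is immediate from the definition of tail function, which requires that the support not equal $V(\Gamma)$. For (Z2), I would fix a weighted edge contraction $\pi : G' \to G$ in $\mathbb{G}_{g,n}$ and realize it by a one-parameter specialization: a geometric point $t$ with dual graph $G$ specializing to a geometric point $s$ with dual graph $G'$, all inside a single atomic neighborhood. By Theorem \ref{thm:uniform_charts}, the restriction of $\mu$ to the tropicalization at $s$ is the pullback of the restriction to the tropicalization at $t$ along the face contraction dual to $\pi$. Lemma \ref{lem:tail_function_weighted_edge_contraction} then yields $v \in \calZ_\mu(G) \iff \ul{\pi}^{-1}(v) \subseteq \calZ_\mu(G')$, which is exactly (Z2).

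\textbf{Inverse direction.} Conversely, given an extremal assignment $\calZ$ supported on rational tails, I would construct a global tail function $\mu_\calZ$ by specifying it \'etale-locally and gluing. On an atomic neighborhood $U$ around a geometric point $s$ with stable dual graph $G$, the subgraph $\calZ(G) \subseteq G$ is a disjoint union of stable rational tails (stable because every rational vertex of a stable graph already has valence at least three), so it determines a tail function on $\trop(C_s)$, and hence via $\PL$ a section $\mu_s \in \Gamma(C_s, \ol{M}_{C_s})$. Part (i) of Theorem \ref{thm:uniform_charts} extends this uniquely to a section $\mu_U \in \Gamma(C|_U, \ol{M}_{C|_U})$. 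To glue the $\mu_U$ over intersections of atomic neighborhoods I would pass to geometric points in the intersection: their tropicalizations are face contractions of both of the distinguished ones, axiom (Z2) dictates how $\calZ$ restricts along face contractions, and Lemma \ref{lem:tail_function_weighted_edge_contraction} says the associated tail functions restrict in exactly the same way. Therefore the locally defined sections agree on overlaps and glue to a global tail function.

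\textbf{Mutual inverseness.} Finally I would note that $\calZ_{\mu_\calZ}(G) = \calZ(G)$ by construction (combined with Lemma \ref{lem:tail_function_determined_by_support}), and $\mu_{\calZ_\mu} = \mu$ since a tail function is determined by its support via the slope rule in Definition \ref{def:tail_function}.

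\textbf{Main obstacle.} The technical heart of the argument is the gluing step in the inverse direction: one has to verify that the fiberwise tail functions, extended uniquely to each atomic neighborhood by Theorem \ref{thm:uniform_charts}, agree on overlaps. The precise form of Lemma \ref{lem:tail_function_weighted_edge_contraction}, describing how a tail function's support transforms under weighted edge contractions, is exactly what pairs with axiom (Z2) to make this compatibility work, and the bookkeeping of identifying face contractions with the edge contractions in $\mathbb{G}_{g,n}$ is the step most likely to require care.
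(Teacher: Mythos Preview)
Your proposal is correct and follows essentially the same approach as the paper. The paper's proof compresses your atomic-neighborhood gluing argument into a single citation of \cite[Lemma 4.5]{bkn_qstable}, which identifies a global tail function with a face-contraction-compatible system $(\mu_\Gamma)_{\Gamma \in \mathbb{G}_{g,n}}$ of tail functions on basic tropical curves; after that reduction the paper works purely combinatorially, setting $\calZ_\mu(\Gamma) = |\mu_\Gamma|$ and invoking Lemma \ref{lem:tail_function_weighted_edge_contraction} to match compatibility under face contractions with axiom (Z2), exactly as you do.
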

\begin{proof}
Regard the graphs $\Gamma \in \mathbb{G}_{g,n}$ as stable tropical curves by giving each the basic log structure.
Arguing analogously to \cite[Lemma 4.5]{bkn_qstable}, a tail function $\mu$ on $\pi_{g,n}$ is equivalent to a choice of tail function $\mu_{\Gamma}$ for each $\Gamma \in \mathbb{G}_{g,n}$ so that for each face contraction $f : \Gamma \to \Gamma'$, we have $f^*\mu_{\Gamma'} = \mu_{\Gamma}$.

Now, a choice of tail function $\mu_{\Gamma}$ on $\Gamma$ is the same as a choice of disjoint rational tails $T_1,\ldots, T_k$ of $\Gamma$ so that $\bigcup_i T_i \neq \Gamma$. Set $\mathcal{Z}_{\mu}(\Gamma) = \bigcup_i T_i$. By definition, we satisfy axiom (Z1).

Face contractions $f : \Gamma \to \Gamma'$ are in bijection with their underlying weighted edge contractions $\Gamma' \to \Gamma$ of underlying graphs. By Lemma \ref{lem:tail_function_weighted_edge_contraction}, $\calZ_{\mu}$ satisfies axiom (Z2). This process is clearly reversible, so we have the desired bijection.
\end{proof}

\begin{corollary} \label{cor:tail_function_is_simplicial_set}
With notation as in Lemma \ref{lem:univ_tail_function_equals_extremal_assignment}, if $g > 1$, tail functions on $\pi_{g,n}$ are in bijection with simplicial complexes $\calK$ on $[n]$. If $g = 0$, tail functions on $\pi_{g,n}$ are in bijection with at least triparted simplicial complexes $\calK$ on $[n]$.
\end{corollary}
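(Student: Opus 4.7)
The plan is to simply compose two bijections that are essentially already in hand. By Lemma \ref{lem:univ_tail_function_equals_extremal_assignment}, tail functions on $\pi_{g,n}$ correspond bijectively to extremal assignments on $\mathbb{G}_{g,n}$ supported on rational tails. By Lemma \ref{lem:extremal_assignments_biject_simplicial_complexes}, such extremal assignments are themselves in bijection with simplicial complexes on $[n]$ (required to be at least triparted in the $g=0$ case). Composing these two bijections yields the statement.

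In more detail, starting with a simplicial complex $\calK$, one forms the extremal assignment $\calZ_\calK$ from Definition \ref{def:simpstab}, and then reads off the associated tail function on $\pi_{g,n}$ fiberwise: for each dual graph $\Gamma \in \mathbb{G}_{g,n}$ the restriction $\mu_\Gamma$ is the tail function $\mu_{T_1,\ldots,T_k}$ where $T_1,\ldots,T_k$ are the connected components of $\calZ_\calK(\Gamma)$ (which are disjoint rational tails by construction). Going the other way, a global tail function restricts on each dual graph to a tail function whose support is a disjoint union of stable rational tails (Lemma \ref{lem:tail_function_determined_by_support}), and we set $\calZ(\Gamma)$ equal to this support; Lemma \ref{lem:univ_tail_function_equals_extremal_assignment} guarantees that this $\calZ$ is an extremal assignment, and then Lemma \ref{lem:extremal_assignments_biject_simplicial_complexes} extracts the corresponding complex $\calK_\calZ$.

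The only nontrivial point is the genus-zero hypothesis. In that case, Lemma \ref{lem:extremal_assignments_biject_simplicial_complexes} only produces bijections with \emph{at least triparted} simplicial complexes; the compatibility condition imposed by face contractions of the single-vertex and two-vertex graphs in $\mathbb{G}_{0,n}$ forces any extremal assignment (and hence any tail function on $\pi_{0,n}$) to avoid exactly the subsets whose exclusion defines the at-least-triparted property. In higher genus there is no such obstruction, so all simplicial complexes arise. Since both steps of the composition are explicit, no analytic input is needed and there is no genuine obstacle; the corollary is a formal consequence of the two preceding lemmas.
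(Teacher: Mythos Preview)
Your proof is correct and follows exactly the same approach as the paper: compose the bijection of Lemma \ref{lem:univ_tail_function_equals_extremal_assignment} with that of Lemma \ref{lem:extremal_assignments_biject_simplicial_complexes}. The paper's own proof is a single sentence to this effect; your additional paragraphs unpacking the two directions are accurate but not needed.
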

\begin{proof}
Compose the bijections of Lemma \ref{lem:univ_tail_function_equals_extremal_assignment} and Lemma \ref{lem:extremal_assignments_biject_simplicial_complexes}.
\end{proof}

\begin{theorem} \label{thm:simplicial_reduction_map_nonintro}
Let $g \geq 0, n \geq 1$ be integers and $\calK$ a simplicial complex on $[n]$. If $g = 0$, assume also that $\calK$ is at least triparted. Then there is a morphism of algebraic stacks
\[
  \rho_{\calK}: \Moduli_{g,n} \to \Moduli_{g,\mathcal{K}}.
\]
restricting to an isomorphism on the dense open substack $\moduli_{g,n}$.
\end{theorem}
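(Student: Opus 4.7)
The plan is to build $\rho_\calK$ as the classifying map of a family of $\calK$-stable curves obtained by contracting the universal curve $\pi_{g,n}\colon \mathcal{C}_{g,n} \to \overline{\mathcal{M}}_{g,n}$ along the universal tail function attached to $\calK$. First I would invoke Corollary \ref{cor:tail_function_is_simplicial_set} to produce the tail function $\mu_\calK \in \Gamma(\mathcal{C}_{g,n}, \ol{M}_{\mathcal{C}_{g,n}})$ corresponding to $\calK$, where $\mathcal{C}_{g,n} \to \ol{\mathcal{M}}_{g,n}$ carries its basic log structure. By construction, on each geometric fiber the support of $\mu_\calK$ is exactly the union of the (maximal) rational tails whose marks lie in $\calK$.

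Next I would apply Proposition \ref{prop:tail_contraction} to the pair $(\pi_{g,n}, \mu_\calK)$, producing a diagram
\[
  \begin{tikzcd}
    \mathcal{C}_{g,n} \ar[rr, "\tau"] \ar[rd, "\pi_{g,n}"'] & & \ol{\mathcal{C}}_{g,n} \ar[ld, "\ol{\pi}"]\\
     & \ol{\mathcal{M}}_{g,n} &
  \end{tikzcd}
\]
where $\ol{\pi}$ is a flat family of pointed nodal curves, formed fiberwise and compatibly with base change. The key step is then to check that each geometric fiber $(\ol{C}; \bar{p}_1, \dots, \bar{p}_n)$ of $\ol{\pi}$ is $\calK$-stable. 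Axioms (K1) and (K2) are immediate from Proposition \ref{prop:tail_contraction}, since the contraction of each connected component of the support of $\mu_\calK$ is a smooth point. Axiom (K3) follows because the smooth marked points of $\ol{C}$ are either untouched markings of the original stable curve (singletons, hence in $\calK$) or images of rational tails $T_i$ whose mark sets $\Marks(T_i)\in \calK$ by definition of $\mu_\calK$. For (K5), rational components of $\ol{C}$ come from rational components of the stable source whose special points are preserved or replaced by a collision point, giving at least three special points.

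The main obstacle is axiom (K4): no rational tail of $\ol{C}$ should have its mark set in $\calK$. The argument is by maximality of $\mu_\calK$: if $\ol{Z} \subseteq \ol{C}$ were a rational tail with $\Marks(\ol{Z}) \in \calK$, then its strict transform in $C$ together with all contracted tails mapping into it would form a rational tail of $C$ marked by $\Marks(\ol{Z}) \in \calK$, contradicting that $\mu_\calK$ was defined using all such maximal tails. (Alternatively, this is exactly the content of Theorem \ref{thm:to_collide_or_not_to_collide} once one observes $\Marks(\ol{Z})$ would then both collide and fail to collide.)

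Finally, since $\ol{\mathcal{M}}_{g,n}$ is a moduli stack and $\Moduli_{g,\calK}$ is an open substack of $\mathcal{V}_{g,n}$ parametrizing $\calK$-stable curves, the family $\ol{\pi}\colon \ol{\mathcal{C}}_{g,n} \to \ol{\mathcal{M}}_{g,n}$ together with its sections determines the required morphism $\rho_\calK\colon \ol{\mathcal{M}}_{g,n} \to \Moduli_{g,\calK}$. Over the open substack $\mathcal{M}_{g,n}$, every fiber of $\pi_{g,n}$ is smooth and irreducible, so it has no rational tails at all; hence $\mu_\calK$ restricts to the zero function and $\tau$ restricts to an isomorphism there. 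The classifying map $\rho_\calK$ therefore restricts to the tautological open immersion $\mathcal{M}_{g,n} \hookrightarrow \Moduli_{g,\calK}$, which is an isomorphism onto its image.
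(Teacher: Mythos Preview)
Your proposal is correct and follows essentially the same approach as the paper: produce the tail function $\mu_\calK$ via Corollary~\ref{cor:tail_function_is_simplicial_set}, apply Proposition~\ref{prop:tail_contraction} to contract the universal curve, and observe that $\mu_\calK$ vanishes on smooth fibers. The only difference is cosmetic: the paper verifies that the contracted family is $\calK$-stable by noting that the support of $\mu_\calK$ on each fiber is exactly $\calZ_\calK$ and then invoking the already-proved equivalence of $\calZ_\calK$-stability with $\calK$-stability (Proposition~\ref{prop:simplicial_stable_is_ZK_stable}), whereas you verify axioms (K1)--(K5) directly; your direct check of (K4) via maximality is essentially the content of that equivalence unwound.
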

\begin{proof}
Let $\mu$ be the tail function on the universal curve $\pi_{g,n} : C_{g,n} \to \Moduli_{g,n}$ associated to $\calK$ by Corollary \ref{cor:tail_function_is_simplicial_set}.
By Proposition \ref{prop:tail_contraction}, there is a contraction morphism
\[
  \begin{tikzcd}
   C_{g,n} \ar[rr,"\tau"] \ar[rd, "\pi_{g,n}"'] & & \ol{C}_{g,n} \ar[ld, "\ol{\pi}"] \\
    & \Moduli_{g,n}
  \end{tikzcd}
\]
associated to $\mu$. By Proposition \ref{prop:tail_contraction} part (i), $\ol{\pi}$ is a flat family of pointed nodal curves. By construction, for each geometric point $s \to \Moduli_{g,n}$, the support of $\mu|_s$ consists of $\calZ_\calK(C_{g,n}|_s)$ where $\calZ_\calK$ is the extremal assignment associated to $\calK$. Therefore, by Proposition \ref{prop:tail_contraction} part (ii) and (iii) $\ol{C}$ is a family of $\calZ_\calK$-stable curves. By Proposition \ref{prop:simplicial_stable_is_ZK_stable}, $\ol{C}$ is a family of $\calK$-stable curves. This defines the morphism $\rho_{\calK}$. The morphism restricts to an isomorphism on $\moduli_{g,n}$ since $\mu = 0$ on smooth curves.
\end{proof}

\subsection{Genus one spaces} \label{ssec:genus_one_univ_contractions}

In this section, we construct a resolution of the rational map $\Moduli_{1,n} \dashrightarrow \Moduli_{1,\mathcal{K}}(Q)$ for each $(Q,\mathcal{K})$-stability condition using a universal contraction datum on $\Moduli_{1,n}^{rad}$. Conversely, the combinatorial data ($Q$, $\mathcal{K}$) emerge from universal contraction data, and we view this as evidence for the naturality of our definitions.

Fix an integer $n$. Let $I$ be the set of isomorphism classes of $n$-pointed basic stable radially aligned tropical curves. For each isomorphism class, fix a representative $\Gamma$ with edge lengths in the strict fs monoid $P_\Gamma$. Recall the following definition:

\begin{definition} (\cite[Definition 4.6]{bkn_qstable})
An $n$-pointed \emphbf{universal radius} $(\rho_\Gamma)_{\Gamma \in I}$ is a choice of radius $\rho \in P_\Gamma$ for each $\Gamma$, compatible with face contractions in the sense that whenever $\Gamma, \Gamma' \in I$ and $\pi : \Gamma \to \Gamma'$ is a face contraction, then $\pi^\sharp(\rho_{\Gamma'}) = \rho_{\Gamma}$.
\end{definition}

By \cite[Lemma 4.5]{bkn_qstable}, the data of a universal radius is equivalent to a radius $\rho \in \Gamma(\Moduli_{1,n}^{rad}, \ol{M}_{\Moduli_{1,n}^{rad}})$ on the universal curve $C$ of $\Moduli_{1,n}^{rad}$.

Recall also that associated to such a radius is a map $\tilde{C} \to C$ of the universal curve inducing the subdivision where $\lambda = \rho$
on the tropicalizations of the geometric fibers of $C$, constructed by taking the log blowup with respect to the log ideal generated by $\lambda$ and $\rho$.

\begin{definition}
Given a universal radius $(\rho_\Gamma)_{\Gamma \in I}$ and $\Gamma \in I$, write $\tilde{\Gamma}$ for the subdivision of $\Gamma$ where $\rho_\Gamma = \lambda$. Write $\Delta_\Gamma$ for the sub-tropical curve of $\tilde{\Gamma}$ where $\lambda < \rho$. Write $\ol{\Delta}_\Gamma$ for the sub-tropical curve of $\tilde{\Gamma}$ where $\lambda \leq \rho$. Observe that a face contraction $\Gamma \to \Gamma'$ induces a face contraction $\tilde{\pi} : \tilde{\Gamma} \to \tilde{\Gamma}'$.

An $n$-pointed \emphbf{universal contraction datum} consists of a contraction datum $(\rho_\Gamma, \mu_\Gamma)$ on each $\Gamma \in I$ such that
\begin{enumerate}
  \item $(\rho_\Gamma)_{\Gamma \in I}$ is a universal radius;
  \item for each face contraction $\pi : \Gamma \to \Gamma'$, we have $\tilde{\pi}^*(\mu_{\Gamma'}) = \mu_{\Gamma}$.
\end{enumerate}
\end{definition}

\begin{proposition}
A contraction datum on the universal curve $C \to \Moduli_{1,n}^{rad}$ is equivalent to the specification of a universal contraction datum.
\end{proposition}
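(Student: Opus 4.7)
The plan is to split the equivalence into three independent pieces: the radius, the tail function, and the condition $E \cap T = \emptyset$, and to verify each separately. Throughout, one exploits the fact that, by Theorem \ref{thm:uniform_charts}, sections of the characteristic sheaf of a family of log curves are determined by compatible systems of piecewise linear functions on the tropicalizations of the geometric fibers, where ``compatibility'' means compatibility with the canonical face contractions induced by generalization.

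For the radius part, the equivalence between a section $\rho \in \Gamma(\Moduli_{1,n}^{rad}, \ol{M})$ and a universal radius $(\rho_\Gamma)_{\Gamma \in I}$ is the content of \cite[Lemma 4.5]{bkn_qstable}, already invoked in the paragraph preceding the statement. Such a $\rho$ determines the log blowup $\tilde{C} \to C$ of the universal curve whose geometric fiber over a point with tropicalization $\Gamma$ has tropicalization $\tilde{\Gamma}$, and each face contraction $\pi : \Gamma \to \Gamma'$ induces a face contraction $\tilde{\pi} : \tilde{\Gamma} \to \tilde{\Gamma}'$ of the subdivisions.

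For the tail function part, apply Theorem \ref{thm:uniform_charts} to the family $\tilde{C} \to \Moduli_{1,n}^{rad}$. This gives that a global section $\mu \in \Gamma(\tilde{C}, \ol{M}_{\tilde{C}})$ is equivalent, via the bijection $\PL$, to a choice of piecewise linear function $\mu_\Gamma \in \PL(\tilde{\Gamma})$ for each $\Gamma \in I$ such that $\tilde{\pi}^*(\mu_{\Gamma'}) = \mu_\Gamma$ for every face contraction $\pi : \Gamma \to \Gamma'$; this is exactly parallel to the argument in \cite[Lemma 4.5]{bkn_qstable} and in Lemma \ref{lem:univ_tail_function_equals_extremal_assignment}. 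It remains to check that such a $\mu$ is a tail function on the family (in the sense of Definition \ref{def:tail_function}) if and only if each $\mu_\Gamma$ is a tail function on $\tilde{\Gamma}$. The forward direction is immediate from the fiberwise definition; the reverse direction uses Lemma \ref{lem:tail_function_weighted_edge_contraction}, which guarantees that the pullback of a tail function along a face contraction remains a tail function, so that the fiberwise tail function condition propagates correctly under generalization.

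Finally, the condition $E \cap T = \emptyset$ on the family is, by definition, a fiberwise condition: its geometric fiber over a point with tropicalization $\Gamma$ reads $\ol{\Delta}_\Gamma \cap |\mu_\Gamma| = \emptyset$, which is precisely the corresponding condition in the definition of a contraction datum on $\Gamma$. The main obstacle is the careful bookkeeping needed to match subdivisions $\tilde{\Gamma}$, face contractions $\tilde{\pi}$, and supports $|\mu_\Gamma|$ with their family-level counterparts; once one has established that the log blowup $\tilde{C} \to C$ is compatible with generalization in the expected way (which follows from the functoriality of log blowups), the proof reduces to invoking the cited theorems and lemmas without further computation.
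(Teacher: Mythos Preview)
Your proposal is correct and follows essentially the same approach as the paper: both reduce to \cite[Lemma 4.5]{bkn_qstable}, with the tail function $\mu_\Gamma$ playing the role of $\rho_\Gamma$. Your version is simply more explicit, breaking the equivalence into the radius, the tail function, and the non-overlap condition, whereas the paper's proof is a one-line reference to that lemma.

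The one point the paper singles out that you leave implicit is automorphism-invariance: to pass from ``a global section of $\ol{M}$ over the moduli stack'' to ``one value per isomorphism class $\Gamma \in I$'', one must know that the tail functions $\mu_\Gamma$ are fixed by all automorphisms of the basic stable radially aligned tropical curve $\Gamma$. This holds because a tail function is determined by its support (Lemma \ref{lem:tail_function_determined_by_support}), and any automorphism of an $n$-marked graph permutes rational tails according to their marking sets, hence fixes them. You absorb this into your appeal to \cite[Lemma 4.5]{bkn_qstable} and Lemma \ref{lem:univ_tail_function_equals_extremal_assignment}, which is reasonable, but it is worth stating since it is the only genuinely new check beyond the radius case.
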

\begin{proof}
The argument is identical to \cite[Lemma 4.5]{bkn_qstable}, with $\mu_\Gamma$ taking the place of $\rho_\Gamma$. Observe that as with radii, the piecewise linear functions $\mu_\Gamma$ are fixed under the automorphisms of all basic stable radially aligned log curves.
\end{proof}

\begin{definition}
A basic stable radially aligned tropical curve $\Gamma$ with edge lengths in $P$ is said to be a \emphbf{$k$-layer tree} if
\begin{enumerate}
    \item The core of $\Gamma$ is a vertex of genus one;
    \item $\Gamma$ has $k$ distinct nonzero radii $\rho_1 < \ldots < \rho_k$.
\end{enumerate}
\end{definition}

\begin{definition}[{{\cite[Definition 3.20]{bkn_qstable}}}] \label{def:partition_type}
Let $\Gamma$ be a radially aligned tropical curve with ordered radii $0 < \rho_1 < \ldots < \rho_k$.

Given a radius $\rho$, we may form a tropical curve $\widetilde{\Gamma}_{\lambda \geq \rho}$ by subdividing the edges and legs of $\Gamma$ where $\lambda = \rho$,
then deleting the locus where $\lambda < \rho$. We define the \emphbf{partition associated to the radius $\rho$} to be the partition of $\{1, \ldots, n\}$ induced by the components of $\widetilde{\Gamma}_{\lambda \geq \rho}$, and we denote it by $\Part(\rho)$.

We say that the resulting strict chain of partitions
\[
  \Part(\rho_1) \prec \Part(\rho_2) \prec \cdots \prec \Part(\rho_k)
\]
is the \emphbf{partition type} of $\Gamma$.
\end{definition}

Given a chain of partitions $\calP : P_1 \prec P_2 \prec \cdots \prec P_k$, there is a unique $k$-layer tree $\Gamma_{\calP}$ with partition type $\calP$. For the idea of the construction, see \cite[Lemma 6.6]{bkn_qstable}.

\begin{proposition}
A universal contraction datum is determined by its values on 1-layer trees.
\end{proposition}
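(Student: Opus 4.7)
Plan: A universal contraction datum $D = (\rho_\Gamma, \mu_\Gamma)_{\Gamma \in I}$ has two pieces --- a radius part $(\rho_\Gamma)$ and a tail-function part $(\mu_\Gamma)$ --- and I would show each is determined by its 1-layer tree values via a propagation argument that leverages the compatibility conditions.

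For the radius piece, I would argue that the 1-layer tree values recover a downward-closed subset $Q = \{\calP \in \PPart(n) : \rho_{\Gamma_\calP} \neq 0\} \in \mathfrak{Q}_n$, and that $Q$ in turn determines $\rho_\Gamma$ on every $\Gamma$. Specifically, for any $\Gamma$ with ordered radii $0 < \rho_1 < \cdots < \rho_k$ and its prescribed value $\rho_\Gamma$, there is a canonical face contraction $\pi : \Gamma_{\Part(\rho_\Gamma)} \to \Gamma$ from the 1-layer tree associated to the partition type $\Part(\rho_\Gamma)$, and the universal radius compatibility $\pi^\sharp(\rho_\Gamma) = \rho_{\Gamma_{\Part(\rho_\Gamma)}}$ pins down $\rho_\Gamma$ as the unique radius of $\Gamma$ whose partition type has the prescribed image in $Q$. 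This part of the argument is in the spirit of \cite[Lemma 6.6]{bkn_qstable}.

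For the tail-function piece, assume $(\rho_\Gamma)$ is known, so that each $\mu_\Gamma$ lives on the subdivision $\tilde\Gamma$ at $\lambda = \rho_\Gamma$. By Lemma \ref{lem:tail_function_determined_by_support}, $\mu_\Gamma$ is uniquely determined by its support --- a disjoint union of stable rational tails. Mimicking Corollary \ref{cor:tail_function_is_simplicial_set} in the subdivided setting, I would identify the universal tail function with a simplicial complex $\calK$ on $[n]$: a stable rational tail $T$ of $\tilde\Gamma$ lies in $|\mu_\Gamma|$ precisely when $\Marks(T) \in \calK$, with downward-closure of $\calK$ forced by the compatibility $\tilde\pi^*(\mu_{\Gamma'}) = \mu_\Gamma$ unpacked via Lemma \ref{lem:tail_function_weighted_edge_contraction}. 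To recover $\calK$ from 1-layer trees, for each subset $S \subseteq [n]$ with $|S| \geq 2$ the 1-layer tree $\Gamma_{\calP_S}$ associated to $\calP_S = \{S\} \cup \{\{j\} : j \in [n] - S\}$ contains a unique stable rational tail with marks $S$, and whether this tail belongs to $|\mu_{\Gamma_{\calP_S}}|$ records whether $S \in \calK$.

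The main obstacle is the radially aligned analogue of Corollary \ref{cor:tail_function_is_simplicial_set}: verifying that the set $\calK$ extracted from the supports of the $\mu_\Gamma$'s is genuinely a simplicial complex, and that $(\calK, Q)$ together reconstruct every $\mu_\Gamma$. Lemma \ref{lem:tail_function_weighted_edge_contraction} translates the compatibility $\tilde\pi^*(\mu_{\Gamma'}) = \mu_\Gamma$ into a preimage condition on supports that forces $\calK$ to be closed under taking subsets; Definition \ref{def:tail_function} then gives the piece-by-piece reconstruction of $\mu_\Gamma$ from its collection of stable rational tails. The remaining checks reduce to routine bookkeeping with face contractions of subdivided tropical curves.
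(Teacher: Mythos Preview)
Your approach is correct but takes a substantially longer route than the paper's. The paper gives a two-line linear-algebra argument: since the monoid $P_\Gamma$ of a basic radially aligned tropical curve is free, any element of $P_\Gamma$ is determined by its images under the coordinate projections $\pi_i^\sharp : P_\Gamma \to \N e_i$, and each such projection is exactly a face contraction $\pi_i : \Gamma_i \to \Gamma$ whose target (for the layer generators $e_i$) is a 1-layer tree. Thus $\rho_\Gamma$ is recovered coordinate-by-coordinate as $(\rho_{\Gamma_1},\ldots,\rho_{\Gamma_k})$, and the same argument applied vertex-by-vertex recovers each value $\mu_\Gamma(v)$. No combinatorial structure on $Q$ or $\calK$ is invoked.

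Your route instead passes through the classifiers $(Q,\calK)$: extract $Q$ from the radius values and $\calK$ from the tail-function supports on the special 1-layer trees $\Gamma_{\calP_S}$, verify downward-closure, and then reconstruct the full datum from $(Q,\calK)$. This is essentially the content of Proposition~\ref{prop:uni_contract_equiv_QK}, which in the paper comes \emph{after} the present statement and in fact cites it in its last sentence (``the data of $Q$ and $\calK$ are determined by the contraction data on 1-layer trees which we know determines a universal contraction datum''). So you are proving the stronger classification result and deducing this proposition as a byproduct, whereas the paper isolates the easy determination step first and uses it as a tool for the classification. Your approach buys the $(Q,\calK)$ correspondence up front, at the cost of the downward-closure and reconstruction verifications you correctly flag as the main obstacle; the paper's approach defers all of that and settles this proposition with no bookkeeping.
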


\begin{proof}
The point is that a value in $P_\Gamma$ is a linear combination of free generators, and what the linear combination is can be recovered by face projections onto each of these free generators.

Suppose $\Gamma$ is a basic stable radially aligned tropical curve with monoid $P_\Gamma = \N e_1 \oplus \cdots \oplus \N e_k$. For each $i = 1,\ldots, k$, let $\pi_i : \Gamma_i \to \Gamma$ be the face contraction induced by the projection $\pi_i^* : P_\Gamma \to \N e_i$. Then
\[
  \rho_\Gamma = (\pi_1^*(\rho_\Gamma), \ldots, \pi_k^*(\rho_\Gamma)) = (\rho_{\Gamma_1}, \cdots, \rho_{\Gamma_k}),
\]
so $\rho_\Gamma$ can be recovered from the values $\rho_{\Gamma_1}, \ldots, \rho_{\Gamma_k}$ on 1-layer trees.

The same argument for each vertex $v$ of $\tilde{\Gamma}$ shows that $\mu(v)$ can be recovered from the values $\mu_{\tilde{\Gamma}_i}(\tilde{\pi}_i(v)) = \pi_i^*(\mu(v))$.
\end{proof}

This makes it important to understand contraction data on 1-layer trees. However, 2-layer trees impose compatibility relations on the contraction data on 1-layer trees, from which we deduce the downward closedness of $Q$ and $\calK$. The following straightforward results summarize what contraction data are possible on 1-layer trees (Lemma \ref{lem:1_layer_contraction_datum}, Figure \ref{fig:firstlemma}) and 2-layer trees (Lemma \ref{lem:2_layer_contraction_datum}, Figure \ref{fig:secondlemma}) and describe the compatibility conditions (Lemma \ref{lem:1_layer_tree_relations}, Figure \ref{fig:thirdlemma}).

\begin{definition}
If $P$ is a part of a partition $\mathcal{P} \in \Part([n])$, we say that $P$ is \emphbf{large} if $P$ contains at least two elements, and say $P$ is \emphbf{small} otherwise.

If $\mathcal{P}_1 \preceq \mathcal{P}_2$ is a chain of partitions, we say that a part $P$ of $\mathcal{P}_1$ is \emphbf{refined} if $P \not\in \mathcal{P}_2$.
\end{definition}

\begin{lemma} \label{lem:1_layer_contraction_datum}
Suppose $\Gamma \in I$ is a 1-layer tree with associated non-discrete partition $\mathcal{P}$. The contraction data $(\rho, \mu)$ admitted by $\Gamma$ are the following:
\begin{enumerate}
  \item (Case I) $\rho$ is the unique non-zero radius of $\Gamma$, and $\mu = 0$.
  \item (Case II) $\rho = 0$ and $\tilde{\Gamma} = \Gamma$. In this case, the rational tails of $\tilde{\Gamma}$ are in bijection with the large parts of $\mathcal{P}$, and $\mu_\Gamma$ is supported on a subset of them.
\end{enumerate}
The contraction data on $\Gamma$ in which $\rho = 0$ are therefore in bijection with the power set of the large parts of $\mathcal{P}$. \qed
\end{lemma}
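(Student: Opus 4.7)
My plan is to exploit the rigid structure of a $1$-layer tree $\Gamma$: by definition its only vertices are the core, at $\lambda = 0$, and non-core vertices all at $\lambda = \rho_1$. Since a radius must be a value $\lambda(v)$, the only possible choices for $\rho$ in a contraction datum $(\rho, \mu)$ on $\Gamma$ are $0$ and $\rho_1$, giving exactly the two cases stated.

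For Case I, I would first note that the subdivision of $\Gamma$ at $\lambda = \rho_1$ introduces no new vertices above $\rho_1$, so every vertex of $\tilde{\Gamma}$ satisfies $\lambda \leq \rho_1$, whence $\ol{\Delta} = \tilde{\Gamma}$. The constraint $\ol{\Delta} \cap |\mu| = \varnothing$ then forces $|\mu| = \varnothing$, and by Lemma \ref{lem:tail_function_determined_by_support} this implies $\mu = 0$.

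For Case II ($\rho = 0$), subdividing at $\lambda = 0$ adds no vertices since the core alone realizes this value, so $\tilde{\Gamma} = \Gamma$ and $\ol{\Delta}$ consists of the core vertex alone. I would then classify the stable rational tails of $\Gamma$: any two distinct non-core vertices $v, w$ both lie at $\lambda = \rho_1$, so an edge between them would satisfy $\delta(e) = \lambda(v) - \lambda(w) = 0$, contradicting $\delta(e) \neq 0$; hence each non-core vertex has its unique edge leading to the core. The stable rational tails of $\Gamma$ are therefore exactly the single non-core vertices $v$ (stability of $\Gamma$ already ensures each such $v$ has valence $\geq 3$).

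Next, I would match non-core vertices with large parts of $\mathcal{P} = \Part(\rho_1)$: cutting $\Gamma$ at $\lambda = \rho_1$, each non-core vertex together with its (at least two) legs becomes a connected component carrying a large part, while each leg directly on the core becomes a singleton component contributing only small parts. This yields a canonical bijection between non-core vertices and large parts. Since a tail function is determined by its support (Lemma \ref{lem:tail_function_determined_by_support}), and admissible supports are unions of disjoint stable rational tails (automatically disjoint here, as they are distinct single vertices), contraction data with $\rho = 0$ correspond bijectively to subsets of the large parts of $\mathcal{P}$. The main technical care is in verifying that the subdivision behaves trivially in both cases and that the tails of a $1$-layer tree are exhausted by single non-core vertices; once these geometric observations are pinned down, the remainder is purely combinatorial bookkeeping.
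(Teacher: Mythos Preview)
Your argument is correct and fills in precisely the details the paper omits: note that the lemma in the paper is stated with a terminal \qed and no proof, so the authors regard it as an immediate unpacking of the definitions. Your case split on the two possible radii, the observation that $\ol{\Delta} = \tilde{\Gamma}$ forces $\mu = 0$ in Case~I, and the identification of stable rational tails with non-core vertices (hence with large parts of~$\mathcal{P}$) in Case~II are exactly the verifications one would expect.
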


\begin{figure}[h]
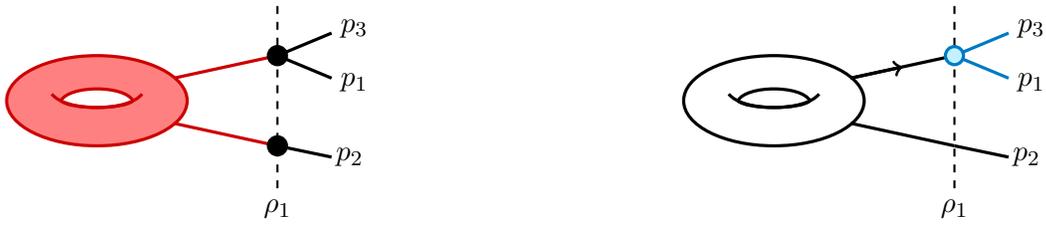

    \FirstLemma
    \caption{Two subdivisions $\tilde{\Gamma}$ of a basic stable radially aligned tropical curve $\Gamma$, a contraction datum in Case I (left) and a contraction datum in Case II (right). Components in {\color{redstroke}red} denote are contracted under $\rho$, and components in {\color{bluestroke}blue} are contracted under $\mu$.}
    \label{fig:firstlemma}
\end{figure}

\begin{lemma} \label{lem:2_layer_contraction_datum}
Suppose $\Gamma \in I$ is a 2-layer tree with associated chain of non-discrete partitions $\mathcal{P}_1 \prec \mathcal{P}_2$. Let $\rho_1 < \rho_2$ be the non-zero radii of $\Gamma$. The contraction data $(\rho, \mu)$ admitted by $\Gamma$ are the following:
\begin{enumerate}
  \item (Case I-I) $\rho_\Gamma = \rho_2$, and $\mu_\Gamma = 0$.
  \item (Case I-II) $\rho_\Gamma = \rho_1$. In this case, the rational tails of $\tilde{\Gamma}$ on which $\mu_\Gamma$ may be supported are in bijection with the large parts of $\mathcal{P}_2$.
  \item (Case II-II) $\rho_\Gamma = 0$. In this case, the rational tails
  of $\tilde{\Gamma}$ on which $\mu_\Gamma$ may be supported are in bijection with the union of the set of refined parts of $\mathcal{P}_1$ with the set of large parts of $\mathcal{P}_2$ that refine some part of $\mathcal{P}_1$. \qed
\end{enumerate}
\end{lemma}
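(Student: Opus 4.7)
The plan is to proceed by case analysis on the value of $\rho_\Gamma$, which must be a radius of $\Gamma$ and hence lies in $\{0, \rho_1, \rho_2\}$. In each case, I would identify the locus $\bar\Delta = \{v : \lambda(v) \leq \rho_\Gamma\}$ and enumerate the stable rational tails of $\tilde\Gamma$ disjoint from $\bar\Delta$. By Lemma~\ref{lem:tail_function_determined_by_support}, tail functions $\mu_\Gamma$ with $|\mu_\Gamma| \cap \bar\Delta = \emptyset$ correspond to finite disjoint collections of such tails, so classifying the possible components of $|\mu_\Gamma|$ reduces to classifying these tails.

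For $\rho_\Gamma = \rho_2$ (Case I-I), every vertex of $\tilde\Gamma$ has $\lambda \leq \rho_2$, so $\bar\Delta = \tilde\Gamma$ and no admissible tail exists, forcing $\mu_\Gamma = 0$. For $\rho_\Gamma = \rho_1$ (Case I-II), any admissible tail lies in the subgraph $\{\lambda = \rho_2\}$. Because $\lambda$ is defined by the unique path to the core, no two vertices at radius $\rho_2$ share an edge; hence every stable rational tail in this locus is a single vertex carrying at least two markings, and these are in bijection with the large parts of $\mathcal{P}_2$. For Case II-II ($\rho_\Gamma = 0$), $\bar\Delta$ is the core alone, and a stable rational tail $T$ is rooted either at a vertex $v$ with $\lambda(v) = \rho_1$ (in which case $T$ consists of $v$ together with all downstream $\lambda = \rho_2$ vertices and legs), or at a lone vertex with $\lambda = \rho_2$. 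In the first case, the stability condition that $v$ have valence at least three in $\Gamma$ is equivalent to $v$ having at least two outgoing directions beyond the edge toward the core, which in turn is equivalent to the $\mathcal{P}_1$-part containing the markings of $T$ being properly refined in $\mathcal{P}_2$; in the second case we recover a large part of $\mathcal{P}_2$, exactly as in Case I-II.

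The step I expect to be the main obstacle is verifying the precise combinatorial description in Case II-II, which hinges on the observation that every unrefined part of $\mathcal{P}_1$ must in fact be a singleton. This follows from stability: a vertex at $\lambda = \rho_1$ with only a single outgoing direction beyond the core would have valence two in $\Gamma$, contradicting the stability of the basic radially aligned tropical curve. Consequently any large part of $\mathcal{P}_2$ must strictly refine some part of $\mathcal{P}_1$, so the enumeration becomes the union of the refined parts of $\mathcal{P}_1$ with the large parts of $\mathcal{P}_2$ refining some part of $\mathcal{P}_1$, as asserted.
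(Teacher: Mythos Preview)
Your case analysis on $\rho_\Gamma \in \{0,\rho_1,\rho_2\}$ is exactly the intended approach (the paper states the lemma without proof, deeming it straightforward), and your handling of Cases I--I and I--II is correct.

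The final paragraph, however, contains an error. You assert that every unrefined part of $\mathcal{P}_1$ must be a singleton, arguing that otherwise a valence-two vertex would sit at $\lambda=\rho_1$. But an unrefined large part $P\in\mathcal{P}_1\cap\mathcal{P}_2$ need not produce any vertex at radius $\rho_1$: the $P$-marked vertex lies at radius $\rho_2$, joined \emph{directly} to the core by a single edge of length $\rho_2$, with no intermediate vertex. For example, take $n=4$, $\mathcal{P}_1=\{\{1,2\},\{3,4\}\}$, $\mathcal{P}_2=\{\{1,2\},\{3\},\{4\}\}$. The associated $2$-layer tree has a vertex $v$ at $\rho_1$ carrying legs $3,4$ and a vertex $w$ at $\rho_2$ carrying legs $1,2$, with $w$ attached directly to the core; here $\{1,2\}$ is an unrefined large part of $\mathcal{P}_1$. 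In Case II--II both $\{v\}$ and $\{w\}$ are stable rational tails disjoint from the core, so $w$ must appear in the enumeration.

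Your Type-2 count (lone vertices at $\rho_2$ with at least two markings) already captures such tails; the mistake is only in the attempted justification that these always lie strictly inside a larger $\mathcal{P}_1$-part. Read the lemma's phrase ``large parts of $\mathcal{P}_2$ that refine some part of $\mathcal{P}_1$'' in the non-strict sense, so that every large part of $\mathcal{P}_2$ qualifies, and your two types then give the bijection without any auxiliary claim about unrefined parts.
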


\begin{figure}[h]
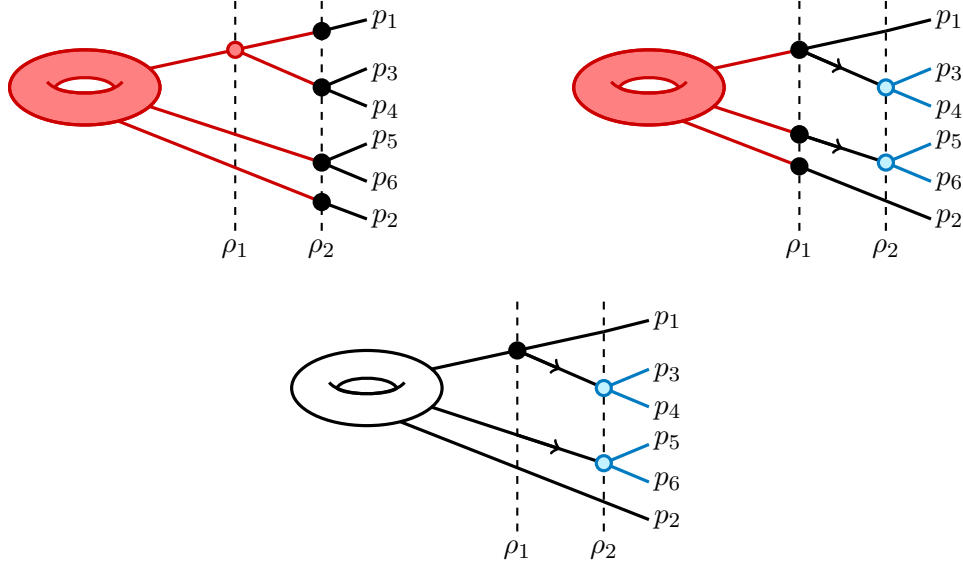

    \SecondLemma
    \caption{Three subdivisions $\tilde{\Gamma}$ of a basic stable radially aligned tropical curve $\Gamma$, with examples of contraction data in Case I-I (top left), Case I-II (top right), and Case II-II (bottom). For Case I-II, there are four possible choices of $\mu$ and for Case II-II there are six possible choices for $\mu$, depending on which rational tails are chosen.}
    \label{fig:secondlemma}
\end{figure}

\begin{lemma} \label{lem:1_layer_tree_relations}
Let $(\rho_\Gamma, \mu_\Gamma)_{\Gamma \in I}$ be a universal contraction datum, let $\mathcal{P}_1 \prec \mathcal{P}_2$ be a chain of non-discrete partitions, and let $\Gamma_1$ and $\Gamma_2$ be the 1-layer trees associated to $\mathcal{P}_1$ and $\mathcal{P}_2$. Then
\begin{enumerate}
  \item If $(\rho_{\Gamma_2}, \mu_{\Gamma_2})$ is in Case I of Lemma \ref{lem:1_layer_contraction_datum}, then so is $(\rho_{\Gamma_1}, \mu_{\Gamma_1})$.
  \item If $(\rho_{\Gamma_1}, \mu_{\Gamma_1})$ is in Case II of Lemma \ref{lem:1_layer_contraction_datum}, then so is $(\rho_{\Gamma_2}, \mu_{\Gamma_2})$. Moreover
  \begin{enumerate}
      \item If $P_1$ is a large part of $\mathcal{P}_1$ so that $\mu_{\Gamma_1}$ is nonzero on the corresponding component of $\Gamma_1$, and $P_2$ is a large part of $\mathcal{P}_2$ contained in $P_1$, then $\mu_{\Gamma_2}$ is nonzero on the corresponding component of $\Gamma_2$.
      \item If $P_2$ is a large part of $\mathcal{P}_2$ so that $\mu_{\Gamma_2}$ is nonzero on the corresponding component of $\Gamma_2$ and $P_2 \in \mathcal{P}_1$ (that is, if $P_2$ is an unrefined part of $\mathcal{P}_1$), then $\mu_{\Gamma_1}$ is nonzero
      on the corresponding component of $\Gamma_1$.
  \end{enumerate}
\end{enumerate}
\end{lemma}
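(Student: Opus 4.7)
The plan is to exploit the unique basic 2-layer tree $\Gamma_{12}$ with partition type $\mathcal{P}_1 \prec \mathcal{P}_2$. Its core is a single vertex of genus one, so its monoid is $P_{\Gamma_{12}} = \mathbb{N} e_1 \oplus \mathbb{N} e_2$ with radii $\rho_1 = e_1$ and $\rho_2 = e_1 + e_2$. Two face contractions $\pi_1 : \Gamma_1 \to \Gamma_{12}$ (induced by $e_2 \mapsto 0$) and $\pi_2 : \Gamma_2 \to \Gamma_{12}$ (induced by $e_1 \mapsto 0$) link the 1-layer trees to $\Gamma_{12}$: the first merges layer-2 vertices into their layer-1 parents, producing $\Gamma_1$; the second absorbs the layer-1 vertices into the core, producing $\Gamma_2$. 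By the universal contraction datum hypothesis, $\pi_i^\sharp(\rho_{\Gamma_{12}}) = \rho_{\Gamma_i}$ and $\tilde{\pi}_i^*(\mu_{\Gamma_{12}}) = \mu_{\Gamma_i}$ for $i = 1,2$.

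Part (1) follows from a case analysis via Lemma \ref{lem:2_layer_contraction_datum}. If $\rho_{\Gamma_{12}} = 0$ (Case II-II) then both $\rho_{\Gamma_i} = 0$; if $\rho_{\Gamma_{12}} = \rho_1$ (Case I-II) then $\rho_{\Gamma_1} \neq 0$ while $\rho_{\Gamma_2} = 0$; if $\rho_{\Gamma_{12}} = \rho_2$ (Case I-I) then both $\rho_{\Gamma_i} \neq 0$ and moreover $\mu_{\Gamma_{12}} = 0$. Thus if $\Gamma_2$ is in Case I we must be in Case I-I, so $\rho_{\Gamma_1} \neq 0$ and $\mu_{\Gamma_1} = \tilde{\pi}_1^*(0) = 0$, putting $\Gamma_1$ in Case I. The contrapositive yields the first half of (2): if $\Gamma_1$ is in Case II, then $\rho_{\Gamma_{12}}$ can be neither $\rho_1$ nor $\rho_2$, so $\rho_{\Gamma_{12}} = 0$, forcing $\rho_{\Gamma_2} = 0$.

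For the ``moreover'' part of (2) we are in Case II-II, so no subdivision is needed ($\tilde{\Gamma}_{12} = \Gamma_{12}$ and $\tilde{\Gamma}_i = \Gamma_i$), and Lemma \ref{lem:tail_function_weighted_edge_contraction} tells us that $\mu_{\Gamma_i}$ is nonzero on $v \in V(\Gamma_i)$ exactly when $\ul{\pi}_i^{-1}(v) \subseteq |\mu_{\Gamma_{12}}|$. For (a), the preimage $\ul{\pi}_1^{-1}(v_{P_1})$ is the ``$P_1$-subtree'' of $\Gamma_{12}$---the layer-1 vertex of $P_1$ together with the layer-2 vertices for the large subparts of $P_1$---and this entire subtree lies in $|\mu_{\Gamma_{12}}|$ by hypothesis. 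Since $P_2 \subseteq P_1$ is a large subpart, it corresponds to one of those layer-2 vertices, which is also the unique element of $\ul{\pi}_2^{-1}(v_{P_2})$, so $\mu_{\Gamma_2}$ is nonzero at $v_{P_2}$. For (b), when $P_2 \in \mathcal{P}_1$ is unrefined, both $\ul{\pi}_1^{-1}(v_{P_2})$ and $\ul{\pi}_2^{-1}(v_{P_2})$ equal the same single vertex of $\Gamma_{12}$ that carries the markings of $P_2$ (attached to the core by a long edge of length $\rho_2$), so the nonvanishing of $\mu_{\Gamma_2}$ at $v_{P_2}$ transfers directly to $\mu_{\Gamma_1}$.

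The only real obstacle is fixing the correct dictionary between parts of $\mathcal{P}_1, \mathcal{P}_2$ and the concrete vertices of $\Gamma_{12}$---in particular noting that only large subparts of a refined $\mathcal{P}_1$-part contribute separate layer-2 vertices (smaller subparts are attached as legs to the layer-1 vertex), and that unrefined parts of $\mathcal{P}_1$ appear in $\Gamma_{12}$ as single vertices attached to the core by long edges of length $\rho_2$. With this dictionary in hand, each claim reduces to a short chase through the pullback formula of Lemma \ref{lem:tail_function_weighted_edge_contraction} applied to $\pi_1$ and $\pi_2$.
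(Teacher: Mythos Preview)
Your proposal is correct and follows exactly the approach the paper takes: pass to the 2-layer tree $\Gamma_{\mathcal{P}_1 \prec \mathcal{P}_2}$, use the two face contractions $\pi_1,\pi_2$ to $\Gamma_1,\Gamma_2$, and read off the relations from Lemma~\ref{lem:2_layer_contraction_datum} together with the pullback description of tail functions in Lemma~\ref{lem:tail_function_weighted_edge_contraction}. The paper's own proof is a one-sentence pointer to this construction plus Figure~\ref{fig:thirdlemma}, so you have in fact supplied considerably more detail than the original; your dictionary between parts of $\mathcal{P}_1,\mathcal{P}_2$ and vertices of $\Gamma_{12}$ (including the observation that an unrefined large part sits on a single long edge of length $\rho_2$) is exactly what one needs to make the terse argument precise.
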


\begin{proof}
This all follows from considering the 2-layer tree $\Gamma_{\mathcal{P}_1 \prec \mathcal{P}_2}$ associated to $\mathcal{P}_1 \prec \mathcal{P}_2$ and the face contractions to $\pi_1 : \Gamma_1 \to \Gamma_{\mathcal{P}_1 \prec \mathcal{P}_2}$ and $\pi_2 : \Gamma_2 \to \Gamma_{\mathcal{P}_1 \prec \mathcal{P}_2}$. See Figure \ref{fig:thirdlemma}.
\end{proof}

\begin{figure}[h]
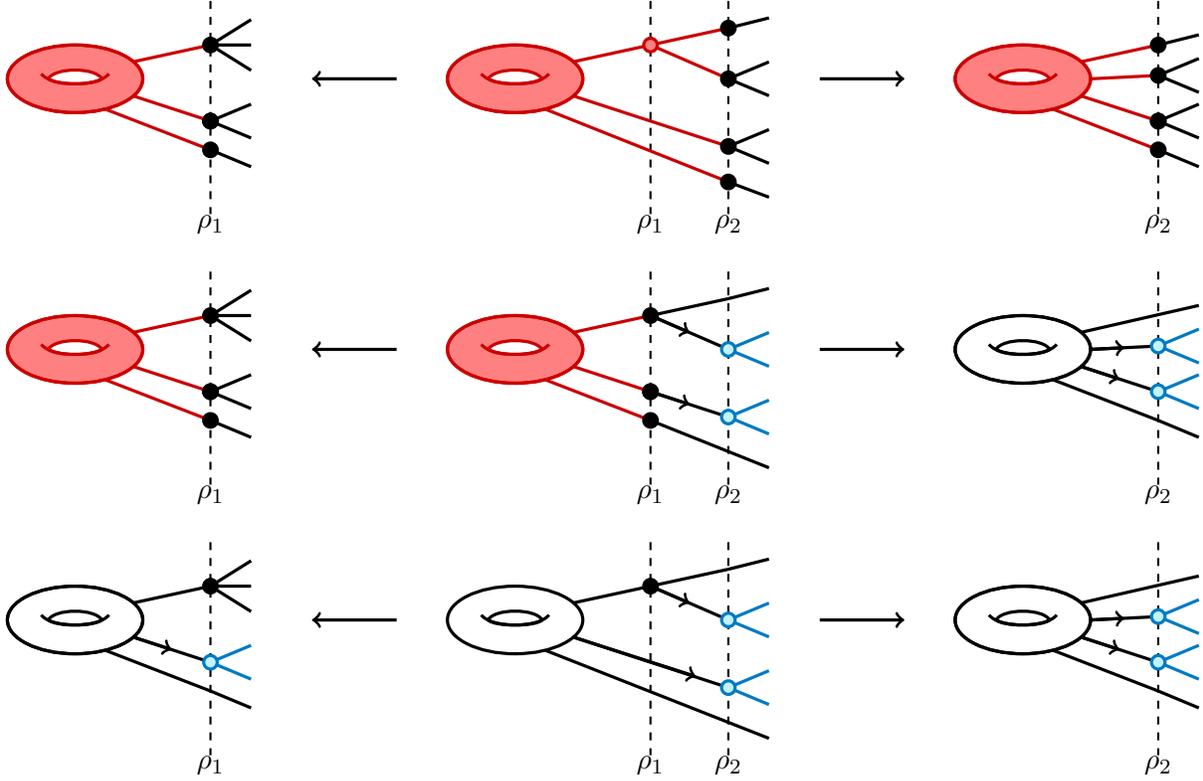

    \ThirdLemma
    \caption{Contraction data on 2-layer trees (middle column) enforce conditions on 1-layer trees (left column and right column). The left column is obtained from the middle column by sending $e_2 = \rho_2 - \rho_1 \to 0$, and the right column is obtained from the middle column by sending $e_1 = \rho_1\to 0$. The middle entry of each row corresponds respectively to Case I-I (top), Case I-II (middle), and Case II-II (bottom) from Lemma \ref{lem:2_layer_contraction_datum}.}
    \label{fig:thirdlemma}
\end{figure}

\begin{proposition}\label{prop:uni_contract_equiv_QK}
  A universal contraction datum is equivalent to the choice of
  \begin{enumerate}
    \item A downward closed proper subset $Q$ of the poset of partitions of $[n]$
    \item A simplicial complex $\calK$ on $[n]$
  \end{enumerate}
  such that $Q$ and $\calK$ do not overlap (Definition \ref{def:do_not_overlap}).
\end{proposition}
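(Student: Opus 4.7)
The plan is to construct a bijection between universal contraction data and pairs $(Q, \calK)$ by exploiting the preceding proposition that a universal contraction datum is determined by its values on 1-layer trees: we read off combinatorial data from these values in one direction and prescribe such values from $(Q, \calK)$ in the other. The compatibility relations forced by 2-layer trees will translate exactly into the stated hypotheses on $(Q, \calK)$.

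In the forward direction, given a universal contraction datum, define $Q$ to be the set of non-discrete partitions $\mathcal{P}$ of $[n]$ whose 1-layer tree $\Gamma_\mathcal{P}$ is in Case I of Lemma \ref{lem:1_layer_contraction_datum}, and define $\calK$ to consist of the singletons of $[n]$ together with every non-singleton $P \subseteq [n]$ that occurs as a large part of some partition $\mathcal{P}$ for which $\mu_{\Gamma_\mathcal{P}}$ is supported on the rational tail indexed by $P$. Downward closedness of $Q$ is Lemma \ref{lem:1_layer_tree_relations}(1), and $Q$ is proper because the discrete partition has no associated 1-layer tree. Downward closure of $\calK$ follows by a refinement argument: given $J \subsetneq I \in \calK$ with $|J| \geq 2$, split the part $I$ of a witnessing partition $\mathcal{P}$ into $J$ together with singletons to obtain a refinement $\mathcal{P}'$, then apply Lemma \ref{lem:1_layer_tree_relations}(2a). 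Finally, if $I \in \calK$ is witnessed by some $\mathcal{P}$, then $\mathcal{P} \notin Q$ by construction, and the partition with $I$ as one part and the remaining elements as singletons refines $\mathcal{P}$, so it cannot lie in $Q$ either by downward closure---this is the non-overlap property.

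For the reverse direction, given $(Q, \calK)$ satisfying the hypotheses, define the datum on each 1-layer tree $\Gamma_\mathcal{P}$ by placing it in Case I of Lemma \ref{lem:1_layer_contraction_datum} when $\mathcal{P} \in Q$ and in Case II with $\mu_{\Gamma_\mathcal{P}}$ supported on exactly those rational tails whose marking sets lie in $\calK$ when $\mathcal{P} \notin Q$. Extend to arbitrary $\Gamma \in I$ by pulling back along the face contractions onto the 1-layer trees associated to the free generators of $P_\Gamma$; uniqueness of this extension is the preceding proposition. Existence amounts to verifying that the pulled-back values are consistent, which by projecting onto each pair of generators reduces to compatibility on 2-layer trees. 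For each partition chain $\mathcal{P}_1 \prec \mathcal{P}_2$, downward closure of $Q$ rules out the Case II-I configuration of Lemma \ref{lem:2_layer_contraction_datum}, and the remaining Cases I-I, I-II, and II-II all match the prescription by Lemma \ref{lem:1_layer_tree_relations}.

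The main obstacle is confirming that the extended $(\rho_\Gamma, \mu_\Gamma)$ forms a genuine contraction datum on every $\Gamma$, i.e., that $\mu_\Gamma$ is a valid tail function and that $\overline{\Delta}_\Gamma \cap |\mu_\Gamma| = \emptyset$. Validity of $\mu_\Gamma$ as a tail function follows from downward closure of $\calK$: the rational tails of $\tilde\Gamma$ whose marks lie in $\calK$ are nested, so their maximal elements form a disjoint collection by Lemma \ref{lem:intersecting_tails}, and stability of each such tail is inherited from stability of $\Gamma$. The disjointness of $|\mu_\Gamma|$ from $\overline{\Delta}_\Gamma$ is exactly where the non-overlap hypothesis enters: a rational tail of $\tilde\Gamma$ meeting $\overline{\Delta}_\Gamma$ would project under a suitable face contraction onto a 1-layer tree and produce an $I \in \calK$ whose corresponding partition $\mathcal{P}_I$ lies in $Q$, contradicting non-overlap. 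That the two constructions are mutually inverse is then immediate by direct comparison on 1-layer trees, where both sides are characterized by the dichotomy of Lemma \ref{lem:1_layer_contraction_datum}.
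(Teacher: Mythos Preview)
Your forward direction matches the paper's. The only cosmetic difference is that you allow an arbitrary partition $\mathcal P$ to witness $P\in\calK$, whereas the paper uses only the canonical partition $\mathcal P(P)$ in which $P$ is the sole large part; Lemma~\ref{lem:1_layer_tree_relations}(2a) shows these are equivalent, and your downward-closure and non-overlap arguments are correct as stated.

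In the reverse direction you and the paper diverge. The paper does \emph{not} define $\mu_\Gamma$ only on 1-layer trees and then extend; it writes down $\mu_\Gamma$ explicitly for every $\Gamma\in I$ as the tail function supported on those rational tails of $\tilde\Gamma$ disjoint from $\ol\Delta_\Gamma$ whose marking set lies in $\calK$, and then verifies compatibility with arbitrary face contractions directly using Lemmas~\ref{lem:tail_function_determined_by_support}, \ref{lem:tail_function_weighted_edge_contraction}, and~\ref{lem:subset_of_tails}. Your plan---prescribe on 1-layer trees, extend coordinatewise, and check consistency only on 2-layer trees---is in principle workable, but there is a real gap: when you turn to verifying that the extended $\mu_\Gamma$ is a tail function and that $|\mu_\Gamma|\cap\ol\Delta_\Gamma=\emptyset$, you silently switch descriptions and argue about ``the rational tails of $\tilde\Gamma$ whose marks lie in $\calK$''. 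That is the paper's formula for the support, not yours; you have not shown that the coordinatewise pullback from 1-layer trees has this support. Establishing that identification is exactly the content of the paper's direct compatibility check, so your argument as written is circular at this point. A clean fix is to abandon the coordinatewise extension and simply \emph{define} $\mu_\Gamma$ for all $\Gamma$ via the $\calK$-marked rational tails outside $\ol\Delta_\Gamma$, as the paper does; then your sketch of disjointness and tail-function validity becomes correct, and the remaining work is verifying $\tilde\pi^*(\mu_{\Gamma'})=\mu_\Gamma$ for a general face contraction, which is where Lemma~\ref{lem:subset_of_tails} enters.
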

\begin{proof}
We construct an explicit one-to-one correspondence.

Given a universal contraction datum $(\rho_\Gamma, \mu_\Gamma)_{\Gamma \in I}$, we associate to it the set $Q$ of partitions $\mathcal{P}$ so that the corresponding 1-layer tree $\Gamma$ has nonzero radius $\rho.$ By \cite[Proposition 4.12]{bkn_qstable}, this set $Q$ is downward closed.

We construct the simplicial complex $\calK$ as follows. If $S$ is a subset of $[n]$ containing at least two elements, let $\mathcal{P}(S)$ be the partition of $[n]$ in which $S$ is the only large part. Consider the corresponding 1-layer tree, $\Gamma_{\mathcal{P}(S)}$. Observe that $\Gamma_{\mathcal{P}(S)}$ has a unique rational component, and that component is marked by $S$. If $\mu_{\Gamma_{\mathcal{P}(S)}}$ is nonzero, set $S \in \calK$. Else, set $S \not\in \calK$. By construction, $\mathcal{P}(S)$ does not belong to $Q$, so $Q$ and $\calK$ do not overlap.

To see that $\calK$ is downward closed, suppose $S$ has at least two elements, $S \subsetneq T$, and $T \in \calK$ according to our construction. Then $\mathcal{P}(T) \prec \mathcal{P}(S)$. By Lemma \ref{lem:1_layer_tree_relations}, since $\mu_{\Gamma_{T}}$ is nonzero on the component corresponding to $T$, $\mu_{\Gamma_S}$ must be nonzero on the component corresponding to $S$. Therefore $S \in \calK$, as required.

For the other half of the correspondence, suppose that a $Q$ and $\calK$ are given as in the statement of the proposition. By \cite[Proposition 4.12]{bkn_qstable}, the choice of $Q$ determines a universal radius $(\rho_\Gamma)$. It remains to choose a tail function $\mu_\Gamma \in \PL(\tilde{\Gamma})$ for each $\Gamma \in I$. By Lemma \ref{lem:tail_function_determined_by_support}, it suffices to identify the rational tails of $\tilde{\Gamma}$ on which we wish $\mu_\Gamma$ to be supported. For each rational tail $T$ of $\tilde{\Gamma}$ not intersecting with $\ol{\Delta}_\Gamma$, we set $T$ to be in the support of $\mu_\Gamma$ precisely when the markings incident to $T$ belong to $\calK$.

Having fixed the data of a universal contraction datum, we must verify that
\begin{enumerate}
    \item for each $\Gamma \in I$, $\ol{\Delta}_\Gamma \cap |\mu_\Gamma| = \emptyset$;
  \item for each face contraction $\pi : \Gamma \to \Gamma'$, $\tilde{\pi}^*(\mu_{\Gamma'}) = \mu_{\Gamma}$.
\end{enumerate}

The first item holds by construction.

For the latter, we know from Lemma \ref{lem:tail_function_weighted_edge_contraction} that $\tilde{\pi}^*(\mu_{\Gamma'})$ is a tail function. By Lemma \ref{lem:tail_function_determined_by_support}, to show $\mu_\Gamma = \tilde{\pi}^*(\mu_{\Gamma'})$ it suffices to show that the supports of $\mu_\Gamma$ and $\tilde{\pi}^*(\mu_{\Gamma'})$ are equal. By Lemma \ref{lem:tail_function_weighted_edge_contraction} again, the support of $\tilde{\pi}^*(\mu_{\Gamma'})$ consists of the vertices $v$ of $\tilde{\Gamma}$ so that $\ul{\tilde{\pi}}^{-1}(v) \subseteq |\mu_{\Gamma'}|$.

By Lemma \ref{lem:subset_of_tails} and the construction of $\mu_{\Gamma'}$, the support of $\tilde{\pi}^*(\mu_{\Gamma'})$ consists equivalently of the vertices $v$ of $\tilde{\Gamma}$ so that $\ul{\tilde{\pi}}^{-1}(v)$ belongs to a rational tail $T'$ of $\Gamma'$ with $\Marks(T') \in \calK$ and $T' \cap \ol{\Delta}_{\Gamma'} = \emptyset$.

Let $v$ be an arbitrary vertex of $\tilde{\Gamma}$. If $v$ belongs to $\ol{\Delta}_{\Gamma}$, then $\ul{\tilde{\pi}}^{-1}(v)$ intersects $\ol{\Delta}_{\Gamma'}$, so neither $\mu_\Gamma$ nor $\tilde{\pi}^*(\mu_{\Gamma'})$ contains $v$ in its support. If $v$ does not belong to $\ol{\Delta}_{\Gamma}$, then $v$ is genus 0 and there is a rational tail $T_v$ of $\tilde{\Gamma}$ rooted at $v$. Write $J$ for the markings on $T_v$. In this case $T_v' \coloneqq \ul{\tilde{\pi}}^{-1}(T_v)$ is also a $J$-marked rational tail, and it does not intersect $\ol{\Delta}_{\Gamma'} \subseteq \ul{\tilde{\pi}}^{-1}(\ol{\Delta}_{\Gamma})$. Observe that $T_v$ is the minimal rational tail containing $v$ and $T_v'$ is the minimal rational tail containing $\ul{\tilde{\pi}}^{-1}(v)$. Then
\begin{align*}
    & \text{$v$ belongs to the support of $\mu_\Gamma$} \\
    \iff & \text{$T_v$ belongs to the support of $\mu_\Gamma$} \\
    \iff & \text{$J \in \calK$} \\
    \iff & \text{$T_v'$ belongs to the support of $\mu_{\Gamma'}$} \\
    \iff & \text{$\ul{\tilde{\pi}}^{-1}(v)$ belongs to the support of $\mu_{\Gamma'}$} \\
    \iff & \text{ $v$ belongs to the support of $\tilde{\pi}^*(\mu_{\Gamma'})$.}
\end{align*}
We conclude that (ii) holds.

The data of $Q$ and $\calK$ are determined by the contraction data on 1-layer trees which we know determines a universal contraction datum, and the constructions we have given are clearly inverses of each other on the restriction to 1-layer trees.
\end{proof}

\begin{theorem} \label{thm:contraction_to_qk_nonintro}
For each $Q, \calK$ as above, there is a diagram of birational morphisms of algebraic stacks
\[
  \begin{tikzcd}
   & \Moduli_{1,n}^{rad} \ar[dl] \ar[dr] & \\
   \Moduli_{1,n} & & \Moduli_{1,\mathcal{K}}(Q)
  \end{tikzcd}
\]
restricting to the identity on $\moduli_{1,n}$.
\end{theorem}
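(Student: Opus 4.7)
The plan is to build the two maps separately and then check birationality, relying heavily on the machinery established earlier.

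The left arrow $\Moduli_{1,n}^{rad} \to \Moduli_{1,n}$ is already given by Theorem \ref{thm:M_1n_rad_exists}, and it restricts to an isomorphism on the dense open substack $\moduli_{1,n}$ since radial alignment is automatic for smooth curves. So the work lies entirely in constructing the right arrow to $\Moduli_{1,\calK}(Q)$.

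To construct $\Moduli_{1,n}^{rad} \to \Moduli_{1,\calK}(Q)$, I would feed the pair $(Q,\calK)$ into Proposition \ref{prop:uni_contract_equiv_QK} to obtain a universal contraction datum $(\rho_\Gamma,\mu_\Gamma)_{\Gamma\in I}$. By the compatibility of this correspondence with face contractions, this datum is equivalent to an actual contraction datum $(\rho,\mu)$ on the universal curve $\mathcal{C} \to \Moduli_{1,n}^{rad}$. I would then apply Proposition \ref{prop:contract_everything} to this datum, producing a diagram
\[
  \begin{tikzcd}
   & \tilde{\mathcal{C}} \ar[dl, "\phi"'] \ar[dr, "\tau"] & \\
    \mathcal{C} \ar[dr] & & \bar{\mathcal{C}} \ar[ld, "\bar{\pi}"] \\
     & \Moduli_{1,n}^{rad} &
  \end{tikzcd}
\]
where $\bar{\pi}$ is a flat family of Gorenstein curves, the construction commutes with base change, and $\tau$ contracts the elliptic subcurve where $\lambda < \rho$ to an elliptic Gorenstein singularity of the correct level, while collapsing each connected component of $|\mu|$ to a smooth point where the appropriate markings collide.

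The key verification is that the resulting family $\bar{\mathcal{C}} \to \Moduli_{1,n}^{rad}$ is $(Q,\calK)$-stable in the sense of Definition \ref{def:qk_stable}. Axiom (Q1) (markings are smooth) holds because the no-overlap condition forces the markings to remain away from the elliptic contraction locus $E$, and markings that would collide lie on contracted tails in $|\mu|$ which collapse to smooth points of $\bar{\mathcal{C}}$. Axioms (Q2) and (Q3) (elliptic singularity levels lie in $Q$ and no genus-one subcurve has level in $Q$) follow directly from the construction of the universal radius from $Q$ as in \cite[Proposition 4.12]{bkn_qstable}, combined with Proposition \ref{prop:contract_everything}(a). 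Axioms (Q4) and (Q5) (markings collide according to $\calK$, no rational tail has markings in $\calK$) follow from our construction of $\mu$ from $\calK$: a set $I \in \calK$ of size $\geq 2$ corresponds precisely to those rational tails in the support of $\mu$, which are contracted. Any surviving rational tail of $\bar{\mathcal{C}}$ with mark set $I$ would arise from a rational tail of $\tilde{\mathcal{C}}$ that was not contracted, giving $I \not\in \calK$. Axiom (Q6) (no infinitesimal automorphisms) follows from the Gorenstein stability established in the contraction, since any non-contracted rational component inherits at least three special points from the stable source.

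Once $(Q,\calK)$-stability is verified on geometric fibers and thus on the whole family, the universal property of $\Moduli_{1,\calK}(Q)$ yields a morphism $\Moduli_{1,n}^{rad} \to \Moduli_{1,\calK}(Q)$. For birationality and restriction to the identity on $\moduli_{1,n}$: over a smooth pointed curve with distinct markings, the tropicalization consists of a single vertex, so every radius is $0$ and every tail function is $0$. Consequently the subdivision $\tilde{\mathcal{C}} \to \mathcal{C}$ and the contraction $\tilde{\mathcal{C}} \to \bar{\mathcal{C}}$ are both isomorphisms over $\moduli_{1,n}$, so the composition $\moduli_{1,n} \hookrightarrow \Moduli_{1,n}^{rad} \to \Moduli_{1,\calK}(Q)$ agrees with the canonical open immersion. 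The main obstacle is organizing the verification of the $(Q,\calK)$-stability axioms cleanly, particularly checking that the no-overlap hypothesis prevents the elliptic singularity from acquiring markings and that the combinatorial correspondence in Proposition \ref{prop:uni_contract_equiv_QK} exactly matches the level and collision conditions of Definition \ref{def:qk_stable} on each geometric fiber.
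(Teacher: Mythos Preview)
Your proposal is correct and follows essentially the same approach as the paper: apply Proposition~\ref{prop:contract_everything} to the universal curve of $\Moduli_{1,n}^{rad}$ using the universal contraction datum furnished by Proposition~\ref{prop:uni_contract_equiv_QK}, then check that the contracted family is $(Q,\calK)$-stable. The paper's proof is a two-line sketch (``apply Proposition~\ref{prop:contract_everything}, then observe that the resulting curve belongs to $\Moduli_{1,\calK}(Q)$''), whereas you have unpacked the ``observe'' step by running through axioms (Q1)--(Q6) explicitly, which is a reasonable expansion.
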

\begin{proof}
Apply Proposition \ref{prop:contract_everything} to the universal curve of $\Moduli_{1,n}^{rad}$, then observe that the resulting curve belongs to $\Moduli_{1,\mathcal{K}}(Q)$.
\end{proof}

\begin{example} \label{ex:different_collision_limits}
    Let $\moduli = \Moduli_{1,\calK}(Q_1)$ and $\moduli' = \Moduli_{1,\calK}(Q_2)$ where
    \begin{align*}
        \calK &= \{ \emptyset, \{1\},\{2\}, \{3\}, \{4\}, \{1,2\} \}, \\
        Q_1 &= \{ 1234, 12/34 \}, \\
        Q_2 &= \{ 1234, 12/34, 1/234, 2/134, 1/2/34 \}.
    \end{align*}
    Let $\Gamma$ be the two layer tree associated to the chain of partitions $12/34 \prec 1/2/34$, and let $\pi : C \to S$ be a 1-parameter smoothing of $C$ in $\Moduli_{1,n}^{rad}$. Apply Theorem \ref{thm:contraction_to_qk_nonintro} for $(Q_1,\calK)$ and $(Q_2,\calK)$ to obtain limits in $\moduli$ and $\moduli'$ of the generic member of $C$. The limit in $\moduli$ is a tacnode with the markings indexed by 1 and 2 colliding on one branch and markings 3 and 4 distinct on the other branch. The limit in $\moduli'$ is a planar triple point with first branch marked by 1, the second branch marked by 2, and the third branch marked by the 3 and 4 (see Figure \ref{fig:different_collision_limits}). Even though the two spaces have the same collision complex, the two limits have different collections of colliding points.

    \begin{figure}[h]
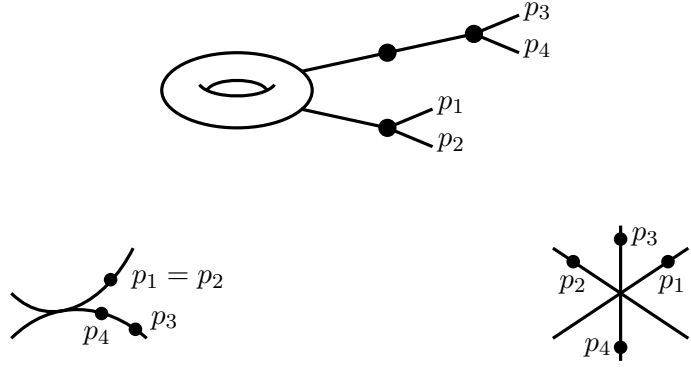
 
    \DifferentCollisionLimits
    \caption{
    The two layer tree associated to the chain of partitions $12/34 \prec 1/2/34$ (top). Below are the respective limits in $\moduli$ (left) and $\moduli'$ (right).
    } \label{fig:different_collision_limits}
\end{figure}
\end{example}

\section{Genus one compactifications admitting collisions}

\subsection{Construction of $(Q,\calK)$-stable spaces}\label{sec:construction}

In this section we prove that the stacks $\Moduli_{1,\calK}(Q)$ are modular compactifications of $\Moduli_{1,n}$. They are by definition substacks of $\mathcal{U}_{1,n}$, so it remains to show that
\begin{enumerate}
  \item $(Q,\calK)$-stable curves are smoothable (so that they lie in $\mathcal{V}_{1,n}$);
  \item $\Moduli_{1,\calK}(Q)$ is an open substack of $\mathcal{V}_{1,n}$;
  \item $\Moduli_{1,\calK}(Q)$ is proper.
\end{enumerate}
We establish each in turn, leveraging the work of \cite{smyth_mstable} and \cite{bkn_qstable}.

We begin by showing that, like simplicially stable spaces, the $(Q,\calK)$-stable spaces have a ``Frankenstein property": each is a regluing of Smyth's $(m,\calA)$-stable spaces \cite[Definition 3.7]{smyth_mstable}. As a base case, each $(m,\calA)$-stable space is a $(Q, \calK)$-stable space for an appropriate choice of $Q$ and $\calK$.

\begin{lemma} \label{lem:ma_to_qk}
For each integer $0 \leq m < n$ and weight vector $\calA = (a_1,\dots,a_n)$ there is a $Q \in \mathfrak{Q}_n$ and simplicial  complex $\calK$ on $[n]$ so that $Q$ and $\calK$ do not overlap and
\[
  \Moduli_{1,\calA}(m) = \Moduli_{1,\calK}(Q).
\]
\end{lemma}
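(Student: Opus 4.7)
The plan is to read off the appropriate $Q$ and $\calK$ from the data $m$ and $\calA$ and then check axiom by axiom that $(m, \calA)$-stability coincides with $(Q, \calK)$-stability. The natural candidates, motivated by the Hassett correspondence in Proposition \ref{prop:hassettsupsimp} and by the interpretation of $Q$ as recording admissible levels of elliptic Gorenstein singularities, are
\[
  \calK := \bigl\{\, I \subseteq [n] \,:\, \textstyle\sum_{i \in I} a_i \leq 1 \ \text{and}\ |I| \leq n - m \,\bigr\}
\]
and
\[
  Q := \bigl\{\, \calP \in \PPart(n) \,:\, |\calP| \leq m \,\bigr\}.
\]

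First I would verify the structural requirements. $\calK$ is downward closed under inclusion since restricting a subset only decreases both the weight sum and the cardinality, hence $\calK$ is a simplicial complex on $[n]$. The set $Q$ is downward closed under the refinement order, because any coarsening of a partition with at most $m$ parts has at most $m$ parts; and since $m < n$ by hypothesis, the discrete partition of $[n]$ (with $n$ parts) does not lie in $Q$. So $Q \in \mathfrak{Q}_n$. Non-overlap is then automatic: for $I \in \calK$, the partition $\{I\} \cup \{\{j\} : j \in [n] - I\}$ has $n - |I| + 1 \geq m + 1$ parts, and therefore is not in $Q$.

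The remaining task is to match the two stability conditions on geometric points. Axioms (Q1), (Q4), (Q5) and the weight-sum half of $\calK$ translate directly into the Hassett conditions on smooth markings, weighted collisions, and contraction of low-weight rational tails. Axiom (Q2), that every elliptic Gorenstein singularity has at most $m$ branches, restates the defining $m$-stability condition. Axiom (Q3), that no genus-one subcurve has level in $Q$, amounts to the requirement that no elliptic subcurve admits a contraction to an admissible elliptic Gorenstein singularity of branch number at most $m$; this is the implicit positivity/ampleness clause in Smyth's definition. Axiom (Q6) is the no-infinitesimal-automorphisms condition shared by both notions. Finally, the cardinality constraint $|I| \leq n - m$ in $\calK$ encodes the fact that a smooth-point collision indexed by a set of size $\geq n - m + 1$ would admit a competing limit as an elliptic Gorenstein singularity on $\leq m$ branches, which by separatedness and Theorem \ref{thm:to_collide_or_not_to_collide} forces one of the two alternatives to be excluded — and in the $(m,\calA)$-framework, the singular limit wins.

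The main obstacle I anticipate is verifying that axioms (Q3) and (Q5) genuinely align with the positivity clauses of Smyth's original formulation of $(m,\calA)$-stability, rather than with their level-partition reformulation; the necessary dictionary is already in place in \cite{bkn_qstable} (its Proposition translating $m$-stability into $Q$-stability), so this step should reduce to bookkeeping. A secondary subtlety is the behavior of marked configurations lying exactly on the boundary of the weight and branch conditions, which needs to be checked to ensure that the construction of $\calK$ with the added cardinality constraint indeed returns the same moduli space as one obtained with the unrestricted Hassett complex $\calK_\calA$ in cases where the standing hypotheses of $(m,\calA)$-stability make the two agree.
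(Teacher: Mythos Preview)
Your proposal is correct and takes essentially the same approach as the paper. Your choices of $Q$ and $\calK$ coincide with the paper's (the paper writes the cardinality constraint as $\mathcal{P}(I) \not\in Q$, which unwinds to exactly your $|I| \leq n-m$), and the paper dispatches the remaining verification with a single sentence (``It is straightforward to check''), so your axiom-by-axiom sketch is, if anything, more detailed than what appears there.
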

\begin{proof}
Start by setting
\[
  Q = \{ \mathcal{P} \in \PPart([n]) \stc |P| \leq m \}.
\]
If $I \subseteq [n]$, write $\mathcal{P}(I)$ for the partition of $[n]$ given by
\[
  \mathcal{P}(I) = \{ I \} \cup \{ \{ j \} \stc j \in [n] - I \}.
\]
Then set $\calK$ to the set
\[
  \calK = \left\{ I \subseteq [n] \stc \sum_{i \in I} a_i \leq 1 \text{ and } \mathcal{P}(I) \not\in Q \right\}.
\]
It is straightforward to check that these choices result in the desired equality.
\end{proof}

For the remainder of this section fix a positive integer $n$, a $Q \in \mathfrak{Q}_n$, and a simplicial complex $\calK$ on $[n]$ so that $Q$ and $\calK$ do not overlap.

\begin{lemma}\label{lem:qk_to_ma}
For every $(Q,\calK)$-stable curve $(C;p_1,\dots,p_n)$, there exists an integer $m$ with $0 \leq m < n$ and weight data $\calA = (a_1,\dots,a_n)$ such that $(C;p_1,\dots,p_n)$ is $(m,\calA)$-stable in the sense of \cite[Definition 3.7]{smyth_mstable}.
\end{lemma}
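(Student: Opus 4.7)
The plan is to produce, for the given $(Q,\calK)$-stable curve $(C;p_1,\dots,p_n)$, an explicit pair $(m,\calA)$ adapted to the finitely many combinatorial features of this particular $C$, and then to verify the axioms of $(m,\calA)$-stability in turn. This reduces an existence statement to a check at a single closed point, mirroring the Frankenstein cover argument of Theorem \ref{thm:simplicial_frankenstein} and Lemma \ref{lem:ma_to_qk}.

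First I would define $m$ to be the maximum of $m(p)$ over all elliptic Gorenstein singularities $p$ of $C$, with $m := 0$ if no such singularity exists. Since (Q2) forces $\lev(p)\in Q$ for each such $p$, and $Q\in\mathfrak{Q}_n$ excludes the discrete partition, we get $m<n$. For the weights, I would set $a_i := 1/N_i$ where $N_i := |\{\,j\in[n]\ :\ p_j=p_i\,\}|$, exactly as in the proof of Theorem \ref{thm:simplicial_frankenstein}. With these choices, every existing smooth collision $x\in C$ satisfies $\sum_{i\in\Marks(x)}a_i=1$, matching the marginal Hassett inequality, and every elliptic Gorenstein singularity of $C$ has at most $m$ branches by construction.

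The remaining work is to check the two ``no unstable subcurve'' conditions inherent in \cite[Definition 3.7]{smyth_mstable}. For rational subcurves, axiom (Q5) gives $\Marks(Z)\not\in\calK$ for each rational tail $Z$ of $C$; the same weight bookkeeping used in Theorem \ref{thm:simplicial_frankenstein} (together with $Z$'s stability forcing $|\Marks(Z)|\geq 2$ unless $Z$ carries at least three nodes, the latter case being incompatible with $Z$ being a tail) yields $\sum_{i\in\Marks(Z)}a_i>1$, so $Z$ is not contractible as a weighted rational bridge. For elliptic subcurves, axiom (Q3) gives $\lev(E)\not\in Q$ for each connected genus-$1$ subcurve $E\subseteq C$; the number of parts of $\lev(E)$ equals the branch count that would arise from contracting $E$ to an elliptic Gorenstein singularity. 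Because $Q$ is downward closed and our chosen $m$ is the maximum $|\lev(p)|$ over singularities actually present in $C$, any $E$ contractible to an elliptic Gorenstein singularity of $\leq m$ branches inside the $(m,\calA)$-stable model would force $\lev(E)\preceq\calP$ for some $\calP\in Q$ and hence $\lev(E)\in Q$, contradicting (Q3). Finally, (Q6) is literally the no-infinitesimal-automorphism axiom of $(m,\calA)$-stability.

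The main obstacle is precisely the elliptic-subcurve step, where one needs to translate the intrinsic combinatorial statement $\lev(E)\not\in Q$ into Smyth's edge-valence/weight-based instability criterion. I expect this translation to rest on two facts already in play in the paper: the downward closure of $Q$ from Definition \ref{def:qk_stable}, and the level--branch dictionary used throughout Section \ref{ssec:genus_one_univ_contractions}, where a contraction of $E$ of level $\calP$ produces an elliptic Gorenstein singularity of exactly that level. Together they yield the contradiction that closes the argument, after which the conclusion $(C;p_1,\dots,p_n)\in\Moduli_{1,\calA}(m)$ is immediate.
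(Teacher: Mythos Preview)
Your overall strategy---choose $(m,\calA)$ adapted to the specific curve and verify Smyth's axioms---is exactly the paper's, and your weight choice $a_i = 1/N_i$ is identical. Your choice of $m$ differs: the paper takes $m = |\lev(Z)|$ for $Z$ the minimal genus-one subcurve, while you take $m = \max_p m(p)$. In genus one these are related (indeed $m(p) = |\lev(p)|$ since the $m$ branches of an elliptic $m$-fold point lie on $m$ distinct components), and either choice is workable.

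There is, however, a genuine gap in your elliptic-subcurve step. You assert that if Smyth's condition fails for a genus-one subcurve $E$, then $\lev(E) \preceq \calP$ for some $\calP \in Q$, and then invoke downward closure of $Q$. But the refinement goes the other way: since any genus-one subcurve $E$ contains the core and hence the elliptic singularity $p$, each connected component of $(C-E)\cup\Sigma$ lies in a single branch of the normalization at $p$, so $\lev(p) \preceq \lev(E)$, not $\lev(E) \preceq \lev(p)$. Downward closure of $Q$ is therefore the wrong tool. The correct argument is simpler: from $\lev(p)\preceq\lev(E)$ together with $\lev(p)\in Q$ (axiom (Q2)) and $\lev(E)\notin Q$ (axiom (Q3)) one gets $\lev(E)\neq\lev(p)$, hence the refinement is strict and $|\lev(E)| > |\lev(p)| = m$. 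Since $|E\cap\overline{C\setminus E}| + |\{i:p_i\in E\}| \geq |\lev(E)|$ (marks counted with multiplicity can only increase the count), Smyth's inequality follows.

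A minor point: your parenthetical ``unless $Z$ carries at least three nodes, the latter case being incompatible with $Z$ being a tail'' is not right---a reducible rational tail can have arbitrarily many internal nodes. The conclusion you want is still available: by (Q6) any leaf component of a rational tail already carries at least two \emph{distinct} marked points, so $\sum_{i\in\Marks(Z)} a_i$ equals the number of distinct marked points on $Z$, which is at least $2$.
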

\begin{proof}
Suppose $(C;p_1,\dots,p_n)$ is $(Q,\calK)$-stable. Let $Z\subseteq C$ be the minimal subcurve of genus one, and define $m := |\lev(Z)|$. For each $i \in [n]$, define $a_i := \frac{1}{N_i}$ where $N_i = |\{p_j \stc p_j = p_i \in C, j\in [n]\}|$, and set $\calA := (a_1,\dots,a_n)$. It is easily verified that $(C;p_1,\dots,p_n)$ is $(m,\calA)$-stable.
\end{proof}

\begin{theorem} [{{Deformation openness}}] \label{thm:qk_def_open} 
The stack $\Moduli_{1,\calK}(Q)$ is an open substack of $\mathcal{U}_{1,n}$
\end{theorem}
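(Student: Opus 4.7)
The plan is to reduce to Smyth's deformation openness of $(m,\calA)$-stable spaces in genus one. Given a geometric point $[C]$ of $\Moduli_{1,\calK}(Q)$, Lemma \ref{lem:qk_to_ma} produces an integer $m$ and weight data $\calA$ making $C$ an $(m,\calA)$-stable curve. By \cite[Proposition 3.8]{smyth_mstable}, $\Moduli_{1,\calA}(m)$ is open in $\mathcal{U}_{1,n}$, so there is an open neighborhood $U$ of $[C]$ in $\mathcal{U}_{1,n}$ contained in $\Moduli_{1,\calA}(m)$. It will suffice to find an open subneighborhood $V \subseteq U$ of $[C]$ whose geometric points are all $(Q,\calK)$-stable.

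To construct $V$, I would verify each axiom (Q1)--(Q6) on a small enough open neighborhood of $[C]$. Axioms (Q1) and (Q6) hold throughout $U$ as consequences of $(m,\calA)$-stability. For the remaining axioms, the key combinatorial input is that for $C'$ sufficiently close to $C$ in $\mathcal{U}_{1,n}$, the curve $C'$ is obtained from $C$ by smoothing some subset of its nodes and elliptic Gorenstein singularities and possibly separating some colliding markings. Concretely: every EGS of $C'$ corresponds to an EGS of $C$ whose level partition only coarsens in the order $\preceq$ (as branches merge under partial resolution); every smooth mark-collision in $C'$ is a subset of one in $C$; every rational tail of $C'$ lifts to a rational tail of $C$ carrying the same mark set; and every genus-one connected subcurve $E' \subseteq C'$ corresponds to a genus-one subcurve $E \subseteq C$ with $\lev(E) \preceq \lev(E')$ (absorbing adjacent rational tails into $E$ introduces new singletons, refining the partition).

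Combining these observations with the downward closure of $Q$ under $\preceq$ and of $\calK$ under inclusion yields (Q2)--(Q5) on a small enough $V$. Axioms (Q2) and (Q4) follow by direct downward closure. Axioms (Q3) and (Q5) follow by contrapositive: if the lifted rational tail or genus-one subcurve in $C$ fails the forbidden condition required by $C$'s satisfaction of $(Q,\calK)$-stability, then so does its image in $C'$; equivalently, $\lev(E') \in Q$ would force $\lev(E) \preceq \lev(E')$ to lie in $Q$, contradicting (Q3) for $C$.

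The main obstacle is axiom (Q3): smoothing an EGS of $C$ may produce a smooth elliptic subcurve in $C'$ that is not manifestly the deformation of any genus-one subcurve of $C$. The resolution is to identify the corresponding preimage subcurve in $C$, namely the union of components meeting the EGS, which inherits arithmetic genus one from the EGS and whose level partition matches that of the resulting smooth elliptic subcurve in $C'$; this lets (Q3) for $C$ propagate to (Q3) for $C'$ via the same downward-closure argument.
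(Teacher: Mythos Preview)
Your proposal is correct and close in spirit to the paper's argument, but the decomposition is different. The paper's proof is a one-line citation: it invokes \cite[Theorem~5.2]{bkn_qstable} to handle the $Q$-related axioms (Q2), (Q3), (Q6) and then simply observes that the new collision-related axioms (Q1), (Q4), (Q5) are open conditions. You instead use Lemma~\ref{lem:qk_to_ma} together with Smyth's openness of $\Moduli_{1,\calA}(m)$ to secure (Q1) and (Q6), and then argue (Q2)--(Q5) directly by tracking how levels, mark-collisions, and rational tails behave under generalization. Both routes are valid. Yours is more self-contained and makes explicit the combinatorics that the paper outsources, at the cost of reproving what \cite{bkn_qstable} already established; the paper's is terser but requires the reader to chase the reference.

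One point to tighten in a full write-up: the argument for (Q3), and to a lesser extent (Q2), hinges on the claimed inequalities between level partitions under generalization. Your parenthetical about ``absorbing adjacent rational tails into $E$'' is in tension with the stated direction $\lev(E) \preceq \lev(E')$; what you need is that for each genus-one $E' \subseteq C'$ there is a genus-one $E \subseteq C$ (e.g.\ the preimage of $E'$, or the minimal genus-one subcurve inside it) with $\lev(E) \preceq \lev(E')$, and the case where an elliptic $m$-fold point is partially smoothed to an elliptic $k$-fold point with $k<m$ requires separate attention. These are exactly the details carried by \cite[Theorem~5.2]{bkn_qstable}, so the paper's citation is doing real work here.
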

\begin{proof}
The proof is identical to \cite[Theorem 5.2]{bkn_qstable} with the additional observation that axioms (Q1), (Q4), and (Q5) of Definition \ref{def:qk_stable} are open conditions.
\end{proof}

\begin{theorem} [{{Frankenstein property}}] \label{thm:qk_frankenstein} 
There is a finite open cover $\{ U_i \}_{i \in I}$ of $\Moduli_{1,\calK}(Q)$ so that for each $i \in I$, there exists a weight vector $\calA_i$ and integer $m_i$ such that $U_i$ is an open substack of $\Moduli_{1,\calA_i}(m_i)$.
\end{theorem}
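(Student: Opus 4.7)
The plan is to mimic the proof of the simplicial Frankenstein property (Theorem \ref{thm:simplicial_frankenstein}), leveraging the two preparatory lemmas \ref{lem:ma_to_qk} and \ref{lem:qk_to_ma} together with deformation openness (Theorem \ref{thm:qk_def_open}). The key observation is that both $\Moduli_{1,\calK}(Q)$ and each $(m,\calA)$-stable space $\Moduli_{1,\calA}(m)$ live naturally as open substacks of the ambient stack $\mathcal{U}_{1,n}$; this is the content of Theorem \ref{thm:qk_def_open} on our side and of \cite[Theorem 3.8]{smyth_mstable} on Smyth's side. Consequently, for any integer $0 \leq m < n$ and any weight datum $\calA = (a_1,\ldots,a_n)$, the intersection
\[
  U_{m,\calA} \coloneqq \Moduli_{1,\calA}(m) \cap \Moduli_{1,\calK}(Q)
\]
is automatically open both in $\Moduli_{1,\calA}(m)$ and in $\Moduli_{1,\calK}(Q)$.

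Next I would verify that the family $\{ U_{m,\calA} \}$ covers $\Moduli_{1,\calK}(Q)$. Given any geometric point $(C;p_1,\dots,p_n)$ of $\Moduli_{1,\calK}(Q)$, Lemma \ref{lem:qk_to_ma} produces an explicit $m$ and $\calA$ for which $(C;p_1,\dots,p_n)$ is $(m,\calA)$-stable; hence it lies in the corresponding $U_{m,\calA}$. Conversely, Lemma \ref{lem:ma_to_qk} ensures that $U_{m,\calA}$ is a subset of some $(Q',\calK')$-stable space compatible with our setup, but for the present theorem all that is needed is the covering property.

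Finally, I would extract a finite subcover. Inspecting the construction of $\calA$ in Lemma \ref{lem:qk_to_ma}, the weights that arise take the form $a_i = 1/N_i$ with $N_i \in \{1,2,\ldots,n\}$, and $m$ ranges over $\{0, 1, \ldots, n-1\}$; there are therefore only finitely many pairs $(m,\calA)$ that can possibly appear, and a fortiori finitely many distinct open substacks $U_{m,\calA}$. Selecting one $(m_i,\calA_i)$ for each such substack produces the required finite open cover.

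The main obstacle, if there is one, is simply bookkeeping: one must check that the $U_{m,\calA}$ are open in $\Moduli_{1,\calA}(m)$ (as opposed to merely being locally closed), and this is where it is essential that both spaces are known to be open in $\mathcal{U}_{1,n}$. Since both openness statements are already in hand via Theorem \ref{thm:qk_def_open} and \cite{smyth_mstable}, there is no real technical hurdle remaining.
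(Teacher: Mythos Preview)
Your proposal is correct and follows essentially the same approach as the paper: take intersections $U_{m,\calA} = \Moduli_{1,\calA}(m) \cap \Moduli_{1,\calK}(Q)$ inside $\mathcal{U}_{1,n}$, use Lemma~\ref{lem:qk_to_ma} to get a cover, and extract a finite subcover. The only cosmetic difference is that the paper justifies openness of $\Moduli_{1,\calA}(m)$ in $\mathcal{U}_{1,n}$ by first identifying it with some $\Moduli_{1,\calK'}(Q')$ via Lemma~\ref{lem:ma_to_qk} and then invoking Theorem~\ref{thm:qk_def_open}, whereas you cite Smyth directly; either route works.
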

\begin{proof}
By Theorem \ref{thm:qk_def_open}, the stack $\Moduli_{1,\calK}(Q)$ and the stacks $\Moduli_{1,\calA_i}(m_i)$ are open substacks of $\mathcal{U}_{1,n}$. By Lemma \ref{lem:qk_to_ma}, each curve of $\Moduli_{1,\calK}(Q)$ belongs to some $\Moduli_{1,\calA}(m)$, so we may argue just as in Theorem \ref{thm:simplicial_frankenstein} to conclude.
\end{proof}

\begin{lemma} \label{lem:qk_smoothable}
The stack $\Moduli_{1,\calK}(Q)$ factors through $\mathcal{V}_{1,n}$
\end{lemma}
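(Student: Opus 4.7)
The plan is to reduce to the already-known smoothability of $(m,\calA)$-stable curves established by Smyth. By Lemma \ref{lem:qk_to_ma}, given any $(Q,\calK)$-stable curve $(C; p_1,\ldots, p_n)$, I can produce an integer $m$ and a weight vector $\calA$ such that $(C; p_1,\ldots, p_n)$ is $(m,\calA)$-stable. It then suffices to check that $\Moduli_{1,\calA}(m)$ factors through $\mathcal{V}_{1,n}$.

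The latter is essentially Smyth's result that $\Moduli_{1,\calA}(m)$ is a modular compactification of $\moduli_{1,n}$ (see \cite{smyth_mstable}). In particular, every $(m,\calA)$-stable curve admits a smoothing whose generic fiber is a smooth pointed curve parametrized by $\moduli_{1,n}$, so the point $[C; p_1,\ldots, p_n]$ of $\mathcal{U}_{1,n}$ lies in the closure of $\moduli_{1,n}$, i.e., in the irreducible component $\mathcal{V}_{1,n}$. Chaining this with the previous paragraph gives the claim for $\Moduli_{1,\calK}(Q)$ on geometric points.

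To extend from geometric points to the claim at the level of stacks, I would use the Frankenstein property (Theorem \ref{thm:qk_frankenstein}) to cover $\Moduli_{1,\calK}(Q)$ by open substacks $U_i \subseteq \Moduli_{1,\calA_i}(m_i)$. Since each $\Moduli_{1,\calA_i}(m_i)$ factors through $\mathcal{V}_{1,n}$, so does each $U_i$, and therefore so does $\Moduli_{1,\calK}(Q)$ because factorization through an open substack of $\mathcal{U}_{1,n}$ can be checked on an open cover.

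No step here is expected to be a serious obstacle: the real content has been pushed into Lemma \ref{lem:qk_to_ma} and Theorem \ref{thm:qk_frankenstein} together with Smyth's prior classification. The only mild subtlety is to cite smoothability of $(m,\calA)$-stable curves explicitly rather than the broader assertion that $\Moduli_{1,\calA}(m)$ is proper; this is harmless since Smyth establishes both.
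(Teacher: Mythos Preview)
Your argument is correct, but it takes a more circuitous route than the paper. The paper's proof is two lines: a curve is smoothable if and only if each of its singularities is smoothable, and the only singularities occurring on a $(Q,\calK)$-stable curve are nodes and elliptic $m$-fold points, both of which are known to be smoothable. No appeal to Smyth's moduli spaces, to Lemma~\ref{lem:qk_to_ma}, or to the Frankenstein cover is needed. Your approach instead pushes the smoothability question onto Smyth's prior result that $\Moduli_{1,\calA}(m)$ is a modular compactification, then glues via the open cover of Theorem~\ref{thm:qk_frankenstein}. This is valid and logically non-circular (both results precede the lemma in the paper), and it has the mild advantage of not requiring the reader to recall the local-to-global smoothability criterion for reduced curves; but it imports substantially more machinery than the direct singularity-by-singularity check.
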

\begin{proof}
A curve is smoothable if and only if its singularities are smoothable. Nodes and elliptic $m$-fold points are smoothable, so the curves of $\Moduli_{1,\calK}(Q)$ are smoothable.
\end{proof}


\begin{proposition} \label{prop:qk_univ_closed}
The stack $\Moduli_{1,\calK}(Q)$ is universally closed.
\end{proposition}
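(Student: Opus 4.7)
The plan is to verify the valuative criterion for universal closedness by way of the contraction morphism $\rho : \Moduli_{1,n}^{rad} \to \Moduli_{1,\calK}(Q)$ constructed in Theorem \ref{thm:contraction_to_qk_nonintro}. The stack $\Moduli_{1,n}^{rad}$ is a logarithmic blowup of the proper Deligne--Mumford stack $\Moduli_{1,n}$ (Theorem \ref{thm:M_1n_rad_exists}), so it is itself proper over $\Spec \Z[1/6]$. If we can show $\rho$ is surjective on geometric points, then universal closedness of $\Moduli_{1,\calK}(Q)$ follows by the standard argument that the image of a universally closed stack under a surjection is universally closed: given a closed substack of $\Moduli_{1,\calK}(Q) \times_{\Spec \Z[1/6]} T$, its preimage in $\Moduli_{1,n}^{rad} \times T$ is closed, has closed image in $T$ by properness, and this image coincides with the image of the original substack by surjectivity.

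For the surjectivity of $\rho$, the plan is to reverse the construction in Proposition \ref{prop:contract_everything}. Given a geometric $(Q,\calK)$-stable curve $(C; p_1,\ldots,p_n)$, I would build a stable radially aligned log curve $\tilde{C}$ and a morphism $\tilde{C}\to C$ whose restriction over each elliptic Gorenstein singularity $q$ attaches a minimal DM-stable elliptic subcurve of level $\lev(q) \in Q$ and whose restriction over each smooth point $x$ carrying a nontrivial collision attaches a rational tail marked by $\Marks(x) \in \calK$. The universal contraction datum classified in Proposition \ref{prop:uni_contract_equiv_QK} prescribes exactly which radii and tail lengths should be chosen, and the downward closedness of $Q$ together with the non-overlap hypothesis with $\calK$ guarantees that the resulting edge-length data generates a basic (freely generated) characteristic monoid in which vertex-radii are pairwise comparable. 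Thus $\tilde C$ defines a geometric point of $\Moduli_{1,n}^{rad}$ mapping under $\rho$ to $C$.

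With surjectivity in hand, the valuative criterion runs cleanly. Given a DVR $R$ with fraction field $K$ and a map $\xi_K : \Spec K \to \Moduli_{1,\calK}(Q)$, surjectivity of $\rho$ applied to a geometric point $\Spec \bar K \to \Moduli_{1,\calK}(Q)$ produces, after a finite field extension $K'/K$ and a corresponding DVR extension $R'/R$, a lift $\tilde \xi_{K'} : \Spec K' \to \Moduli_{1,n}^{rad}$ with $\rho \circ \tilde \xi_{K'}$ equal to the base change of $\xi_K$. Properness of $\Moduli_{1,n}^{rad}$ then extends this to $\tilde \xi : \Spec R' \to \Moduli_{1,n}^{rad}$, and $\rho\circ\tilde\xi$ is the desired extension of $\xi_K$ to $\Spec R'$.

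The main obstacle will be the un-contraction step: one must check that the reconstructed $\tilde C$ is genuinely DM-stable rather than merely semistable (that is, every rational component has at least three special points), and that the assignment of radii is coherent with the partial order on the characteristic monoid. The non-overlap hypothesis on $(Q,\calK)$ in Definition \ref{def:do_not_overlap} is the combinatorial shadow of the geometric condition $E \cap T = \emptyset$ appearing in the contraction data of Proposition \ref{prop:contract_everything}, and is precisely what rules out the pathological situation in which an elliptic singularity of $C$ would have to coincide with a colliding marked point, which would obstruct any well-defined un-contraction.
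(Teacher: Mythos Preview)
Your overall strategy---showing that $\rho$ is surjective and then using that the image of a proper stack is universally closed---is valid in principle, but the un-contraction step you sketch is genuinely hard and you have not carried it out. Replacing an elliptic $m$-fold point by an elliptic subcurve requires choosing a moduli parameter (e.g.\ a $j$-invariant), arranging the edge lengths so that the result is basic radially aligned, and then checking that the \emph{universal} contraction datum for $(Q,\calK)$ selects exactly the subcurve and tails you attached (not more, not less). The last point is delicate: the universal radius is determined by the partition type of the whole tropicalization, so you must control all radii of $\tilde{C}$, not just the one you inserted. None of this is impossible, but it is substantially more work than your paragraph suggests.

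The paper sidesteps all of this by exploiting a standard reduction you do not invoke: since $\moduli_{1,n}$ is dense in $\Moduli_{1,\calK}(Q)$, for the valuative criterion of universal closedness it suffices to test maps $\Spec K \to \moduli_{1,n}$. For such maps there is no lifting problem at all---the smooth locus $\moduli_{1,n}$ already sits inside $\Moduli_{1,n}^{rad}$ and $\rho$ restricts to the identity there---so one simply takes the limit in $\Moduli_{1,n}^{rad}$ (properness, Theorem~\ref{thm:M_1n_rad_exists}) and composes with $\rho$. This is a two-line proof. Your valuative-criterion paragraph would collapse to exactly this if you replaced ``surjectivity of $\rho$ applied to a geometric point'' with ``density of $\moduli_{1,n}$''. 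Note also that the paper does eventually prove surjectivity of $\rho$ (inside Theorem~\ref{thm:qk_classification_nonintro}), but via smoothability and separatedness rather than explicit un-contraction, and only \emph{after} both universal closedness and separatedness are already in hand.
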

\begin{proof}
For any 1-parameter family of curves in $\moduli_{1,n}$, we may find a limit in $\Moduli_{1,n}^{rad}$ and then apply Theorem \ref{thm:contraction_to_qk_nonintro} to produce a $(Q,\mathcal{K})$-stable limit (cf. {{\cite[Theorem 5.3]{bkn_qstable}}}).
%
\end{proof}

\begin{proposition} \label{prop:qk_separated}
The stack $\Moduli_{1, \calK}(Q)$ is separated.
\end{proposition}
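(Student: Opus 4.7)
The plan is to verify the valuative criterion of separatedness. Let $R$ be a DVR with fraction field $K$, and suppose $\mathcal{C}_1, \mathcal{C}_2 \to \Spec R$ are two $(Q,\calK)$-stable families equipped with an isomorphism $\varphi: (\mathcal{C}_1)_K \overset{\sim}{\to} (\mathcal{C}_2)_K$ on the generic fiber. I would show that $\varphi$ extends to an isomorphism of families over $\Spec R$, possibly after a finite extension of $R$.

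My strategy would be to realize both $\mathcal{C}_i$ as images of a common basic radially aligned family under the contraction morphism $\Phi : \Moduli_{1,n}^{rad} \to \Moduli_{1,\calK}(Q)$ of Theorem \ref{thm:contraction_to_qk_nonintro}, and then leverage the separatedness of $\Moduli_{1,n}^{rad}$. Using (classical) properness and separatedness of $\Moduli_{1,n}$ together with the properness of $\Moduli_{1,n}^{rad} \to \Moduli_{1,n}$ from Theorem \ref{thm:M_1n_rad_exists}, after finite extension of $R$ the common smooth generic fiber extends to a basic radially aligned family $\mathcal{D} \to \Spec R$. Applying $\Phi$ produces a $(Q,\calK)$-stable family $\Phi(\mathcal{D})$ that is generically isomorphic to both $\mathcal{C}_i$. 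The central claim I would prove is that $\mathcal{C}_i \cong \Phi(\mathcal{D})$ over $\Spec R$, compatibly with $\varphi$, for $i = 1, 2$; this would immediately give $\mathcal{C}_1 \cong \mathcal{C}_2$, and faithfully flat descent along the finite extension would then yield the original isomorphism.

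To prove the central claim, I would show that each $\mathcal{C}_i$ itself arises as $\Phi(\mathcal{D}_i)$ for some radially aligned family $\mathcal{D}_i \to \Spec R$. Given this, separatedness of $\Moduli_{1,n}^{rad}$ forces $\mathcal{D}_1 \cong \mathcal{D}_2 \cong \mathcal{D}$, and functoriality of the contraction then recovers $\mathcal{C}_1 \cong \mathcal{C}_2$. Constructing $\mathcal{D}_i$ would proceed by inverting the two-stage contraction of Proposition \ref{prop:contract_everything}: a log blowup of $\mathcal{C}_i$ along each elliptic Gorenstein singularity, along the lines of \cite{bozlee_contractions, bkn_qstable}, followed by an additional blowup separating the colliding sections. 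The correspondence of Proposition \ref{prop:uni_contract_equiv_QK} would then ensure that this inverse procedure reproduces exactly the universal radius and tail function prescribed by $(Q, \calK)$.

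The main obstacle will be verifying that this reverse construction genuinely exists for every $(Q,\calK)$-stable family, and that contracting the resulting radially aligned family via $\Phi$ really does recover the original family. The subtlest point is compatibility with the combinatorial data: I would need to check that the log blowup at each elliptic Gorenstein singularity introduces precisely the radii associated to $Q$, and that the blowup separating colliding sections produces rational tails carrying exactly the $\calK$-markings. This will parallel the treatment of the $Q$-stable case in \cite{bkn_qstable}, combined with the tail-contraction formalism developed in Section \ref{sec:collide} and Section \ref{ssec:genus_one_univ_contractions} of the present paper.
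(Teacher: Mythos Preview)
Your proposal takes a genuinely different route from the paper, and the route has a real gap.

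The paper follows Smyth's strategy directly: given two $(Q,\calK)$-stable families $C, C' \to S$ over a DVR with isomorphic smooth generic fibers, it constructs a common regular semistable model $C^{ss}$ dominating both, and shows that the exceptional loci $Z, Z'$ of $\phi : C^{ss} \to C$ and $\phi' : C^{ss} \to C'$ coincide. The elliptic part of the exceptional locus is handled by invoking \cite[Theorem 5.6]{bkn_qstable}, which shows both $\phi^{-1}(p)$ and $(\phi')^{-1}(p')$ are the same balanced subcurve. The new content is the rational tails: if a component $K$ lies in $Z$ but not $Z'$, then the rational tail $T$ rooted at $K$ contracts in $C$ to a point with $\Marks(\phi(T)) \in \calK$, while $\phi'(T)$ is a rational tail of $C'$ with the same marks, contradicting axiom (Q5). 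No lifting to $\Moduli_{1,n}^{rad}$ is used.

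Your approach hinges on the ``reverse construction'': given a $(Q,\calK)$-stable family $\mathcal{C}_i \to \Spec R$, produce a radially aligned family $\mathcal{D}_i$ with $\Phi(\mathcal{D}_i) \cong \mathcal{C}_i$. This step is not available in the paper or in the references you cite. The contractions of \cite{bozlee_contractions} and \cite{bkn_qstable} go one way, from radially aligned log curves to Gorenstein curves; they do not provide a canonical log resolution of an elliptic $m$-fold singularity landing in $\Moduli_{1,n}^{rad}$. Even granting some resolution, you would still need to verify that the universal contraction datum, applied to your particular lift $\mathcal{D}_i$, returns exactly $\mathcal{C}_i$ rather than some other $(Q,\calK)$-stable curve with the same generic fiber --- and that verification is essentially the separatedness statement you are trying to prove. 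The paper's semistable-resolution argument sidesteps this entirely by comparing exceptional loci combinatorially, using the axioms (Q4)--(Q5) directly.
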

\begin{proof}
Let $S$ be the spectrum of a DVR, $s \to S$ a geometric special point, and $\eta \to S$ its
generic point. Suppose that $\pi : C \to S$ and $\pi' : C' \to S$ are families of 
$(Q,\mathcal{K})$-stable curves whose restrictions to $\eta$ are isomorphic and factor through $\moduli_{1,n}$. As in \cite[Section 3.3.2]{smyth_mstable}, it suffices
to show that the isomorphism over generic fibers extends to a regular isomorphism over $S$ after a finite base change.

After a finite base change if necessary, we may construct a semistable curve $C^{ss}$ with regular total space over $S$ dominating $C$ and $C'$:
\[
\begin{tikzcd}
 & C^{ss} \ar[dr, "\phi'"] \ar[dl, "\phi"'] & \\
 C \ar[dr, "\pi"'] & & C' \ar[dl, "\pi'"] \\
  & S
\end{tikzcd}
\]

Let $Z$ be the exceptional locus of $\phi$ and $Z'$ the exceptional locus of $\phi'$. Since $C^{ss}$, $C$, and $C'$ are normal, it suffices to show that $Z = Z'$.
As in \cite[Theorem 5.6]{bkn_qstable}, either
\begin{enumerate}
    \item $C|_s$ and $C'|_s$ are both nodal, or
    \item there exists $p \in C|_s$, $p' \in C'|_s$ so that $p, p'$ are elliptic $l$-fold points with the same level whose preimages $\phi^{-1}(p) = (\phi')^{-1}(p')$ are the same balanced subcurve $E$ of $C^{ss}|_s$.
\end{enumerate}
Again as in \cite[Section 3.3.2]{smyth_mstable} we have
\begin{enumerate}
  \item $Z$ and $Z'$ contain no irreducible component of $C^{ss}|_s$ adjacent to $E$, and
  \item $Z$ and $Z'$ contain each irreducible component of $C^{ss}|_s$ not adjacent to $E$ with only two special points.
\end{enumerate}

It remains to check that each contains the same irreducible components of $C^{ss}$ not adjacent to $E$ with at least 3 special points. Suppose that $K$ is such a component so that $K \subseteq Z$ but $K \not\subseteq Z'$. Then since the only rational singularities that $C|_s$ may have are nodes, $Z$ must contain the entire rational tail $T$ rooted at $K$. With $\Sigma^{ss}$ is the divisor of markings on $C^{ss}$, the image $\phi(\Sigma^{ss} \cap T)$ is a point, so the markings corresponding to $\Sigma^{ss} \cap T$ coincide in $C$. Since $C$ is $(Q, \mathcal{K})$-stable, we must have $\Marks(\phi(T)) \in \calK$.

Since $K \not\subset Z'$, $\phi'(T)$ is a rational tail of $C'|_s$.  Now, $\Marks(\phi(T)) = \Marks(\phi'(T))$ belongs to $\mathcal{K}$. But axiom (Q5) of $(Q, \calK)$-stability (Definition \ref{def:qk_stable}) implies that $\Marks(\phi'(T)) \not\in \calK$, a contradiction. Therefore $Z'$ must also contain $K$ and the result follows.
\end{proof}

\begin{theorem} \label{thm:qk_is_modular_compactification_nonintro}
 The stack $\Moduli_{1,\calK}(Q)$ is a Deligne-Mumford Gorenstein modular compactification of $\moduli_{1,n}$ over $\Z[1/6]$ admitting collisions so that $\calK(\Moduli_{1,\calK}(Q)) = \calK$.
\end{theorem}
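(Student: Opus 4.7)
The plan is to assemble this theorem from the structural results already established in Section \ref{sec:construction}, handling each clause of the conclusion in turn and leaving the collision complex identity for last.

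First, I would observe that $(Q,\calK)$-stability by definition imposes that curves are Gorenstein with smooth markings (axioms (Q1), (Q2), (Q3)), so every geometric point of $\Moduli_{1,\calK}(Q)$ is a Gorenstein curve with smooth markings, establishing the ``Gorenstein'' and ``admitting collisions'' clauses. Lemma \ref{lem:qk_smoothable} gives that $(Q,\calK)$-stable curves lie in $\mathcal{V}_{1,n}$, and Theorem \ref{thm:qk_def_open} shows $\Moduli_{1,\calK}(Q)$ is an open substack of $\mathcal{U}_{1,n}$; combining these, $\Moduli_{1,\calK}(Q)$ is an open substack of $\mathcal{V}_{1,n}\times_{\Spec \Z}\Spec \Z[1/6]$. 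Properness over $\Spec\Z[1/6]$ then follows by combining Proposition \ref{prop:qk_univ_closed} (universal closedness, proved via the contraction map of Theorem \ref{thm:contraction_to_qk_nonintro} applied to limits in $\ol{\mathcal{M}}_{1,n}^{rad}$) with Proposition \ref{prop:qk_separated}.

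For the Deligne-Mumford property, I would invoke Theorem \ref{thm:qk_frankenstein}: $\Moduli_{1,\calK}(Q)$ has a finite open cover by open substacks of Smyth's spaces $\Moduli_{1,\calA_i}(m_i)$, each of which is known to be a Deligne-Mumford stack over $\Z[1/6]$ by \cite{smyth_mstable}. Since the Deligne-Mumford condition is local, this gives the claim.

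The remaining clause is the identity $\calK(\Moduli_{1,\calK}(Q)) = \calK$. The inclusion $\calK(\Moduli_{1,\calK}(Q)) \subseteq \calK$ is immediate from axiom (Q4) in Definition \ref{def:qk_stable}: if $I = \Marks(x)$ for a smooth point $x$ of some curve in $\Moduli_{1,\calK}(Q)$, then $I \in \calK$. For the reverse inclusion $\calK \subseteq \calK(\Moduli_{1,\calK}(Q))$, given any nonempty $I \in \calK$ I would construct an explicit $(Q,\calK)$-stable curve containing a smooth point marked by $I$. Take a smooth elliptic curve $E$ (over the generic point in characteristic $0$), choose $n - |I| + 1$ distinct smooth points, and place the markings indexed by $[n] - I$ at $n - |I|$ of them and all markings indexed by $I$ at the remaining point $x$. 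Axioms (Q1)--(Q5) are immediate: the curve is smooth so (Q2), (Q3), (Q5), (Q6) are vacuous or trivial, (Q1) holds by construction, and (Q4) holds because $I \in \calK$. This produces a geometric point of $\Moduli_{1,\calK}(Q)$ witnessing that $I$ collides, so $I \in \calK(\Moduli_{1,\calK}(Q))$. The main subtlety to be careful about is the hypothesis that $Q$ and $\calK$ do not overlap (Definition \ref{def:do_not_overlap}), which is what prevents a conflict between wanting $I$ to collide and $Q$ forcing an elliptic singularity with level $\mathcal{P}(I)$ at the corresponding locus; since the witness curve constructed above is smooth with no elliptic singularities, no conflict arises and the argument goes through cleanly.
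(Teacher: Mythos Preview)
Your proof follows essentially the same route as the paper's: openness via Lemma~\ref{lem:qk_smoothable} and Theorem~\ref{thm:qk_def_open}, properness via Propositions~\ref{prop:qk_univ_closed} and~\ref{prop:qk_separated}, and the Deligne--Mumford property via the Frankenstein cover (Theorem~\ref{thm:qk_frankenstein}). The paper simply asserts $\calK(\Moduli_{1,\calK}(Q)) = \calK$ without further comment, so your explicit witness construction for the inclusion $\calK \subseteq \calK(\Moduli_{1,\calK}(Q))$ is a welcome addition.

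There is one imprecision worth fixing. You say axiom (Q3) is ``vacuous or trivial'' for your smooth witness curve $E$, but it is neither: the whole curve $E$ is itself a connected genus-one subcurve, and by Definition~\ref{def:qk_stable} its level $\lev(E)$ is the partition of $[n]$ induced by the connected components of $(E - E) \cup \Sigma = \Sigma$. Since the markings indexed by $I$ coincide and the rest are distinct, this is exactly the partition $\mathcal{P}(I)$. So (Q3) demands $\mathcal{P}(I) \notin Q$, which is precisely the non-overlap hypothesis of Definition~\ref{def:do_not_overlap}. In other words, the non-overlap condition is used not to rule out an elliptic singularity in the witness (there is none), but to ensure the smooth witness itself satisfies (Q3). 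Once you reroute your final paragraph through this observation, the argument is complete.
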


\begin{proof}
By Lemma \ref{lem:qk_smoothable} and Theorem \ref{thm:qk_def_open}, it is an open substack of $\mathcal{V}_{1,n}$. By Proposition \ref{prop:qk_univ_closed} and \ref{prop:qk_separated}, $\Moduli_{1,\calK}(Q)$ is proper. Therefore $\Moduli_{1,\calK}(Q)$ is a modular compactification of $\moduli_{1,n}$ over $\Spec \Z[1/6].$

We have that $\Moduli_{1,\calK}(Q)$ is Deligne-Mumford since, by Theorem \ref{thm:qk_frankenstein}, it admits an open cover by Deligne-Mumford stacks \cite[Theorem 3.8]{smyth_mstable}. By definition, the geometric points of $\Moduli_{1,\calK}(Q)$ correspond to Gorenstein curves with smooth markings, and $\calK(\Moduli_{1,\calK}(Q)) = \calK$.
\end{proof}

\subsection{Classification of genus one Gorenstein compactifications}\label{sec:classfication}

We now prove that all modular compactifications of $\moduli_{1,n}$ by Gorenstein curves with possibly colliding markings are classified by the $(Q,\calK)$-stable spaces (Theorem \ref{thm:qk_classification_nonintro}). The method of proof is an adaptation of the argument of \cite{bkn_qstable} to the case of non-colliding markings.

\begin{definition}
Let $\mathcal{W}_{1,n}$ be the moduli stack over $\Spec \Z[1/6]$ parametrizing all families of $n$-pointed Gorenstein curves with smooth, but possibly colliding markings, and no infinitesimal automorphisms, i.e. $H^1\left(C, \omega_C\left(-\sum_{i=1}^n p_i \right)\right) = 0$.
\end{definition}

Let $\moduli$ be a proper open substack of $\mathcal{W}_{1,n}$. Then more precisely, the goal of this section is to show that $\moduli = \Moduli_{1,\mathcal{K}}(Q)$ for some $Q$ and $\mathcal{K}$.

Our strategy is to construct a universal contraction datum inducing a map $\Moduli_{1,n}^{rad} \to \moduli.$ As a first step we break $\mathcal{W}_{1,n}$ into a stratification by ``combinatorial type," and show that $\moduli$ is a union of such strata. Then to decide whether a given curve belongs to $\moduli$, we need only to know its combinatorial type.

\begin{definition} \label{def:combinatorial_type}
Let $(C; p_1,\ldots, p_n)$ be a Gorenstein $n$-pointed curve of arithmetic genus one with smooth markings. The \emphbf{combinatorial type of $C$}, denoted by $\Lambda$, consists of the following data:
\begin{enumerate}
 \item a set $V$ of \emphbf{vertices}, identified with the set of irreducible components of $C$;
 \item a set $E$ of \emphbf{singularities}, identified with the set of singular points of $C$;
 \item a \emphbf{genus function} $g : V \cup E \to \N$ taking an irreducible component of $C$ to the genus of its normalization and taking each singularity of $C$ to its genus as a singularity;
 \item an \emphbf{incidence function} $i : V \times E \to \{ 0, 1 \}$ taking $(v,e) \mapsto 1$ if $e \in v$ and $0$ otherwise;
 \item a \emphbf{marking function} $s$ which assigns to a component $v\in V$ the partition of $\Marks(v)$ given by the subsets $\Marks(x)$ as $x$ varies over the marked points of $v$.
\end{enumerate}
Observe that for a given combinatorial type, the union of the sets $s(v)$ for $v \in V$ is a partition of $\{ 1, \ldots, n \}$.

Two combinatorial types $\Lambda_1 = (V_1, E_1, g_1, i_1, s_1)$ and $\Lambda_2 = (V_2, E_2, g_2, i_2, s_2)$ are \emphbf{isomorphic} if there is a bijection $f : V_1 \cup E_1 \to V_2 \cup E_2$ so that
\begin{enumerate}
  \item $f(V_1) = V_2$ and $f(E_1) = E_2$;
  \item $g_1 = g_2 \circ f$;
  \item $s_1 = s_2 \circ f$;
  \item $i_1(v,e) = i_2(f(v), f(e))$ for all $(v,e) \in V_1 \times E_1$.
\end{enumerate}
\end{definition}

Given a combinatorial type $\Lambda$ there is a natural locally closed substack $\mathcal{Z}_\Lambda$ of $\mathcal{W}_{1,n}$ whose geometric points are the curves with
combinatorial type isomorphic to $\Lambda$.

\begin{lemma}
$\mathcal{W}_{1,n}$ is a union of the open substacks of weighted pointed $m$-stable curves $\Moduli_{1,\mathcal{A}}(m)$.
\end{lemma}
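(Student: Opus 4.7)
The plan is to show that every geometric point $(C; p_1, \ldots, p_n)$ of $\mathcal{W}_{1,n}$ is $(m, \calA)$-stable for some integer $m$ and weight vector $\calA$. Since each $\Moduli_{1,\calA}(m)$ parametrizes Gorenstein genus one curves with smooth markings and no infinitesimal automorphisms, each is automatically contained in $\mathcal{W}_{1,n}$, so the reverse inclusion is immediate once we have established deformation-openness of $(m,\calA)$-stability as in \cite{smyth_mstable}.

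Given such a $(C; p_1, \ldots, p_n)$, I would follow the strategy of Lemma \ref{lem:qk_to_ma}. By Smyth's classification of Gorenstein singularities on genus one curves \cite{smyth_mstable}, $C$ has at worst nodal singularities together with at most one elliptic Gorenstein singularity; if present, that singularity is itself the minimal connected genus one subcurve of $C$. Let $Z \subseteq C$ denote the unique minimal connected subcurve of arithmetic genus one, and set $m := |\lev(Z)|$. For each $i \in [n]$, set $N_i := |\{j \stc p_j = p_i\}|$, $a_i := 1/N_i$, and $\calA := (a_1, \ldots, a_n)$.

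To verify $(m,\calA)$-stability, I would check in turn: (i) all singularities of $C$ are nodes or elliptic $\ell$-fold points with $\ell \leq m$, which holds because the lone elliptic singularity, if present, has level exactly $|\lev(Z)| = m$; (ii) at each smooth point $x \in C$, $\sum_{p_i = x} a_i = 1$ by the choice of weights; (iii) for every connected genus one subcurve $E \subseteq C$ one has $Z \subseteq E$, so $\lev(E)$ refines $\lev(Z)$, yielding the required level bound; and (iv) the defining hypothesis $H^1(C, \omega_C(-\sum p_i)) = 0$ of $\mathcal{W}_{1,n}$ translates to $H^0(C, \Omega_C^\vee(-\Sigma)) = 0$ by Serre duality on the Gorenstein curve $C$, giving the no-infinitesimal-automorphism axiom.

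The main subtlety lies in (iii), where the precise inequality required for $(m,\calA)$-stability must match the choice $m = |\lev(Z)|$; this is exactly the ``easily verified" step in the proof of Lemma \ref{lem:qk_to_ma}, which is handled by a short case analysis on whether $Z$ is a single elliptic Gorenstein singularity or a positive-dimensional subcurve. Once these axioms are confirmed, $(C; p_1, \ldots, p_n) \in \Moduli_{1,\calA}(m)$, and varying $(C; p_1, \ldots, p_n)$ over geometric points of $\mathcal{W}_{1,n}$ yields the desired covering.
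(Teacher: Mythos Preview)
Your proposal is correct and follows essentially the same approach as the paper: both argue that each $\Moduli_{1,\calA}(m)$ is an open substack of $\mathcal{W}_{1,n}$ and then cover geometric points by choosing $m = |\lev(Z)|$ for $Z$ the minimal genus one subcurve and $a_i = 1/N_i$, exactly as in Lemma~\ref{lem:qk_to_ma}. The paper's proof is a one-line reference to that lemma, whereas you spell out the verification of the $(m,\calA)$-stability axioms; the substance is the same.
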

\begin{proof}
Since each of the stacks $\Moduli_{1,\mathcal{A}}(m)$ is an open substack of $\mathcal{W}_{1,n}$, it suffices to show that each curve $(C; p_1,\ldots, p_n)$ in $\mathcal{W}_{1,n}$ belongs to $\Moduli_{1,\calA}(m)$ for some weights $\calA$ and integer $m$. This follows exactly as in 
Lemma \ref{lem:qk_to_ma}.
\end{proof}

\begin{lemma}
Given a combinatorial type $\Lambda$, the locus $\mathcal{Z}_{\Lambda}$ is irreducible.
\end{lemma}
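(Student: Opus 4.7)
The plan is to exhibit $\mathcal{Z}_\Lambda$ as a fibration with irreducible fibers over an irreducible base, via the normalization morphism.

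First I would define a normalization morphism
\[
  \nu : \mathcal{Z}_\Lambda \longrightarrow \mathcal{N}_\Lambda := \prod_{v \in V} \mathcal{M}_{g(v), N_v},
\]
where $N_v := |s(v)| + \#\{e \in E : i(v,e) = 1\}$ counts the distinguished points on the normalization of the component corresponding to $v$: one for each part of $s(v)$ (recording the location of the coinciding markings indexed by that part) and one for each preimage of a singularity incident to $v$ (labeled by the corresponding $e \in E$). Since each factor $\mathcal{M}_{g(v), N_v}$ is irreducible, so is the target $\mathcal{N}_\Lambda$. If some rational component $v$ has $N_v < 3$, then the no-infinitesimal-automorphism condition built into the definition of $\mathcal{W}_{1,n}$ forces $\mathcal{Z}_\Lambda$ to be empty, hence trivially irreducible, so we may assume $N_v \geq 3$ whenever $g(v) = 0$.

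Next I would argue the fibers of $\nu$ are irreducible. Given the normalization data, reconstructing a Gorenstein curve of combinatorial type $\Lambda$ amounts to specifying, at each singularity $e \in E$, a Gorenstein singularity structure on the prescribed smooth branches. For a node this structure is canonically determined. For an elliptic $m$-fold point, the analytic normal form is unique by the classification of elliptic Gorenstein singularities recalled from \cite{smyth_mstable}, so the space of such structures on given smooth branches is a torsor under a connected affine group acting by rescaling local parameters along the branches, and is therefore irreducible. A fiber of $\nu$ is a product of these irreducible local moduli, hence is itself irreducible.

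Finally, since $\nu$ is a morphism to an irreducible stack whose fibers are irreducible and of constant dimension, $\mathcal{Z}_\Lambda$ is itself irreducible. The main technical obstacle is Step 2: precisely describing the moduli of Gorenstein singularity structures on prescribed smooth branches at an elliptic $m$-fold point and verifying that it is a homogeneous space under a connected group. The cleanest route is likely to exploit the explicit local normal forms from \cite{smyth_mstable} together with a local deformation-theoretic argument analogous to those in \cite{bkn_qstable}.
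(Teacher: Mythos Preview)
Your approach is sound in spirit but takes a substantially harder route than the paper. The paper's proof is a one-line reduction: replace each collision group in $s(v)$ by a single marking to obtain a combinatorial type $\overline{\Lambda}$ with distinct markings, observe that $\mathcal{Z}_\Lambda \cong \mathcal{Z}_{\overline{\Lambda}}$ tautologically, and then cite \cite[Lemma~6.3]{bkn_qstable}, which is exactly the irreducibility statement for strata with distinct markings. What you have sketched is essentially a from-scratch proof of that cited lemma, reproduced in the colliding-markings setting. That is not wrong, but it duplicates work already done in the literature that the paper relies on.

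Two issues to flag if you pursue your route. First, your formula for $N_v$ is off in the self-node case: for the nodal cubic, the single node $e$ has \emph{two} preimages on the normalization of $v$, but $\#\{e \in E : i(v,e)=1\}$ contributes only $1$. Your verbal description (``one for each preimage'') is right; the formula is not. In genus one this is the only problematic case, since an irreducible component can meet an elliptic $m$-fold point in at most one branch, but you should count branches rather than incident singularities. Second, the final step---irreducible base with irreducible fibers of constant dimension implies irreducible total space---needs a hypothesis such as flatness or openness of $\nu$; you should say explicitly that $\nu$ is smooth (which it is, since the gluing data form a smooth parameter space), so that this conclusion is valid.
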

\begin{proof}
The idea is to identify $\mathcal{Z}_\Lambda$ with $\mathcal{Z}_{\ol{\Lambda}}$ where $\ol{\Lambda}$ is a combinatorial type with no colliding markings. More formally,
let $r$ be the number of sets in the partition $\bigcup\limits_{v \in V} s(v)$ of $[n]$. Find a bijection $\eta: \bigcup\limits_{v \in V} s(v) \to \{ \{ 1 \},\dots, \{r\} \}$. 
Let
$\ol{\Lambda}$ be the combinatorial type of $r$-pointed curve obtained from $\Lambda$ with marking function $\eta \circ s$. 
Then clearly, $\mathcal{Z}_{\Lambda} \cong \mathcal{Z}_{\ol{\Lambda}}$. By \cite[Lemma 6.3]{bkn_qstable}, $\mathcal{Z}_\Lambda$ is irreducible.
\end{proof}

\begin{lemma} \label{lem:union_of_loci}
$\moduli$ is a union of $\mathcal{Z}_{\Lambda}$s.
\end{lemma}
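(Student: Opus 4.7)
The goal is to show that membership in $\moduli$ is determined by combinatorial type alone: for every combinatorial type $\Lambda$, the intersection $\moduli \cap \mathcal{Z}_\Lambda$ is either empty or all of $\mathcal{Z}_\Lambda$. Suppose $(C; p_1, \ldots, p_n) \in \moduli \cap \mathcal{Z}_\Lambda$ and $(C'; p_1', \ldots, p_n') \in \mathcal{Z}_\Lambda$ is any other curve of combinatorial type $\Lambda$; the aim is to prove $C' \in \moduli$.

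My first observation is that the proof of Lemma \ref{lem:qk_to_ma} applies verbatim to any $n$-pointed Gorenstein curve in $\mathcal{W}_{1,n}$: the values $m := |\lev(Z)|$ and $a_i := 1/N_i$ depend only on the combinatorial type $\Lambda$. Hence both $C$ and $C'$ lie in the same Smyth space $\Moduli_{1,\calA_\Lambda}(m_\Lambda)$, which by \cite[Theorem 3.8]{smyth_mstable} is an irreducible proper separated Deligne-Mumford stack. In particular, $\mathcal{Z}_\Lambda \subseteq \Moduli_{1,\calA_\Lambda}(m_\Lambda)$ is an irreducible locally closed substack (irreducibility coming from the preceding lemma).

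Next, I would show that $\moduli \cap \mathcal{Z}_\Lambda$ is both open and closed in $\mathcal{Z}_\Lambda$. Openness is immediate from the fact that $\moduli$ is open in $\mathcal{V}_{1,n}$. For closedness, let $\mathcal{C} \to \Delta$ be a family over a DVR entirely in $\mathcal{Z}_\Lambda$ with generic fiber $C_\eta \in \moduli$ and special fiber $C_0$; since all fibers have combinatorial type $\Lambda$, the family $\mathcal{C}$ is a family in $\Moduli_{1,\calA_\Lambda}(m_\Lambda)$, and by separatedness of the latter it is the unique such extension of $C_\eta$ with special fiber $C_0$. Meanwhile, properness of $\moduli$ over $\Spec \Z[1/6]$ produces, after finite base change, an extension $\mathcal{L} \to \Delta$ of $C_\eta$ in $\moduli$ with some special fiber $L$. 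The key step is to argue that $L$ itself is $(m_\Lambda, \calA_\Lambda)$-stable so that $\mathcal{L}$ is also a family in $\Moduli_{1,\calA_\Lambda}(m_\Lambda)$; this will then force $\mathcal{L} = \mathcal{C}$ and hence $L = C_0 \in \moduli$ by separatedness of $\Moduli_{1,\calA_\Lambda}(m_\Lambda)$. This is where I expect the main obstacle to be, and the argument needs a combinatorial case analysis: the generic fiber $C_\eta$ has type $\Lambda$ with a specific pattern of elliptic Gorenstein singularities and marking collisions, and since $\mathcal{L}$ lies in the Gorenstein locus $\mathcal{W}_{1,n}$ and the numerical invariants $(m_\Lambda, \calA_\Lambda)$ extracted in Lemma \ref{lem:qk_to_ma} are upper-semicontinuous under specialization within a stratum of fixed $\Lambda$, one checks that $L$ cannot be strictly ``more stable'' than $(m_\Lambda, \calA_\Lambda)$.

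Finally, combining openness and closedness of $\moduli \cap \mathcal{Z}_\Lambda$ in $\mathcal{Z}_\Lambda$ with the irreducibility of $\mathcal{Z}_\Lambda$, the intersection is either empty or all of $\mathcal{Z}_\Lambda$. Since $C$ witnesses non-emptiness, $C' \in \moduli$, completing the proof. The hard part, as noted, is the separatedness/stability check for the $\moduli$-limit $L$; this leverages that in a family of type $\Lambda$ curves, the invariants driving $(m_\Lambda, \calA_\Lambda)$ are preserved under the valuative criterion for $\Moduli_{1,\calA_\Lambda}(m_\Lambda)$.
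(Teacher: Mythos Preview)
Your overall architecture---reduce to showing $\moduli \cap \mathcal{Z}_\Lambda$ is open and closed in the irreducible stratum $\mathcal{Z}_\Lambda$, and use that $\mathcal{Z}_\Lambda$ sits inside a single Smyth space $\Moduli_{1,\calA_\Lambda}(m_\Lambda)$---is exactly the strategy of \cite[Lemma~6.4]{bkn_qstable}, which the paper simply cites with $\Moduli_{1,\calA}(m)$ in place of $\Moduli_{1,n}(m)$.

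However, the step you flag as ``the main obstacle'' is a genuine gap in your write-up, and your proposed resolution does not close it. You want to show that the $\moduli$-limit $L$ of $C_\eta$ lies in $\Moduli_{1,\calA_\Lambda}(m_\Lambda)$, but your appeal to upper-semicontinuity is circular: the invariants $m_\Lambda$ and $\calA_\Lambda$ were read off from the type $\Lambda$, and there is no reason the $\moduli$-limit $L$ has type $\Lambda$ (the family $\mathcal{L}$ is not the family $\mathcal{C}$!). Concretely, $L$ could acquire a larger elliptic $m$-fold point than anything in $\Lambda$, or sprout a rational tail whose markings have total $\calA_\Lambda$-weight $\leq 1$; either would eject $L$ from $\Moduli_{1,\calA_\Lambda}(m_\Lambda)$. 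The phrase ``$L$ cannot be strictly more stable'' has no precise meaning here, and the relevant numerical invariants are not semicontinuous in the direction you need across the two \emph{different} families $\mathcal{C}$ and $\mathcal{L}$.

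The actual argument in \cite{bkn_qstable} handles this by comparing $L$ and $C_0$ through a common semistable model $C^{ss} \to \Delta$ dominating both $\mathcal{L}$ and $\mathcal{C}$, then analyzing the two exceptional loci (much as in the separatedness proof, Proposition~\ref{prop:qk_separated}, and in \cite[\S3.3.2]{smyth_mstable}). That analysis, not a semicontinuity statement, is what forces $L \cong C_0$. So your plan is correct up to the point you identify, but the resolution you sketch would need to be replaced by this semistable-model comparison.
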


\begin{proof}
This follows exactly as in \cite[Lemma 6.4]{bkn_qstable}, with weighted $m$-stable spaces $\Moduli_{1,\mathcal{A}}(m)$ replacing $\Moduli_{1,n}(m)$.
\end{proof}

Our next task is to construct a universal contraction datum inducing a map $\Moduli_{1,n}^{rad} \to \moduli$. We start by constructing contraction data for small slices of $\Moduli_{1,n}^{rad}$, which we call test curves.

\begin{definition} [{{\cite[Definition 6.5]{bkn_qstable}}}]
Let $\Gamma$ be a basic stable radially aligned tropical curve. A \emphbf{$\Gamma$-test curve centered at a geometric point $s$ of $S$} consists of a family of radially aligned curves $\pi : C \to S$ and an isomorphism $\trop(C|_s) \cong \Gamma$ (suppressed in later notation) such that
\begin{enumerate}
  \item $(S,s)$ is an atomic neighborhood for $\pi : C \to S$.
  \item The log structure on $S$ is divisorial; that is, it is the log structure associated to a normal crossings divisor \cite[(1.5)]{kato_log_structures}.
\end{enumerate}
\end{definition}

\begin{lemma} \label{lem:test_curves_exist}
For any basic stable radially aligned tropical curve $\Gamma$, there is a $\Gamma$-test curve.
\end{lemma}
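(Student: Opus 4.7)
The plan is to construct a $\Gamma$-test curve as a sufficiently small étale neighborhood of a geometric point of $\Moduli_{1,n}^{rad}$ whose tropicalization is $\Gamma$. First, I would produce the geometric point $s \to \Moduli_{1,n}^{rad}$ with the desired tropicalization: take $(C_0; p_1, \ldots, p_n)$ to be the $n$-pointed nodal curve of genus one whose dual graph is the underlying weighted marked graph of $\Gamma$, endowed with the basic radially aligned log structure (which exists by Theorem \ref{thm:M_1n_rad_exists}). Since $\Gamma$ is basic, the stalk of the characteristic sheaf of $\Moduli_{1,n}^{rad}$ at $s$ is precisely the free monoid $P_\Gamma \cong \N^r$.

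Next, I would pass to an étale chart around $s$. By Theorem \ref{thm:M_1n_rad_exists}, $\Moduli_{1,n}^{rad}$ carries a locally free log structure, so there is an étale neighborhood $U \to \Moduli_{1,n}^{rad}$ containing a lift of $s$ together with a strict étale map $U \to \A^r$, where $\A^r$ carries its standard toric log structure. Since this toric log structure is the divisorial log structure associated to the normal crossings divisor cut out by the coordinate hyperplanes, its pullback to $U$ is divisorial. This verifies condition (ii) in the definition of a $\Gamma$-test curve.

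Finally, I would shrink $U$ further so as to apply Theorem \ref{thm:uniform_charts} to the pulled-back universal curve $\pi : C := C_{1,n}^{rad} \times_{\Moduli_{1,n}^{rad}} U \to U$. This ensures that $\Gamma(U, \ol{M}_U) \to \ol{M}_{U,s}$ and $\Gamma(C, \ol{M}_{C}) \to \Gamma(C_0, \ol{M}_{C_0})$ are isomorphisms, which is precisely the atomic neighborhood condition (i). The tropicalization isomorphism $\trop(C|_s) \cong \Gamma$ is tautological from the construction.

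The main technical obstacle is ensuring that all of these shrinkings can be performed simultaneously: we need a single étale neighborhood $U$ that is at once strict over $\A^r$ with its toric (and hence divisorial) log structure, and also atomic in the sense of Theorem \ref{thm:uniform_charts}. However, both properties are étale-local around $s$, so a common étale refinement suffices; the only subtle point is that the divisorial log structure condition might not be preserved under arbitrary étale localization, but the toric log structure on $\A^r$ remains divisorial under any étale localization because normal crossings divisors pull back to normal crossings divisors along étale maps.
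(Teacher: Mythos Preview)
Your proposal is correct and follows essentially the same approach as the paper's proof: choose a log curve with tropicalization $\Gamma$, take an \'etale neighborhood in $\Moduli_{1,n}^{rad}$, shrink to an atomic neighborhood via Theorem~\ref{thm:uniform_charts}, and observe that the divisorial property of the log structure is inherited. Your treatment is slightly more explicit in deriving the divisorial property from the locally free log structure via a strict map to $\A^r$, whereas the paper simply invokes that divisoriality is an \'etale-local property; but these are the same argument at different levels of detail.
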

\begin{proof}
Choose a stable radially aligned log curve $C_0$ with tropicalization $\Gamma$. Choose a strict \'etale neighborhood $(S,s)$ of $C_0$ in $\Moduli_{1,n}^{rad}$. Then shrink $S$ if necessary to obtain an atomic neighborhood using Theorem \ref{thm:uniform_charts}. The log structure on $\Moduli_{1,n}^{rad}$ is divisorial, which is an \'etale local property, so the same is true of $(S,s)$.
\end{proof}

The reason we ask that the log structure is divisorial is so that each $\Gamma$-test curve contains test curves for the weighted edge contractions of $\Gamma$ as explained in the lemma below.

\begin{lemma} \label{lem:test_curve_structure}
Let $\pi : C \to S$ be a $\Gamma$-test curve centered at a geometric point $s$ of $S$. Let $\tau : \Gamma' \to \Gamma$ be any face contraction. Then there is an open subscheme $S'$ of $S$ and a geometric point $s'$ of $S'$ so that
\begin{enumerate}
  \item $\pi|_{S'}$ is a $\Gamma'$-test curve centered at $s'$
  \item $\tau$ is the composite of the canonical face contraction $\trop(C|_{s'}) \to \trop(C|_{s})$ with the isomorphisms $\Gamma' \to \trop(C|_{s'})$ and $\trop(C|_s) \to \Gamma$.
\end{enumerate}
\end{lemma}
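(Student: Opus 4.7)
The plan is to exploit the divisorial log structure on $S$ to read off a suitable open subscheme from the combinatorics of $\tau$. A face contraction $\tau\colon \Gamma' \to \Gamma$ is characterized by a face $F$ of $P = P_\Gamma$ with $P_{\Gamma'} \cong P/F$, where the underlying graph contraction $\ul{\Gamma} \to \ul{\Gamma'}$ contracts precisely the edges corresponding to generators of $F$. Because $\Gamma$ is basic, $P \cong \mathbb{N}^r$ is freely generated, and since the log structure on $S$ is divisorial and $(S,s)$ is atomic, after shrinking $S$ we may identify the free generators $e_1, \ldots, e_r$ of $P = \ol{M}_{S,s}$ with the local branches $D_1, \ldots, D_r$ of a normal crossings divisor $D \subseteq S$ through $s$.

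Given this dictionary, I would let $S' = S \setminus \bigcup_{e_i \in F} D_i$ and pick a geometric point $s'$ of $S'$ lying on $\bigcap_{e_i \notin F} D_i$ but on no further branch of $D$. Such a point exists because this stratum is nonempty (it contains $s$ in its closure) and meets the open $S'$. At $s'$, the local branches of $D$ are exactly those indexed by the complement of $F$, so $\ol{M}_{S,s'} \cong P/F \cong P_{\Gamma'}$, and the generalization map $\ol{M}_{S,s} \to \ol{M}_{S,s'}$ is tautologically the quotient $P \to P/F$. Shrinking $S'$ further to an \'etale neighborhood of $s'$ if needed (using Theorem \ref{thm:uniform_charts}), we may arrange that $(S', s')$ is atomic for $\pi|_{S'}$; divisoriality of its log structure is automatic, since $D \cap S'$ is a normal crossings divisor in $S'$.

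With the test curve set up, item (2) follows from Theorem \ref{thm:uniform_charts}(ii): the canonical face contraction $\trop(C|_{s'}) \to \trop(C|_s)$ is induced by the generalization $\ol{M}_{S,s} \to \ol{M}_{S,s'}$, which under our identifications is $P \to P/F$. Taking $\Gamma' \cong \trop(C|_{s'})$ to be the identification coming from $P/F \cong P_{\Gamma'}$ makes this composite equal to $\tau$ on the nose. The only bookkeeping subtlety is checking that these identifications of tropicalizations are compatible, but this is forced by the monoid-level description of face contractions and introduces no essential obstacle; the main content of the lemma really is the geometric step of producing $S'$ and $s'$ via the divisorial structure, which is where the ``divisorial'' hypothesis in the definition of test curve earns its keep.
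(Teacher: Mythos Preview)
Your proof is correct and essentially the same as the paper's. The paper phrases the construction in terms of the complementary index set $J = \{ j : \tau^\sharp(e_j) \neq 0 \}$ and the stratification $W_I = \bigcap_{i \in I} |e_i| \cap \bigcap_{\ell \in I^c} (S \setminus |e_\ell|)$, taking $S' = \bigcup_{I \subseteq J} W_I$ and $s' \in W_J$; unwinding, this is exactly your $S' = S \setminus \bigcup_{e_i \in F} D_i$ with $s'$ in the deepest remaining stratum, and your additional remark about possibly shrinking $S'$ to ensure atomicity is a harmless extra precaution.
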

\begin{proof}
 Write $\bigoplus_{i = 1}^k \N e_i$ for $\ol{M}_{S,s} \cong \Gamma(S, \ol{M}_S)$. Since the log structure on $S$ is divisorial, $S$ possesses a stratification by non-empty locally closed subsets $\{ W_I \}$ indexed by subsets $I \subseteq \{ 1, \ldots, k \}$, where
\[
  W_I = \bigcap_{i \in I} |e_i| \cap \bigcap_{\ell \in I^c} (S - |e_\ell|),
\]
and $|e_i|$ denotes the locus in $S$ where $e_i \neq 0$. 
Let $J$ be the set of indices $j$ such that $e_j$ does not go to zero under $\tau^\sharp$. Choose $s'$ as any point of $W_J$, and let $S' = \bigcup_{I \subseteq J} W_I$.
\end{proof}

\begin{notation} \label{not:notation}
Let $\pi : C \to S$ be a $\Gamma$-test curve centered at $s$. For each contraction datum $D = (\rho, \mu)$ on $\pi$, let $\tilde{C}_D \to S$ be the subdivision of $C$ where $\lambda = \rho$, let $\ol{C}_{D} \to S$ be the contraction of $\tilde{C}_D \to S$ associated to $D$, and let $\Lambda_D$ be the combinatorial type of the fiber of $\ol{C}_D$ over $s$.
\end{notation}

\begin{lemma} \label{lem:contracted_family_vs_contracted_fiber}
With Notation \ref{not:notation},
for each contraction datum $D$ on $\pi$,
\[
  \moduli(s) \text{ contains } \ol{C}_D|_s \, \iff \, \moduli(S) \text{ contains } \ol{C}_D.
\]
\end{lemma}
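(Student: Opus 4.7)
The forward direction is immediate: if $\ol{C}_D \to S$ factors through the open substack $\moduli \subseteq \mathcal{W}_{1,n}$, then pulling back along the inclusion of $s$ gives $\ol{C}_D|_s \in \moduli(s)$.

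For the reverse direction, the plan is to suppose $\ol{C}_D|_s \in \moduli(s)$ and show that for every geometric point $t$ of $S$, the fiber $\ol{C}_D|_t$ also lies in $\moduli$, so that $\ol{C}_D$ globally factors through $\moduli$. Writing $\Lambda_t$ for the combinatorial type of $\ol{C}_D|_t$ and invoking Lemma \ref{lem:union_of_loci} to present $\moduli$ as a union of combinatorial strata $\mathcal{Z}_\Lambda$, it is enough to show $\mathcal{Z}_{\Lambda_t} \cap \moduli \neq \emptyset$ for each $t$, since then $\mathcal{Z}_{\Lambda_t} \subseteq \moduli$.

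The key geometric input is that in the atomic neighborhood $(S,s)$, the point $s$ is the most degenerate: every geometric point $t$ of $S$ specializes to $s$, i.e.\ $s \in \overline{\{t\}}$. This is built into the atomic neighborhood condition, since by Theorem \ref{thm:uniform_charts} there is a canonical face contraction $\trop(C|_t) \to \trop(C|_s)$, reflecting the fact that $C|_s$ is a degeneration of $C|_t$; concretely, in Lemma \ref{lem:test_curves_exist} one constructs $S$ as a sufficiently small strict \'etale neighborhood so that its stratification by the divisorial log structure has $s$ as the unique deepest stratum. Consequently the image of $t$ in $\mathcal{W}_{1,n}$ under $\ol{C}_D$ has closure containing the image of $s$, i.e.\ $\ol{C}_D|_s \in \overline{\mathcal{Z}_{\Lambda_t}}$. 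Since $\moduli$ is an open neighborhood of $\ol{C}_D|_s$ in $\mathcal{W}_{1,n}$, and every open neighborhood of a point in $\overline{\mathcal{Z}_{\Lambda_t}}$ must meet $\mathcal{Z}_{\Lambda_t}$, we conclude that $\moduli \cap \mathcal{Z}_{\Lambda_t}$ is nonempty, as required.

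The main potential obstacle is the assertion that every point of the atomic neighborhood specializes to $s$. I expect this to follow from a careful bookkeeping of the construction of atomic neighborhoods rather than from an abstract property, and I will ensure in passing that Lemma \ref{lem:test_curves_exist} can be sharpened to guarantee this specialization property. Granting this, the argument reduces to two formal ingredients already in hand: the openness of $\moduli$ in $\mathcal{W}_{1,n}$, and the decomposition of $\moduli$ into combinatorial strata provided by Lemma \ref{lem:union_of_loci}.
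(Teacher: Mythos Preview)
Your overall strategy---use openness of $\moduli$ together with the decomposition of $\moduli$ into combinatorial strata (Lemma \ref{lem:union_of_loci})---is the same as the paper's. The gap is precisely the one you flag: the assertion that every geometric point $t$ of $S$ specializes to $s$ is not true for an arbitrary $\Gamma$-test curve. For instance, $S$ could be $\mathbb{A}^2$ with the coordinate axes as log divisor and $s$ the origin; then any closed point on an axis away from the origin lies in $S$ but does not specialize to $s$. Since the lemma is stated for all test curves (and is later applied in Corollary \ref{cor:univ_datum_well_defined} to compare two arbitrary test curves), sharpening the construction in Lemma \ref{lem:test_curves_exist} would not suffice.

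The paper's fix is a small refinement of your idea that avoids this problem entirely. Rather than asking every $t$ to specialize to $s$, one stratifies $S$ by the divisorial log structure into locally closed pieces $W_I = \bigcap_{i \in I} |e_i| \cap \bigcap_{\ell \notin I} (S - |e_\ell|)$, on each of which the tropicalization of $C$---and hence the combinatorial type of $\ol{C}_D$---is constant. Because the log structure is divisorial and $s$ lies in the deepest stratum, each $W_I$ contains at least one generalization $\eta_I$ of $s$ (e.g.\ the generic point of $\bigcap_{i\in I}|e_i|$ in $\Spec \mathcal{O}_{S,s}$). Openness of $\moduli$ then gives $\ol{C}_D|_{\eta_I}\in\moduli$, and Lemma \ref{lem:union_of_loci} propagates this to all of $W_I$. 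In other words, you only need \emph{one} point per stratum to specialize to $s$, not every point of $S$; once you insert the stratification, your argument goes through without modifying the notion of test curve.
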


\begin{proof}
The backward direction is clear. For the forward direction, assume that for some contraction datum $D$, $\ol{C}_D|_s$ belongs to $\moduli$. This follows from the openness of $\moduli$ in $\mathcal{W}_{1,n}$. As in the proof of the previous lemma, write $\bigoplus_{i=1}^k \N e_i$ for $\ol{M}_{S,s} \cong \Gamma(S, \ol{M}_S)$ and let
\[
  W_I = \bigcap_{i \in I} |e_i| \cap \bigcap_{\ell \in I^c} (S - |e_\ell|)
\]
for each $I \subseteq \{1, \ldots, k\}$.
Since $C \to S$ satisfies the conclusions of Theorem \ref{thm:uniform_charts}, for each $I$, the tropicalizations of the fibers of $C|_{W_I} \to W_I$ are constant. It follows that the same is true for the combinatorial types of the fibers of $\ol{C}_D|_{W_I} \to W_I$. Since the log structure of $S$
is divisorial, for each $I \subseteq \{1, \ldots, k \}$, there is a generalization $\eta_{I} \to s$ where $\eta_I \in W_I$. Since $\mathcal{M}$ is open in $\mathcal{W}_{1,n}$, $\moduli$
is closed under generalization. Since $\mathcal{M}$ contains $\ol{C}_D|_s$, $\mathcal{M}$ must also contain $\ol{C}_D|_{\eta_I}$. Then, since
membership in $\mathcal{M}$ is determined by combinatorial type, $\mathcal{M}$ contains all of $\ol{C}_D|_{W_I}$. We conclude that the whole
family $\ol{C}_D \to S$ belongs to $\mathcal{M}$.
\end{proof}

Observe that if $\rho = \rho_i \neq 0$, then $\Lambda_D$ possesses an elliptic $|P_{i+1}|$-fold point (taking $P_{i + 1} = 1/2/\cdots/n$ if $i = k$). On the other hand, if $\rho = 0$, then $\Lambda_D$ possesses no elliptic singularities. Then if $D$ and $D'$ are contraction data with different radii, $\Lambda_D \not\cong \Lambda_{D'}$. Similarly, if $D$ and $D'$ have the same radius, but different tail functions, then without loss of generality $\Lambda_D$ possesses an $I$-marked rational tail where $\Lambda_{D'}$ possesses an $I$-marked leg. We summarize in the following lemma.

\begin{lemma} \label{lem:contracted_curves_distinct}
With Notation \ref{not:notation}, and contraction data $D, D'$ on $C \to S$, we have $\Lambda_D \cong \Lambda_{D'}$ if and only if $D = D'$. \qed
\end{lemma}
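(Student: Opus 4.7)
The forward direction is immediate. For the converse, the plan is to show that the combinatorial type $\Lambda_D$ determines the contraction datum $D = (\rho, \mu)$ given knowledge of $\Gamma$. I would proceed in two steps: first recover $\rho$ from the singularities of $\ol{C}_D|_s$, then recover $\mu$ from the collision patterns among smooth markings. The two invariants preserved by isomorphisms of combinatorial types that drive the argument are the genus function on singularities and the labeled marking function.

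For the first step, Proposition \ref{prop:contract_everything}(a) guarantees that $\ol{C}_D|_s$ contains an elliptic Gorenstein singularity precisely when $\rho \neq 0$, and in that case the singularity is unique with level $\Part(\rho)$ in the sense of Definition \ref{def:partition_type}. Since the level can be read off from $\Lambda_D$ (via the components incident to the singularity and their markings), and since the radii $0 < \rho_1 < \cdots < \rho_k$ of a basic stable radially aligned tropical curve yield distinct partitions $\Part(\rho_1) \prec \cdots \prec \Part(\rho_k)$, the preservation of the genus function and marking data under the isomorphism $\Lambda_D \cong \Lambda_{D'}$ forces $\rho = \rho'$.

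Assuming $\rho = \rho'$, I would show $|\mu| = |\mu'|$, which suffices by Lemma \ref{lem:tail_function_determined_by_support}. Since the markings on $\tilde{C}|_s$ remain pairwise distinct (inherited from the disjoint sections of $\pi : C \to S$), any smooth point of $\ol{C}_D|_s$ carrying at least two marks must arise as $\tau(T_i)$ for a connected component $T_i$ of $|\mu|$, by Proposition \ref{prop:contract_everything}(b). Thus the collection
\[
  \mathcal{I}(D) := \{\,\Marks(T_i) : T_i \text{ a connected component of } |\mu|\,\}
\]
equals the set of marking-partition blocks of size at least two appearing at vertices of $\Lambda_D$. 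The isomorphism $\Lambda_D \cong \Lambda_{D'}$ preserves this collection, giving $\mathcal{I}(D) = \mathcal{I}(D')$. To conclude, I would show each block $I \in \mathcal{I}(D)$ determines a unique stable rational tail of $\tilde{\Gamma}$ with marking set $I$: two such tails would share a marked vertex, so by Lemma \ref{lem:intersecting_tails} together with the fact that rational tails are disjoint from the genus-one core of $\tilde{\Gamma}$ (precluding $T \cup T' = \tilde{\Gamma}$), one must contain the other. A valence count on the complementary vertices -- which must be unmarked and hence require valence at least three from edges alone, while forming a subtree with at most two edges to the exterior -- then forces equality.

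The main obstacle is this final combinatorial uniqueness statement; the recovery of $\rho$ from the elliptic singularity is comparatively routine once one notes that the only source of elliptic singularities in $\ol{C}_D|_s$ is the contraction of $E$. The remaining work is bookkeeping, tracking how the two pieces of contraction data manifest as distinct combinatorial features (a singularity vs.\ a multi-marked smooth point) of the resulting curve.
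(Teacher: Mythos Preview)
Your proposal is correct and follows essentially the same approach as the paper: distinguish the radius $\rho$ via the presence and type of the elliptic Gorenstein singularity, then distinguish the tail function $\mu$ via the pattern of colliding markings. The paper's own argument is the one-paragraph observation immediately preceding the lemma; your treatment of the second step---in particular the valence-counting argument for uniqueness of an $I$-marked stable rational tail in $\tilde{\Gamma}$---supplies detail that the paper leaves implicit.
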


\begin{proposition} \label{prop:exactly_one_contraction}
With Notation \ref{not:notation}, $\moduli$ contains exactly one of the families $\ol{C}_D \to S$.
\end{proposition}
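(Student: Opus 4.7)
The plan is to prove existence and uniqueness of $D$ separately, exploiting separatedness and properness of $\moduli$ respectively (both of which are part of the hypothesis that $\moduli$ is a proper open substack of $\mathcal{W}_{1,n}$), together with Lemmas \ref{lem:contracted_family_vs_contracted_fiber} and \ref{lem:contracted_curves_distinct}.

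For uniqueness, suppose $D \neq D'$ are two contraction data with $\ol{C}_D|_s, \ol{C}_{D'}|_s \in \moduli(s)$. By Lemma \ref{lem:contracted_family_vs_contracted_fiber}, the entire families $\ol{C}_D$ and $\ol{C}_{D'}$ belong to $\moduli(S)$. Each contraction $\tilde{C}_D \to \ol{C}_D$ is an isomorphism over the open dense locus $U \subseteq S$ where $C \to S$ has trivial tropicalization, since the support of the contraction is contained in the fibers over the closed stratum. Hence $\ol{C}_D|_U \cong \ol{C}_{D'}|_U$. Pulling back along a trait $T \to S$ through $s$ whose generic point lands in $U$ produces two families in $\moduli(T)$ with the same generic fiber but with special fibers $\ol{C}_D|_s$ and $\ol{C}_{D'}|_s$; these have non-isomorphic combinatorial types by Lemma \ref{lem:contracted_curves_distinct}. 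This contradicts separatedness of $\moduli$.

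For existence, the generic fiber $C|_\eta$ of the test curve is a smooth $n$-pointed curve with distinct markings, hence lies in $\moduli_{1,n}(\eta) \subseteq \moduli(\eta)$. Choose a trait $T \to S$ through $s$ whose generic point maps to $\eta$. By properness of $\moduli$, after a finite base change the family $C|_T$ has a limit $\ol{C}^\star$ in $\moduli(T)$. It remains to identify $\ol{C}^\star$ with $\ol{C}_D|_T$ for some contraction datum $D$. Build a common semistable model $\tilde{C}' \to T$ dominating both $C|_T$ and $\ol{C}^\star$. The exceptional locus of $\tilde{C}' \to \ol{C}^\star$ lives in the special fiber and decomposes into a connected subcurve of arithmetic genus one (possibly empty) that collapses to an elliptic Gorenstein singularity of $\ol{C}^\star$, together with a disjoint union of stable rational trees collapsing to smooth points. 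The first piece determines a radius $\rho$ by the classification of elliptic Gorenstein contractions of radially aligned curves from \cite[Section 4]{bkn_qstable}, and the second piece determines a tail function $\mu$ on $\tilde{C}_\rho$ in the sense of Definition \ref{def:tail_function}. The disjointness condition $\ol{\Delta} \cap |\mu| = \emptyset$ holds because rational tails of $\ol{C}^\star$'s smooth exceptional locus cannot meet the elliptic exceptional subcurve.

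The main obstacle is this last identification: checking that any Gorenstein limit $\ol{C}^\star$ of a family of radially aligned curves arises as $\ol{C}_D|_T$ for a well-defined contraction datum $D$, and in particular that the pieces of the exceptional locus are precisely those controlled by a radius and a tail function. Once this matching is in hand, Lemma \ref{lem:contracted_family_vs_contracted_fiber} promotes $\ol{C}_D|_s \in \moduli(s)$ to $\ol{C}_D \in \moduli(S)$, completing the proof.
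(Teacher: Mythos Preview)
Your approach matches the paper's: uniqueness via separatedness and Lemma \ref{lem:contracted_curves_distinct}, existence by taking a trait through $s$, finding the $\moduli$-limit by properness, passing to a common semistable model, and reading off a contraction datum from its exceptional locus. The paper fills in exactly the obstacle you flag, with two refinements worth noting. First, the semistable model is chosen to dominate not only $C|_X$ and the $\moduli$-limit but \emph{all} of the (finitely many) $\tilde{C}_D|_X$ and $\ol{C}_D|_X$ simultaneously; this is what makes the comparison of balanced subcurves go through via \cite[Lemma 6.8]{bkn_qstable} and pins down the radius. Second, your two-piece decomposition of the exceptional locus is slightly too coarse: the paper decomposes it as $E^{bal} \cup E^{ss} \cup T$, where $E^{ss}$ consists of semistable rational components (fewer than three special points) not in $T$ and not adjacent to $E^{bal}$, and then argues that no further components can occur because any remaining component would force a rational singularity in $C_{\moduli}$ with too many branches or a marking on it.
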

\begin{proof}
By Lemma \ref{lem:contracted_curves_distinct} and the separatedness of $\moduli$, $\moduli$ can contain at most one of the families $\ol{C}_D$. By Lemma \ref{lem:contracted_family_vs_contracted_fiber}, we only need to show that there exists a $D$ so that the central geometric fiber of $\ol{C}_D$ belongs to $\moduli$. Note that there are finitely many contraction data $D$ on $\pi$ since there are only finitely many such contraction data on $\trop(C|_s)$.

Choose $t$ to be a point of $S$ generizing $s$ over which $C$ is smooth. Let $X \to S$ be a map from the spectrum of a DVR with special point $x$ mapping to $s$ and generic point $\eta$ mapping to $t$. 

Replacing $X$ by a finite base change if necessary, find the limit curve $C_{\mathcal{M}} \to X$ in $\mathcal{M}$ of the smooth curve $C|_\eta$. After a second base change if necessary, pick a regular family of semistable curves $C^{ss} \to X$ dominating $C_{\mathcal{M}} \to X$, each of the families $\ol{C}_D|_X \to X$, and each of the families $\tilde{C}_D|_X \to X$.
Let $E$ be the exceptional locus of $\phi : C^{ss} \to C_{\mathcal{M}}$ and let $E_D$ be the exceptional locus of $\phi_D : C^{ss} \to \ol{C}_D|_X$ for each $D$. By \cite[Lemma 2.7]{smyth_zstable}, we have 
\begin{enumerate}
  \item $\phi$ (resp. $\phi_D$) is surjective with connected fibers;
  \item $\phi$ (resp. $\phi_D$) is an isomorphism on $C^{ss} - E$ (resp. $C^{ss} - E_D$);
  \item if $Z$ is a connected component of $E$ (resp. $E_D$), then $\phi(Z)$ (resp. $\phi_D$) has singularity genus equal to the arithmetic genus of $Z$ and number of branches equal to $|Z \cap \ol{C - Z}|$.
\end{enumerate}
If $C_\moduli|_x$ possesses an elliptic $m$-fold point $p$, by
\cite[Proposition 2.12]{smyth_mstable}, $\phi^{-1}(p)$ is a balanced subcurve of $C^{ss}|_x$, with
$\phi^{-1}(p)$ consisting of all components of $C^{ss}|_x$ whose nodal distance from the core of $C^{ss}|_x$
is less than some integer $\ell$. In this case, set $E^{bal} = \phi^{-1}(p)$. If $C_\moduli|_x$ has no elliptic $m$-fold points, set $E^{bal} = \emptyset$.

Next, let $T$ be the union of the pre-images $\phi^{-1}(q)$ of the points $q \in C_\moduli|_x$ so that $\Marks(q)$ has size at least two. The connected components of $T$ must be rational tails, since each $q$ has singularity genus 0 and 1 branch.

Since $C_\moduli|_x$ has no infinitesimal automorphisms, $E$ contains all rational irreducible components of $C^{ss}|_x$ with fewer than three special points not adjacent to $E^{bal}$. Write $E^{ss}$ for the union of these components not contained in $T$. Observe that a component adjacent to $T$ and in the exceptional locus must have the same image under $\phi$ as $T$ and is therefore part of one of the pre-images constituting $T$. It follows that the components of $E^{ss}$ are also not adjacent to $T$.

We claim that $E = E^{bal} \cup E^{ss} \cup T$. If $Z$ is some other irreducible component of $C^{ss}$ belonging to $E$, then $Z$ has at least three special points. At least two of these are nodes since $Z$ cannot be a tail (which would imply $Z \subseteq T$), and $Z$ cannot be all of $C^{ss}$. Further, $Z$ is not adjacent to $E^{bal}$ or $T$, since otherwise $Z$ would belong to $E^{bal}$ or $T$, following the argument in the previous paragraph. Let $\hat{Z}$ be the connected component of $E$ to which $Z$ belongs. Then $\phi(\hat{Z})$ is a rational singularity with either at least three branches or at least two branches and a marking. Neither is possible in $C_\moduli$, so $E = E^{bal} \cup E^{ss} \cup T$.

Repeat the construction for each contraction datum $D$ to obtain decompositions $E_D = E^{bal}_D \cup E^{ss}_D \cup T_D$.

Now, arguing as in \cite[Lemma 6.8]{bkn_qstable}, there is a radius $\rho_\moduli$ of $\Gamma$ so that $E^{bal} = E^{bal}_D$ for all contraction data $D = (\rho, \mu)$ with $\rho = \rho_\moduli$. Let $\tilde{C} \to S$ be the subdivision of $C$ at $\lambda = \rho_\moduli.$

Next, we claim that the image of $T$ in $\tilde{C}|_X$ is a disjoint union of stable rational trees. To see this, observe that $\tilde{C}|_X$ has distinct smooth markings and the only semistable components of $\tilde{C}|_X$, if any, are adjacent to the locus where $\lambda < \rho_\moduli$. Let $\mu_\moduli$ be the tail function on $\tilde{C}|_X$ with support the image of $T$ and let $D_\moduli$ be the unique extension of the contraction datum $(\rho_\moduli|_X, \mu_\moduli)$ to $S$.

By construction, the exceptional loci of $\phi_{D_\moduli}$ and $\phi$ are the same. It follows that $C_\moduli|_s \cong \ol{C}_{D_\moduli}|_s$, as required.
\end{proof}

\begin{corollary}
With Notation \ref{not:notation}, $\moduli$ contains exactly one of the loci $\mathcal{Z}_{\Lambda_D}$.
\end{corollary}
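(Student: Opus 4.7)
The plan is to directly combine the preceding proposition with the already-established identification of $\moduli$ as a union of combinatorial-type strata.

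First, I would invoke Proposition~\ref{prop:exactly_one_contraction} to obtain a unique contraction datum $D_0$ on $\pi : C \to S$ such that $\ol{C}_{D_0} \to S$ belongs to $\moduli$. In particular $\ol{C}_{D_0}|_s \in \moduli$, and since $\ol{C}_{D_0}|_s$ has combinatorial type $\Lambda_{D_0}$, the locus $\mathcal{Z}_{\Lambda_{D_0}}$ meets $\moduli$. By Lemma~\ref{lem:union_of_loci}, $\moduli$ is a union of combinatorial-type strata, so $\mathcal{Z}_{\Lambda_{D_0}} \subseteq \moduli$. This establishes existence.

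For uniqueness, suppose $\mathcal{Z}_{\Lambda_D} \subseteq \moduli$ for some contraction datum $D$. Since $\ol{C}_D|_s$ has combinatorial type $\Lambda_D$, it belongs to $\mathcal{Z}_{\Lambda_D}$ and hence to $\moduli$. By Lemma~\ref{lem:contracted_family_vs_contracted_fiber}, this forces the whole family $\ol{C}_D \to S$ to lie in $\moduli$. The uniqueness half of Proposition~\ref{prop:exactly_one_contraction} then gives $D = D_0$, and by Lemma~\ref{lem:contracted_curves_distinct}, distinct contraction data yield non-isomorphic combinatorial types, so $\Lambda_D = \Lambda_{D_0}$ and $\mathcal{Z}_{\Lambda_D} = \mathcal{Z}_{\Lambda_{D_0}}$. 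No genuine obstacles are expected: the corollary is a direct translation of Proposition~\ref{prop:exactly_one_contraction} from ``contains exactly one of the families $\ol{C}_D$'' to ``contains exactly one of the strata $\mathcal{Z}_{\Lambda_D}$,'' with the bridge supplied by Lemmas~\ref{lem:union_of_loci},~\ref{lem:contracted_curves_distinct}, and~\ref{lem:contracted_family_vs_contracted_fiber}.
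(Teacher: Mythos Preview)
Your proof is correct and follows the same approach as the paper, which simply records that the corollary is immediate from Proposition~\ref{prop:exactly_one_contraction} and Lemma~\ref{lem:union_of_loci}. You have spelled out the details (including the auxiliary invocations of Lemmas~\ref{lem:contracted_family_vs_contracted_fiber} and~\ref{lem:contracted_curves_distinct}), but note that once you have $D = D_0$ the equality $\Lambda_D = \Lambda_{D_0}$ is automatic and Lemma~\ref{lem:contracted_curves_distinct} is not needed there.
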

\begin{proof}
This is immediate from Proposition \ref{prop:exactly_one_contraction} and Lemma \ref{lem:union_of_loci}.
\end{proof}

\begin{corollary} \label{cor:univ_datum_well_defined}
With Notation \ref{not:notation}, write $D(\pi)$ for the unique contraction datum so that $C_{D(\pi)} \to S$ lies in $\moduli$. Let $\pi' : C' \to S'$ be a second $\Gamma$-test curve, this time centered at $s'$. Then, after identifying $C'|_{s'}$ with $C|_s$ by any isomorphism, $\PL(D(\pi)) = \PL(D(\pi'))$. 
\end{corollary}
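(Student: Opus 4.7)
The plan is to exploit the fact that membership in $\moduli$ of a contracted central fiber depends only on its combinatorial type, which in turn is determined by the PL data of the contraction datum applied to $\trop(C|_s) \cong \Gamma \cong \trop(C'|_{s'})$.

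First, I would observe that because $(S, s)$ is an atomic neighborhood for $\pi$, a contraction datum on $\pi$ is the same information as a contraction datum on $\trop(C|_s) = \Gamma$: the radius $\rho \in \Gamma(S, \ol{M}_S) \cong \ol{M}_{S,s} = P_\Gamma$ and the tail function $\mu \in \Gamma(\tilde{C}, \ol{M}_{\tilde{C}})$ are both determined by their stalks at the central fiber. The same holds for $\pi'$. Thus the chosen isomorphism $C'|_{s'} \cong C|_s$ (equivalently, the common identification with $\Gamma$) induces a bijection $D \mapsto D'$ between the finite sets of contraction data on $\pi$ and on $\pi'$ that preserves the underlying PL data on $\Gamma$.

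Next, I would show that under this bijection the combinatorial type of the contracted central fiber is preserved: $\Lambda_{D} \cong \Lambda_{D'}$. This is immediate from the construction in Proposition \ref{prop:contract_everything}, since that construction is built entirely out of $\omega_{\tilde{C}/S}(\Sigma) \otimes \mathscr{O}_{\tilde{C}}(\mu)$ and the subdivision at $\lambda = \rho$, and because restriction to the central geometric fiber only sees the data of $\Gamma$, $\rho$, $\mu$, and the isomorphism class of $\tilde{C}|_s$. An isomorphism $C|_s \cong C'|_{s'}$ compatible with the identifications to $\Gamma$ therefore produces an isomorphism of contracted central fibers $\ol{C}_D|_s \cong \ol{C'}_{D'}|_{s'}$, so $\Lambda_D \cong \Lambda_{D'}$.

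Finally, I would invoke the key openness/combinatorial input: by Lemma \ref{lem:union_of_loci}, $\moduli$ is a union of strata $\mathcal{Z}_\Lambda$, so membership of a curve in $\moduli$ depends only on its combinatorial type. Since $\ol{C}_{D(\pi)}|_s$ lies in $\moduli$ by definition of $D(\pi)$, and since $\Lambda_{D(\pi)} \cong \Lambda_{D(\pi)'}$ by the previous step, the curve $\ol{C'}_{D(\pi)'}|_{s'}$ also lies in $\moduli$. By Proposition \ref{prop:exactly_one_contraction} applied to $\pi'$, there is a \emph{unique} contraction datum on $\pi'$ whose contracted central fiber lies in $\moduli$, so this datum must be $D(\pi')$. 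Hence $D(\pi') = D(\pi)'$, which is precisely the statement that $\PL(D(\pi)) = \PL(D(\pi'))$ after identification.

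The only delicate point is verifying that the identification of contraction data across the two test curves really is canonical at the level of PL data on $\Gamma$; once that is set up using Theorem \ref{thm:uniform_charts} and the definition of atomic neighborhood, the rest is a quick application of Proposition \ref{prop:exactly_one_contraction} and Lemma \ref{lem:union_of_loci}. I expect no substantive obstacle beyond bookkeeping.
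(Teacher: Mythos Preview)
Your proposal is correct and follows essentially the same approach as the paper: both arguments rest on the observation that the combinatorial type $\Lambda_D$ of the contracted central fiber depends only on the PL data of $D$ on $\Gamma$, and then use that $\moduli$ contains exactly one such type (via Proposition \ref{prop:exactly_one_contraction} and Lemma \ref{lem:union_of_loci}) to force agreement. The paper compresses this into two sentences by citing the preceding corollary, while you spell out the bijection of contraction data and invoke the constituent results directly, but the substance is the same.
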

\begin{proof}
The combinatorial type $\Lambda_D$ only depends on the corresponding piecewise linear function. Since $\moduli$ contains only one locus $\mathcal{Z}_D$, the piecewise linear functions for both families must be the same.
\end{proof}

We conclude with the complete classification of Gorenstein modular compactifications of $\moduli_{1,n}$ admitting collisions as $(Q,\calK)$-stable spaces.

\begin{theorem} \label{thm:qk_classification_nonintro}
Let $\moduli$ be a Gorenstein modular compactification of $\moduli_{1,n}$ over $\Z[1/6]$ admitting collisions. Then there exists $Q \in \mathfrak{Q}_n$ and $\calK$ a simplicial complex on $[n]$ so that $Q$ and $\calK$ do not overlap and $\moduli = \Moduli_{1,\calK}(Q)$ as open substacks of $\mathcal{V}_{1,n} \times_{\Spec \Z} \Spec \Z[1/6]$.
\end{theorem}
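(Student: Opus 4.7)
The strategy is to extract a universal contraction datum from $\moduli$, convert it to a pair $(Q, \calK)$ via Proposition \ref{prop:uni_contract_equiv_QK}, and then match the resulting $\Moduli_{1,\calK}(Q)$ with $\moduli$ by comparing strata $\mathcal{Z}_\Lambda$. The engine driving the argument is the fact, established in the preceding sequence of lemmas, that the $\moduli$-limit of a radially aligned test curve is determined by a unique contraction datum on that test curve.

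For each basic stable radially aligned tropical curve $\Gamma$, I would fix a $\Gamma$-test curve $\pi_\Gamma : C_\Gamma \to S_\Gamma$ centered at a geometric point $s_\Gamma$, using Lemma \ref{lem:test_curves_exist}. Proposition \ref{prop:exactly_one_contraction} produces a unique contraction datum $D_\Gamma = (\rho_\Gamma, \mu_\Gamma)$ on $\pi_\Gamma$ whose associated contracted family lies in $\moduli$, and Corollary \ref{cor:univ_datum_well_defined} guarantees that the underlying piecewise linear data depend only on $\Gamma$ and not on the auxiliary choice of $\pi_\Gamma$.

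Next I would verify that $\Gamma \mapsto D_\Gamma$ respects face contractions. Given a face contraction $\tau : \Gamma' \to \Gamma$, Lemma \ref{lem:test_curve_structure} produces an open subscheme $S'_\Gamma \subseteq S_\Gamma$ on which $\pi_\Gamma$ restricts to a $\Gamma'$-test curve and recovers $\tau$ as the induced face contraction on tropicalizations. Because the formation of the contracted family of Proposition \ref{prop:contract_everything} commutes with base change and $\moduli$ is open in $\mathcal{W}_{1,n}$, this restriction still lies in $\moduli$, so by uniqueness it must agree with the $\moduli$-contraction of the restricted test curve. Invoking Corollary \ref{cor:univ_datum_well_defined} on $\pi_\Gamma|_{S'_\Gamma}$ and on $\pi_{\Gamma'}$ then yields $\tau^\sharp(\rho_\Gamma) = \rho_{\Gamma'}$ and $\tilde{\tau}^*(\mu_\Gamma) = \mu_{\Gamma'}$, so $(D_\Gamma)_{\Gamma \in I}$ is a universal contraction datum. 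Proposition \ref{prop:uni_contract_equiv_QK} now produces the desired pair $(Q, \calK)$ with $Q \in \mathfrak{Q}_n$, $\calK$ a simplicial complex on $[n]$, and $Q, \calK$ not overlapping.

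It remains to show $\moduli = \Moduli_{1,\calK}(Q)$ as open substacks of $\mathcal{V}_{1,n} \times_{\Spec \Z} \Spec \Z[1/6]$. By Theorem \ref{thm:contraction_to_qk_nonintro}, the universal contraction datum induces a morphism $\Moduli_{1,n}^{rad} \to \Moduli_{1,\calK}(Q)$; by construction, the very same contracted family on each test curve also lies in $\moduli$. To conclude, given any geometric point $(C; p_1, \ldots, p_n)$ of $\moduli$, I would choose a one-parameter smoothing within $\moduli$, lift it to $\Moduli_{1,n}^{rad}$ using properness of $\Moduli_{1,n}^{rad} \to \Moduli_{1,n}$, and apply Proposition \ref{prop:exactly_one_contraction} to identify $C$ with the $D_\Gamma$-contraction of the central fiber; the same argument applied inside $\Moduli_{1,\calK}(Q)$ shows the two limits coincide. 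By Lemma \ref{lem:union_of_loci}, both stacks are unions of strata $\mathcal{Z}_\Lambda$, and the argument above shows they contain the same strata, so they agree. The main obstacle is the face-compatibility step: one has to be sure that the contraction commutes with the generization $s'_\Gamma \rightsquigarrow s_\Gamma$ in a way that respects both the subdivision at $\lambda = \rho$ and the resulting tail function, which forces the careful invocation of Theorem \ref{thm:uniform_charts} packaged into the definition of a test curve.
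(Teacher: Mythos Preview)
Your proposal is correct and follows essentially the same approach as the paper's proof: extract a contraction datum on each $\Gamma$-test curve via Proposition \ref{prop:exactly_one_contraction} and Corollary \ref{cor:univ_datum_well_defined}, check face compatibility via Lemma \ref{lem:test_curve_structure}, convert to $(Q,\calK)$ via Proposition \ref{prop:uni_contract_equiv_QK}, and then match $\moduli$ with $\Moduli_{1,\calK}(Q)$ by lifting one-parameter smoothings to $\Moduli_{1,n}^{rad}$ and invoking separatedness. The only cosmetic difference is that the paper phrases the final step as surjectivity of the induced map $\tau : \Moduli_{1,n}^{rad} \to \mathcal{W}_{1,n}$ onto the geometric points of both stacks, whereas you phrase it via equality of the strata $\mathcal{Z}_\Lambda$; these are equivalent.
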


\begin{proof}
By Corollary \ref{cor:univ_datum_well_defined}, there is a well-defined contraction datum $D_\Gamma$ associated to each isomorphism class of basic stable radially aligned tropical curve $\Gamma$ so that for any $\Gamma$-test curve $C \to S$, the contracted curve $\ol{C}_{D_\Gamma} \to S$ belongs to $\moduli$. By Lemma \ref{lem:test_curve_structure}, these contraction data are compatible with edge contractions and hence define a universal contraction datum. Then by Proposition \ref{prop:uni_contract_equiv_QK}, this universal contraction datum is associated to a non-overlapping $Q$ and $\calK$. Moreover, the associated contraction $\ol{C} \to \Moduli_{1,n}^{rad}$ of the universal curve of $\Moduli_{1,n}^{rad}$ belongs to both $\moduli$ and $\Moduli_{1,\calK}(Q)$. The associated morphism $\tau : \Moduli_{1,n}^{rad} \to \mathcal{W}_{1,n}$ induced by $\ol{C} \to \Moduli_{1,n}^{rad}$ factors through both $\moduli$ and $\Moduli_{1,\calK}(Q)$.

We claim that the map $\tau$ surjects onto the geometric points of both $\moduli$ and $\Moduli_{1,\calK}(Q)$. The argument for each case identical, so we give it for $\moduli$. Let $C_0$ be a geometric point of $\moduli$. Since $\moduli_{1,n}$ is dense in $\moduli$, there is a scheme $S$ equal to the spectrum of a discrete valuation ring with geometric special point $s$, generic point $\eta$, and a 1-parameter smoothing $\pi : C \to S$ so that $C_0 = C|_s$ and $C|_\eta$ belongs to $\moduli_{1,n}$. After a finite base change if necessary, there is a limit radially aligned curve $\pi^{rad} : C^{rad} \to S$ in $\Moduli_{1,n}^{rad}$. Then $\tau(\pi^{rad}) : \ol{C^{rad}} \to S$ is a second 1-parameter family of curves in $\moduli$ with generic fiber $C|_\eta$. Since $\moduli$ is separated, we conclude $\ol{C^{rad}}|_s \cong C_0$, i.e., $C_0$ is in the image of $\tau$.

As $\Moduli_{1,\calK}(Q)$ and $\moduli$ have the same geometric points as open substacks of $\calW_{1,n}$, we conclude $\Moduli_{1,\calK}(Q) = \moduli$.
\end{proof}

\bibliographystyle{amsalpha}
\bibliography{biblio}

\end{document}